\documentclass[11pt,reqno]{amsart}
\usepackage{amsmath,amssymb,latexsym,esint,cite,mathrsfs}
\usepackage{verbatim,wasysym}
\usepackage[left=1.8cm,right=1.8cm,top=2.4cm,bottom=2.4cm]{geometry}
\usepackage{tikz,enumitem,graphicx, subfig, microtype, color}
\usepackage{epic,eepic}

\usepackage[colorlinks=true,urlcolor=blue, citecolor=red,linkcolor=blue,
linktocpage,pdfpagelabels, bookmarksnumbered,bookmarksopen]{hyperref}
\usepackage[hyperpageref]{backref}
\usepackage[english]{babel}
\usepackage{appendix}
\usepackage{bm}
\usepackage{amsmath}
\DeclareMathOperator{\diag}{diag}%
\usepackage{float}
\usepackage{amsmath,amssymb}
\usepackage{array,booktabs,tabularx}
\usepackage{placeins}
\numberwithin{equation}{section}

\newtheorem{thm}{Theorem}[section]
\newtheorem{lem}[thm]{Lemma}
\newtheorem{cor}[thm]{Corollary}
\newtheorem{Prop}[thm]{Proposition}
\newtheorem{Def}[thm]{Definition}
\newtheorem{Rem}[thm]{Remark}

\begin{document}
	\title[Spectral analysis of multi-bubble solutions to the Brezis-Nirenberg problem]
	{Spectral analysis of multi-bubble solutions to the Brezis-Nirenberg problem}

	\author[M. Yang]{Minbo Yang}
	\address{\noindent Minbo Yang  \newline
		School of Mathematical Sciences, Zhejiang Normal University,\newline
		Jinhua 321004, Zhejiang, People's Republic of China.}\email{mbyang@zjnu.edu.cn}
	
	\author[S. Zhao]{Shunneng Zhao}
	\address{\noindent Shunneng Zhao  \newline
	School of Mathematical Sciences, Zhejiang Normal University,\newline
		Jinhua 321004, Zhejiang, People's Republic of China.}
	\email{snzhao@zjnu.edu.cn}

	\thanks{2020 {\em{Mathematics Subject Classification.}} Primary 35B40;  Secondly 35B33, 35J15.}

	\thanks{{\em{Key words and phrases.}} Brezis-Nirenberg problem; Sobolev critical exponents; Spectral properties;  Multi-bubble solutions}
	
	\allowdisplaybreaks
	
	\begin{abstract}
		{\small
We investigate weighted spectral limits for separated multi-bubble configurations and refined spectral asymptotics
for the Brezis--Nirenberg problem
\[
    -\Delta u=u^p+\varepsilon u^q, \quad u>0, \quad\text{in }\Omega,
    \qquad
    u=0 \quad \text{on }\partial\Omega,
\]
where $\Omega$ is a smooth bounded domain in $\mathbb R^N$ with $N\geq3$,
$p=(N+2)/(N-2)$, $1\leq q<p$ and $\varepsilon$ is a positive parameter.
We first determine the limits of the ordered weighted eigenvalues
associated with families of positive functions admitting an
$m$-bubble decomposition at distinct interior points with a
remainder vanishing in $H_0^1(\Omega)$.
The limiting eigenvalues are those of the single-bubble problem,
with all multiplicities multiplied by $m$.
We then refine this analysis for positive multi-bubble solutions
of this problem with comparable concentration scales, deriving
precise expansions of the first $(N+2)m$ eigenvalues and
characterizing their eigenfunctions when
$1<q<3$ if $N=4$ or $1\leq q<p$ if $N\geq5$.
The leading corrections are determined by explicit symmetric
matrices involving $q$, the concentration parameters, and the
Green and Robin functions.
In particular, we prove that the dilation matrix is positive definite.
Together, these spectral results yield Morse index bounds for
sufficiently small $\varepsilon$, with an exact formula when the
matrix governing the translation modes is invertible.
}
	\end{abstract}

	\vspace{3mm}
	
	\maketitle
	\section{Motivation and main results}
	In this paper, we are concerned with
the Brezis-Nirenberg problem
\begin{equation}\label{ele-1.1}
\left\lbrace
\begin{aligned}
    &-\Delta u=u^{p}+\epsilon u^{q} \quad \quad\mbox{in}\quad \Omega,\\
    &u>0\quad \quad \quad \quad\quad\quad\quad\hspace{1mm}  \mbox{in}\quad\Omega,\\
    &u=0\quad\quad\quad \quad\quad \quad\quad \hspace{0.5mm}  \mbox{on}\hspace{2mm}\partial\Omega,
   \end{aligned}
\right.
\end{equation}
where $\Omega$ is a smooth bounded domain in $\mathbb{R}^N$ with $N\geq3$, $p=(N+2)/(N-2)$, $1\leq q<p$ and $\epsilon>0$ is a small parameter. We are interested in positive multi-bubble solutions $\epsilon\rightarrow0$, and in the asymptotic behavior of their linearized eigenvalues and eigenfunctions.

We first recall some known results for $q=1$.
Brezis and Nirenberg \cite{Brezis-1983}
showed that if $N\geq4$, problem \eqref{ele-1.1} has a
	nontrivial solution for $\epsilon\in(0,\lambda_1)$,  where $\lambda_1$ is the first eigenvalue of $-\Delta$ with Dirichlet boundary
	condition;
If $\epsilon=0$, problem~\eqref{ele-1.1} becomes again much more delicate. Pohozaev discovered in \cite{Pohozaev-1965} that \eqref{ele-1.1} does not have a solution if $\Omega$ is a star-shaped domain.  Bahri and Coron \cite{Bahri-1988} gave an existence result of a positive solution to problem \eqref{ele-1.1} when $\Omega$ has a
	nontrivial topology, while Ding \cite{Ding1989} constructed a solution to  \eqref{ele-1.1} in the case where $\Omega$ is contractible.

Once existence is known, a natural question is how these solutions behave as $\epsilon\rightarrow0$.
For positive solutions whose Sobolev quotient tends to the best
Sobolev constant, Han \cite{HANZCHAO} and Rey~\cite{Rey-1989} showed that
concentration occurs at a single interior critical point of the
Robin function, and determined the concentration rate and the
limiting profile. Conversely, Rey~\cite{Rey-1990} constructed
single-bubble solutions near nondegenerate critical points of
the Robin function for $N\geq5$.
In dimension three, del Pino, Dolbeault and Musso~\cite{DDM}
constructed positive single-bubble solutions as
$\lambda\downarrow\lambda_0>0$, under suitable assumptions
at a zero critical point of the Robin function of
$-\Delta-\lambda_0$.
The four-dimensional construction
was considered by Pistoia and Rocci~\cite{PR}. For the blow-up analysis of bubble solutions, we also refer to \cite{HLWW,LVWW}.
The spectral properties of single-bubble solutions were studied
by Takahashi~\cite{T-1}. He obtained asymptotic estimates
for the eigenvalues and the corresponding eigenfunctions of
the linearized problem. In particular, for $N\geq6$, the first
corrections to the translation eigenvalues are determined by
the Hessian of the Robin function at the concentration point.

For multi-bubble solutions, Musso and Pistoia~\cite{Musso-Pistoia-2002}
proved existence for $N\geq5$ under suitable stable critical
point conditions on a reduced function involving the Green
and Robin functions. In $N=3$, Musso and
Salazar~\cite{MS-2018} constructed multi-bubble solutions as the
linear parameter approaches a suitable positive value.
More recently, Pistoia, Rago and Vaira~\cite{P-R-V} established
a four-dimensional existence result under a positive stable
critical set condition on the smallest eigenvalue of the
Green-Robin interaction matrix. In this case, the bubble
widths are exponentially small in $1/\varepsilon$. For $N\geq6$, Cao, Luo and Peng~\cite{CLP} established local uniqueness and
counting results under suitable assumptions on the reduced
data. Further information on multi-bubble solutions has been obtained
through refined concentration analysis.
Considered the critical elliptic equation
\begin{equation}\label{ele-1.1-1}
\left\lbrace
\begin{aligned}
    &-\Delta u+\epsilon V(x)u^{q}=u^{p}\quad \quad\mbox{in}\quad \Omega,\\
    &u>0\quad \quad \quad \quad\quad\quad\quad\quad\quad\hspace{0.5mm}  \mbox{in}\quad\Omega,\\
    &u=0\quad\quad\quad \quad\quad \quad\quad\quad\quad  \mbox{on}\hspace{2mm}\partial\Omega.
   \end{aligned}
\right.
\end{equation}
K\"{o}nig and Laurain \cite{K-L-2024} in the case $q=1$ in \eqref{ele-1.1-1} with $V\in C^1(\bar{\Omega})$ and $V<0$ on $\bar{\Omega}$ proved that \eqref{ele-1.1-1} concentration occurs at finitely many distinct
interior points and obtained precise information on the
concentration rates and limiting configurations in terms of
the Green and Robin functions and the potential $V$. In dimension $N=3$,
the authors in \cite{K-L-2025} gives a complete picture of blow-up phenomena in the Brezis-Nirenberg framework. The choice $V\equiv-1$ in \cite{K-L-2024} corresponds, after a constant normalization of the solution, to the case $q=1$ of our problem.

We next discuss nonlinear perturbations $1<q<p$.
Brezis and Nirenberg in a celebrated paper \cite{Brezis-1983} proved that if $N\geq4$ and $1<q<p$ problem \eqref{ele-1.1} has a solution for every $\epsilon>0$.
Molle and Pistoia~\cite{MPistoia} constructed single-bubble solutions
by a finite-dimensional reduction, with concentration points
determined by the Robin function. More recently, Gao and
Ma~\cite{GM-1} studied asymptotic behavior of a positive least energy solution $u_{\epsilon}$ for \eqref{ele-1.1}.  The spectral properties of single-bubble solutions were studied by Yang and Zhao~\cite{YZ}.
For multi-bubble solutions, Micheletti and
Pistoia in \cite{Micheletti} studied \eqref{ele-1.1-1} and developed a finite-dimensional reduction and constructed sign-changing solutions on domains joined by thin handles, namely that
have a solution of the following form
\begin{equation}\label{PW}
u_{\epsilon}=\sum_{i=1}^ma_{i}PW_{\lambda_{i},\xi_{i\epsilon}/\epsilon^{\alpha_0}}+\omega_{\epsilon},
\end{equation}
where $\alpha_0=2/(N-6+q(N-2))$, $a_i\in\{1,-1\}$, $\lambda_1,\cdots,\lambda_m$ are positive parameters and $\xi_{1},\cdots,\xi_m$ are different
points in $\Omega$.
Gao and Ma~\cite{GM} obtained refined bubble decompositions,
concentration rates and equations for the limiting parameters
of positive multi-peak solutions under suitable assumptions.
For the studies of the related topic, Damascelli, Grossi and Pacella~\cite{dgp} used the maximum principle to study symmetry and nodal properties of solutions
of the linearized equation in symmetric domains, with applications to uniqueness and nondegeneracy of positive solutions. For the problem with a negative linear perturbation,$-\Delta u=u^p-\varepsilon u$, Musso and Pistoia~\cite{Musso-Pistoia-2003}
constructed positive two-bubble solutions in domains with
a sufficiently small hole when $N\geq5$.

The preceding results lead naturally to the spectral analysis of positive
multi-bubble solutions of \eqref{ele-1.1}. We first describe the limits of all
fixed-index eigenvalues for positive families admitting a separated
multi-bubble decomposition. This identifies the limiting spectral clusters
for the positive multi-bubble solutions of \eqref{ele-1.1}. We then use the problem \eqref{ele-1.1} to determine the refined
asymptotic behavior of the first $(N+2)m$ eigenvalues and their
eigenfunctions, and relate these expansions to the Morse index of the solution.
We follow the approach of Choi--Kim--Lee~\cite{CKL-1}, with
modifications required by the subcritical perturbation. Remarkably, differently from single-bubble solutions in dimension $N\geq3$, the multi-bubble analysis requires careful control of the interactions between distinct bubbles
when deriving the eigenvalue asymptotics. Moreover, the term
$\varepsilon u_\epsilon^q$ contributes at leading order to
the local scaling identities and must be retained in the
spectral calculation. These effects lead to finite-dimensional
matrices depending on both the concentration configuration
and the exponent $q$. We state our main results below and
then explain the main difficulties and the ideas used to
overcome them.

\subsection{Main results}
We define the Aubin-Talenti bubble function
$$W(x)=[N(N-2)]^{(N-2)/4}(1+|x|^{2})^{-\frac{N-2}{2}}:=C_N(1+|x|^{2})^{-\frac{N-2}{2}}$$
and, for every the concentration rate $\lambda>0$ and the center $\xi=(\xi_1,\cdots,\xi_N)\in\mathbb{R}^N$, its rescaled and translated versions
\begin{equation}\label{WMIU}
W_{\lambda,\xi}(x)=C_N\left(\frac{\lambda}{\lambda^2+|x-\xi|^2}\right)^{\frac{N-2}{2}}\hspace{2mm}\mbox{for}\hspace{2mm}x\in\mathbb{R}^N.
\end{equation}
We notice that $W_{\lambda,\xi}$ are all the positive solutions of the equation
\begin{equation}\label{bec}
-\Delta W=W^{p}\quad \mbox{in}\ \ \mathbb{R}^N,\quad W>0\quad\mbox{in}\ \ \mathbb{R}^N.
\end{equation}
The function $PW$ in \eqref{PW} is the projection of $W$ onto $H_{0}^{1}(\Omega)$, namely,
\begin{equation*}
\Delta PW
=\Delta W,
\quad\text{in}\quad\Omega\quad
PW=0\quad\text{on}\quad\partial\Omega.
\end{equation*}
Furthermore, $G$ denotes the
Green's function of the Laplacian $-\Delta$ in $\Omega$ with the Dirichlet boundary condition, namely that
\begin{equation}\label{Robin}
	H(x,y)=\frac{1}{(N-2)\omega_N|x-y|^{N-2}}-G(x,y),
\end{equation}
where the function $H$ is nothing but the regular part of the Green function and
$H(x,y)$ satisfies
\begin{equation*}
\Delta H(x,y) =0\quad\mbox{in}\quad\Omega,\quad\quad
	H(x,y)=\frac{1}{(N-2)\omega_N|x-y|^{N-2}}\quad\mbox{on}\quad\partial\Omega
\end{equation*}
with $\omega_N$ is the area of the unit sphere in $\mathbb{R}^N$.
For every $x\in\Omega$ the leading term of the regular part of the Green's function $\phi(x):=H(x,x)$ is called the Robin function of $\Omega$ at the pint $x$.

Let us introduce the space
\[
\mathcal O_m(\Omega)
=(0,\infty)^m\times
\{(\xi_1,\ldots,\xi_m)\in\Omega^m:\xi_i\ne\xi_j\ (i\ne j)\},
\]
and the function
 $\Psi_{\Omega,q}(\lambda,\xi):(\mathbb{R}^+)^m\times\Omega^m\rightarrow\mathbb{R}$ defined by
\begin{equation}\label{pusaik-0}
\Psi_{\Omega,q}(\lambda,\xi)=\frac{\mathcal{A}_0^2}{2}\bigg(\sum_{i=1}^k\phi(\xi_i)\lambda_{i}^{N-2}-\sum^k _{\substack{i,j=1,\\
i\neq j}} G(\xi_i,\xi_j)\lambda_{i}^{\frac{N-2}{2}}\lambda_{j}^{\frac{N-2}{2}}\bigg)-\mathcal{B}_0\sum_{i=1}^k\lambda_{i}^{\frac{(N-2)(p-q)}{2}}.
\end{equation}
The constants $\mathcal{A}_0$ and $\mathcal{B}_0$ are given by
\begin{equation}\label{pusaik-1-1}
\mathcal{A}_0=\int_{\mathbb{R}^N}W^{p}(z)dz\quad\mbox{and}\quad \mathcal{B}_0=\frac{1}{q+1}\int_{\mathbb{R}^N}W^{q+1}(z)dz.
\end{equation}
Before turning to the spectral analysis, we first state an
existence result for positive multi-bubble solutions. It provides solutions with distinct interior concentration points and a prescribed asymptotic decomposition, which will serve as the starting point for our analysis.
\begin{thm}\label{existence-0}
Assume that $p=(N+2)/(N-2)$, $1<q<3$ if $N=4$, and $1\leq q<p$ if $N\geq5$, and fix an integer $m\geq2$. Then
\begin{enumerate}[label=\textup{(\roman*)},leftmargin=2.2em,itemsep=5pt]
\item\label{it:given}
 If $(\lambda^{\ast},\xi^{\ast})=(\lambda_{1},\cdots,\lambda_{m},x_{1},\cdots, x_{m})\in \mathcal O_m(\Omega)$ is an isolated stable critical point of $\Psi_{\Omega,q}$. In particular, $(\lambda^{\ast},\xi^{\ast})$ is \(C^1\)-stable in the sense of Li \cite{LI-1997}. Then for $\epsilon$ small enough, there exists a family of positive solution of \eqref{ele-1.1} having the form
\begin{equation}\label{pusaik-1}
u_{\epsilon}=\sum_{i=1}^mPW_{\lambda_{i\epsilon}\epsilon^{\alpha_0},\xi_{i\epsilon}}+\omega_{\epsilon},\quad\mbox{with}\hspace{2mm} \alpha_0=\frac{2}{N-6+q(N-2)},
\end{equation}
which blow-up at $\xi_{1},\cdots, \xi_{m}$ with the rate of the concentration $\lambda_{1}\cdots, \lambda_{m}$. Moreover, $\lambda_{i\epsilon}\rightarrow\lambda_{i}$, $\xi_{i\epsilon}\rightarrow \xi_{i}$ and $\omega_\epsilon\rightarrow0$ in $H_{0}^1(\Omega)$ as $\epsilon\rightarrow0$.

\item\label{it:handles}
There are $m$ pairwise disjoint balls
$B_1,\ldots,B_m$ and a family of smooth bounded connected domains
$\Omega_\delta$, obtained by joining the balls with tubes of width $\delta$,
such that, for all sufficiently small $\delta>0$,
$\Phi_{\Omega,q}$ has an isolated nondegenerate local minimum
$z_\delta\in\mathcal O_m(\Omega_\delta)$.  Consequently, after fixing any such
$\delta$, the positive $m$-bubble solutions in \eqref{pusaik-1}
exist on $\Omega_\delta$ for every sufficiently small $\epsilon>0$.  Their
concentration points lie in mutually distinct ball chambers.
\end{enumerate}
\end{thm}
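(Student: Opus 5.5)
We prove part (i) by the standard Lyapunov--Schmidt reduction, in the spirit of Musso--Pistoia \cite{Musso-Pistoia-2002} and Molle--Pistoia \cite{MPistoia}. We then deduce part (ii) from an explicit analysis of $\Psi_{\Omega,q}$ on domains with thin handles, as in Micheletti--Pistoia \cite{Micheletti}.

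\emph{Setup for (i).} Fix a compact neighbourhood $K\subset\mathcal O_m(\Omega)$ of $(\lambda^\ast,\xi^\ast)$. For $(\lambda,\xi)\in K$ set $\mu_i=\lambda_i\epsilon^{\alpha_0}$ and
\[
V_{\lambda,\xi}=\sum_{i=1}^m PW_{\mu_i,\xi_i}.
\]
The choice of $\alpha_0$ is exactly the one that balances two quantities. The first is the interaction/boundary energy, of order $\mu^{N-2}$, carried by $\phi$ and $G$. The second is the perturbation energy, of order $\epsilon\,\mu^{(N-2)(p-q)/2}$, coming from $\epsilon\int u^{q+1}$. Indeed $N-2=\alpha_0^{-1}\cdot 1+(N-2)(p-q)/2$ reduces to $\alpha_0=2/(N-6+q(N-2))$. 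The restriction $1<q<3$ for $N=4$ (and $q\ge1$ for $N\ge5$) is what makes $\alpha_0>0$. It also makes $\int W^{q+1}<\infty$, since $(N-2)(q+1)>N$, and keeps the error terms of lower order.

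\emph{Reduction and reduced energy.} We look for $u=V_{\lambda,\xi}+\omega$, with $\omega$ in the $H_0^1$-orthogonal complement $E_{\lambda,\xi}$ of the span of
\[
PZ^0_i=P\partial_{\mu_i}W_{\mu_i,\xi_i},\qquad PZ^k_i=P\partial_{\xi_{i,k}}W_{\mu_i,\xi_i}.
\]
This proceeds in four steps.

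\begin{enumerate}
\item Using the nondegeneracy of $W$ (the kernel of $-\Delta-pW^{p-1}$ is spanned by the $Z$'s), we prove that the projected linearized operator $L_{\lambda,\xi}=\Pi^\perp(\mathrm{Id}-i^\ast[(p V^{p-1}+q\epsilon V^{q-1})\cdot])$ is uniformly invertible on $E_{\lambda,\xi}$. This is done by contradiction and blow-up around each $\xi_i$; the bubbles decouple because $|\xi_i-\xi_j|\ge c>0$ on $K$.
\item We estimate the error $R=-\Delta V-V^p-\epsilon V^q$ in $L^{2N/(N+2)}$. It is bounded by $o(\epsilon^{\alpha_0(N-2)/2})$ times suitable logarithmic factors. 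Here we use the expansion $PW_{\mu,\xi}=W_{\mu,\xi}-c_N\mu^{(N-2)/2}H(\cdot,\xi)+O(\mu^{(N+2)/2})$ and the fact that the interactions $W_iW_j$ are of size $\mu^{N-2}$.
\item By contraction we obtain a unique $\omega_{\lambda,\xi}\in E_{\lambda,\xi}$, $C^1$ in $(\lambda,\xi)$, solving the auxiliary equation with $\|\omega\|=o(\epsilon^{\alpha_0(N-2)/2})$.
\item We expand the reduced energy
\[
\tilde J(\lambda,\xi)=J_\epsilon(V+\omega)=\frac{m}{N}S^{N/2}+\epsilon^{\alpha_0(N-2)}\Psi_{\Omega,q}(\lambda,\xi)+o(\epsilon^{\alpha_0(N-2)})
\]
in the $C^0$ topology uniformly on $K$, with the analogous $C^1$ expansion for the derivatives. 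The $\mathcal A_0^2$ terms come from $\int W_i^pPW_j$ and $\int W_i^p(W_i-PW_i)$, and the $\mathcal B_0$ term comes from $\frac{\epsilon}{q+1}\int V^{q+1}$. This $C^1$ expansion, with the correct constants and with uniform control of the $\omega$-contribution, is the main technical obstacle. The hardest cases are $N=4$, where logarithmic errors appear, and $q$ close to $p$.
\end{enumerate}

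\emph{Conclusion of (i).} Since $(\lambda^\ast,\xi^\ast)$ is a $C^1$-stable critical point of $\Psi_{\Omega,q}$ in Li's sense, $\tilde J$ has a critical point $(\lambda_\epsilon,\xi_\epsilon)\to(\lambda^\ast,\xi^\ast)$. By the standard Lagrange multiplier argument, critical points of $\tilde J$ give genuine solutions of \eqref{ele-1.1}. Positivity follows by testing the equation with $u^-$: since $\|\omega\|\to0$, the negative part has small norm, so the Sobolev inequality forces $u^-\equiv0$. The strong maximum principle then gives $u>0$.

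\emph{Part (ii).} Let $\Omega_0=\bigcup_i B_i$. On $\Omega_0$ we have $G(\xi_i,\xi_j)=0$ whenever $\xi_i,\xi_j$ lie in different balls. Hence $\Psi_{\Omega_0,q}$ splits as a sum of single-bubble functions
\[
\tfrac{\mathcal A_0^2}{2}\phi_{B_i}(\xi_i)\lambda_i^{N-2}-\mathcal B_0\lambda_i^{(N-2)(p-q)/2}.
\]
Each of these has a nondegenerate minimum at $\xi_i=$ centre of $B_i$ and at an explicit $\lambda_i>0$. The nondegeneracy in $\xi$ uses the strict convexity of the Robin function of a ball. In $\lambda$, a negative coefficient on the smaller power together with a positive coefficient on the larger one gives a nondegenerate minimum, because $(N-2)(p-q)/2<N-2$. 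Next, $G_{\Omega_\delta}\to G_{\Omega_0}$ and $H_{\Omega_\delta}\to H_{\Omega_0}$ in $C^2_{loc}$ away from the tubes as $\delta\to0$; this is the standard domain-perturbation result used in \cite{Micheletti}. It follows that $\Psi_{\Omega_\delta,q}\to\Psi_{\Omega_0,q}$ in $C^2$ near that point. By the implicit function theorem, $\Psi_{\Omega_\delta,q}$ therefore has a nondegenerate local minimum $z_\delta$ nearby. A nondegenerate critical point is $C^1$-stable, so part (i) applies on $\Omega_\delta$.
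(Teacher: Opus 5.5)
Your proposal is correct and is essentially the paper's argument. For (i), the paper does not redo the Lyapunov--Schmidt reduction but invokes Micheletti--Pistoia's (all signs $a_i=1$, applied to the odd extension of the nonlinearity), whose reduced functional is exactly $\Psi_{\Omega,q}$; it then uses isolatedness for the convergence of the parameters and the same $u^-$-testing argument for positivity. Part (ii) is your splitting on $\Omega_0=\bigcup_i B_i$ followed by $C^2$ persistence of the nondegenerate minimum, except that the paper justifies the $C^2_{\mathrm{loc}}$ Green-function convergence you merely assert: $G_{\Omega_\delta}-G_{\Omega_0}\ge0$ is harmonic on each $B_i\times B_j$, so Harnack's inequality and interior estimates upgrade the convergence given by the cited lemma.
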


We begin the spectral analysis with a result that depends only on the
separated multi-bubble decomposition. For this result, the background
$u_\epsilon$ need not solve the Brezis-Nirenberg equation, and its
bubble widths need not have a prescribed concentration rate.
For this general result, we allow $N\geq3$, set $p=(N+2)/(N-2)$,
and assume $1\leq q<p$.
For a positive function $u_\epsilon\in H_0^1(\Omega)$, consider
\begin{equation}\label{ele-2}
\left\lbrace
\begin{aligned}
    &-\Delta v=\mu\big(pu_{\epsilon}^{p-1}+\epsilon qu_{\epsilon}^{q-1}\big)v \quad \hspace{1mm}\mbox{in}\quad \Omega,\\
    &v=0\quad \quad \quad \quad \quad \quad \quad \quad \quad \quad \quad \hspace{1.5mm}\mbox{on}\hspace{2mm}\partial\Omega.
   \end{aligned}
\right.
\end{equation}
Here and in Theorem~\ref{THm1-2}, this problem is
understood weakly in $H_0^1(\Omega)$: for every $\varphi\in H_0^1(\Omega)$,
\[
 \int_\Omega\nabla v\cdot\nabla\varphi
 =\mu\int_\Omega
 \bigl(pu_\varepsilon^{p-1}+\varepsilon q u_\varepsilon^{q-1}\bigr)v\varphi.
\]
For each $\epsilon>0$, let
\[
    0<\mu_{1\epsilon}\leq\mu_{2\epsilon}
    \leq\cdots\leq\mu_{l\epsilon}\leq\cdots
\]
be the eigenvalues of the linearized problem, arranged in
nondecreasing order and counted with multiplicity.
Related spectral decoupling results can be found in
\cite[Proposition~4.7 and Remark~4.8]{FigalliGlaudo2020}.
In particular, Remark~4.8 concerns finite-dimensional spectral
subspaces for bubbles with fixed scales and mutually diverging centers
in $\mathbb R^N$, where $N\geq5$.
The following theorem describes the limiting spectrum for positive
families admitting a separated multi-bubble decomposition.
 \begin{thm}\label{THm1-2}
 Let $N\geq3$, let $\Omega\subset\mathbb R^N$ be a smooth bounded domain,
and set $p=(N+2)/(N-2)$. Fix $m\geq1$ and $1\leq q<p$.
Suppose that, as $\epsilon\to0$,
$u_\epsilon\in H_0^1(\Omega)$ is positive almost everywhere and
\[
 u_\epsilon
 =\sum_{i=1}^{m}PW_{\delta_{i\epsilon},\xi_{i\epsilon}}
 +\omega_\epsilon,
 \qquad\|\omega_\epsilon\|_{H_0^1(\Omega)}\longrightarrow0,
\]
where $\delta_{i\epsilon}>0$, $\delta_{i\epsilon}\to0$, and
$\xi_{i\epsilon}\to\xi_i\in\Omega$ with $\xi_i\ne\xi_j$ for $i\ne j$.
Let $\mu_{l\epsilon}$ be the eigenvalues of
\eqref{ele-2}, counted with multiplicity.
For $s=0,1,2,\ldots$, define
\[
 \mu_s^\star
 =\frac{N(N-2)+4s(s+N-1)}{N(N+2)},\qquad
 h_s=\binom{N+s}{s}-\binom{N+s-2}{s-2},
\]
where the second binomial coefficient is zero for $s=0,1$.
Let $(\mu_l^0)_{l\geq1}$ be the nondecreasing sequence obtained by
repeating each $\mu_s^\star$ exactly $mh_s$ times. Then, for every
fixed $l\in\mathbb N$,
\[
 \mu_{l\varepsilon}\longrightarrow\mu_l^0
            \qquad\text{as }\epsilon\to0^+.
\]
In particular, setting $K=(N+2)m$, we have
\[
 \mu_{l\epsilon}\longrightarrow
 \begin{cases}
  1/p,&1\leq l\leq m,\\[2pt]
  1,&m+1\leq l\leq K,\\[2pt]
  (N+4)/N,&K+1\leq l\leq m(N+1)(N+4)/2.
 \end{cases}
\]
\end{thm}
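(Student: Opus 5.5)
We will prove Theorem~\ref{THm1-2} by a min-max comparison. The upper bound comes from explicit test functions built out of single-bubble eigenfunctions. The lower bound comes from a concentration-compactness argument showing that normalized eigenfunctions decouple into blow-up limits at the $m$ centres.

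Write $V_\epsilon=pu_\epsilon^{p-1}+\epsilon q u_\epsilon^{q-1}$. The eigenvalues are given by the Courant--Fischer characterization
\[
\mu_{l\epsilon}=\min_{\dim E=l}\max_{v\in E\setminus\{0\}}\frac{\int_\Omega|\nabla v|^2}{\int_\Omega V_\epsilon v^2}.
\]
This is well defined because $V_\epsilon\in L^{N/2}$ and the weighted embedding is compact for fixed $\epsilon$.

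The first step is to show that $V_\epsilon$ is close to $\sum_i pW_{\delta_{i\epsilon},\xi_{i\epsilon}}^{p-1}$ in $L^{N/2}(\Omega)$. This uses $\|\omega_\epsilon\|_{H_0^1}\to0$, Sobolev, $\|PW-W\|_{L^{2^*}}\to0$, and the disjointness of the bubble supports in the limit. The term $\epsilon u_\epsilon^{q-1}$ is bounded in $L^{N/2}$ by $C\epsilon$, since $q-1<p-1$ and $\Omega$ is bounded. Consequently the Rayleigh quotients with weight $V_\epsilon$ and with the model weight differ by a factor $1+o(1)$, uniformly on $H_0^1$, by Hölder and Sobolev. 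It therefore suffices to treat the model weight.

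On $\mathbb R^N$, the problem $-\Delta v=\mu pW^{p-1}v$ in $D^{1,2}$ is conformally equivalent, via stereographic projection, to $-\Delta_{S^N}\psi+\frac{N(N-2)}4\psi=\mu\frac{N(N+2)}4\psi$ on the sphere. This gives exactly the eigenvalues $\mu_s^\star$ with multiplicities $h_s$, the dimensions of the spherical harmonics of degree $s$. It also gives an orthonormal eigenbasis $\{e_k\}$ in $L^2(pW^{p-1})$.

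For the upper bound we use test functions. Fix $l$ and take the first $l$ elements of the family $\{\chi_i e_k^{(i)}\}$. Here $e_k^{(i)}(x)=\delta_{i\epsilon}^{-(N-2)/2}e_k((x-\xi_{i\epsilon})/\delta_{i\epsilon})$, and $\chi_i$ is a cutoff near $\xi_i$ with disjoint supports. The cutoff errors vanish because $e_k$ decays like $|x|^{2-N}$ and its gradient like $|x|^{1-N}$, so their energy outside balls of radius $r/\delta_{i\epsilon}$ tends to $0$. Cross terms between different $i$ vanish by disjoint supports, and cross terms within the same $i$ become orthogonal in the limit. Computing the max of the Rayleigh quotient on this span gives $\limsup\mu_{l\epsilon}\le\mu_l^0$.

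The lower bound is the main step. We argue by induction on clusters. Suppose $\mu_{l\epsilon}\to\bar\mu<\mu_l^0$ along a subsequence. Take eigenfunctions $v_{1\epsilon},\dots,v_{l\epsilon}$, orthonormal in $L^2(V_\epsilon)$ and with bounded energies. For each $i$, rescale them around $\xi_{i\epsilon}$ at scale $\delta_{i\epsilon}$. The rescalings $\tilde v_{j\epsilon}^{(i)}$ are bounded in $D^{1,2}(\mathbb R^N)$, and the weights converge in $L^{N/2}_{loc}$ to $pW^{p-1}$. Passing to weak limits, $\tilde v^{(i)}_j\rightharpoonup\bar v^{(i)}_j$, which solves $-\Delta\bar v=\bar\mu_j pW^{p-1}\bar v$.

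The key point is no loss of weighted mass. We must show
\[
\int_\Omega V_\epsilon v_{j\epsilon}v_{k\epsilon}\to\sum_{i=1}^m\int_{\mathbb R^N}pW^{p-1}\bar v^{(i)}_j\bar v^{(i)}_k.
\]
This requires excluding weighted mass in three regions: the necks $R\delta_{i\epsilon}<|x-\xi_{i\epsilon}|<r$, the region away from all centres, and small scales. For this we use the $L^{N/2}$-smallness of the model weight on annuli $\{R\delta<|x-\xi|<r\}$ as $R\to\infty$ and $r\to0$. We also use that the limit weight $W^{p-1}$ is a fixed $L^{N/2}$ function, so $W^{p-1}\in L^{N/2}$ gives compactness of $D^{1,2}\ni v\mapsto\int pW^{p-1}v^2$. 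On compact sets of rescaled variables we use Rellich, and on the tails the weight is small in $L^{N/2}$. Hence the maps $v_{j\epsilon}\mapsto(\bar v^{(1)}_j,\dots,\bar v^{(m)}_j)$ carry an $l$-dimensional orthonormal family to an orthonormal family in $\bigoplus_i L^2(pW^{p-1})$. Each component is a combination of eigenfunctions with eigenvalues at most $\bar\mu$.

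The span of such eigenfunctions in the direct sum has dimension equal to $m$ times the total multiplicity of the $\mu_s^\star$ that are at most $\bar\mu$. This is less than $l$ because $\bar\mu<\mu_l^0$, which is a contradiction. To handle distinct limits $\bar\mu_j\le\bar\mu$, we use the weak limit of the energy identity with test functions $\bar v$, or we argue directly with the Rayleigh quotient and weak lower semicontinuity. Either way, each limit lies in the spectral subspace below $\bar\mu$.

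We expect the no-mass-loss step to be the main obstacle, together with the case $N=3$, where $PW-W$ is only small in $L^{2^*}$ and not pointwise. Combining the two bounds yields $\mu_{l\epsilon}\to\mu_l^0$. The displayed special values follow from $h_0=1$, $h_1=N+1$, $h_2=(N+1)(N+4)/2-(N+2)$, and $\mu_0^\star=1/p$, $\mu_1^\star=1$, $\mu_2^\star=(N+4)/N$.
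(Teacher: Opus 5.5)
Your proposal is correct and follows essentially the same route as the paper. Both reduce the potential in $L^{N/2}$ to $p\sum_i W_{i\epsilon}^{p-1}$, both transplant model eigenfunctions for the upper bound, and for the lower bound both use rescaled profiles whose weighted Gram matrix is preserved through the compact embedding $\mathcal D^{1,2}(\mathbb R^N)\hookrightarrow L^2(\mathbb R^N,pW^{p-1}dz)$, which yields $l$ independent model eigenvectors at levels $\le\liminf\mu_{l\epsilon}$; the paper phrases this with energy orthonormality rather than weighted orthonormality, which is equivalent here. One small correction: the comparison with the model weight is additive for the reciprocal quotients $\int V_\epsilon v^2/\|\nabla v\|_2^2$, with error $O(\|V_\epsilon-p\sum_iW_{i\epsilon}^{p-1}\|_{N/2})$, not a uniform factor $1+o(1)$ on all of $H_0^1(\Omega)$; your argument never actually uses that reduction, so nothing breaks.
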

The proof of Theorem~\ref{THm1-2} is given in Section~\ref{sec3}.
No nonlinear equation is imposed on $u_\epsilon$, and
the concentration scales $\delta_{i\epsilon}$ need not
be comparable or have a prescribed dependence on $\epsilon$.
The stated multiplicities refer to the limiting spectral levels;
eigenvalues within the same cluster need not coincide
for $\epsilon>0$.

We now apply Theorem~\ref{THm1-2} to the problem \eqref{ele-1.1}.
For the remaining results in the introduction, we return to the ranges
$1<q<3$ if $N=4$ and $1\leq q<p$ if $N\geq5$, with $m\geq2$.
Let $\{u_\epsilon\}$ be a family of positive solutions of \eqref{ele-1.1}
satisfying \eqref{pusaik-1}, with
$\lambda_{i\epsilon}\to\lambda_i>0$ and
$\xi_{i\epsilon}\to\xi_i\in\Omega$ at distinct points.
This includes the solutions constructed in Theorem~\ref{existence-0}.
Taking $\delta_{i\epsilon}
 =\lambda_{i\epsilon}\epsilon^{\alpha_0}$
verifies the assumptions of
Theorem~\ref{THm1-2}.
For this solution family, \eqref{ele-2} is the weighted eigenvalue problem
associated with the linearization of \eqref{ele-1.1}.

Let $v_{l\epsilon}$ be a corresponding eigenfunction,
normalized by
$\|v_{l\epsilon}\|_{L^\infty(\Omega)}=1$.
Given $i=1,\cdots,m$,
 let $V_{li\epsilon}$ be a dilation of the eigenfunction $v_{l\epsilon}$ corresponding to $\lambda_{i\epsilon}$ defined as
\begin{equation}\label{vie}
V_{li\epsilon}(x)=v_{l\epsilon}\left(\lambda_{i\epsilon}\epsilon^{\alpha_0}x+\xi_{i\epsilon}\right)\hspace{2mm}\mbox{for}\hspace{2mm} x\in\Omega_{i\epsilon}:=\lambda_{i\epsilon}^{-1}\epsilon^{-\alpha_0}(\Omega-\xi_{i\epsilon}),\hspace{2mm}\mbox{where}\hspace{2mm}\alpha_0=\frac{2}{N-6+q(N-2)}.
\end{equation}

Since $\mu_{(K+1)\epsilon}\to (N+4)/N>1$, only the first
$K$ eigenvalues can contribute to the Morse index or the kernel
of the linearized operator for sufficiently small $\epsilon$.
To determine the position of the eigenvalues converging to $1$
relative to this threshold, we use equation~\eqref{ele-1.1} to derive
their leading-order corrections.
We organize this analysis into the three ranges
\[
1\leq l\leq m,\qquad
m+1\leq l\leq(N+1)m,\qquad
(N+1)m+1\leq l\leq(N+2)m,
\]
which will be associated with the amplitude, translation, and dilation modes, respectively. The eigenvalue expansions hold as \(\varepsilon\to0\), while the limiting coefficient vectors are obtained along common subsequences.

We first consider $1\leq l\leq m$. Let $M=M(\xi)=(m_{ij})_{1\leq i,j\leq m}$ be the symmetric matrix
defined by
$$
m_{ii}(x)=\phi(\xi_i),\hspace{3mm} m_{ij}(\xi)=-G(\xi_i,\xi_j)\hspace{2mm}\mbox{if}\hspace{2mm}i\neq j.
$$
Define the diagonal matrices
\begin{equation}\label{DDT}
    D_t
    :=
    \operatorname{diag}\left(
        \lambda_1^{\frac{N-2}{2}},\ldots,
        \lambda_m^{\frac{N-2}{2}}
    \right),
    \qquad
    D_{q}
    :=
    \operatorname{diag}\left(
        \lambda_1^{\frac{(N-2)(p-q)}{2}},\ldots,
        \lambda_m^{\frac{(N-2)(p-q)}{2}}
    \right).
\end{equation}
We then introduce the symmetric matrix
\begin{equation}\label{pusaik-2}
    A
    :=
    D_t M D_t
    +
    \frac{(p-q)(q+1-p)\mathcal B_0}{p\mathcal A_0^2}\,D_{q},
\end{equation}
and denote its eigenvalues by
$\nu_1^1\leq\cdots\leq\nu_m^1$, counted with multiplicity.
The following theorem gives
the asymptotic expansions of these eigenvalues, which converge
to $1/p$, and shows that the local limits of the corresponding
rescaled eigenfunctions are multiples of the Aubin-Talenti bubble $W$.
\begin{thm}\label{Figalli-1}
Assume that $p=(N+2)/(N-2)$, $1<q<3$ if $N=4$, and $1\leq q<p$ if $N\geq5$. Let $\{u_{\epsilon}\}$ be a
family of solutions of \eqref{ele-1.1} of the form \eqref{pusaik-1} and $\mu_{l\epsilon}$ the $l$-th eigenvalue of problem \eqref{ele-2} for $1\leq l\leq m$. Then there exists a constant $\alpha_N$ defined by
$\alpha_N=\frac{\mathcal{A}_0^2}{p\int_{\mathbb{R}^N}W^{p+1}(x)dx}$
such that
	\begin{equation}\label{vie-1}	
\mu_{l\epsilon}=\frac{1}{p}+\alpha_N\nu_{l}^{1}\epsilon^{(N-2)\alpha_0}+o\big(\epsilon^{(N-2)\alpha_0}\big)
\end{equation}
as $\epsilon\rightarrow0$. Moreover, there exists a nonzero column vector
\begin{equation}\label{eltita}
\tilde{\bm{e}}_l=\Big(\lambda_{1}^{\frac{N-2}{2}}e_{l1},\cdots,\lambda_{m}^{\frac{N-2}{2}}e_{lm}\Big)^T\in\mathbb{R}^m
\end{equation}
such that for each $i\in\{1,\cdots,m\}$ the function $V_{li\epsilon}\rightharpoonup e_{l\epsilon}W$ weakly in $\mathcal{D}^{1,2}(\mathbb{R}^N)$ as $\epsilon\rightarrow0$. Also the vector
$\tilde{\bm{e}}_l$ is an eigenvector corresponding to $\nu_{l}^1$, $1\leq l\leq m$ and these vectors are nonzero and mutually orthogonal.
\end{thm}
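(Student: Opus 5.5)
The proof follows the Choi--Kim--Lee scheme. It combines the limiting profile given by Theorem~\ref{THm1-2} with a pair of integral identities obtained by testing the eigenvalue equation against the equation for $u_\epsilon$.

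\textbf{Step 1: local profiles.} For $1\le l\le m$, Theorem~\ref{THm1-2} gives $\mu_{l\epsilon}\to 1/p$. The rescaled functions $V_{li\epsilon}$ are bounded in $L^\infty$. Using the decay of $u_\epsilon$ away from the $\xi_i$ and elliptic estimates, they are also bounded in $\mathcal D^{1,2}(\mathbb R^N)$. Up to a subsequence, $V_{li\epsilon}\rightharpoonup V_{li}$, where $V_{li}$ solves $-\Delta V=W^{p-1}V$. The perturbation $\epsilon q u_\epsilon^{q-1}$ vanishes after rescaling, because $q<p$ and the scale is $\epsilon^{\alpha_0}$. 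The solutions of $-\Delta V=W^{p-1}V$ in $\mathcal D^{1,2}$ are exactly the multiples $cW$, since the next eigenvalue of the weighted problem is $1$. This gives $V_{li}=e_{li}W$. To show that the vector $(e_{l1},\dots,e_{lm})$ is nonzero, I would use the normalization $\|v_{l\epsilon}\|_\infty=1$. Away from the points $\xi_i$ the potential is small, so the maximum must be attained at distance $O(\lambda_{i\epsilon}\epsilon^{\alpha_0})$ of some $\xi_{i\epsilon}$. There one gets a uniform bound $|v_{l\epsilon}(x)|\lesssim\sum_i\epsilon^{\alpha_0(N-2)/2}|x-\xi_{i\epsilon}|^{2-N}$ via the Green representation. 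This pointwise decay estimate will also be needed in the expansions below.

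\textbf{Step 2: the key identity.} Multiply $-\Delta u_\epsilon=u_\epsilon^p+\epsilon u_\epsilon^q$ by $v_{l\epsilon}$, and the eigenvalue equation by $u_\epsilon$. Subtracting yields
\[
(1-p\mu_{l\epsilon})\int_\Omega u_\epsilon^{p}v_{l\epsilon}
+\epsilon(1-q\mu_{l\epsilon})\int_\Omega u_\epsilon^{q}v_{l\epsilon}=0 .
\]
This alone fixes only a scalar relation, so I would localize it. Test both equations on small balls $B_\rho(\xi_{i\epsilon})$ with the local Green representation, or equivalently with Pohozaev-type boundary integrals on $\partial B_\rho(\xi_{i\epsilon})$. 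This produces for each $i$
\[
(1-p\mu_{l\epsilon})\int_{B_\rho(\xi_{i\epsilon})}u_\epsilon^pv_{l\epsilon}
+\epsilon(1-q\mu_{l\epsilon})\int_{B_\rho(\xi_{i\epsilon})}u_\epsilon^qv_{l\epsilon}
=\int_{\partial B_\rho}\big(\partial_\nu u_\epsilon\, v_{l\epsilon}-u_\epsilon\partial_\nu v_{l\epsilon}\big).
\]
For the boundary terms, use $u_\epsilon\approx\mathcal A_0\sum_j(\lambda_j\epsilon^{\alpha_0})^{(N-2)/2}G(\cdot,\xi_j)$ and $v_{l\epsilon}\approx\mathcal A_0 p^{-1}\sum_j e_{lj}(\lambda_j\epsilon^{\alpha_0})^{(N-2)/2}G(\cdot,\xi_j)$, the latter coming from the Green representation with $\mu\to1/p$. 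The boundary term then equals $\epsilon^{(N-2)\alpha_0}$ times a bilinear expression in $\lambda^{(N-2)/2}$ and $\tilde{\bm e}_l$, built from $\phi(\xi_i)$ and $G(\xi_i,\xi_j)$. This is $D_tMD_t\tilde{\bm e}_l$ up to constants. The left integrals rescale to $\epsilon^{\alpha_0(N-2)/2}\lambda_i^{(N-2)/2}e_{li}\int W^{p+1}$ and to the corresponding $W^{q+1}$ terms.

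\textbf{Step 3: the correction term.} The delicate point is that $\epsilon\int u^qv$ is of the same order as the main term, since $\epsilon\,\epsilon^{\alpha_0(N-2)(p-q)/2}\sim\epsilon^{(N-2)\alpha_0}$ by the choice of $\alpha_0$. In addition, $1-p\mu_{l\epsilon}$ is itself only $O(\epsilon^{(N-2)\alpha_0})$. To first order, the $q$-term therefore contributes with the coefficient $1-q/p$, and it gives the diagonal $D_q$ part. To obtain the exact factor $(p-q)(q+1-p)$, I expect one must also use the first-order correction of $v_{l\epsilon}$ in the $W^{q+1}$ integral. This is done by subtracting the dilation identity that comes from the equation for $u_\epsilon$, i.e. the relation defining the critical point of $\Psi_{\Omega,q}$ in $\lambda$. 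That relation converts $\mathcal B_0$-terms into the $\phi$/$G$ terms and produces $(q+1-p)$. Dividing by $p\int W^{p+1}$ yields
\[
(\mu_{l\epsilon}-1/p)\tilde{\bm e}_l=\alpha_N\epsilon^{(N-2)\alpha_0}A\tilde{\bm e}_l+o(\epsilon^{(N-2)\alpha_0}).
\]
I expect the main obstacle to be controlling the remainder: errors from $\omega_\epsilon$, from the interactions between bubbles, and from the corrections to $v_{l\epsilon}$ must all be $o(\epsilon^{(N-2)\alpha_0})$. The restriction $q>1$ for $N=4$ is needed exactly here.

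\textbf{Step 4: the ordering.} Step 3 shows that each limiting coefficient $(\mu_{l\epsilon}-1/p)\epsilon^{-(N-2)\alpha_0}/\alpha_N$ is an eigenvalue of $A$ with eigenvector $\tilde{\bm e}_l$. Orthogonality of the $\tilde{\bm e}_l$ comes from the $W^{p-1}$-weighted orthogonality of eigenfunctions, $\int(pu^{p-1}+\epsilon qu^{q-1})v_lv_k=0$, which in the limit gives $\sum_i e_{li}e_{ki}\lambda_i^{N-2}\int W^{p+1}=0$. With $m$ orthogonal nonzero vectors, the coefficients exhaust the spectrum of $A$ with multiplicity. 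Since the $\mu_{l\epsilon}$ are ordered, the $l$-th coefficient is $\nu_l^1$, which gives \eqref{vie-1}.
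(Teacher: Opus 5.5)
Your overall route matches the paper's: Green's second identity for $u_\epsilon$ against $v_{l\epsilon}$ on $B(\xi_{i\epsilon},r)$ (this is \eqref{m0}), exterior Green expansions of both functions, the mean value property, the critical-point relation \eqref{critical}, and weighted orthogonality for the $\tilde{\bm e}_l$. You are also right that $\epsilon\int_B u_\epsilon^q v_{l\epsilon}$ has the same order as $(p\mu_{l\epsilon}-1)\int_B u_\epsilon^p v_{l\epsilon}$ and must be kept. The paper's write-up of \eqref{f017}--\eqref{f018} calls this term negligible, but the coefficient of $D_q$ in $A$ only comes out right if it is kept.

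However, there is a genuine gap in Steps 2--3, which is the computation that produces $A$. The flux is not $D_tMD_t\tilde{\bm e}_l$ up to constants. Near $\xi_i$ write $u_\epsilon\approx\epsilon^{\alpha_0\tau_0}(\mathcal A_0t_i\Gamma_i+g_i)$ and $v_{l\epsilon}\approx\epsilon^{\alpha_0(N-2)}(\mathcal A_0t_i(\tilde{\bm e}_l)_i\Gamma_i+h_i)$. Here $t_i=\lambda_i^{(N-2)/2}$, $\Gamma_i$ is the fundamental solution with pole $\xi_i$, and $g_i,h_i$ are harmonic with $g_i(\xi_i)=-\mathcal A_0(Mt)_i$ and $h_i(\xi_i)=-\mathcal A_0(MD_t\tilde{\bm e}_l)_i$. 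The mean value property then gives
\[
\epsilon^{-3\alpha_0\tau_0}\int_{\partial B(\xi_{i\epsilon},r)}\bigl(\partial_\nu u_\epsilon\,v_{l\epsilon}-u_\epsilon\,\partial_\nu v_{l\epsilon}\bigr)\,dS\longrightarrow\mathcal A_0^2(D_tMD_t\tilde{\bm e}_l)_i-\mathcal A_0^2t_i(Mt)_i(\tilde{\bm e}_l)_i .
\]
The second term is the pairing of the nonzero regular part of $u_\epsilon$ with the singular part of $v_{l\epsilon}$; this is what you dropped. By \eqref{critical} it equals $-(p-q)\mathcal B_0(D_q\tilde{\bm e}_l)_i$.

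On the volume side, set $\Lambda=\lim(p\mu_{l\epsilon}-1)\epsilon^{-\alpha_0(N-2)}$. The leading profiles $e_{li}W$ alone give $\Lambda(\tilde{\bm e}_l)_i\int_{\mathbb R^N}W^{p+1}\,dz-\frac{(p-q)(q+1)}{p}\mathcal B_0(D_q\tilde{\bm e}_l)_i$, since $q\mu_{l\epsilon}-1\to-(p-q)/p$ and $\int W^{q+1}=(q+1)\mathcal B_0$. Because $-(p-q)+\frac{(p-q)(q+1)}{p}=\frac{(p-q)(q+1-p)}{p}$, one gets $\mathcal A_0^2A\tilde{\bm e}_l=\Lambda\int_{\mathbb R^N}W^{p+1}\,dz\;\tilde{\bm e}_l$. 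This is \eqref{vie-1} with the stated $\alpha_N$.

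So no first-order correction of $v_{l\epsilon}$ enters, since any correction to $e_{li}W$ only affects $o(1)$ terms. Nor can ``subtracting'' the critical-point relation from your expression create the factor $q+1-p$: that relation is an identity between limit quantities. It does work only when used to rewrite the term $t_i(Mt)_i(\tilde{\bm e}_l)_i$ that is actually present in the flux. As written, Step 3 does not produce $A$.

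There are also sign and normalization slips that would corrupt the constants.
\begin{itemize}
\item With the outward normal, your localized identity needs a minus sign in front of the boundary integral (compare \eqref{m0}).
\item With $\|v_{l\epsilon}\|_\infty=1$, the exterior expansion is $v_{l\epsilon}\approx\mathcal A_0\epsilon^{\alpha_0(N-2)}\sum_j\lambda_j^{N-2}e_{lj}G(\cdot,\xi_j)$. There is no factor $p^{-1}$, because $p\mu_{l\epsilon}\to1$. The weight $\lambda_j^{N-2}=t_j^2$, not $\lambda_j^{(N-2)/2}$, is exactly what makes $D_tMD_t$ act on $\tilde{\bm e}_l$.
\item The decay bound must read $|v_{l\epsilon}(x)|\lesssim\sum_i\bigl(\epsilon^{\alpha_0}/(\epsilon^{\alpha_0}+|x-\xi_{i\epsilon}|)\bigr)^{N-2}$. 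Your version with prefactor $\epsilon^{\alpha_0(N-2)/2}$ gives neither the $O(\epsilon^{\alpha_0(N-2)})$ exterior size nor an $\epsilon$-uniform integrable majorant for $U_{i\epsilon}^{p-1}V_{li\epsilon}$.
\item That bound is not a routine Green-representation estimate here. The rescaled potential only satisfies $c_\epsilon\le Cd_\epsilon^{-\sigma_q}$ with $\sigma_q=\min\{4,(N-2)q\}$, which is below $3$ for $N=4$ and $q<3/2$. This is why the paper needs the barrier iteration of Appendix~\ref{sharp}.
\end{itemize}
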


We next consider the range $m+1\leq l\leq(N+1)m$. The first task is to identify their local eigenfunction limits and exclude a dilation component. The
following theorem shows that only translation modes remain.
\begin{thm}\label{existence-1}
		For $m+1\leq l\leq(N+1)m$, it holds that
	\begin{equation}\label{vlie}
V_{li\epsilon}\rightarrow-\sum_{k=1}^{N}a_{l,i,k}\partial_kW\quad\mbox{as}\hspace{2mm}\epsilon\rightarrow0\hspace{2mm}\mbox{in}\hspace{2mm}C_{loc}^{1}(\mathbb{R}^N),\hspace{2mm}i=1,\cdots,m
\end{equation}
for all $\epsilon>0$ small.
\end{thm}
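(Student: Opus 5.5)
By Theorem~\ref{THm1-2}, for $m+1\leq l\leq(N+1)m$ we have $\mu_{l\epsilon}\to1$. Since $\|v_{l\epsilon}\|_{L^\infty}=1$, the rescaled functions $V_{li\epsilon}$ satisfy
\[
-\Delta V_{li\epsilon}=\mu_{l\epsilon}\Big(p\,\tilde u_{i\epsilon}^{p-1}+\epsilon(\lambda_{i\epsilon}\epsilon^{\alpha_0})^2 q\,\tilde u_{i\epsilon}^{q-1}\Big)V_{li\epsilon},
\qquad
\tilde u_{i\epsilon}(x)=(\lambda_{i\epsilon}\epsilon^{\alpha_0})^{\frac{N-2}{2}}u_\epsilon(\lambda_{i\epsilon}\epsilon^{\alpha_0}x+\xi_{i\epsilon}),
\]
and $|V_{li\epsilon}|\le1$. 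By the decomposition \eqref{pusaik-1}, $\tilde u_{i\epsilon}\to W$ in $C_{loc}$; here pointwise control of $\omega_\epsilon$ is available from the standard estimates for the Brezis-Nirenberg reduction. The subcritical weight is $o(1)$ locally, because $\epsilon\,\epsilon^{2\alpha_0}\epsilon^{-\alpha_0(N-2)(q-1)/2}\to0$ for $q<p$. Elliptic estimates together with a uniform $\mathcal D^{1,2}$ bound on $V_{li\epsilon}$ then give $V_{li\epsilon}\to V_{li}$ in $C^1_{loc}(\mathbb R^N)$ along subsequences. The limit solves $-\Delta V=pW^{p-1}V$ with $V\in\mathcal D^{1,2}\cap L^\infty$. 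The uniform $\mathcal D^{1,2}$ bound comes from testing the equation with $v_{l\epsilon}$, using $\mu_{l\epsilon}$ bounded and the Sobolev inequality on the weight.

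By the nondegeneracy of $W$ (Bianchi--Egnell), every such $V_{li}$ has the form
\[
V_{li}=-\sum_{k=1}^Na_{l,i,k}\partial_kW+b_{l,i}\Big(\tfrac{N-2}{2}W+x\cdot\nabla W\Big).
\]
The whole task is therefore to show $b_{l,i}=0$ for every $i$. My plan is to first prove that the eigenfunctions with index in $[m+1,(N+2)m]$ form, after projection, a $K-m=(N+1)m$ dimensional space of limits. Ordering then forces the lower $Nm$ of them to be pure translations and the top $m$ to be dilations.

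Concretely, I would compute the first-order corrections from two families of test identities. The first is a local Pohozaev-type identity. Multiply the eigenvalue equation by $(x-\xi_{i\epsilon})\cdot\nabla u_\epsilon+\frac{N-2}{2}u_\epsilon$, and also multiply the derivative of \eqref{ele-1.1} in $\lambda_i$ by $v_{l\epsilon}$. Integrating over a small ball $B_\rho(\xi_{i\epsilon})$ gives
\[
(1-\mu_{l\epsilon})\int_{B_\rho}pu_\epsilon^{p-1}\partial_\lambda u_\epsilon\,v_{l\epsilon}
=\text{(boundary terms)}+\epsilon\cdot(\text{$q$-terms}).
\]
The boundary terms are computed with the expansions $v_{l\epsilon}\approx\sum_j c_j G(\cdot,\xi_j)$ and $u_\epsilon\approx\sum_j\delta_j^{(N-2)/2}\mathcal A_0G(\cdot,\xi_j)$ away from the points. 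The weight of $\epsilon^{(N-2)\alpha_0}$ fixes the scale, and the $q$-term enters at the same order. Using the translation identity (multiplying by $\partial_k u_\epsilon$) instead gives the $a$-equations.

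The conclusion I aim for is that the dilation-type vectors $(b_{l,1},\dots,b_{l,m})$ correspond to eigenvalues $1+c\,\epsilon^{(N-2)\alpha_0}\nu+o(\epsilon^{(N-2)\alpha_0})$, where $\nu$ ranges over the eigenvalues of the dilation matrix. That matrix is built from the Hessian of $\Psi_{\Omega,q}$ in $\lambda$ and is positive definite. The translation eigenvalues, by contrast, satisfy $\mu_{l\epsilon}-1=O(\epsilon^{(N-1)\alpha_0})$ or smaller, since the translation forces come from $\nabla$ of the Green/Robin terms at a higher power of the scale. A counting argument then finishes the proof. Suppose some $l\le(N+1)m$ had $b_l\ne0$. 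Using orthogonality in the weighted $L^2$ inner product of the eigenfunctions $v_{m+1,\epsilon},\dots,v_{(N+2)m,\epsilon}$, which passes to the limit and gives orthogonality of the limit coefficient vectors, we would obtain $m+1$ linearly independent dilation vectors in $\mathbb R^m$ among indices whose eigenvalues lie at or below the translation level. This is impossible.

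The main obstacle is the precise evaluation of the dilation identity. The interaction terms between bubbles (through $G(\xi_i,\xi_j)$) and the $\epsilon u^q$ term must be computed to exact leading order, and one must show the resulting matrix is positive definite, so that dilation eigenvalues sit strictly above $1$ by order $\epsilon^{(N-2)\alpha_0}$ while translation eigenvalues deviate less. I would first try to obtain this matrix as $D^2_\lambda\Psi_{\Omega,q}$ at the stable critical point, using the relation $\partial_\lambda\Psi=0$ to simplify it to a diagonal-plus-positive form. A secondary difficulty is handling $\omega_\epsilon$ in these identities, which I would do by using the $L^\infty$ estimates for $\omega_\epsilon$ from the reduction.
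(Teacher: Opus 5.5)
There is a gap in your closing counting step. Your blow-up step and your dilation identity agree with the paper's (Lemmas~\ref{m00}--\ref{ckl}). The defect term coming from $L_\epsilon\omega_{i\epsilon}=\frac{2(p-q)}{p-1}\epsilon u_\epsilon^q$ enters at leading order, and the resulting matrix $H_q$ is positive definite. So a nonzero weighted dilation vector $\bm c_l=(\lambda_i^{(N-2)/2}b_{l,i})_i$ forces $(\mu_{l\epsilon}-1)\epsilon^{-(N-2)\alpha_0}\to\nu_l>0$.

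The trouble is the orthogonality you rely on. Pass the weighted orthogonality of $v_{l\epsilon}$ and $v_{l'\epsilon}$ to the limit; the cross terms $\int W^{p-1}Z_0\partial_kW$ vanish by parity, and you get
\[
\mathcal C_1\,\tilde{\bm d}_l\cdot\tilde{\bm d}_{l'}+L_0\,\bm c_l\cdot\bm c_{l'}=0,\qquad \mathcal C_1=\int_{\mathbb R^N}W^{p-1}(\partial_1W)^2,\quad L_0=\int_{\mathbb R^N}W^{p-1}Z_0^2 .
\]
This is orthogonality of the full vectors in $\mathbb R^{(N+1)m}$, not of the dilation parts $\bm c_l$. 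Nothing in your argument excludes mixed modes with both $\tilde{\bm d}_l\neq0$ and $\bm c_l\neq0$. For such modes, the dilation parts of $m+1$ (or even all $(N+1)m$) indices can be nonzero and linearly dependent in $\mathbb R^m$. So ``$m+1$ linearly independent dilation vectors'' does not follow. Ordering alone also gives no contradiction, because you never bound $\mu_{l\epsilon}$ from above for $l\le(N+1)m$. Your phrase ``indices whose eigenvalues lie at or below the translation level'' presupposes exactly such a bound.

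The paper supplies that bound directly. Lemma~\ref{baowenbei} gives $\mu_{l\epsilon}\le 1+O(\epsilon^{N\alpha_0})$ for $l\le(N+1)m$, by min--max over the $(N+1)m$-dimensional span of $\chi_iu_\epsilon$ and $\chi_i\partial_ku_\epsilon$. Here $L_\epsilon\partial_ku_\epsilon=0$ and the amplitude block is negative definite. After scaling and Young's inequality, the Rayleigh quotient on this space is at most $1+C\epsilon^{N\alpha_0}$. Then $(\mu_{l\epsilon}-1)\epsilon^{-(N-2)\alpha_0}\le C\epsilon^{2\alpha_0}\to0$, which contradicts $\nu_l>0$ for each fixed $l$. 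No counting, orthogonality, or nontriviality of limits is needed.

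If you want to keep your counting route, you must add a no-mixing step:
\begin{itemize}
\item The translation identity $F_{lik\epsilon}=(\mu_{l\epsilon}-1)I_{lik\epsilon}$ holds for every mode, with $F_{lik\epsilon}=O(\epsilon^{3\alpha_0\tau_0})$.
\item Also $\epsilon^{\alpha_0(1-\tau_0)}I_{lik\epsilon}\to-p\mathcal C_1\lambda_i^{\tau_0-1}a_{l,i,k}$, since the $Z_0$ part drops by parity.
\item Hence $\tilde{\bm d}_l\neq0$ forces $|\mu_{l\epsilon}-1|=O(\epsilon^{(N-1)\alpha_0})$, which is incompatible with $\bm c_l\neq0$.
\end{itemize}
Modes are then pure, the pure-dilation vectors are orthogonal in $\mathbb R^m$, and counting plus ordering close the argument.

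That route further needs every $v_{l\epsilon}$ to have a nonzero limit vector, and orthogonality to pass to the limit. Both rest on the sharp decay $|v_{l\epsilon}|\le C\sum_i\bigl(\epsilon^{\alpha_0}/(\epsilon^{\alpha_0}+|x-\xi_{i\epsilon}|)\bigr)^{N-2}$. Your substitute, a uniform $\mathcal D^{1,2}$ bound obtained by testing with $v_{l\epsilon}$, fails under the $L^\infty$ normalization. It only gives $\|\nabla V_{li\epsilon}\|_2^2\lesssim(\lambda_{i\epsilon}\epsilon^{\alpha_0})^{2-N}\int_\Omega u_\epsilon^{p-1}$, which is of order $\epsilon^{\alpha_0(4-N)}$ for $N\ge5$ and $|\log\epsilon|$ for $N=4$.
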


We continue to work with $m+1\leq l\leq(N+1)m$. The preceding theorem describes the eigenfunctions near the concentration points, but determining the first eigenvalue corrections also requires an expansion away from these points. To describe this exterior behavior, let us introduce a symmetric $m\times m$ matrix $M(\xi)=(m_{ij})_{1\leq i,j\leq m}$ defined by
$$
m_{ii}(x)=\phi(\xi_i),\hspace{3mm} m_{ij}(\xi)=-G(\xi_i,\xi_j)\hspace{2mm}\mbox{if}\hspace{2mm}i\neq j.
$$
Introducing the matrixs
\begin{equation}\label{Nq}
N_{q}:=M+
\frac{\mathcal{B}_0(p-q)(q+1-p)}{\mathcal{A}_0^2}\diag(\lambda_i^{\frac{(N-2)(p-q-2)}{2}}).
\end{equation}
In virtue of Lemma \ref{positivede}, we know $N_{q}$ is positive definite in particular invertible. We denote its inverse $N_q^{-1}=(\eta^{ij})_{1\leq i,j\leq m}$.

For $m+1\leq l\leq(N+1)m$, the next theorem gives the exterior expansion in terms of the Green function
and its first derivatives.
\begin{thm}\label{vepsilon}
For $m+1\leq l\leq(N+1)m$, it holds that
\begin{equation}\label{116}
\begin{split}
\frac{v_{l\epsilon}}{\epsilon^{(N-1)\alpha_0}}\rightarrow& \mathcal{A}_0\Bigg[\sum_{i=1}^{m}\sum_{j=1}^{m}\sum_{k=1}^{N}\eta^{ij}\Bigg(-\frac{1}{2}
\lambda_{j}^{N-1}a_{l,j,k}\frac{\partial \phi}{\partial x_k}(\xi_{j})+\sum_{l\neq j}\lambda_{l}^{N-1}a_{l,l,k}\frac{\partial G}{\partial y_k}(\xi_{j},\xi_{l})\Bigg)G(x,\xi_{i})
\\&+\sum_{i=1}^{m}\sum_{k=1}^{N}\lambda_{i}^{N-1}a_{l,i,k}\frac{\partial G}{\partial y_k}(x,\xi_{i})
\Bigg]
\quad\mbox{in}\quad C_{loc}^{1}\big(\Omega\setminus\{\xi_{1},\cdots,\xi_{k}\}\big)
\end{split}
\end{equation}	
as $\epsilon\rightarrow0$, where $\mathcal{A}_0$ is a dimensional constant in \eqref{pusaik-1-1}.
\end{thm}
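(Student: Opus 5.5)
We plan to write $v_{l\epsilon}$ through the Green representation
\[
v_{l\epsilon}(x)=\mu_{l\epsilon}\int_\Omega G(x,y)\big(pu_\epsilon^{p-1}+\epsilon q u_\epsilon^{q-1}\big)(y)v_{l\epsilon}(y)\,dy ,
\]
and then expand the kernel $G(x,y)$ around each concentration point $\xi_{i\epsilon}$ for $x$ in a fixed compact set $K\subset\Omega\setminus\{\xi_1,\dots,\xi_m\}$. First split $\Omega$ into balls $B_\rho(\xi_{i\epsilon})$ and their complement. Off the balls we use $u_\epsilon=O(\epsilon^{(N-2)\alpha_0/2})$ and $|v_{l\epsilon}|\le1$; this shows the complement contributes $o(\epsilon^{(N-1)\alpha_0})$, once we combine it with the decay of $v_{l\epsilon}$ away from the bubbles, which follows from the same representation and a bootstrap. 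On $B_\rho(\xi_{i\epsilon})$ we Taylor expand,
\[
G(x,y)=G(x,\xi_{i\epsilon})+\nabla_yG(x,\xi_{i\epsilon})\cdot(y-\xi_{i\epsilon})+O(|y-\xi_{i\epsilon}|^2),
\]
and rescale $y=\delta_{i\epsilon}z+\xi_{i\epsilon}$ with $\delta_{i\epsilon}=\lambda_{i\epsilon}\epsilon^{\alpha_0}$. Using Theorem~\ref{existence-1} together with dominated convergence, where uniform decay bounds for $V_{li\epsilon}$ come from comparison with $W$, the first-moment term gives
\[
\delta_{i\epsilon}^{N-1}\int pW^{p-1}\Big(-\sum_k a_{l,i,k}\partial_kW\Big)z_k\,dz=\delta_{i\epsilon}^{N-1}\mathcal A_0\sum_k a_{l,i,k}.
\]
Here we integrate by parts, $\int pW^{p-1}\partial_kW\,z_k=\int\partial_k(W^p)z_k=-\mathcal A_0$. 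This produces the term $\mathcal A_0\sum_{i,k}\lambda_i^{N-1}a_{l,i,k}\partial_{y_k}G(x,\xi_i)$. The quadratic remainder is $O(\epsilon^{N\alpha_0}\log)$, hence negligible.

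The delicate part is the zeroth-order coefficient
\[
c_{i\epsilon}:=\mu_{l\epsilon}\int_{B_\rho(\xi_{i\epsilon})}\big(pu_\epsilon^{p-1}+\epsilon qu_\epsilon^{q-1}\big)v_{l\epsilon} .
\]
Its leading bubble part vanishes by oddness of $\partial_kW$, so we must show $c_{i\epsilon}=\epsilon^{(N-1)\alpha_0}\mathcal A_0\sum_j\eta^{ij}(\cdots)+o(\epsilon^{(N-1)\alpha_0})$. To do this we test the eigenvalue equation against $\partial_\lambda$-type functions, namely $Z_{i}^0:=\partial_{\delta}PW_{\delta_{i\epsilon},\xi_{i\epsilon}}$ (or equivalently $u_\epsilon$ localized near $\xi_i$), and use the local Pohozaev identity on $B_\rho(\xi_{i\epsilon})$ with the vector field $(y-\xi_{i\epsilon})\cdot\nabla$ applied to the pair $(u_\epsilon,v_{l\epsilon})$. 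The interior terms combine $u_\epsilon$'s equation with the linear equation; the $\epsilon u_\epsilon^q$ term contributes the $q$-dependent diagonal correction. The boundary integrals on $\partial B_\rho$ are evaluated using the exterior expansions $u_\epsilon\approx\epsilon^{(N-2)\alpha_0/2}\mathcal A_0\sum_j\lambda_j^{(N-2)/2}G(\cdot,\xi_j)$ from \eqref{pusaik-1} and the ansatz $v_{l\epsilon}\approx\sum_j c_{j\epsilon}G(\cdot,\xi_j)+\epsilon^{(N-1)\alpha_0}\mathcal A_0\sum\lambda_j^{N-1}a_{l,j,k}\partial_{y_k}G(\cdot,\xi_j)$. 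Using the standard bilinear boundary-integral formulas for Green functions (giving $\phi$, $\nabla\phi$, $G$, $\nabla G$ terms), this yields an $m\times m$ linear system for $(c_{j\epsilon}/\epsilon^{(N-1)\alpha_0})$. Its matrix is $N_q$ (after dividing by $\lambda_i^{(N-2)/2}$ and using $\mu_{l\epsilon}\to1$). The right-hand side is $-\frac12\lambda_j^{N-1}a_{l,j,k}\partial_k\phi(\xi_j)+\sum_{l\ne j}\lambda_l^{N-1}a_{l,l,k}\partial_{y_k}G(\xi_j,\xi_l)$, and the balance equations for the critical point $(\lambda^\ast,\xi^\ast)$ of $\Psi_{\Omega,q}$ cancel the remaining leading terms. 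Invertibility of $N_q$ (Lemma~\ref{positivede}) then gives $c_{i\epsilon}/\epsilon^{(N-1)\alpha_0}\to\mathcal A_0\sum_j\eta^{ij}(\cdots)$.

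Finally, the $C^1_{loc}$ convergence follows by differentiating the representation in $x$, since $\nabla_xG$ is smooth for $x\in K$ and $y$ near $\xi_i$. The main obstacle is the Pohozaev step. One must track that $\mu_{l\epsilon}-1$ is at most $O(\epsilon^{(N-2)\alpha_0})$, or at least $o(1)$ times the relevant scale, so that it does not pollute the system. One must also verify that the $\epsilon qu_\epsilon^{q-1}$ weight contributes exactly the diagonal entry $\frac{\mathcal B_0(p-q)(q+1-p)}{\mathcal A_0^2}\lambda_i^{(N-2)(p-q-2)/2}$. The restriction $1<q<3$ for $N=4$ is presumably needed so that these error terms are truly lower order.
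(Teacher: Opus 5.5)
Your skeleton (Green representation, the first-moment computation, a bilinear Pohozaev identity with the dilation field, the critical-point equations of $\Psi_{\Omega,q}$, invertibility of $N_q$) is the Choi--Kim--Lee scheme, and the paper uses it too. However, the two places you defer as ``to be verified'' are exactly where that scheme breaks, and the proposal gives no mechanism for either.

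\emph{Error control.} You estimate the remainder in the Green representation using only $u_\epsilon=O(\epsilon^{\alpha_0\tau_0})$ and $|v_{l\epsilon}|\lesssim\mathfrak w_\epsilon$. For the weight $\epsilon qu_\epsilon^{q-1}$, the potential off the balls and its quadratic Taylor remainder inside them are then only $O(\epsilon^{\alpha_0(N-4+(N-2)q)})$; use $\epsilon=\epsilon^{\alpha_0(2\tau_0-\tau_0(p-q))}$. This is $o(\epsilon^{\alpha_0(N-1)})$ only when $(N-2)q>3$. So your claim fails for $N=4$, $1<q\le 3/2$, and is borderline for $N=5$, $q=1$. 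The restriction $1<q<3$ for $N=4$ does not address this; it only excludes the logarithmic case $q=1$, where $\mathcal B_0=\infty$.

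The paper handles this by writing $v_{l\epsilon}=\zeta_\epsilon+\sum_{i,k}c_{ik\epsilon}\psi_{ik\epsilon}$, with $\psi_{ik\epsilon}\propto\chi_{i\epsilon}\partial_ku_\epsilon$ and the orthogonality \eqref{l21:orthog}. On the translation part, $\epsilon u_\epsilon^{q-1}\partial_ku_\epsilon=\frac{\epsilon}{q}\partial_k(u_\epsilon^q)$ is a divergence and integrates by parts. The residual satisfies $\|\zeta_\epsilon\|_*\le C\mathfrak h_\epsilon$ (Lemma \ref{l21:lemma}); this is a blow-up argument in which the mass is what excludes the $Z_0$ direction. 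Together with \eqref{l21:green3}, the whole $q$-potential becomes $o(\epsilon^{\alpha_0(N-2)}\mathfrak h_\epsilon)$ (Corollary \ref{lem:q-potential}).

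\emph{The monopole system does not close.} The Pohozaev identity for $(u_\epsilon,v_{l\epsilon})$ leaves a volume defect $-\tau_0(p-q)\epsilon\int_{B_i}u_\epsilon^q\bar v_{l\epsilon}$, with $\bar v_{l\epsilon}=v_{l\epsilon}/(p\mu_{l\epsilon})$. It comes from $\epsilon u_\epsilon^q$ in the nonlinearity, not from the weight $\epsilon qu_\epsilon^{q-1}$. It is of the same order $\epsilon^{\alpha_0\tau_0}\kappa_{i1}$ as the boundary terms, where $\kappa_{i1}=\int_{B_i}u_\epsilon^{p-1}v_{l\epsilon}$. Yet its leading part vanishes by oddness, so the translation profile of Theorem \ref{existence-1} cannot evaluate it.

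What is needed is the next-order local profile, $\zeta_\epsilon(\xi_{i\epsilon}+\lambda_{i\epsilon}\epsilon^{\alpha_0}z)/\mathfrak h_\epsilon\to b_iZ_0$. Then the mass and the defect are both proportional to the same $b_i$, via $p\int W^{p-1}Z_0=\tau_0\mathcal A_0$ and $\int W^qZ_0=\tau_0(p-q)\mathcal B_0$. This is the signed moment relation \eqref{eq:moment-match}. Only with it does the diagonal contribution $(Mt)_i/t_i=\frac{\mathcal B_0(p-q)}{\mathcal A_0^2}\lambda_i^{\tau_0(p-q-2)}$ acquire the factor $(p-q)(q+1-p)$ that appears in $N_q$. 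You assert this diagonal entry but have no way to derive it.

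\emph{Absorption step.} These errors are controlled only relative to $\mathfrak h_\epsilon$, which contains the unknown masses; a priori you only know $\kappa_{i1}=o(\epsilon^{\alpha_0(N-2)})$. What one actually obtains is $N_q\kappa_1=\mathcal R_\epsilon(\kappa_2)+o(\epsilon^{\alpha_0(N-2)}\mathfrak h_\epsilon)$. The error must then be absorbed using $N_q\succ0$ to get $\kappa_1=O(\epsilon^{\alpha_0(N-1)})$ and $\mathfrak h_\epsilon=O(\epsilon^{\alpha_0})$. Your boundary ansatz with free $c_{j\epsilon}$ tacitly assumes this outcome.

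\emph{Minor points.} Testing against localized $u_\epsilon$ is not equivalent to the dilation test; it yields \eqref{m0}. Also, $|\mu_{l\epsilon}-1|=O(\epsilon^{(N-2)\alpha_0})$ does not make $(\mu_{l\epsilon}-1)D_{i\epsilon}$ negligible. The paper uses $|\mu_{l\epsilon}-1|=O(\epsilon^{\alpha_0(N-1)})$, obtained from the translation identity \eqref{eq:translationidentity}.
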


Let $\bm{\tilde{d}}_l$ is a nonzero vector defined by
\begin{equation}\label{pusaik-3}
\begin{split}
\bm{\tilde{d}}_l=\Bigg(\lambda_{1}^{\frac{N-2}{2}}&a_{l,1,1},\cdots,\lambda_{1}^{\frac{N-2}{2}}a_{l,1,N},\lambda_{2}^{\frac{N-2}{2}}a_{l,2,1},\cdots,
\lambda_{m-1}^{\frac{N-2}{2}}a_{l,m-1,N}, \lambda_{m}^{\frac{N-2}{2}}a_{l,m,1},\cdots,\lambda_{m}^{\frac{N-2}{2}}a_{l,m,N}\Bigg)^T\in\mathbb{R}^{m\times N}.
\end{split}
\end{equation}
Introducing a $m\times mN$ matrix $P=(P_{i,(j,k)})_{1\leq i,j\leq m, 1\leq k\leq N}$ and a symmetry $mN\times mN$ matrix $Q=(Q_{(i,k),(j,t)})_{1\leq i,j\leq m,1\leq k,t\leq N}$ as follow.
\begin{equation}\label{pusaik-4}
P_{i,(j,k)}=
\left\lbrace
\begin{aligned}
    &\lambda_{j}^{\frac{N}{2}}\frac{\partial G}{\partial y_k}(\xi_{i},\xi_{j})\quad\quad\quad\quad\hspace{8.5mm}\mbox{if}\hspace{3mm} i\neq j,
    \\&-\frac{1}{2}\lambda_{i}^{\frac{N}{2}}\frac{\partial \phi}{\partial x_k}(\xi_{i})\quad \quad\quad\quad\quad\hspace{3mm}\mbox{if}\hspace{3mm}i=j,
 \end{aligned}
\right.
\end{equation}
and
\begin{equation}\label{pusaik-5}
Q_{(i,k),(j,t)}=
\left\lbrace
\begin{aligned}
    &\lambda_{i}^{\frac{N}{2}}\lambda_{j}^{\frac{N}{2}}\frac{\partial G}{\partial x_k\partial y_t}(\xi_{i},\xi_{j})\quad\quad\quad\quad\quad\quad\quad\quad\quad\quad\quad\quad\quad\quad\quad\hspace{3mm} \mbox{if}\hspace{3mm} i\neq j,
    \\&-\frac{\lambda_{i}^{N}}{2}\frac{\partial^2 \phi}{\partial x_k\partial x_t}(\xi_{i})+\lambda_{i}^{\frac{N+2}{2}}\sum_{l\neq i}\lambda_{l}^{\frac{N-2}{2}}\frac{\partial^2 G}{\partial x_k\partial x_t}(\xi_{i},\xi_{l})\quad \hspace{1mm}\quad\quad\mbox{if}\hspace{3mm}i=j.
 \end{aligned}
\right.
\end{equation}
For the same range $m+1\leq l\leq (N+1)m$, our result can be stated as follows.
	\begin{thm}\label{remainder terms}
Assume that $B_q$ is a $mN\times mN$ symmetric matric given by
$$B_q=P^TN_q^{-1}P+Q.$$
Let $\nu_{m+1}^2\geq \cdots\nu_{(N+1)m}^2$ be the eigenvalues of $B$.
Then there exist some constant
$$\beta_N:=\mathcal{A}_0^2\big(p\int_{\mathbb R^N}
W^{p-1}(\partial_1W)^2\,dz\big)^{-1}>0$$ such that for $\epsilon$ small, it holds that
\begin{equation}\label{eq:translationresult}
\mu_{l\epsilon}
=1-\beta_N\nu_{l}^2\epsilon^{\alpha_0N}+o(\epsilon^{\alpha_0N}),
\quad\mbox{for any}\hspace{2mm}m+1\leq l\leq(N+1)m.
\end{equation}
Moreover,
the vector $\tilde{\bm{d}}_l$ is an eigenvector corresponding to $\nu_{l}^2$, $m+1\leq l\leq (N+1)m$ and these vectors are nonzero and mutually orthogonal, and hence form a basis of $\mathbb{R}^{Nm}$.
\end{thm}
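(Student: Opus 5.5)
The proof tests the eigenvalue equation against the derivatives of $u_\epsilon$ and extracts the leading coefficient from local Pohozaev-type identities, in the spirit of Choi--Kim--Lee and of Takahashi for the single-bubble case.

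For each concentration point $\xi_{i\epsilon}$ and each direction $k$, we use the identity obtained by multiplying the equation $-\Delta v_{l\epsilon}=\mu_{l\epsilon}(pu_\epsilon^{p-1}+\epsilon qu_\epsilon^{q-1})v_{l\epsilon}$ by $\partial_k u_\epsilon$. We also multiply the differentiated equation $-\Delta \partial_k u_\epsilon=(pu_\epsilon^{p-1}+\epsilon qu_\epsilon^{q-1})\partial_k u_\epsilon$ by $v_{l\epsilon}$. We subtract the two and integrate over a small ball $B_\rho(\xi_{i\epsilon})$ with $\rho$ fixed. This gives
\[
(1-\mu_{l\epsilon})\int_{B_\rho(\xi_{i\epsilon})}\big(pu_\epsilon^{p-1}+\epsilon qu_\epsilon^{q-1}\big)v_{l\epsilon}\,\partial_k u_\epsilon
=\int_{\partial B_\rho(\xi_{i\epsilon})}\Big(v_{l\epsilon}\frac{\partial(\partial_k u_\epsilon)}{\partial\nu}-\partial_k u_\epsilon\frac{\partial v_{l\epsilon}}{\partial\nu}\Big).
\]

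\textbf{Left side.} After rescaling by $\lambda_{i\epsilon}\epsilon^{\alpha_0}$, Theorem~\ref{existence-1} gives $V_{li\epsilon}\to-\sum_t a_{l,i,t}\partial_tW$ in $C^1_{loc}$. The term $\epsilon q u_\epsilon^{q-1}$ is lower order after rescaling, and the far part of the integral is handled by uniform decay bounds for $v_{l\epsilon}$ and $u_\epsilon$. With these, the left side becomes
\[
(1-\mu_{l\epsilon})\,\epsilon^{-\alpha_0}\lambda_i^{-1}\cdot\Big(-p\,a_{l,i,k}\int_{\mathbb R^N}W^{p-1}(\partial_1W)^2\Big)(1+o(1)),
\]
where the exact powers of $\lambda_i$ and $\epsilon$ are fixed by the scaling of $u_\epsilon$ in \eqref{pusaik-1}.

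\textbf{Right side.} Away from the points, $u_\epsilon\epsilon^{-(N-2)\alpha_0/2}\to\mathcal A_0\sum_j\lambda_j^{(N-2)/2}G(\cdot,\xi_j)$, which is standard from the decomposition. Theorem~\ref{vepsilon} gives the limit $\mathcal A_0F_l$ of $v_{l\epsilon}\epsilon^{-(N-1)\alpha_0}$. The boundary integral is then a bilinear form in the two harmonic-away-from-poles functions. By the standard computation (the bilinear boundary form for functions of the type $G$ and $\partial_yG$ evaluated through their singular/regular decomposition, independent of $\rho$), it reduces to the $(i,k)$-row of $\mathcal A_0^2\big(P^TN_q^{-1}P+Q\big)\tilde{\bm d}_l$, up to the powers $\lambda_i^{N/2}$ appearing in $P$ and $Q$. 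Here:
\begin{itemize}
\item the $G(x,\xi_i)$ part of $F_l$, with coefficients $\eta^{ij}(\cdots)$, produces $P^TN_q^{-1}P$;
\item the $\partial_yG$ parts produce $Q$, with the diagonal Robin Hessian and the cross terms $\partial^2_{x_kx_t}G(\xi_i,\xi_l)$ coming from the regular part of $u_\epsilon$ near $\xi_i$.
\end{itemize}
The total order on the right is $\epsilon^{(N-2)\alpha_0/2+(N-1)\alpha_0}$ times appropriate factors. Matching orders against the left side yields $1-\mu_{l\epsilon}\sim\epsilon^{N\alpha_0}$, after accounting for the normalization of $v$ through $V_{li\epsilon}$.

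\textbf{Eigenvalue conclusion.} Equating the two sides gives $\lim \epsilon^{-N\alpha_0}(1-\mu_{l\epsilon})\,\tilde{\bm d}_l=\beta_N B_q\tilde{\bm d}_l$. Since $\tilde{\bm d}_l\neq0$, Theorem~\ref{existence-1} rules out the eigenfunction vanishing on all bubbles, so $\lim\epsilon^{-N\alpha_0}(1-\mu_{l\epsilon})$ is an eigenvalue of $\beta_NB_q$ along subsequences.

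\textbf{Ordering and multiplicity.} We show that the vectors $\tilde{\bm d}_l$, $m+1\le l\le(N+1)m$, are mutually orthogonal in $\mathbb R^{mN}$. For $l\neq l'$ the eigenfunctions satisfy $\int(pu_\epsilon^{p-1}+\epsilon qu_\epsilon^{q-1})v_{l\epsilon}v_{l'\epsilon}=0$. Rescaling around each bubble, with $L^\infty$ normalization and weights concentrating at the points, gives
\[
\sum_i\lambda_i^{N-2}\sum_k a_{l,i,k}a_{l',i,k}\int W^{p-1}(\partial_1W)^2=0 .
\]
This is exactly orthogonality of the $\tilde{\bm d}_l$; the weight $\lambda_i^{N-2}$ arises from the $L^\infty$ normalization of $v_{l\epsilon}$ combined with rescaling. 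Thus the $mN$ limits exhaust all eigenvalues of $B_q$ with multiplicity. The min-max ordering of $\mu_{l\epsilon}$, which increase with $l$, then forces $\nu_l^2$ to be listed decreasingly. Every subsequence has the same limit, so the expansion holds for the full family.

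\textbf{Main obstacle.} The hardest step is the boundary-integral computation together with justifying that the $o(1)$ errors are genuinely smaller than $\epsilon^{N\alpha_0}$. This requires sharp pointwise bounds $|v_{l\epsilon}|\lesssim\sum_i W_{\delta_{i\epsilon},\xi_{i\epsilon}}$-type estimates on $v_{l\epsilon}$. It also requires control of the $\epsilon u_\epsilon^{q}$ term and of the remainder $\omega_\epsilon$, for which we assume the $C^1$ convergence away from the points (dimension restrictions $N\ge5$, or $N=4$ with $1<q<3$).
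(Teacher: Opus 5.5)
Your route is the paper's route: the translation Green identity on a fixed ball, the local limit \eqref{tao0} for the volume term, the exterior expansions \eqref{AS-eigenvalues-1} and \eqref{exterior} inserted into the Choi--Kim--Lee Green identities to produce $\mathcal A_0^2(B_q\tilde{\bm d}_l)_{(i,k)}$, then weighted orthogonality and ordering.

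\textbf{The sign of your starting identity is wrong.} As written, your argument proves the opposite expansion. From $-\Delta v_{l\epsilon}=\mu_{l\epsilon}f_\epsilon'(u_\epsilon)v_{l\epsilon}$ and $-\Delta\partial_ku_\epsilon=f_\epsilon'(u_\epsilon)\partial_ku_\epsilon$ one gets $v_{l\epsilon}\Delta\partial_ku_\epsilon-\partial_ku_\epsilon\Delta v_{l\epsilon}=(\mu_{l\epsilon}-1)f_\epsilon'(u_\epsilon)\partial_ku_\epsilon v_{l\epsilon}$. Hence
\[
\int_{\partial B_\rho(\xi_{i\epsilon})}\Big(v_{l\epsilon}\,\partial_\nu(\partial_ku_\epsilon)-\partial_ku_\epsilon\,\partial_\nu v_{l\epsilon}\Big)\,dS=(\mu_{l\epsilon}-1)\int_{B_\rho(\xi_{i\epsilon})}f_\epsilon'(u_\epsilon)\,\partial_ku_\epsilon\,v_{l\epsilon}\,dx,
\]
which is \eqref{eq:translationidentity}; your factor $1-\mu_{l\epsilon}$ has the wrong sign. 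With your factor, your left-hand limit (coefficient $-p\mathcal C_1a_{l,i,k}$) and your right-hand limit ($+\mathcal A_0^2(B_q\tilde{\bm d}_l)_{(i,k)}$, which is correct) give $\epsilon^{-N\alpha_0}(1-\mu_{l\epsilon})\to-\beta_N\nu_l^2$. Your step ``equating the two sides'' silently reverses this.

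This sign is exactly what Corollary~\ref{emm-1} uses, so it cannot be fixed at the end. Already for one bubble, $P=0$ and $B_q=-\frac{\lambda^N}{2}D^2\phi(\xi)$. Your version would then put all $N$ translation eigenvalues below $1$ at a nondegenerate minimum of the Robin function, such as the center of a ball. That contradicts the Morse index one of the least-energy solution there.

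\textbf{Smaller bookkeeping points.} By \eqref{tao0}, the volume term is $(\lambda_{i\epsilon}\epsilon^{\alpha_0})^{\tau_0-1}\bigl(-p\mathcal C_1a_{l,i,k}+o(1)\bigr)$, not $\epsilon^{-\alpha_0}\lambda_i^{-1}(\cdots)$. The missing factor $(\lambda_i\epsilon^{\alpha_0})^{\tau_0}$ comes from the amplitude of $u_\epsilon$, not from the normalization of $v_{l\epsilon}$. It is needed for two reasons:
\begin{enumerate}
\item it produces the order $\epsilon^{N\alpha_0}$ (with your power you would get $\epsilon^{\frac{3N-2}{2}\alpha_0}$);
\item it turns $a_{l,i,k}$ into $(\tilde{\bm d}_l)_{(i,k)}=\lambda_i^{\tau_0}a_{l,i,k}$.
\end{enumerate}
Multiplying by $\lambda_i\epsilon^{-\frac{3N-4}{2}\alpha_0}$ then gives a genuine eigen-equation $B_q\tilde{\bm d}_l=-\frac{p\mathcal C_1}{\mathcal A_0^2}\bigl(\lim_{\epsilon\to0}\epsilon^{-N\alpha_0}(\mu_{l\epsilon}-1)\bigr)\tilde{\bm d}_l$ for the same vector on both sides.

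Also, the monopole pairing on the $i$-th sphere is a priori $\lambda_i\bigl(\sum_{j\neq i}\lambda_j^{\tau_0}\partial_{x_k}G(\xi_i,\xi_j)-\lambda_i^{\tau_0}\partial_k\phi(\xi_i)\bigr)$. It equals $P_{i,(i,k)}=-\frac12\lambda_i^{N/2}\partial_k\phi(\xi_i)$ only after you use the $\xi$-criticality of the limiting configuration, $\frac12\lambda_i^{\tau_0}\nabla\phi(\xi_i)=\sum_{j\neq i}\lambda_j^{\tau_0}\nabla_xG(\xi_i,\xi_j)$, which your sketch does not mention.

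With these corrections your argument, including the orthogonality and ordering steps, coincides with the paper's proof.
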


\begin{Rem}
No nondegeneracy assumption on $B$ is imposed.
If $\nu_l^2=0$, the expansion \eqref{eq:translationresult} gives only
\[
    \mu_{l\epsilon}-1=o(\epsilon^{\alpha_0N}).
\]
Thus, this expansion alone does not determine the sign of
$\mu_{l\epsilon}-1$.
\end{Rem}

Let us denote by $\nu_{l}^3$ the $(l-k(N+1))$ eigenvalues of the matrix $H_q$ by
\begin{equation}\label{pusaik-2-2}
 H_q:=\mathcal{A}_0^2(\frac{N-2}{2})^2D_tM D_t
      +\mathcal{B}_0(\frac{N-2}{2})^2(p-q)(q+1-p)D_{q},
\end{equation}

The next theorem describes both the local profiles and the eigenvalue corrections for $(N+1)m+1\leq l\leq(N+2)m$. In particular, the positive definiteness of $H_q$ places these eigenvalues above 1 for small $\epsilon$.
	\begin{thm}\label{thmprtb}
let $0<\nu_{(N+1)m+1}^{3}\le\cdots\le\nu_{(N+2)m}^{3}$ be the eigenvalues of $H_q$, counted with multiplicity. Set
$$
Z_0=-z\cdot\nabla W-aW,\quad\mathcal{C}_{p,q}:=(p\int_{\mathbb R^N}
    W^{p-1}Z_0^2\,dz)^{-1}.
$$
Then there exists a nonzero vector
$$\tilde{\bm{f}}_l=\Big(\lambda_{1}^{\frac{N-2}{2}}a_{l,1},\cdots,\lambda_{m}^{\frac{N-2}{2}}a_{l,m}\Big)^T\in\mathbb{R}^m$$
such that for each $(N+1)m+1\leq l\leq(N+2)m$,
it holds that
		\begin{equation}\label{rtbdy46}	
\mu_{l\epsilon}
=1+\mathcal{C}_{p,q}\nu_l^{3}\epsilon^{\alpha_0(N-2)}+o(\epsilon^{\alpha_0(N-2)}),\quad V_{l,i,\epsilon}\rightharpoonup a_{l,i} Z_0\quad\mbox{weakly in}\quad\mathcal{D}^{1,2}(\mathbb{R}^N) \quad\mbox{as}\quad\epsilon\rightarrow0,
\end{equation}
The nonzero vectors $\tilde{\bm{f}}_{l}$ in \eqref{eq:dilationlocal} are corresponding eigenvectors of $H_q$ and form an orthogonal basis of $\mathbb{R}^m$ along every simultaneous convergent subsequence.
\end{thm}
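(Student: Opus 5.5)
We follow the scheme of Choi--Kim--Lee~\cite{CKL-1}, adapted to the extra term $\epsilon u_\epsilon^q$. By Theorem~\ref{THm1-2}, $\mu_{l\epsilon}\to1$ for $(N+1)m+1\le l\le (N+2)m$, and these eigenvalues sit at the top of the cluster at level $1$. We first identify the local limits. Normalize $\|v_{l\epsilon}\|_{L^\infty}=1$; then each $V_{li\epsilon}$ is bounded in $\mathcal D^{1,2}(\mathbb R^N)$ and locally in $C^1$. It therefore converges, weakly and in $C^1_{loc}$ along a subsequence, to a bounded solution of $-\Delta V=pW^{p-1}V$ on $\mathbb R^N$. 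By the nondegeneracy of $W$,
\[
V_{li\epsilon}\to \sum_k b_{l,i,k}\partial_kW+a_{l,i}Z_0 ,
\]
where $Z_0=-z\cdot\nabla W-\frac{N-2}{2}W$, so $a=\frac{N-2}{2}$. The $\epsilon u_\epsilon^q$ term vanishes in the limit, since $\epsilon\delta^2\cdot\delta^{-(N-2)(q-1)/2}\cdots\to0$ after scaling in the given $q$-range.

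We then use the variational characterization together with the orthogonality relations $\int (pu_\epsilon^{p-1}+\epsilon q u_\epsilon^{q-1})v_{l\epsilon}v_{l'\epsilon}=0$. The eigenfunctions with $l\le (N+1)m$ already have local limits spanned by $W$ and by $\partial_kW$, by Theorems~\ref{Figalli-1} and~\ref{existence-1}. Passing to the limit in these orthogonality relations, where $\int W^{p-1}\partial_kW\,Z_0=0$, forces $b_{l,i,k}=0$. It also shows that the vectors $\tilde{\bm f}_l$ are nonzero, after renormalizing by $\max_i$ of the local sup-norms, and mutually orthogonal; a counting argument over $m$ indices shows they span $\mathbb R^m$.

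Next comes the eigenvalue expansion. Test the eigenvalue equation against the dilation derivatives of the approximate solution, namely $Z_{i\epsilon}:=\lambda_{i\epsilon}\partial_{\lambda}PW_{\delta,\xi_{i\epsilon}}|_{\delta=\delta_{i\epsilon}}$. Equivalently, use the local Pohozaev-type identity obtained by multiplying \eqref{ele-1.1} by $(x-\xi_{i\epsilon})\cdot\nabla v_{l\epsilon}+\frac{N-2}{2}v_{l\epsilon}$, multiplying the eigen-equation by $(x-\xi_{i\epsilon})\cdot\nabla u_\epsilon+\frac{N-2}{2}u_\epsilon$, and subtracting on a ball $B_r(\xi_{i\epsilon})$. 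This yields
\[
(\mu_{l\epsilon}-1)\int_{B_r} p u_\epsilon^{p-1}v_{l\epsilon}\Big((x-\xi_{i\epsilon})\cdot\nabla u_\epsilon+\tfrac{N-2}{2}u_\epsilon\Big)
=\text{(boundary terms on }\partial B_r)+\epsilon\,(\text{$q$-terms}).
\]
After rescaling, the left side equals
\[
(\mu_{l\epsilon}-1)\,a_{l,i}\,\delta_{i\epsilon}^{-(N-2)/2}\Big(p\int W^{p-1}Z_0^2+o(1)\Big)
\]
up to normalization. The $q$-term is where the new contribution enters: the scaling identity $\int W^q\big(z\cdot\nabla W+\frac{N-2}{2}W\big)$-type integrals, combined with the linearized term $q\,\epsilon u^{q-1}v$, produce the factor $(p-q)(q+1-p)\mathcal B_0\lambda_i^{(N-2)(p-q)/2}a_{l,i}$ at order $\epsilon^{\alpha_0(N-2)}$. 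Here we use $\epsilon\,\delta^{(N-2)(p-q)/2}\sim\delta^{N-2}$, which is exactly the balance defining $\alpha_0$.

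The boundary terms require an exterior expansion of $v_{l\epsilon}$, analogous to Theorem~\ref{vepsilon}. Using Green's representation,
\[
v_{l\epsilon}(x)=\mu_{l\epsilon}\int G(x,y)(pu^{p-1}+\epsilon qu^{q-1})v_{l\epsilon}\,dy ,
\]
we get $\epsilon^{-\alpha_0(N-2)/2}v_{l\epsilon}\to c\sum_j\lambda_j^{(N-2)/2}a_{l,j}\,G(x,\xi_j)$, where the coefficient comes from $p\int W^{p-1}Z_0=-\frac{N-2}{2}\mathcal A_0$. Inserting this and $u_\epsilon\approx \mathcal A_0\sum_j\delta_j^{(N-2)/2}G(\cdot,\xi_j)$ into the standard bilinear boundary form evaluates to $\mathcal A_0^2(\frac{N-2}{2})^2$ times the entries of $D_tMD_t\tilde{\bm f}_l$: diagonal entries come from $\phi(\xi_i)$ and off-diagonal entries from $-G(\xi_i,\xi_j)$. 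Collecting the equations over $i=1,\dots,m$ gives
\[
H_q\tilde{\bm f}_l=\lim\frac{\mu_{l\epsilon}-1}{\epsilon^{\alpha_0(N-2)}}\;\mathcal C_{p,q}^{-1}\,\tilde{\bm f}_l .
\]
Since $\tilde{\bm f}_l\ne0$, this limit is an eigenvalue of $H_q$, and ordering together with orthogonality matches it to $\nu^3_l$. Positivity of $H_q$ follows as for $N_q$ in Lemma~\ref{positivede}: $H_q$ is a positive multiple of $D_tN_qD_t$, since $D_t^{-1}D_qD_t^{-1}=\mathrm{diag}(\lambda_i^{(N-2)(p-q-2)/2})$.

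We expect the main obstacle to be the exact bookkeeping of the $\epsilon u_\epsilon^q$ contributions and the error terms. We must show that the remainder $\omega_\epsilon$, the bubble interactions, and the non-leading parts of the $q$-integrals are all $o(\epsilon^{\alpha_0(N-2)})$. This forces pointwise bounds $|v_{l\epsilon}|\lesssim\sum_i\delta_i^{(N-2)/2}(\delta_i^2+|x-\xi_i|^2)^{-(N-2)/2}$, obtained by a Moser/Green iteration. It is also where the restriction $1<q<3$ for $N=4$ is needed to keep $\int u^{q-1}|v|W$-type terms integrable at the right order. We would attempt these estimates first, following the single-bubble analysis of \cite{YZ}.
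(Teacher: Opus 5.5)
Your overall route is the paper's: local limits via nondegeneracy, orthogonality to the translation cluster, the dilation identity \eqref{489} with boundary terms evaluated through the exterior Green expansion, then ordering. However, the central computation has a genuine gap. You never use the balance relation satisfied by the limiting parameters, and the two contributions you assert are each incorrect.

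Write $\tau_0=\frac{N-2}{2}$, $t_i=\lambda_i^{\tau_0}$ and $c^i=(\tilde{\bm f}_l)_i=t_ia_{l,i}$. Pair the monopole of each of $u_\epsilon$, $v_{l\epsilon}$ with the regular part of the other. The boundary term, normalized by $\epsilon^{-3\alpha_0\tau_0}$, then tends to
\[
\tau_0^2\mathcal A_0^2\,t_i\Big[c^i\sum_{j\neq i}G(\xi_i,\xi_j)t_j+\sum_{j\neq i}G(\xi_i,\xi_j)t_jc^j-2\phi(\xi_i)t_ic^i\Big]
=-\tau_0^2\mathcal A_0^2\big[(D_tMD_t\tilde{\bm f}_l)_i+t_i(Mt)_i\,c^i\big].
\]
This is not a multiple of $(D_tMD_t\tilde{\bm f}_l)_i$ alone. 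Its diagonal coefficient is $t_i\big(2\phi(\xi_i)t_i-\sum_{j\neq i}G(\xi_i,\xi_j)t_j\big)$, and the extra term would vanish only if $(Mt)_i=0$, which is false here.

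On the other side, the defect volume term $\tau_0(p-q)\epsilon\int_{B_i}u_\epsilon^qv_{l\epsilon}$ gives, through $\int W^qZ_0=\tau_0(p-q)\mathcal B_0$, exactly $\tau_0^2(p-q)^2\mathcal B_0\lambda_i^{\tau_0(p-q)}c^i$. It does not give $(p-q)(q+1-p)\mathcal B_0\lambda_i^{\tau_0(p-q)}c^i$; the piece $(\mu_{l\epsilon}-1)\epsilon q\int u_\epsilon^{q-1}\omega_{i\epsilon}v_{l\epsilon}$ is lower order.

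The missing input is \eqref{critical}, i.e.\ $\mathcal A_0^2t_i(Mt)_i=\mathcal B_0(p-q)\lambda_i^{\tau_0(p-q)}$. It follows from the scalar Pohozaev identity for $u_\epsilon$ on $B(\xi_{i\epsilon},r)$, or equivalently from $\partial_{\lambda_i}\Psi_{\Omega,q}=0$ at the limit. This relation turns the extra boundary term into $\tau_0^2\mathcal B_0(p-q)(D_q\tilde{\bm f}_l)_i$. Only the combination $(p-q)-(p-q)^2=(p-q)(q+1-p)$ then produces $H_q$; this is exactly Lemma~\ref{m00} combined with Lemma~\ref{limti}. As written, your two pieces are wrong and only happen to sum to the right matrix. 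The same relation is also what makes your positivity argument via Lemma~\ref{positivede} work.

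Your normalizations also need repair before the computation can close. With $\|v_{l\epsilon}\|_\infty=1$, the exterior expansion lives at order $\epsilon^{\alpha_0(N-2)}$, not $\epsilon^{\alpha_0(N-2)/2}$. Precisely, $\epsilon^{-\alpha_0(N-2)}v_{l\epsilon}\to\tau_0\mathcal A_0\sum_j t_jc^jG(\cdot,\xi_j)$, because $p\int W^{p-1}Z_0=+\tau_0\mathcal A_0$ (not $-\tau_0\mathcal A_0$). With your coefficients $\lambda_j^{(N-2)/2}a_{l,j}=c^j$, pairing against the coefficients $t_i$ of $u_\epsilon$ would produce the nonsymmetric $D_tM\tilde{\bm f}_l$. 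Likewise, the rescaled volume term carries $\delta_{i\epsilon}^{+(N-2)/2}$, not $\delta_{i\epsilon}^{-(N-2)/2}$, and tends to $-pL_0c^i$.

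Finally, orthogonality kills the $b_{l,i,k}$ only because the translation vectors $\tilde{\bm d}_j$, $m+1\le j\le(N+1)m$, form a basis of $\mathbb R^{Nm}$ (Theorem~\ref{remainder terms}). Nonvanishing of $\tilde{\bm f}_l$ does not come from orthogonality. It comes from the $L^\infty$ normalization together with the decay bound \eqref{V5}, which forces the maximum of $|v_{l\epsilon}|$ to lie within $O(\epsilon^{\alpha_0})$ of some center.
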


The preceding spectral estimates yield information on the Morse
index of $u_\varepsilon$. Recall that its Morse index and
augmented Morse index are defined by
\[
    \operatorname{ind}(u_\epsilon)
    :=\#\{l:~\mu_{l\epsilon}<1\},
    \qquad
    \operatorname{ind}_0(u_\epsilon)
    :=\#\{l:~\mu_{l\epsilon}\leq1\},
\]
respectively, where eigenvalues are counted with multiplicity.
For a real symmetric matrix $C$, we similarly denote by
$\operatorname{ind}(A)$ and $\operatorname{ind}_0(A)$ the numbers
of its negative and nonpositive eigenvalues, counted with
multiplicity. As a direct consequence of Theorem \ref{Figalli-1}, Theorem \ref{remainder terms} and Theorem \ref{thmprtb}, we can prove the following corollary.
\begin{cor}\label{emm-1}
Let the assumptions of Theorem \ref{Figalli-1} be satisfied. Then
$$m\leq m+ind(-B_q)\leq ind(u_{\epsilon})\leq ind_0(u_{\epsilon})\leq m+ind_0(-B_q)\leq (N+1)m$$
for sufficiently small $\epsilon>0$. Therefore if the matrix $B_q$ nondegenerate, then we have
$$ind(u_{\epsilon})=m+ind(-B_q)\in[m,(N+1)m].$$
\end{cor}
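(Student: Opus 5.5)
The plan is a counting argument on the first $K=(N+2)m$ eigenvalues, using the four spectral theorems above. Since $u_\epsilon$ solves \eqref{ele-1.1} and has the form \eqref{pusaik-1}, Theorem~\ref{THm1-2} applies with $\delta_{i\epsilon}=\lambda_{i\epsilon}\epsilon^{\alpha_0}$. It gives $\mu_{l\epsilon}\to(N+4)/N>1$ for $l\geq K+1$, so every eigenvalue with $\mu_{l\epsilon}\leq1$ has index $l\leq K$ once $\epsilon$ is small. I would then split $\{1,\dots,K\}$ into the three ranges of Theorems~\ref{Figalli-1}, \ref{remainder terms} and \ref{thmprtb} and count the eigenvalues in each range that lie below or at $1$.

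\emph{Amplitude and dilation ranges.} For $1\leq l\leq m$, Theorem~\ref{Figalli-1} gives $\mu_{l\epsilon}\to1/p<1$, so all $m$ of these eigenvalues are $<1$ for small $\epsilon$. For $(N+1)m+1\leq l\leq(N+2)m$, Theorem~\ref{thmprtb} gives $\mu_{l\epsilon}=1+\mathcal C_{p,q}\nu_l^3\epsilon^{\alpha_0(N-2)}+o(\epsilon^{\alpha_0(N-2)})$. Here $\mathcal C_{p,q}>0$, and $\nu_l^3>0$ because $H_q$ is positive definite (the dilation-matrix positivity proved in the paper, cf.\ Lemma~\ref{positivede}). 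Hence these $m$ eigenvalues are $>1$ for small $\epsilon$ and contribute nothing to either index.

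\emph{Translation range.} For $m+1\leq l\leq(N+1)m$, Theorem~\ref{remainder terms} gives $\mu_{l\epsilon}=1-\beta_N\nu_l^2\epsilon^{\alpha_0N}+o(\epsilon^{\alpha_0N})$ with $\beta_N>0$, where $\nu^2_l$ runs over all $Nm$ eigenvalues of $B_q$ with multiplicity (the eigenvectors $\tilde{\bm d}_l$ form a basis of $\mathbb R^{Nm}$). The sign cases are:
\begin{itemize}
\item If $\nu_l^2>0$, i.e.\ $-\nu_l^2$ is a negative eigenvalue of $-B_q$, then $\mu_{l\epsilon}<1$ for small $\epsilon$. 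There are exactly $\operatorname{ind}(-B_q)$ such $l$.
\item If $\nu_l^2<0$, then $\mu_{l\epsilon}>1$.
\item If $\nu_l^2=0$, nothing is known, so such an $l$ may or may not count.
\end{itemize}
Therefore the number of $l$ in this range with $\mu_{l\epsilon}<1$ is at least $\operatorname{ind}(-B_q)$. The number with $\mu_{l\epsilon}\leq1$ is at most $\#\{\nu_l^2\geq0\}=\operatorname{ind}_0(-B_q)$, which is itself at most $Nm$.

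Adding the three ranges gives
\[
m\leq m+\operatorname{ind}(-B_q)\leq\operatorname{ind}(u_\epsilon)\leq\operatorname{ind}_0(u_\epsilon)\leq m+\operatorname{ind}_0(-B_q)\leq(N+1)m.
\]
If $B_q$ is nondegenerate, then $\operatorname{ind}=\operatorname{ind}_0$ for $-B_q$, and all the inequalities in the middle collapse to $\operatorname{ind}(u_\epsilon)=m+\operatorname{ind}(-B_q)$.

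The main point needing care is that the expansions hold for all small $\epsilon$ and not only along subsequences. The eigenvalue expansions are stated as $\epsilon\to0$, with only the coefficient vectors subsequential, and the $\nu^2_l,\nu^3_l$ are eigenvalues of matrices evaluated at the limit configuration $(\lambda,\xi)$, so they are fixed numbers. The signs used above are therefore fixed. If uniformity were in doubt, I would argue by contradiction along a sequence $\epsilon_n\to0$ violating the bound and apply the theorems along it.
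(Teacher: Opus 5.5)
Your argument is correct and is the counting argument the paper intends when it calls the corollary a direct consequence of Theorems \ref{THm1-2}, \ref{Figalli-1}, \ref{remainder terms} and \ref{thmprtb}: the amplitude eigenvalues lie below $1$, the dilation eigenvalues lie above $1$ because $H_q$ is positive definite, and the translation cluster is sorted by the signs of the eigenvalues of $B_q$. Two small corrections: indices $l\geq K+1$ are excluded by $\mu_{(K+1)\epsilon}\to(N+4)/N>1$ together with the nondecreasing ordering (it is not true that every $\mu_{l\epsilon}$ with $l\geq K+1$ tends to $(N+4)/N$), and positive definiteness of $H_q$ is Lemma \ref{ckl}, which is equivalent to Lemma \ref{positivede} via $H_q=\mathcal A_0^2\tau_0^2D_tN_qD_t$.
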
	

Before starting the proof of our main theorems, we would like to mention about related results obtained for the slightly subcritical problem
 \begin{equation}\label{SC}
 -\Delta u
=u^{p-\epsilon}
 ,~~~u>0,~~~\text{in}~~~\Omega,~~~
 		u=0,~~~\text{on}~~~\partial\Omega,
 \end{equation}
 The studies on the asymptotic behavior of radial solutions of \eqref{SC} were initiated by Atkinson and Peletier in \cite{ATKINSON-1986} by using ODE technique in the unit ball of $\mathbb{R}^3$. Later, Brezis and Peletier \cite{BP} used the method of PDE to obtain the same results as that in \cite{ATKINSON-1986} for the spherical domains.
 Finally, the same kind of results hold for nonspherical domain, which was settled by Han in \cite{HANZCHAO} (independently by Rey in \cite{Rey-1989}). Later that Wei in \cite{JW0} further locate the blow-up point $x_0$ and to give a precise asymptotic expansion of the least energy solutions to problem \eqref{SC}. Li, Wei and Zou~\cite{LiWZ} further established local uniqueness and nondegeneracy of concentrating solutions to the slightly
subcritical problem under suitable assumptions, and obtained
uniqueness and counting results on certain classes of domains.
 For multi-bubble solutions, Bahri-Li-Rey in \cite{B-L-R}
studied the concentration configurations and their relation
to the Morse index through a finite-dimensional function
involving the Green and Robin functions. Musso and
Pistoia~\cite{Musso-Pistoia-2002} constructed multi-bubble solutions
under suitable stable critical point assumptions on the
reduced function. Furthermore, Grossi and Pacella in \cite{GP05} deduced the asymptotic behavior of the eigenvalues $\lambda$ for the linearized problem
 \begin{equation}\label{SC-1}
 -\Delta v
=(p-\epsilon)\lambda u_{\epsilon}^{p-1-\epsilon}v
 ,~~~\text{in}~~\Omega,~~~
 		v=0,~~~\text{on}~~\partial\Omega,
 \end{equation}
and the Morse index of $u_{\epsilon}$, as well as their result in \cite{CKL-1} was extended later to the linearized problem \eqref{SC-1} at $u_{\epsilon}$ if asymptotic forms of the solutions are written as
\begin{equation*}
u_{\epsilon}=\sum_{i=1}^mPW_{\lambda_{i\epsilon}\epsilon^{1/(N-2)},\xi_{i\epsilon}}+\omega_{\epsilon},
\end{equation*}
 where $\alpha_{i\epsilon}\rightarrow1$, $\lambda_{i\epsilon}\rightarrow\lambda_{i}$ and $\xi_{i\epsilon}\rightarrow \xi_{i}$ as $\epsilon\rightarrow0$.
 In \cite{GG09}, Gladiali and Grossi studied the asymptotic behavior of the eigenvalues and Morse index of one-bubble solution for the linearized Gelfand problem.
Later that, the corresponding problem of multi-bubble solutions was proved in \cite{GGO-1}, and further the qualitative properties of the first $m$ eigenfunctions was established by Gladiali et al. in \cite{GGO}.
For qualitative analysis to an eigenvalue problem, we also refer to \cite{LTX}.

Theorem \ref{THm1-2} determines the limiting spectral clusters from the
separated multi-bubble decomposition. To obtain the refined
eigenvalue and eigenfunction asymptotics, we also use
problem~\eqref{ele-1.1}, together with quantitative estimates for the
solutions and their eigenfunctions. In particular, terms that
vanish in the limiting weighted form may still contribute to
the leading-order eigenvalue corrections.

We conclude this introduction by describing the main difficulties in
extending the approach of Choi--Kim--Lee~\cite{CKL-1} to problem \eqref{ele-1.1}, together with the new ingredients needed to overcome them. These
difficulties arise at the several key steps of the Choi--Kim--Lee's argument, namely the analysis corresponding to Proposition~4.4 and the derivation of the
sharp exterior expansion~(1.16).

\medskip
\noindent \textbf{(1) The nonhomogeneous scaling equation.}
The counterpart of Proposition~4.4 in Choi--Kim--Lee~\cite{CKL-1}
requires a substantial modification. The main obstruction is that the
scaling mode used in \cite{CKL-1} solves the homogeneous linearized equation,
whereas in the present problem
\[
    L_\varepsilon \omega_{i\varepsilon}
    =
    \frac{2(p-q)}{p-1}\,
    \varepsilon u_\varepsilon^q,\quad \mbox{where}\quad L_\varepsilon=-\Delta -pu_{\epsilon}^{p-1}-\epsilon qu_{\epsilon}^{q-1},
\]
See also Lemma \ref{nabla}. Consequently, the corresponding local identity contains an additional
defect volume term. Despite the small factor $\epsilon$, this term
contributes at the same leading order as the boundary terms and hence
cannot be absorbed into the error, so the argument of \cite{CKL-1} does not
apply directly. To handle this defect term, we establish the refined estimates and identities in Lemmas~\ref{m00}-\ref{limti}. In particular, the resulting leading-order
coefficients depend explicitly on $q$, which leads to a new matrix
$H_q$ in place of the matrix arising in \cite{CKL-1}. We then prove that $H_q$
is strictly positive definite in Lemma \ref{ckl}, which provides the
coercivity needed to complete the analogue of Proposition~4.4.

\medskip
\noindent \textbf{(2) The sharp exterior expansion.}
 A further difficulty arises in deriving the sharp exterior expansion
corresponding to (1.16) in \cite{CKL-1}. The argument in \cite{CKL-1} relies essentially on the decay estimate of Lemma~2.8 to control the Taylor remainder
in the Green representation and on the cancellation mechanism of
Lemma~5.5 to determine the relevant zeroth-order mass. Both
mechanisms are affected by the additional lower-order perturbation
in the present problem. Indeed, the Green representation contains the additional defect term $\epsilon q u_\epsilon^{q-1}v_{l\varepsilon}$,
whose decay, in the low-dimensional borderline regime, is insufficient
for the direct Choi-Kim-Lee's argument to produce the required
$o(\epsilon^{\alpha_0(N-1)})$ remainder. Instead of estimating this term
separately, we retain its dependence on the undetermined the
coefficients through
\[
    \mathfrak h_\epsilon=\epsilon^{\alpha_0}
             +\frac{\max_i|k_\epsilon|}{\epsilon^{\alpha_0(N-2)}}.
\]
Corollary~\ref{lem:q-potential} and Lemma~\ref{lem:green} then yield a preliminary exterior expansion
with remainder $o(\epsilon^{\alpha_0(N-2)}\mathfrak h_\epsilon)$,
without assuming $\mathfrak h_\epsilon=O(\epsilon^{\alpha_0})$.
To conclude this estimate, one has to overcome a second obstruction:
the Brezis-Nirenberg perturbation leaves an additional signed volume term in the
bilinear Pohozaev identity, which cannot be eliminated by the
cancellation used in \cite{CKL-1}. The signed relation established
in Corollary~\ref{lem:moment}, combined with the bilinear identity (5.8) in \cite{CKL-1} and the identity~\eqref{deference}, yields
\[N_q\kappa_1
    =
    \mathcal R_\varepsilon(\kappa_2)
    +
    o(\epsilon^{\alpha_0(N-2)} \mathfrak h_\epsilon).
    \tag{5.18}
\]
Since $N_q>0$ by~Lemma \ref{positivede}, we obtain
\[
    \kappa_1=O(\epsilon^{\alpha_0(N-1)}),
    \qquad
    \mathfrak h_\epsilon=O(\epsilon^{\alpha_0}).
\]
Substituting this estimate back into the preliminary expansion improves
the remainder to $o(\epsilon^{\alpha_0(N-1)})$, and hence gives the sharp
exterior expansion~\eqref{exterior}.

\medskip
\noindent \textbf{(3) Sharp eigenfunction decay.}
We also refine the eigenfunction decay estimate corresponding to
Lemma~2.8 in~\cite{CKL-1}. The additional lower-order
term in the present Brezis-Nirenberg problem requires a modification of their comparison argument. More precisely, the potential in the rescaled eigenvalue
equation satisfies
\begin{equation}\label{sigamaq}
    c_\varepsilon(x)
    \leq C d_\varepsilon(x)^{-\sigma_q},
    \qquad
    \sigma_q:=\min\{4,(N-2)q\}>2,
\end{equation}
outside fixed neighborhoods of the rescaled concentration points,
where $d_\varepsilon(x)$ denotes the distance to these points.
In particular, when $N=4$ and $1<q<3/2$, we have $\sigma_q=2q<3$,
so this estimate does not provide the $d_\epsilon^{-3}$ bound
used in the argument of \cite{CKL-1}.
To accommodate this slower decay, we adapt the comparison iteration
to the above decay estimate, choosing increments in the decay exponent
smaller than $\sigma_q-2$ and starting directly from the $L^\infty$
normalization. This first yields decay with exponents below $N-2$.
Since $|x|^{2-N}$ is harmonic away from the origin, the pure-power
iteration does not directly yield the endpoint estimate. We therefore
introduce a corrected barrier to complete the comparison and obtain \eqref{V5}.
Here $C_l$ is independent of $\epsilon$ for each fixed $l$.
The exponent $N-2$ is optimal in general, as shown by the first
eigenfunction.

The preceding analysis provides the key ingredients for the fine
spectral asymptotics in Theorem \ref{remainder terms} and Theorem \ref{thmprtb}. The translation
eigenvalue asymptotics follow from the local Green identities
combined with the sharp exterior expansion. For the dilation
eigenvalues, Lemma~\ref{lem:dilationcluster} identifies the limiting dilation modes, and retaining the additional volume term in the scaling identity
leads to the same matrix $H_q$ already obtained in the
scale-mode analysis. The relation between the two limiting matrices
is described in Remark~\ref{r9-3} in terms of the scale Hessian and its
Schur complement.

Theorem \ref{THm1-2} also applies to the borderline case $N=4$ and $q=1$. The refined asymptotic expansions in this case, however, requires changes to both the concentration scale and several key estimates used in the spectral analysis.
The main differences from the parameter range considered here
are summarized in Table~\ref{tab:parameter-regimes}.

\subsection{Structure of the paper.}
	The paper is organized as follows.

In section \ref{section2}, we analyze the existence of multi-bubble solutions, and obtain some decay estimates on rescaled functions of $u_{\epsilon}$.

In section \ref{section-3}, we study the behavior of solutions $u_{\epsilon}$ outside the blow-up point $\{\xi_1,\cdots,\xi_m\}$, and we obtain a sharp decay estimate on the eigenvalue function $v_{l\epsilon}$ which will be an indispensable tool throughout the paper.

In Section \ref{sec3}, we establish the limiting spectrum under a general
separated multi-bubble decomposition, prove strong energy decompositions of the eigenfunctions,
and derive coercivity estimates under suitable orthogonality conditions.

In section \ref{asymptotic} we are devoted to considering the eigenvalue $\mu_{l\epsilon}$ for $1\leq l\leq m$, and concluding the proof of Theorem \ref{Figalli-1}.

In section \ref{section4}, we first give the estimates for the eigenvalues $\mu_{l\epsilon}$ for $m+1\leq l\leq(N+1)m$ and asymptotic behaviour of the eigenfunctions. In additional, we the refined estimates and identities in Lemmas~\ref{m00}-\ref{limti}.
Based on these results and combined with the positive definiteness of the matrix $H_q$, we complete the proof of Theorem \ref{existence-1}.

In section \ref{section-6}, We introduce the $\ast$-norm and the critical mass $\mathfrak h_\epsilon$ , and establish several important results to overcome the difficulties arising from the perturbation term,
so as to prove Theorem \ref{vepsilon}.

In section \ref{section-7}, we apply the Green's identity to prove Theorem \ref{remainder terms}.

In section \ref{section-8}, we are devoted to proving Theorem \ref{thmprtb}.

In apprienx \ref{sharp}, we prove the decay estimate of the eigenfunctions and illustrate the optimality of the common decay exponent.

\begin{table}[!htbp]
    \begingroup
    \small
    \centering
    \caption{Comparison of the two parameter regimes.}
    \label{tab:parameter-regimes}
    \setlength{\tabcolsep}{4pt}
    \renewcommand{\arraystretch}{1.20}
    \renewcommand{\tabularxcolumn}[1]{m{#1}}
    \begin{tabularx}{\linewidth}{|
        >{\raggedright\arraybackslash}m{0.16\linewidth}|
        >{\centering\arraybackslash}X|
        >{\centering\arraybackslash}X|}
        \hline
        Cases
        & $1<q<3$ if $N=4$, $1\leq q<p$ if $N\geq5$,
        & $N=4$, $q=1$ \\
        \hline

         Concentration rate
        & \vspace{2pt}
          $\begin{gathered}
            N-2-\tau_0(p-q)>0,\\
            \epsilon^{\alpha_0};\hspace{2mm}
        \epsilon\epsilon^{\tau_0(p-q)}=\epsilon^{N-2}
          \end{gathered}$
          \par\smallskip
          (See \eqref{miu}-\eqref{beta1})
        & \vspace{2pt}
          $\begin{gathered}
            N-2=\tau_0(p-q)=2,\\
            \epsilon\log(\delta_{1,\epsilon}^{-1})
            \longrightarrow c_*>0
          \end{gathered}$
          \par\smallskip
          (See \cite{P-R-V})\par\vspace{3pt}
        \\
        \hline

        Decay estimates
        & \vspace{2pt}
          $\begin{gathered}
            \sigma_q=\min\{4,(N-2)q\}>2\hspace{2mm}\mbox{in}\hspace{2mm}\eqref{sigamaq},\\
            0<\theta<\sigma_q-2
          \end{gathered}$
          \par\smallskip
          (See Appendix \ref{sharp})
        & \vspace{2pt}
          $\begin{gathered}
            \sigma_1=2,\\
            (0,\sigma_1-2)=\varnothing
          \end{gathered}$
          \par\smallskip
          No positive gain from this bound.
          \par\vspace{2pt}
        \\
        \hline

        Integral estimates
        & \vspace{2pt}
          $\begin{gathered}
            \mathcal{B}_0<\infty,\hspace{2mm}
            \displaystyle\int_{\mathbb R^N}W^qZ_0=\tau_0(p-q) \mathcal{B}_0
          \end{gathered}$
          \par\smallskip
          (See Lemma \eqref{UqZ0})
        & \vspace{2pt}
          $\begin{gathered}
            \displaystyle\int_{B_R}W^2\approx\log R;
            \displaystyle\int_{B_R}WZ_0\approx\log R,
          \end{gathered}$
          \par\smallskip
          $\mathcal{B}_0=+\infty$\hspace{2mm}\mbox{as}\hspace{2mm}$R\to\infty$
          \par\smallskip
          (See Lemma \ref{limti})
          \par\vspace{2pt}
        \\
        \hline

        Positive definiteness of the matrixs
        & \vspace{2pt}
          $\begin{gathered}
            H_q\succ0,~~
            N_q\succ0
          \end{gathered}$
          \par\smallskip
          (See Lemma \ref{ckl},~Lemma \ref{positivede})
        & \vspace{2pt}
          $\begin{gathered}
    \mathcal K_0:=D_r(M-\Lambda_1I)D_r,\\
    \mathcal K_0\succeq0,\\
    \ker\mathcal K_0=\operatorname{span}\{\mathbf1\}.
\end{gathered}$
          \par\vspace{2pt}
        \\
        \hline
    \end{tabularx}

    \smallskip
\begin{minipage}{\linewidth}
    \footnotesize
    Here $c_*>0$ is the constant in the exponential concentration
    rate. In the last row, $M=M(\xi)$ is evaluated at the limiting
    configuration, and $\Lambda_1$ is its simple smallest eigenvalue.
    The corresponding positive eigenvector
    $r=(r_1,\ldots,r_m)^T$ is normalized by $r_1=1$.
    For the solutions constructed in~\cite{P-R-V}, the parametrization
    in Section~2.2 and the limiting system in Section~3.1 imply
    $\delta_{i,\varepsilon}/\delta_{1,\varepsilon}\to r_i$
    along a family whose concentration points converge to $\xi$.
    We write $D_r=\operatorname{diag}(r_1,\ldots,r_m)$ and
    $\mathbf1=(1,\ldots,1)^T$.
    The identity $\ker\mathcal K_0=\operatorname{span}\{\mathbf1\}$
follows from $Mr=\Lambda_1r$ and the simplicity of $\Lambda_1$
\end{minipage}
    \endgroup
\end{table}

\FloatBarrier

\subsection{Notations.}
We list some notational conventions which will be used throughout the paper.

\medskip
\noindent
- $\omega_N$ is the area of the unit sphere in $\mathbb{R}^N$, and $a\approx b$ means that $a\lesssim b$ and $\gtrsim b$.

\medskip
\noindent
- We write $\|\cdot\|_{\infty}$ instead of the norm of $L^{\infty}(\Omega)$.

\medskip
\noindent
- We denote by $\mathbf{1}_E$ the indicator function of a measurable set $E$.

\medskip
\noindent
- For $x\in\Omega$, we denote the distance from $x$ to $\partial\Omega$ by $\operatorname{dist}(x,\partial\Omega)$.

\medskip
\noindent
- $B(\xi_i,r)=\{x\in\mathbb{R}^{N}:~|x-\xi_i|<r\}$ for each $\xi_i\in\mathbb{R}^N$ and $r>0$, for $i=1,\cdots m$.

\medskip
\noindent
- The surface measure is denoted as $dS_{x}$ where $x$ is the variable of the integrand.

\medskip
\noindent
- $\mathcal{D}^{1,2}(\mathbb{R}^N)$ denotes the closure of $C^\infty_c(\mathbb{R}^N)$ with respect to the norm $\|u\|_{\mathcal{D}^{1,2}(\mathbb{R}^N)}=\|\nabla u\|_{L^2}$.

\medskip
\noindent
- For a real symmetric matrix $A$, the notation $A\succ0$
(respectively, $A\succeq0$) means that $A$ is positive definite
(respectively, positive semidefinite).

\medskip
\noindent
- We will use big $O$ and small $o$ notations to describe the limit behavior of a certain quantity as $\epsilon\rightarrow0$.

\medskip
\noindent
- $C$ and $c$ are indiscriminately used to denote various absolutely positive constants.

\section{Preliminaries}\label{section2}
In this section, we are devoted to establishing some results necessary for our analysis in the sequel.

 \subsection{Existence of multi-bubble solutions}
In this section, in spirit of \cite{Micheletti}, we prove the existence of multi-bubble solutions for \eqref{ele-1.1},
\begin{proof}[Proof of Theorem \ref{existence-0}]
In the ranges $1<q<3$ if $N=4$, and $1\leq q<p$ if $N\geq5$, we have $\mathcal{B}_0<\infty$ and
$0<p-q<2$, so the scale $\lambda_{i\epsilon}\epsilon^{\alpha_0}$ is well defined.
For \textup{(i)}, choose $w\equiv1$ and $a_i=1$ for every $i$
in the reduction of \cite[Section~2]{Micheletti}. Its reduced functional
\cite[(3.6)]{Micheletti} is precisely \eqref{pusaik-0}. By
\cite[Propositions~2.7 and~3.3, Theorem~3.4]{Micheletti}, the odd extension
of \eqref{ele-1.1} has a solution with the decomposition in
\eqref{pusaik-1}. The parameters lie near $z_*=(\lambda^{\ast},\xi^{\ast})$, and every
accumulation point is a critical point of $\Phi_{\Omega,q}$.
Since $z_*$ is isolated, $(\lambda_\epsilon,\xi_\epsilon)\to z_*$.

We verify that this solution is positive. Write
$u_\epsilon=S_\epsilon+\omega_\varepsilon$, where
$S_\epsilon=\sum_i P
U_{\lambda_{i\epsilon}\epsilon^{\alpha_0},\xi_{i\epsilon}}>0$,
and set $v_\epsilon=u_\epsilon^-$. Then
$0\le v_\epsilon\le |\omega_\epsilon|$, so
$\|v_\epsilon\|_{2^*}=o(1)$. Testing the equation against
$-v_\epsilon$ gives
\[
\|\nabla v_\epsilon\|_2^2
\le C_\Omega\bigl(
 \|v_\epsilon\|_{2^*}^{p-1}
 +\epsilon\|v_\epsilon\|_{2^*}^{q-1}
 \bigr)\|\nabla v_\epsilon\|_2^2.
\]
The factor in parentheses tends to zero, including when $q=1$.
Thus $v_\epsilon=0$ for small $\epsilon$, and the strong
maximum principle yields $u_\epsilon>0$.

For \textup{(ii)}, let $\Omega_0=\bigcup_{i=1}^m B_i$, and denote
the center and radius of $B_i$ by $c_i$ and $R_i$. When
$\xi_i\in B_i$, the interactions between different balls vanish.
Moreover,
\[
\phi_{B_i}(x)
=\frac{1}{(N-2)|S^{N-1}|}\left(\frac{R_i}{R_i^2-|x-c_i|^2}\right)^{N-2},
\qquad
(\lambda_i^0)^{N-2-\frac{(N-2)(p-q)}{2}}
=\frac{\mathcal{B}_0(p-q)}{\mathcal{A}_0^2\,\phi_{B_i}(c_i)}.
\]
Since $0<p-q<2$, these formulas show that
$z_0=(\lambda^0,c_1,\ldots,c_m)$ is a nondegenerate local minimum
of $\Phi_{\Omega_0,q}$.
Connect the balls by thin tubes as in
\cite[proof of Theorem~1.4]{Micheletti}. The Green function convergence in
\cite[Lemma~3.5]{Micheletti} improves to local $C^2$ convergence: the
nonnegative difference $G_{\Omega_\delta}-G_{\Omega_0}$ is harmonic
on each $B_i\times B_j$ after cancellation of the diagonal
singularity, so Harnack's inequality and interior estimates apply.
Consequently, $\Phi_{\Omega_\eta,q}\to\Phi_{\Omega_0,q}$ in $C^2$
near $z_0$. For small $\delta$, a nondegenerate local minimum
$z_\delta\to z_0$ therefore persists. Fix such an $\delta$ and apply
\textup{(i)} on $\Omega_\delta$ gives positive $m$-bubble solutions whose
parameters converge to $z_\delta$ as $\varepsilon\to0$.
\end{proof}

\subsection{Local profiles and pointwise estimates}
For any $i\in\{1,\cdots,m\}$,
we set
\begin{equation}\label{miu}
\alpha_0=\frac{2}{N-6+q(N-2)}\quad \mbox{and}\quad\tau_{0}=\frac{N-2}{2},
\end{equation}
which implies that
\begin{equation}\label{beta1}
\epsilon\cdot(\epsilon^{\alpha_0\tau_{0}(p-q)})=(\epsilon^{2\alpha_0\tau_0}),\quad 2\tau_0-\tau_{0}(p-q)>0, \quad 2\tau_0(q+1)>N.
\end{equation}
Moreover, we have the following decomposition hold
\begin{equation}\label{lmiu}
u_{\epsilon}=W_{\lambda_{i\epsilon}\epsilon^{\alpha_0},\xi_{i\epsilon}}+\big(PW_{\lambda_{i\epsilon}\epsilon^{\alpha_0},\xi_{i\epsilon}}-W_{\lambda_{i\epsilon}\epsilon^{\alpha_0},\xi_{i\epsilon}}\big)+\sum_{i\neq j}PW_{\lambda_{i\epsilon}\epsilon^{\alpha_0},\xi_{i\epsilon}}+\omega_{\epsilon}.
\end{equation}
Then we rescale the solution $u_{\epsilon}$ by
\begin{equation}\label{miu-1}
U_{i\epsilon}(x)=(\lambda_{i\epsilon}\epsilon^{\alpha_0})^{\tau_0}u_{\epsilon}\left(\lambda_{i\epsilon}\epsilon^{\alpha_0}x+\xi_{i\epsilon}\right)\hspace{2mm}\mbox{for}\hspace{2mm} x\in\Omega_{i\epsilon}:=\lambda_{i\epsilon}^{-1}\epsilon^{-\alpha_0}(\Omega-\xi_{i\epsilon}).
\end{equation}	
We first state the following lemma which is useful in our analysis.
\begin{lem}\label{Lem6.1}
There exist a positive constant $C$ and a small constant $\sigma_0>0$ such that for $\epsilon>0$ sufficiently small,  there holds that
\begin{equation*}
\big\|U_{i\epsilon}\big\|_{L^{\infty}\big(B^{N}(0,\epsilon^{-\alpha_0}\sigma_0)\big)}\leq C,\quad
U_{i\epsilon}\rightharpoonup W\hspace{2mm}\mbox{weakly in}\hspace{2mm} \mathcal{D}^{1,2}(\mathbb{R}^N)\hspace{2mm}\mbox{as}\hspace{2mm}\epsilon\rightarrow0.
\end{equation*}
Moreover,
\begin{equation}\label{tidu}
\big|\nabla^{j}u_{\epsilon}\big|\leq C\sum_{i=1}^{m}\frac{\epsilon^{\frac{(N-2)\alpha_0}{2}}}{(\epsilon^{\alpha_0}+|x-\xi_{i\epsilon}|)^{N-2+j}},\hspace{4mm}j=0,1.
\end{equation}
The constants may depend on the fixed exponent $q$.
\end{lem}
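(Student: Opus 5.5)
To prove Lemma~\ref{Lem6.1}, I would first establish the weak convergence, then the uniform $L^\infty$ bound near each concentration point, and finally the global pointwise bound \eqref{tidu} by a Green representation argument.

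\textbf{Weak convergence.} By \eqref{lmiu} and the scaling invariance of the $\mathcal D^{1,2}$ norm, the rescaled function is
\[
U_{i\epsilon}=W+\big(\text{rescaled }(PW_{i}-W_{i})\big)+\sum_{j\neq i}\big(\text{rescaled }PW_j\big)+\tilde\omega_\epsilon ,
\]
where $\|\tilde\omega_\epsilon\|_{\mathcal D^{1,2}}=\|\omega_\epsilon\|_{H_0^1}\to0$. The remaining pieces tend to zero weakly for the following reasons.
\begin{enumerate}
\item The term $PW_i-W_i$ is harmonic, and $0\le W_i-PW_i\le C\delta_{i\epsilon}^{(N-2)/2}$. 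After rescaling it is bounded by $C\epsilon^{\alpha_0(N-2)}$ on compacts, and its energy is $O(\delta^{N-2})$.
\item For $j\neq i$, the rescaled $PW_j$ concentrates at the point $(\xi_j-\xi_i)/\delta_{i\epsilon}\to\infty$ with comparable scale. Its $\mathcal D^{1,2}$ norm stays bounded, so it converges weakly to zero.
\end{enumerate}

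\textbf{The $L^\infty$ bound.} This is the main obstacle. Weak convergence alone gives no uniform bound, so I would use a Moser iteration / $\varepsilon$-regularity argument on $U_{i\epsilon}$, which satisfies
\[
-\Delta U=U^{p}+\epsilon\,\delta^{\,2-\tau_0(q-1)}U^{q}
\]
with $\delta=\lambda_{i\epsilon}\epsilon^{\alpha_0}$. By \eqref{beta1} the coefficient is $\le C\delta^{2-\tau_0(p-q)}\cdot\delta^{\tau_0(p-q)}\epsilon=O(1)$, and in fact it tends to zero.

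Write the potential as $a_\epsilon=U^{p-1}+\text{(small)}\,U^{q-1}$. On $B(0,\epsilon^{-\alpha_0}\sigma_0)$, the $L^{N/2}$ norm of $a_\epsilon$ on every unit ball is small away from a fixed ball $B_R$. This holds because the energy of $U_{i\epsilon}$ outside $B_R$ (and away from the other bubbles, which lie outside $B(0,\epsilon^{-\alpha_0}\sigma_0)$ for $\sigma_0$ small) is $\le\eta(R)+o(1)$. Inside $B_R$, the strong $H^1$ convergence $U_{i\epsilon}-W\to0$ in $L^{2^*}$ gives the same smallness, since $W^{p-1}$ is bounded there.

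Consequently, the standard Brezis–Kato / Moser iteration yields a uniform $L^\infty$ bound on unit balls. I must check that the argument also covers $q=1$, where the term is linear, and the case $q<2$, where $U^{q-1}$ is handled through $U^q\le U+U^p$.

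\textbf{Pointwise bound \eqref{tidu}.} I would use the Green representation
\[
u_\epsilon(x)=\int_\Omega G(x,y)\big(u_\epsilon^p+\epsilon u_\epsilon^q\big)(y)\,dy .
\]
First, the $L^\infty$ bound gives $u_\epsilon\le C\delta^{-\tau_0}$ near each $\xi_{i\epsilon}$. Next, I would obtain decay of $U_{i\epsilon}$ like $|x|^{2-N}$ by a comparison/Kelvin-transform argument: the Kelvin transform of $U_{i\epsilon}$ satisfies an equation of the same type with small $L^{N/2}$ potential near the origin, so it is bounded. This gives $u_\epsilon\le C\sum_i\delta^{\tau_0}(\delta+|x-\xi_{i\epsilon}|)^{2-N}$, and in the region away from all the points a maximum principle comparison covers the rest.

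The gradient bound then follows by differentiating the Green representation and using $|\nabla_xG|\le C|x-y|^{1-N}$. Inserting the zeroth-order bound, the convolution estimate
\[
\int\frac{\delta^{(N+2)/2}\,dy}{|x-y|^{N-1}(\delta+|y-\xi|)^{N+2}}\lesssim\frac{\delta^{\tau_0}}{(\delta+|x-\xi|)^{N-1}}
\]
holds. The $\epsilon u^q$ contribution is of lower order by \eqref{beta1} and $2\tau_0(q+1)>N$; this is where the restrictions on $q$ enter.
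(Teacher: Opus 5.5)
Your weak-convergence argument and your $\varepsilon$-regularity (Brezis--Kato/Moser on unit balls) proof of the $L^\infty$ bound are sound. The latter is more self-contained than the paper, which reads the $L^\infty$ bound off a global pointwise bubble estimate imported from Gao--Ma ($q>1$) and K\"onig--Laurain ($q=1$). Your gradient step is the same Green-representation/convolution argument the paper uses for $q>1$. (Minor slip: the rescaled coefficient is $\kappa_\epsilon=\epsilon\delta^{\tau_0(p-q)}=\lambda_{i\epsilon}^{\tau_0(p-q)}\epsilon^{\alpha_0(N-2)}$ with $\delta=\lambda_{i\epsilon}\epsilon^{\alpha_0}$. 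The product you display is garbled, though $\kappa_\epsilon\to0$ is correct.) The gap is the zeroth-order case $j=0$ of \eqref{tidu}. This is the substantive part of the lemma, and it is exactly what the paper cites rather than proves.

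\textbf{Why the Kelvin-transform step fails.} $U_{i\epsilon}$ solves its equation only on $\Omega_{i\epsilon}$, a region of size $\sim\delta^{-1}$. So $U^*(x)=|x|^{2-N}U_{i\epsilon}(x/|x|^2)$ exists only for $|x|\gtrsim\delta$: there is no equation at the origin and no removable singularity. Smallness of the $L^{N/2}$ norm of the potential on $\{\delta/c_0<|x|<1/R\}$ bounds $U^*$ uniformly only if $U^*$ is controlled on the inner sphere $|x|=\delta/c_0$. That is, you would already need to know $u_\epsilon=O(\delta^{(N-2)/2})$ on $\partial B(\xi_{i\epsilon},c_0)$. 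Local data cannot supply this. Nothing local rules out a harmonic component of size $K_\epsilon\delta^{(N-2)/2}$ with $K_\epsilon\to\infty$ slowly. Such a component is consistent with the equation near $\xi_{i\epsilon}$, the $L^\infty$ bound and all your energy-smallness conditions, yet it destroys the $|y|^{2-N}$ decay at $|y|\sim\delta^{-1}$. Scale-by-scale $\varepsilon$-regularity only yields the weak bound $U_{i\epsilon}(y)\le\eta(R)|y|^{-(N-2)/2}$. You then obtain the exterior estimate by a maximum principle that uses the near-point decay as boundary data, so the argument is circular.

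\textbf{How to close it.} You need a genuinely global step. One option, in order:
\begin{enumerate}
\item Use the weak bound: the potential is then $\le C\eta^{p-1}|y|^{-2}$ with a small constant.
\item Compare on $R\le|y|\le c_0/\delta$ with $A|y|^{-(N-2-\theta)}+M(c_0/(\delta|y|))^{\theta'}$, where $M$ is the unknown outer boundary value.
\item Insert this into the Green representation as in Lemma~\ref{outside-2}. This gives $\mathcal A_\epsilon\le C(\delta^{(N-2)/2}+\mathcal A_\epsilon^p+\epsilon\mathcal A_\epsilon^q)$ for $\mathcal A_\epsilon=\max_{\Sigma_r}u_\epsilon$, which tends to $0$ by Lemma~\ref{outside-1}. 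Hence $M=O(\delta^{N-2})$.
\item Finish with a barrier iteration as in Appendix~\ref{sharp} to reach the exponent $N-2$.
\end{enumerate}
Alternatively, run the moving-sphere Proposition~\ref{prop:main}, which needs only local inputs, followed by Lemma~\ref{outside-2}. Or cite the global bounds, as the paper does.
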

\begin{proof}
Writing $\kappa_\epsilon=\epsilon(\lambda_{i\epsilon}\epsilon^{\alpha_0})^{\tau_0(p-q)}$, we have
\begin{equation}\label{eq:rescaling}
-\Delta U_{i\epsilon}=U_{i\epsilon}^p+\kappa_{\epsilon} U_{i\epsilon}^q.
\end{equation}
The rescaled bubble decomposition, as in the proof of
\cite[Lemma~2.2]{CKL-1}, gives
$U_{i\varepsilon}\rightharpoonup W$ in
$\mathcal D^{1,2}(\mathbb R^N)$.

The global bubble bound follows from
\cite[Theorem~1.1(2)(e)]{GM} when $q>1$, with their exponent
replaced by $q+1$, and from
\cite[Proposition~2.1(vi)]{K-L-2024} when $q=1$.
After matching the bubble parameters and using scale comparability,
these information give the estimate $\nabla^{0}u$.
The asserted $L^\infty$ bound for $U_{i,\varepsilon}$ follows
immediately from this estimate and the separation of the centers.

For $q=1$, the gradient estimate follows from
\cite[Proposition~B.2, (B.4)]{K-L-2024}, together with the core and
exterior $C^1$ estimates in \cite[Proposition~2.1(iv)--(v)]{K-L-2024}.
For $q>1$, we define
\begin{equation}\label{UU}
    \Omega_\varepsilon=\alpha^{-\alpha_0}\Omega,\qquad
    \widehat U_\epsilon(x)
    =\epsilon^{\alpha\tau_0} u_\varepsilon(\epsilon^{\alpha_0}x),
    \qquad
    X_{i\epsilon}=\epsilon^{-\alpha_0}\xi_{i\varepsilon}.
\end{equation}
Then
\[
    -\Delta\widehat U_\epsilon
    =\widehat U_\epsilon^p
     +\epsilon^{\alpha_{0}(N-2)}\widehat U_\epsilon^q.
\]
Since
$1+|x-X_{i\varepsilon}|\leq C/\epsilon^{\alpha_0}$ on $\Omega_\varepsilon$,
the preceding pointwise bound implies
\[
\begin{aligned}
    \widehat U_\epsilon^p
    +\epsilon^{\alpha_0(N-2)}\widehat U_\epsilon^q
    &\leq C\sum_i
    \left((1+|x-X_{i\varepsilon}|)^{-N-2}
          +\epsilon^{\alpha_0(N-2)}(1+|x-X_{i\varepsilon}|)^{-q(N-2)}\right)\\
    &\leq C\sum_i(1+|x-X_{i\varepsilon}|)^{-N-\gamma}.
\end{aligned}
\]
where $\gamma=\min\{2,(N-2)q-2\}>0$.
Using the Dirichlet Green representation and the standard estimate
$|\nabla_xG_{\Omega_\varepsilon}(x,y)|
\leq C|x-y|^{1-N}$, we obtain
\[
\begin{aligned}
    |\nabla_x\widehat U_\epsilon(x)|
    &\leq C\sum_i\int_{\mathbb R^N}
    \frac{(1+|y-x_{i\epsilon}|)^{-N-\gamma}}
         {|x-y|^{N-1}}\,dy\\
    &\leq C\sum_i
    (1+|x-X_{i\epsilon}|)^{1-N}.
\end{aligned}
\]
The last inequality follows by splitting the integral into
neighborhoods of $x$ and $X_{i\varepsilon}$ and their complement.
Returning to the original variables yields estimate of $\nabla^{1}u$,
which completes the proof.
\end{proof}

\subsection{A moving sphere argument}\label{moving}
We close the section by stating and proving the following local bubble bound by applying the moving sphere argument (refer to \cite{Li-Zhang,CKL-1}), which
is very useful in the sequel.
\begin{Prop}\label{prop:main}
Let $N\ge4$, let $\Omega\subset\mathbb{R}^N$ be a smooth bounded domain, and fix $1\le q<p$. Let $\lambda_{i\epsilon}\epsilon^{\alpha_0}\rightarrow0$
and
$\overline{B(\xi_{i\epsilon},\sigma_0)}\subset\Omega$ for some fixed $\sigma_0>0$. Assume that
\begin{equation}\label{eq:inputs}
\begin{gathered}
\sup_{i,\epsilon}\|U_{i\epsilon}\|_{L^\infty(B_{r_0/(\lambda_{i\epsilon}\epsilon^{\alpha_0})})}\le M,
\qquad U_{i\epsilon}\to W\quad\text{in }C^2_{\mathrm{loc}}(\mathbb{R}^N)
\end{gathered}
\end{equation}
for some $M>0$. Then there exist $C>0$ and $\varrho_0\in(0,\sigma_0)$, independent of sufficiently small $\epsilon$, such that
\begin{equation}\label{eq:bound}
U_{i\epsilon}(z)\le CW(z)\quad\mbox{for}\hspace{3mm}|z|\le \varrho_0/(\lambda_{i\epsilon}\epsilon^{\alpha_0}),
\qquad i=1,\ldots,m.
\end{equation}
Equivalently, $u_\epsilon\le CW_{\lambda_{i\epsilon}\epsilon^{\alpha_0},\xi_{i\epsilon}}$ in $B(\xi_{i\epsilon},\varrho_0)$.
\end{Prop}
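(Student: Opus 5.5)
We follow the moving sphere scheme of Li--Zhang \cite{Li-Zhang} as adapted in \cite[Section~2]{CKL-1}, working in the rescaled variables. Fix $i$. By \eqref{eq:rescaling}, $U:=U_{i\epsilon}$ solves $-\Delta U=U^p+\kappa_\epsilon U^q$ in $B_{R_\epsilon}$, where $R_\epsilon=\varrho/(\lambda_{i\epsilon}\epsilon^{\alpha_0})$ and $\kappa_\epsilon=\epsilon(\lambda_{i\epsilon}\epsilon^{\alpha_0})^{\tau_0(p-q)}\to0$. Since $W(z)\approx(1+|z|)^{2-N}$ and $U\le M$, the bound \eqref{eq:bound} on $|z|\le1$ is trivial. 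It therefore suffices to show $U(z)\le C|z|^{2-N}$ for $1\le|z|\le R_\epsilon$, and this would follow from a uniform \emph{Kelvin-reflection inequality} centred at $0$ (or at nearby points). For $x\in\mathbb R^N$ and $\lambda>0$ set
\[
U_{x,\lambda}(z)=\Big(\frac{\lambda}{|z-x|}\Big)^{N-2}U\Big(x+\frac{\lambda^2(z-x)}{|z-x|^2}\Big).
\]
The goal is to prove that for $|x|\le1$ and $0<\lambda\le\lambda_0$ ($\lambda_0$ fixed, small) we have $U_{x,\lambda}\le U$ on $\{\lambda\le|z-x|\le R_\epsilon/2\}$. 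Taking $x=0$ and $\lambda=\lambda_0$ then gives $U(z)\ge c|z|^{2-N}$, which is only a lower bound. For the upper bound, the standard trick in \cite{Li-Zhang} is needed: one shows the moving-sphere inequality from \emph{all} centres $x$ with $|x|\le 1$ up to a fixed radius $\lambda_0$. A calculus lemma (\cite[Lemma~2.3]{Li-Zhang}-type) then yields that the spherical averages of $U$ are controlled, and hence $U(z)\lesssim |z|^{2-N}\max_{B_1}U$ via Harnack on annuli. Equivalently, one can conclude through the comparison $U\le C\,\overline{U}$, with $\overline U$ obtained from the spherical average estimate $\bar U(r)\le C r^{2-N}$.

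The key steps, in order, are as follows. \emph{(a) Start.} For each $x$ there is a tiny $\lambda$ with $U_{x,\lambda}\le U$ near the sphere, using $U>0$, $C^1$ bounds and positivity of $\partial_r(r^{\frac{N-2}{2}}U)$ for small $r$. \emph{(b) Equation of the difference.} The reflected function satisfies
\[
-\Delta U_{x,\lambda}=U_{x,\lambda}^p+\kappa_\epsilon\big(\lambda/|z-x|\big)^{(N-2)(p-q)}U_{x,\lambda}^q .
\]
Since $q<p$ and $|z-x|>\lambda$, the perturbation coefficient is at most $\kappa_\epsilon$. Hence $w=U-U_{x,\lambda}$ satisfies $-\Delta w\ge c(z)w$ wherever $w<0$, with $c\ge0$ and the correct sign structure. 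This is exactly where $q<p$ is used, and where $q\ge1$ makes $t\mapsto t^q$ monotone. \emph{(c) Boundary control.} On the outer sphere $|z-x|=R_\epsilon/2$ we need $U_{x,\lambda}\le U$. For this we use $U\ge c\,R_\epsilon^{2-N}$ there; this follows from superharmonicity of $u_\epsilon$ and the Dirichlet Green function lower bound on $B(\xi_{i\epsilon},\sigma_0)$, combined with $U_{x,\lambda}\le\lambda^{N-2}M|z-x|^{2-N}$. Choosing $\lambda_0$ small in terms of $M$ and the Green constant handles this. \emph{(d) Continuation.} Let $\bar\lambda$ be the supremum of admissible $\lambda$. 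If $\bar\lambda<\lambda_0$, the strong maximum principle together with the Hopf lemma on the inner sphere lets us push further. Compactness here comes from the $C^2_{\rm loc}$ convergence $U\to W$. Because $W_{x,\lambda}<W$ strictly for $\lambda<\sqrt{1+|x|^2}$ (recall the exact Kelvin symmetry of $W$ at $\lambda^2=1+|x|^2$), the limit profile leaves room for this.

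The main obstacle is step (d) in the far region, uniformly in $\epsilon$. A touching point could escape to $|z|\sim R_\epsilon$, where $C^2_{\rm loc}$ convergence gives no information. We would handle this with the Green representation on $B(\xi_{i\epsilon},\sigma_0)$. Writing $w=U-U_{x,\lambda}$ as a superharmonic-type function, the quantity $w(z)|z-x|^{N-2}$ is bounded below by a positive constant times $(1-(\lambda/|x'|)^{N-2})$-type factors uniformly, exactly as in \cite[Lemma~2.5]{CKL-1}. The additional $\kappa_\epsilon U^q$ term is nonnegative and only helps the sign. Finally, equivalence with $u_\epsilon\le CW_{\lambda_{i\epsilon}\epsilon^{\alpha_0},\xi_{i\epsilon}}$ is just undoing the scaling \eqref{miu-1}.
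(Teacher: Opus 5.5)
There is a genuine gap in how you obtain the upper bound. In (a)--(d) you move spheres only up to a small fixed $\lambda_0$, and the outer comparison comes from the a priori lower bound $U\ge cR_\epsilon^{2-N}$.

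\begin{itemize}
\item For $|z-x|\ge\lambda$, the reflected value $U_{x,\lambda}(z)$ depends only on $U$ in $\overline{B_\lambda(x)}\subset B_{1+\lambda_0}$. So the inequalities $U_{x,\lambda}\le U$ only bound far values of $U$ from \emph{below} by near values.
\item They survive if $U$ is increased away from $B_{1+\lambda_0}$, and, as you note yourself, $W$ satisfies them. So there is nothing to contradict.
\item No calculus lemma turns them into $\bar U(r)\le Cr^{2-N}$. The Li--Zhang lemma needs the inequality for all radii. In Li--Zhang's Harnack argument that range is reachable only because the outer boundary condition comes from the \emph{negation} of the estimate being proved.
\end{itemize}

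That is the missing idea, and it is what the paper does, with spheres centred at $0$ only:
\begin{itemize}
\item Suppose $\min_{|z|=r}U>(1+\zeta)W(r)$ for some $r\le c_\zeta R_\epsilon$.
\item Since $U^\lambda\approx\lambda^{N-2}W(r)$ on $\{|z|=r\}$ for large $r$, the outer comparison stays strict for $\lambda$ up to some $\lambda_+>1$.
\item Using \eqref{eq:lower} and your difference-quotient inequality (b), which is fine, the sphere moves from some $\lambda_-<1$ to $\lambda_+$.
\item The limit then satisfies $W\ge W^{\lambda_+}$ outside $B_{\lambda_+}$, which contradicts the Kelvin symmetry of $W$ at $\lambda=1$.
\end{itemize}
The output is \eqref{eq:minimum}, which bounds only the spherical \emph{minimum}.

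Your closing step ``via Harnack on annuli'' is also unsupported. A Harnack constant independent of $r$ and $\epsilon$ needs a uniform bound on the scaled potential $r^2(U^{p-1}+\kappa_\epsilon U^{q-1})$ on $\{r/2<|z|<2r\}$. The hypothesis $U\le M$ only gives $r^2M^{p-1}$.

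The paper derives this control from \eqref{eq:minimum}:
\begin{itemize}
\item The Green function lower bound at a minimum point on $\partial B_{L_\epsilon}$ yields the mass bound \eqref{eq:mass}.
\item Subtracting the core mass given by local convergence yields the small tail \eqref{eq:tail}.
\item A Moser-type small-energy argument for $V(z)=r^{(N-2)/2}U(rz)$ on dyadic annuli then bounds $V$ and its potential uniformly.
\item Only then does Harnack give $\max_{|z|=r}U\le C\min_{|z|=r}U\le CW(r)$.
\end{itemize}
Without the contradiction-driven moving sphere and this small-energy step, your outline does not reach \eqref{eq:bound}.
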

\begin{proof}
Taking $R_\epsilon=r_0/(\lambda_{i\epsilon}\epsilon^{\alpha_0})$, then $U_{i\epsilon}$ solve \eqref{eq:rescaling} in $B_{R_\epsilon}$.
We note that $\omega_N A=[N(N-2)]^{(N-2)/4}$,
and since the boundary values of $U_{i\epsilon}$ are positive and $\kappa_\epsilon U_{i\epsilon}^q\ge0$. Thus,
as the argument of (A.10) in \cite{CKL-1}, for every $\eta>0$, we obtain
\begin{equation}\label{eq:lower}
U_{i\epsilon}(x)\ge(1-\eta)[N(N-2)]^{(N-2)/4}|x|^{2-N}
\quad \mbox{for}\hspace{2mm}T\le|x|\le cR_\epsilon,
\end{equation}
with fixed $T>1$ and $c\in(0,1)$ depending on $\eta$.

Now the Kelvin transform $U_{i\epsilon}^{\lambda}$ of $U_{i\epsilon}$ defined by
$U_{i\epsilon}^\lambda(x)=(\lambda/|x|)^{N-2}
 U_{i\epsilon(\lambda^2x/|x|^2}),$
satisfies
\begin{equation}
-\Delta U_{i\epsilon}^\lambda=(U_{i\epsilon}^\lambda)^p+
\kappa_\epsilon\left(\frac{\lambda}{|x|}\right)^{2\tau_0(p-q)}(U_{i\epsilon}^\lambda)^q.
\end{equation}
Thus, for $|x|\ge\lambda$, $\omega_{i\epsilon}^\lambda:=U_{i\epsilon}-U_{i\epsilon}^\lambda$ satisfies
$
-\Delta\omega_{i\epsilon}^\lambda\ge c_{\epsilon,\lambda}(x)\omega_{i\epsilon}^\lambda,
$
where $c_{\epsilon,\lambda}\geq0$ is the difference quotient of $t^p+\kappa_\epsilon t^q$.
This is the inequality used in \cite[Lemma~A.2]{CKL-1}. Together with \eqref{eq:lower} and local $C^1$ convergence, it gives
\begin{equation}\label{eq:minimum}
\min_{|x|=r}w_{i\epsilon}(x)\le(1+\zeta)W(r)
\quad \mbox{for}\quad 0<r\le c_\zeta R_\epsilon
\end{equation}
for every fixed $\zeta>0$ and some $c_\zeta\in(0,1)$.
Indeed, the same contradiction argument starts the sphere at $\lambda_-<1$; a failure of \eqref{eq:minimum} keeps the outer boundary comparison strict, so the continuation argument in that lemma moves it to $\lambda_+>1$. The limit would satisfy $W\ge W^{\lambda_+}$ outside $B_{\lambda_+}$, contradicting the explicit formula for $W$.

We next pass from the spherical minimum to a pointwise bound, following the Green-function step of \cite[Lemma~2.4]{Li-Zhang}. We record the estimates needed for a uniform Harnack inequality. Fix a small $\delta_0>0$, set $L_\epsilon=cR_\epsilon$ with $c\le c_h$, and choose a minimum point $x_\epsilon$ on $\partial B_{L_\epsilon}$. By \eqref{eq:minimum},
\[
U_{i\epsilon}(x_\epsilon)\le(1+\delta_0)[N(N-2)]^{(N-2)/4}L_\epsilon^{2-N}.
\]
After taking $\theta,c>0$ sufficiently small, we have
\[
G_{R_\epsilon}(x_\epsilon,y)\ge(1-\delta_0)\omega_NL_\epsilon^{2-N}
\quad\mbox{for}\quad |y|\le\theta L_\epsilon.
\]
Consequently,
\begin{equation}\label{eq:mass}
\int_{B_{\theta S_\epsilon}}U_{i\epsilon}^p\le\frac{1+\delta_0}{1-\delta_0}\mathcal{A}_0.
\end{equation}
For a sufficiently large fixed $T_1$, local convergence gives
$\int_{B_{T_1}}U_{i\epsilon}^p\ge(1-2\delta_0)\mathcal{A}_0$ for all small $\epsilon$.
Subtracting this from \eqref{eq:mass}, and using $U_{i\epsilon}^{p+1}\le M U_{i\epsilon}^p$, shows that for every $\rho>0$, we can choose fixed $T_1>1$ and $d=\theta c>0$ such that
\begin{equation}\label{eq:tail}
\int_{B_{dR_\epsilon}\setminus B_{T_1}}U_\epsilon^{p+1}\le\rho.
\end{equation}

For $4T_1\le r\le dR_\epsilon/4$, let $V(z)=r^{\frac{N-2}{2}} U_{i\epsilon}(rz)$ on
$\mathcal{D}_0=\{1/4<|z|<4\}$. Then \eqref{eq:tail} gives
$\int_{\mathcal D_0}V^{p+1}\le\omega$.
The rescaled BN equation is
\begin{equation}\label{eq:annulus}
-\Delta V=V^p+b_{\epsilon,r}V^q,\qquad
0\le b_{\epsilon,r}=\kappa_\epsilon r^{\tau_0(p-q)}
=\epsilon(\lambda_{i\epsilon}\epsilon^{\alpha_0} r)^{\tau_0(p-q)}\le\epsilon r_0^{\tau_0(p-q)}.
\end{equation}
For $0<\epsilon\le1$, taking $K_2=\sigma_0^{\tau_0(p-q)}$. Since $1\le q<p$, its linear potential satisfies
\begin{equation}\label{eq:potential}
0\le c_V:=V^{p-1}+b_{\epsilon,r}V^{q-1}
\le K_2+(1+K_2)V^{p-1}.
\end{equation}
To apply Harnack's inequality, first take a cutoff $\chi$ in $\mathcal D_0$, equal to one on $\mathcal D_1=\{1/3<|z|<3\}$, and test the equation with $\chi^2V^{p}$. Sobolev and H\"older inequalities give
\[
\|\chi V^{\frac{p+1}{2}}\|_{L^{p+1}}^2
\le C\omega+C(1+K)\omega^{\frac{2}{N}}
\|\chi V^{\frac{(p+1)}{2}}\|_{L^{p+1}}^2.
\]
Choosing $\rho$ small and absorbing the last term bounds $V$ in $L^s(\mathcal D_1)$, where
\[
s=\frac{2N^2}{(N-2)^2},\qquad
\frac{s}{p-1}=\frac{N^2}{2(N-2)}>\frac N2.
\]
Thus \eqref{eq:potential} bounds $c_V$ in $L^\nu$ for some $\nu>N/2$. Local boundedness for $-\Delta V=c_VV$ gives a uniform bound for $V$, and hence for $c_V$, on $\{1/2<|z|<2\}$. Harnack's inequality along the unit sphere now yields
\[
\max_{|x|=r}U_{i\epsilon}(x)\le C\min_{|x|=r}U_{i\epsilon}(x)\le CW(r),
\]
where the last inequality follows from \eqref{eq:minimum}.
Local convergence handles $|x|\le4T_1$.
Hence \eqref{eq:bound} holds with $\varrho_0=\sigma_0d/4$ for the fixed index. Taking the minimum of these radii and the maximum of the constants over the finitely many indices completes the proof.
\end{proof}

\section{Limiting spectrum and energy decomposition}\label{sec3}
In this section, we assume the hypotheses of
Theorem~\ref{THm1-2}. We establish the spectral limits and a strong energy decomposition, and then derive coercivity estimates under suitable orthogonality
conditions. We next recall some well-know results.
Set
\[
 W_{i\epsilon}:=W_{\delta_{i\epsilon},\xi_{i\epsilon}},
 \qquad f_\epsilon(t)=t^p+\epsilon t^q.
\]
For a positive $g\in L^{N/2}(\Omega)$, the embedding
$H_0^1(\Omega)\hookrightarrow L^2(\Omega,g\,dx)$ is compact by
\cite[Proposition~A.1]{FigalliGlaudo2020}, applied to zero extensions
of the functions and a positive $L^{N/2}$ extension of $g$.
Consequently, the operator defined by
$(K_gv,z)_{H_0^1}=\int_\Omega gvz$ is compact, positive, self-adjoint
and injective. The compact spectral theorem
\cite[Theorem~6.11]{Brezis2011} and the min--max principle,
applied with $g=f_\epsilon'(u_\epsilon)$, give
\begin{equation}\label{bn:eq-weighted-minmax}
 \mu_{l\epsilon}
 =\inf_{\substack{S\subset H_0^1(\Omega)\\\dim S=l}}
   \sup_{0\ne v\in S}
   \frac{\int_\Omega|\nabla v|^2}
        {\int_\Omega f_\epsilon'(u_\epsilon)v^2}.
\end{equation}
Choose a complete energy-orthonormal eigenbasis
$\{\widehat v_{l\epsilon}\}_{l\ge1}$. Then
\begin{equation}\label{bn:eq-eigenbasis-normalization}
 (\widehat v_{l\epsilon},\widehat v_{k\epsilon})_{H_0^1}
 =\delta_{lk},\qquad
 \int_\Omega f_\epsilon'(u_\epsilon)
       \widehat v_{l\epsilon}\widehat v_{k\epsilon}
 =\frac{\delta_{lk}}{\mu_{l\epsilon}}.
\end{equation}
For the solution families considered later, this is the eigenbasis
used in the introduction, with energy rather than $L^\infty$ normalization.

Recall that the weighted eigenvalue problem
$-\Delta\phi=\mu pW^{p-1}\phi$ in $\mathcal{D}^{1,2}(\mathbb R^N)$
has eigenvalues $\mu_s^\star$ with multiplicities $h_s$.
For the eigenvalue formula, with the normalization
$pW^{p-1}=N(N+2)(1+|y|^2)^{-2}$, see
\cite[proof of Corollary~6 and Appendix~B]{DolbeaultJankowiak2014}.
The multiplicities and completeness follow from the corresponding
results for spherical harmonics; see
\cite[Proposition~5.8 and Theorem~5.12]{AxlerBourdonRamey2001}.
The first two eigenspaces are
\begin{equation}\label{bn:eq-first-model-spaces}
 \operatorname{span}\{W\},\qquad
 \operatorname{span}\{\partial_1W,\ldots,\partial_NW,Z_0\},
 \qquad Z_0=-y\cdot\nabla W-\frac{N-2}{2}W,
\end{equation}
at $1/p$ and $1$, respectively; see
\cite[Lemma~A1]{BE91}.
On $\mathscr D=(D^{1,2}(\mathbb R^N))^m$, the energy and weight forms
are
\[
 \sum_{i=1}^m\int_{\mathbb R^N}\nabla\phi_i\cdot\nabla\psi_i,
 \qquad
 p\sum_{i=1}^m\int_{\mathbb R^N}W^{p-1}\phi_i\psi_i.
\]
The ordered spectrum of this direct sum is $(\mu_l^0)_{l\ge1}$,
with multiplicity $mh_s$ at $\mu_s^\star$.

For $\phi\in D^{1,2}(\mathbb R^N)$, define the energy-preserving map
\[
 (T_{i\epsilon}\phi)(x)
 =\delta_{i\epsilon}^{-\tau_0}
   \phi\left(\frac{x-\xi_{i\epsilon}}{\delta_{i\epsilon}}\right).
\]
Choose $\chi_i\in C_c^\infty(\Omega)$, equal to one near $\xi_i$,
with pairwise disjoint supports.
By the Sobolev and H\"older inequalities,
\begin{equation}\label{bn:eq-critical-form-bound}
 \left|\int gvz\right|
 \le C\|g\|_{N/2}\|\nabla v\|_2\|\nabla z\|_2,
 \qquad g\in L^{N/2}.
\end{equation}
This estimate holds for $v,z\in H_0^1(\Omega)$ and for
$v,z\in \mathcal{D}^{1,2}(\mathbb R^N)$, with the integrals and norms
taken over the corresponding domain.
We first reduce the potential to a sum of single-bubble weights
and establish the corresponding convergence of the weighted forms.
\begin{lem}
\label{bn:lem-potential-decomposition}
Under the assumptions of Theorem~\ref{THm1-2},
\begin{equation}\label{bn:eq-potential-error}
 \mathcal R_\epsilon
 :=f_\epsilon'(u_\epsilon)-p\sum_{i=1}^mW_{i\epsilon}^{p-1}
 \longrightarrow0\quad\text{in }L^{N/2}(\Omega).
\end{equation}
In particular, $\|f_\epsilon'(u_\epsilon)\|_{N/2}$ is uniformly
bounded and $\mu_{l\epsilon}\ge c_*>0$, uniformly in $l$ and small
$\epsilon$. Moreover, if $v_\epsilon,z_\epsilon$ are bounded
in $H_0^1(\Omega)$ and, along a common subsequence,
\[
 T_{i\epsilon}^{-1}v_\epsilon\rightharpoonup v_i,\qquad
 T_{i\epsilon}^{-1}z_\epsilon\rightharpoonup z_i
 \quad\text{in }\mathcal{D}^{1,2}(\mathbb R^N),
\]
then
\begin{equation}\label{bn:eq-weighted-capture}
 \int_\Omega f_\epsilon'(u_\epsilon)v_\epsilon z_\epsilon
 \longrightarrow
 p\sum_{i=1}^m\int_{\mathbb R^N}W^{p-1}v_i z_i.
\end{equation}
\end{lem}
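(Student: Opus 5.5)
We first prove the $L^{N/2}$ convergence \eqref{bn:eq-potential-error} by splitting $f_\epsilon'(u_\epsilon)=pu_\epsilon^{p-1}+\epsilon q u_\epsilon^{q-1}$. For the lower-order term we use $q-1<p-1$ and H\"older on the bounded domain:
\[
\|\epsilon u_\epsilon^{q-1}\|_{N/2}\le C\epsilon\bigl(1+\|u_\epsilon\|_{2^*}^{p-1}\bigr)\to0,
\]
since $u_\epsilon$ is bounded in $H_0^1$. If $q=1$ the term is the constant $\epsilon$, and the bound is trivial. For the critical part, set $S_\epsilon=\sum_i W_{i\epsilon}$. The difference $u_\epsilon-S_\epsilon$ equals $\omega_\epsilon+\sum_i(PW_{i\epsilon}-W_{i\epsilon})$. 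We have $\|\omega_\epsilon\|_{H^1_0}\to0$, and $0\le W_{i\epsilon}-PW_{i\epsilon}\le C\delta_{i\epsilon}^{\tau_0}$ on $\Omega$ by the maximum principle, since the boundary values of $W_{i\epsilon}$ are $O(\delta_{i\epsilon}^{\tau_0})$ because $\xi_i$ is interior. Hence $u_\epsilon-S_\epsilon\to0$ in $L^{2^*}(\Omega)$. The elementary inequality $|a^{p-1}-b^{p-1}|\le C|a-b|^{p-1}$ when $p-1\le1$, or $C|a-b|(a^{p-2}+b^{p-2})$ when $p-1>1$, combined with H\"older at exponent $N/2=2^*/(p-1)$, gives $\|u_\epsilon^{p-1}-S_\epsilon^{p-1}\|_{N/2}\to0$. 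Here positivity a.e.\ of $u_\epsilon$ ensures $u_\epsilon^{p-1}$ is the correct expression. It remains to show $\|S_\epsilon^{p-1}-\sum_iW_{i\epsilon}^{p-1}\|_{N/2}\to0$. We split $\Omega$ into the disjoint balls $B(\xi_i,r)$ and their complement. On $B(\xi_i,r)$ the other bubbles are $O(\delta_{j\epsilon}^{\tau_0})$ uniformly, so the same elementary inequality gives a vanishing contribution. Outside all balls every $W_{j\epsilon}$ is uniformly $O(\delta_{j\epsilon}^{\tau_0})\to0$. Together these prove \eqref{bn:eq-potential-error}.

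The uniform bound on $\|f_\epsilon'(u_\epsilon)\|_{N/2}$ then follows from the scale invariance $\|W_{i\epsilon}^{p-1}\|_{L^{N/2}(\mathbb R^N)}=\|W^{p-1}\|_{N/2}$. Combining this with \eqref{bn:eq-critical-form-bound} gives $\int f_\epsilon'(u_\epsilon)v^2\le C\|\nabla v\|_2^2$, so the Rayleigh quotient in \eqref{bn:eq-weighted-minmax} is at least $1/C=:c_*$ for every $l$.

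For \eqref{bn:eq-weighted-capture}, we first use \eqref{bn:eq-critical-form-bound} with $g=\mathcal R_\epsilon$ to reduce to
\[
\int_\Omega pW_{i\epsilon}^{p-1}v_\epsilon z_\epsilon=p\int_{\Omega_{i\epsilon}} W^{p-1}\,(T_{i\epsilon}^{-1}v_\epsilon)(T_{i\epsilon}^{-1}z_\epsilon),
\]
which follows by changing variables and extending $v_\epsilon,z_\epsilon$ by zero. Since $T_{i\epsilon}$ preserves the $\mathcal D^{1,2}$ norm, the rescaled functions $\tilde v_\epsilon=T_{i\epsilon}^{-1}v_\epsilon$ and $\tilde z_\epsilon=T_{i\epsilon}^{-1}z_\epsilon$ are bounded in $\mathcal D^{1,2}(\mathbb R^N)$ and converge weakly to $v_i$ and $z_i$. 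The main point is that the bilinear form $(\phi,\psi)\mapsto\int W^{p-1}\phi\psi$ is weakly sequentially continuous on bounded sets of $\mathcal D^{1,2}(\mathbb R^N)$. We prove this by truncation. For $R$ large, the region $|y|>R$ contributes at most $C\|W^{p-1}\|_{L^{N/2}(|y|>R)}$ by \eqref{bn:eq-critical-form-bound}, and this tends to $0$ as $R\to\infty$ uniformly in $\epsilon$. On $B_R$, Rellich gives $\tilde v_\epsilon\to v_i$ and $\tilde z_\epsilon\to z_i$ strongly in $L^2(B_R)$, and $W^{p-1}$ is bounded there, so that part of the integral converges. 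Summing over $i$ gives \eqref{bn:eq-weighted-capture}.

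We expect the main obstacle to be the exponent bookkeeping when $p-1<1$ (i.e.\ $N>6$), together with keeping every estimate uniform in possibly incomparable scales $\delta_{i\epsilon}$. Both are handled by the H\"older-continuity form of the elementary inequality and by the fact that only $\delta_{i\epsilon}\to0$ and separation of the centers are used.
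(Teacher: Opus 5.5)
Your proof is correct and follows the paper's argument in all essential steps. These are the maximum principle for $W_{i\epsilon}-PW_{i\epsilon}$, the $L^{2^*}\to L^{N/2}$ continuity of $a\mapsto a^{p-1}$, H\"older for $\epsilon q u_\epsilon^{q-1}$, scaling plus min--max for $c_*$, and weighted compactness of the rescaled sequences for \eqref{bn:eq-weighted-capture}.

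The only differences are local and harmless. To pass from $(\sum_iW_{i\epsilon})^{p-1}$ to $\sum_iW_{i\epsilon}^{p-1}$, you use fixed balls and the uniform $O(\delta_{j\epsilon}^{\tau_0})$ bound on the off-center bubbles, whereas the paper truncates each bubble at scale $R\delta_{i\epsilon}$. That bound also disposes of the terms $\sum_{j\ne i}W_{j\epsilon}^{p-1}$ on $B(\xi_i,r)$, which you did not mention explicitly. You also prove the compact embedding $\mathcal D^{1,2}(\mathbb R^N)\hookrightarrow L^2(\mathbb R^N,W^{p-1}dy)$ directly, via a tail bound and Rellich, instead of citing Figalli--Glaudo.
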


\begin{proof}
Since the centers remain away from $\partial\Omega$, the maximum
principle for $W_{i\epsilon}-PW_{i\epsilon}$ gives
$0\le W_{i\epsilon}-PW_{i\epsilon}\le C\delta_{i\epsilon}^{\tau_0}$.
The assumed decomposition and Sobolev's inequality imply
\[
 \sup_\epsilon\|u_\epsilon\|_{2^*}<\infty,\qquad
 \left\|u_\epsilon-\sum_iW_{i\epsilon}\right\|_{p+1}\to0.
\]
The power map $a\mapsto a^{p-1}$ is continuous from the nonnegative
cone of $L^{p+1}$ to $L^{N/2}$, uniformly on bounded sets. Indeed, for
$a,b\ge0$,
\[
 \|a^{p-1}-b^{p-1}\|_{N/2}
 \le
 \begin{cases}
 \|a-b\|_{p+1}^{p-1},&p-1\le1,\\
 C(\|a\|_{p+1}+\|b\|_{p+1})^{p-2}\|a-b\|_{p+1},&p-1>1.
 \end{cases}
\]
Fix $R>1$ and truncate $W_{i\epsilon}$ to
$B(\xi_{i\epsilon},R\delta_{i\epsilon})$. For small
$\epsilon$ these supports are disjoint, so the power of their sum
is the sum of their powers. Each discarded $L^{2^*}$ tail is bounded
by $\|W\mathbf1_{\{|y|>R\}}\|_{p+1}$. The preceding continuity
estimate, first with $\epsilon\to0$ and then $R\to\infty$, yields
\[
 \left\|u_\epsilon^{p-1}-\sum_iW_{i\epsilon}^{p-1}\right\|_{N/2}
 \to0.
\]
For $q>1$,
\[
 \|\epsilon q u_\epsilon^{q-1}\|_{N/2}
 \le\epsilon q|\Omega|^{2/N-(q-1)/(p+1)}
       \|u_\epsilon\|_{p+1}^{q-1}\to0;
\]
for $q=1$ the norm is $\epsilon|\Omega|^{2/N}$. This proves
\eqref{bn:eq-potential-error}. By scaling \eqref{bn:eq-weighted-minmax} and \eqref{bn:eq-critical-form-bound} give the uniform bounds.

For the last assertion, apply the compact embedding
$\mathcal{D}^{1,2}(\mathbb R^N)\hookrightarrow L^2(\mathbb R^N,pW^{p-1}\,dy)$
from \cite[Proposition~A.1]{FigalliGlaudo2020} to the rescaled
sequences. Their weak energy convergence becomes strong convergence
in this weighted space. Changing variables in each bubble term and
using \eqref{bn:eq-potential-error}, Sobolev and H\"{o}lder's inequalities prove \eqref{bn:eq-weighted-capture}.
\end{proof}

We next consider eigenfunctions whose eigenvalues remain bounded
as $\epsilon\to0$. The convergence of the weighted forms
in Lemma~\ref{bn:lem-potential-decomposition} allows us to identify their limiting profiles
and establish a strong energy decomposition.
\begin{Prop}
\label{bn:prop-strong-profile}
Let $J$ be fixed and suppose that
$\mu_{l\epsilon}$ is bounded for $1\le l\le J$. From any sequence
$\epsilon\to0$ one can extract a common subsequence such that
$\mu_{l\epsilon}\to\nu_l>0$ and
\[
 \widehat V_{li\epsilon}
 :=T_{i\epsilon}^{-1}\widehat v_{l\epsilon}
 \rightharpoonup\widehat V_{li}
 \quad\text{in }D^{1,2}(\mathbb R^N).
\]
For $1\le l,k\le J$, these profiles satisfy
\begin{align}
 -\Delta\widehat V_{li}
 &=\nu_l pW^{p-1}\widehat V_{li}
       \quad\text{in }\mathbb R^N,
       \label{bn:eq-limit-profile-eqn}\\
 p\sum_i\int_{\mathbb R^N}W^{p-1}\widehat V_{li}\widehat V_{ki}
 &=\frac{\delta_{lk}}{\nu_l},
       \label{bn:eq-profile-weighted-orthogonality}\\
 \sum_i\int_{\mathbb R^N}\nabla\widehat V_{li}\cdot\nabla\widehat V_{ki}
 &=\delta_{lk},
       \label{bn:eq-profile-energy-orthogonality}
\end{align}
and
\begin{equation}\label{bn:eq-strong-energy-decomposition}
 \left\|\widehat v_{l\epsilon}
       -\sum_i\chi_iT_{i\epsilon}\widehat V_{li}\right\|_{H_0^1(\Omega)}
 \longrightarrow0\qquad(1\le l\le J).
\end{equation}
\end{Prop}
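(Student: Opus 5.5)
The plan is to combine compactness of the rescaled eigenfunctions with Lemma~\ref{bn:lem-potential-decomposition}. The hardest step will be the strong energy decomposition \eqref{bn:eq-strong-energy-decomposition}.

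\textbf{Extraction of limits.} Since $\|\widehat v_{l\epsilon}\|_{H_0^1}=1$ and the maps $T_{i\epsilon}$ preserve energy, the sequences $\widehat V_{li\epsilon}$ are bounded in $\mathcal D^{1,2}(\mathbb R^N)$. Together with the bounds $c_*\le\mu_{l\epsilon}\le C$, a diagonal argument over the finitely many indices $l\le J$ and $i\le m$ gives a common subsequence with $\mu_{l\epsilon}\to\nu_l\ge c_*>0$ and $\widehat V_{li\epsilon}\rightharpoonup\widehat V_{li}$.

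\textbf{Limit equation.} Fix $\varphi\in C_c^\infty(\mathbb R^N)$ and test the weak eigenvalue equation with $z_\epsilon=T_{i\epsilon}\varphi$. For small $\epsilon$ this function is supported in $\Omega$, near $\xi_{i\epsilon}$, and is bounded in $H_0^1$. On the left, the energy pairing equals $\int\nabla\widehat V_{li\epsilon}\cdot\nabla\varphi$, which tends to $\int\nabla\widehat V_{li}\cdot\nabla\varphi$. For the weight term we apply \eqref{bn:eq-weighted-capture}, for which we need the limits of $T_{j\epsilon}^{-1}z_\epsilon$ for every $j$. For $j=i$ the limit is $\varphi$. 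For $j\ne i$ we have $T_{j\epsilon}^{-1}z_\epsilon\rightharpoonup0$, because the rescaled support escapes to infinity (the centers stay separated while $\delta_{j\epsilon}\to0$). Hence the right-hand side tends to $\nu_l p\int W^{p-1}\widehat V_{li}\varphi$, which is \eqref{bn:eq-limit-profile-eqn}.

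\textbf{Weighted orthogonality.} Applying \eqref{bn:eq-weighted-capture} directly with $v_\epsilon=\widehat v_{l\epsilon}$ and $z_\epsilon=\widehat v_{k\epsilon}$, and using \eqref{bn:eq-eigenbasis-normalization}, gives \eqref{bn:eq-profile-weighted-orthogonality}. Testing \eqref{bn:eq-limit-profile-eqn} with $\widehat V_{ki}$ and summing over $i$ then converts this into \eqref{bn:eq-profile-energy-orthogonality}. This uses $\nu_l=\nu_k$ or symmetry: we have $\sum_i\int\nabla\widehat V_{li}\cdot\nabla\widehat V_{ki}=\nu_l p\sum_i\int W^{p-1}\widehat V_{li}\widehat V_{ki}=\nu_l\delta_{lk}/\nu_l=\delta_{lk}$.

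\textbf{Strong energy decomposition (main obstacle).} Set $S_\epsilon=\sum_i\chi_iT_{i\epsilon}\widehat V_{li}$. We show $\|\widehat v_{l\epsilon}-S_\epsilon\|^2\to0$ by expanding the square.

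First, $\|S_\epsilon\|^2\to\sum_i\|\nabla\widehat V_{li}\|^2=1$. This holds because the cutoff error $\|\chi_iT_{i\epsilon}\widehat V-T_{i\epsilon}\widehat V\|$ tends to zero: the gradient falling on $\chi_i$ is controlled by $\|\widehat V\|_{L^{2^*}}$ on the region $|y|\gtrsim1/\delta_{i\epsilon}$, via H\"older. The cross terms between different $i$ vanish by disjointness of the supports.

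Second, $(\widehat v_{l\epsilon},S_\epsilon)_{H_0^1}\to\sum_i\int\nabla\widehat V_{li}\cdot\nabla\widehat V_{li}=1$. This follows from the same cutoff estimate, the identity $(\widehat v_{l\epsilon},T_{i\epsilon}\widehat V)=(\widehat V_{li\epsilon},\widehat V)$, and weak convergence.

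Since $\|\widehat v_{l\epsilon}\|=1$, we get $\|\widehat v_{l\epsilon}-S_\epsilon\|^2=1-2+1+o(1)\to0$.

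The subtle point is that the argument hinges on the total energy of the profiles being exactly $1$, which is \eqref{bn:eq-profile-energy-orthogonality}. That identity in turn rests on the full weighted capture \eqref{bn:eq-weighted-capture}, which transfers the weight normalization to the limit with no loss of mass. So the key input is that Lemma~\ref{bn:lem-potential-decomposition} converts weighted orthonormality into energy orthonormality of the limits. No separate concentration-compactness argument for the eigenfunctions is needed.
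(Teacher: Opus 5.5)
Your proposal is correct and follows the paper's proof essentially step for step. You extract limits from the uniform eigenvalue bounds, obtain the profile equation by testing with $T_{i\epsilon}\varphi$, get weighted orthogonality from \eqref{bn:eq-weighted-capture} and energy orthogonality by testing the profile equation with $\widehat V_{ki}$, and prove the strong decomposition by expanding the square with the cutoff estimate. The only variation is that you dispose of the other-center terms by noting $T_{j\epsilon}^{-1}T_{i\epsilon}\varphi\rightharpoonup0$ and invoking \eqref{bn:eq-weighted-capture}, whereas the paper uses the direct tail bound \eqref{bn:eq-cross-potential-tail}; both work, but you should say explicitly that density extends the profile equation to $\mathcal{D}^{1,2}(\mathbb R^N)$ test functions before testing with $\widehat V_{ki}$.
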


\begin{proof}
The eigenvalue lower bound in Lemma~\ref{bn:lem-potential-decomposition}
and the energy normalization give a common subsequence with the
asserted limits. To identify the profile equation, take
$\phi\in C_c^\infty(B_R)$ and test the eigenvalue equation with
$T_{i\epsilon}\phi\in H_0^1(\Omega)$. For $j\ne i$, the distinct
limiting centers give a fixed $d>0$ such that
\begin{equation}\label{bn:eq-cross-potential-tail}
 \left\|pW_{j\epsilon}^{p-1}
       \mathbf1_{B(\xi_{i\epsilon},R\delta_{i\epsilon})}\right\|_{N/2}
 \le
 \left\|pW^{p-1}\mathbf1_{\{|y|>d/\delta_{j\epsilon}\}}\right\|_{N/2}
 \longrightarrow0.
\end{equation}
Thus all other-center terms vanish by \eqref{bn:eq-critical-form-bound}; the remainder vanishes by
\eqref{bn:eq-potential-error}. The same-center term passes to the
limit by weak energy convergence and
\eqref{bn:eq-critical-form-bound}. Density gives
\eqref{bn:eq-limit-profile-eqn} for all finite-energy test functions.
Applying \eqref{bn:eq-weighted-capture} to
\eqref{bn:eq-eigenbasis-normalization} yields
\eqref{bn:eq-profile-weighted-orthogonality}. Testing the profile
equation against $\widehat V_{ki}$ and summing in $i$ then gives
\eqref{bn:eq-profile-energy-orthogonality}.

For each fixed $\phi\in \mathcal{D}^{1,2}(\mathbb R^N)$,
\begin{equation}\label{bn:eq-cutoff-energy-approx}
 \|\chi_iT_{i\epsilon}\phi-T_{i\epsilon}\phi\|_{\mathcal{D}^{1,2}(\mathbb R^N)}
 \longrightarrow0.
\end{equation}
The gradient term is bounded by the tail of $\nabla\phi$; the cutoff
term satisfies
\[
 \|\nabla\chi_i\,T_{i\epsilon}\phi\|_2
 \le\|\nabla\chi_i\|_N
      \|T_{i\epsilon}\phi\|_{L^{p+1}(\operatorname{supp}\nabla\chi_i)}
 \longrightarrow0,
\]
since $\operatorname{supp}\nabla\chi_i$ stays away from
$\xi_{i\epsilon}$.
Set $E_{l\epsilon}=\sum_i\chi_iT_{i\epsilon}\widehat V_{li}$.
By disjoint supports, \eqref{bn:eq-cutoff-energy-approx} and
\eqref{bn:eq-profile-energy-orthogonality},
$\|E_{l\epsilon}\|_{H_0^1}^2\to1$. Rescaled weak convergence
also gives $(\widehat v_{l\epsilon},E_{l\epsilon})_{H_0^1}\to1$.
Expanding the squared norm of the difference proves
\eqref{bn:eq-strong-energy-decomposition}.
\end{proof}

The strong convergence in \eqref{bn:eq-strong-energy-decomposition}
is a decomposition in the original energy space. An individual
rescaling need not converge strongly on all of $\mathbb R^N$, since
the other bubbles may carry energy to infinity in those coordinates.

We now prove Theorem~\ref{THm1-2} using the min--max principle
and the compactness established above.

\begin{proof}[Proof of Theorem~\ref{THm1-2}]
Fix $l\in\mathbb N$.
For the upper bound, approximate the first $l$ energy-orthonormal
vectors of the direct-sum model by
$\psi_1,\ldots,\psi_l\in(C_c^\infty(\mathbb R^N))^m$.
Their energy and weight Gram matrices can be made arbitrarily close
to $I_l$ and
$D_l=\operatorname{diag}((\mu_1^0)^{-1},\ldots,(\mu_l^0)^{-1})$.
Transplant them by
\[
 J_\epsilon\psi_a=\sum_{i=1}^mT_{i\epsilon}(\psi_a)_i,
 \qquad 1\le a\le l.
\]
For small $\epsilon$, the component supports are disjoint and
contained in $\Omega$, so their energy Gram matrix is unchanged.
By \eqref{bn:eq-potential-error} and
\eqref{bn:eq-cross-potential-tail}, their weighted Gram matrix
converges to that of the compactly supported model vectors.
Hence, given small $\eta>0$, the approximation and then
$\epsilon$ can be chosen so that these matrices $A_\varepsilon$
and $C_\varepsilon$ satisfy
$\|A_\epsilon-I_l\|_{\rm op}<\eta$ and
$\|C_\epsilon-D_l\|_{\rm op}<\eta$.
The trial space has dimension $l$, and its largest Rayleigh quotient
is bounded by
\[
 \sup_{c\ne0}\frac{c^{\mathsf T}A_\varepsilon c}
                         {c^{\mathsf T}C_\varepsilon c}
 \le\frac{1+\eta}{(\mu_l^0)^{-1}-\eta}.
\]
The min--max principle and $\eta\downarrow0$ give
\begin{equation}\label{bn:eq-spectral-upper}
 \limsup_{\epsilon\to0}\mu_{l\epsilon}\le\mu_l^0.
\end{equation}

For the lower bound, choose a sequence on which $\mu_{l\varepsilon}$
tends to its lower limit. By \eqref{bn:eq-spectral-upper}, the first
$l$ eigenvalues are bounded. Proposition~\ref{bn:prop-strong-profile}
provides a further subsequence with
\[
 0<\nu_1\le\cdots\le\nu_l
       =\liminf_{\epsilon\to0}\mu_{l\epsilon}.
\]
The corresponding direct-sum profiles are energy-orthonormal by
\eqref{bn:eq-profile-energy-orthogonality}, and each is a model
eigenvector at $\nu_k$. Thus the model spectral subspace below or at
$\nu_l$ has dimension at least $l$. Its ordered spectrum is
$(\mu_k^0)$, so $\mu_l^0\le\nu_l$. Together with
\eqref{bn:eq-spectral-upper}, this proves
$\mu_{l\epsilon}\to\mu_l^0$ for every fixed $l$.
The stated cluster sizes follow from the model multiplicities $mh_s$.
\end{proof}

In particular, Proposition~\ref{bn:prop-strong-profile} applies to
every fixed finite set of indices with $\nu_l=\mu_l^0$.
The eigenvalue limits hold for the full family, whereas the profiles
and the strong decomposition are obtained along common subsequences.

As a consequence, we obtain coercivity on the complements
of the low eigenspace and the explicit bubble modes.
Let $K=(N+2)m$ and define
\[
 B_\epsilon(v,z)
 =\int_\Omega\nabla v\cdot\nabla z
       -\int_\Omega f_\epsilon'(u_\epsilon)vz,
 \qquad
 F_\epsilon=\operatorname{span}\{\widehat v_{1\epsilon},\ldots,
                                  \widehat v_{K\epsilon}\}.
\]
Set
\[
 E_{\le1}=\operatorname{span}\{W,\partial_1W,\ldots,\partial_NW,Z_0\},
 \qquad
 \mathcal Z_\epsilon
 =\operatorname{span}\{\chi_iT_{i\epsilon}\phi:
                 1\le i\le m,\ \phi\in E_{\le1}\}.
\]
By \eqref{bn:eq-cutoff-energy-approx},
$\dim\mathcal Z_\epsilon=K$ for small $\epsilon$.
For a finite-dimensional subspace $E\subset H_0^1(\Omega)$, write
\[
 c_\epsilon(E)
 =\inf_{\substack{0\ne w\in H_0^1(\Omega)\\w\perp E}}
     \frac{B_\epsilon(w,w)}{\|\nabla w\|_2^2},
\]
where orthogonality is in the energy inner product.

Coercivity away from the bubble modes is classical
\cite[Proposition~3.10]{FigalliGlaudo2020}; see also
\cite[proof of Lemma~6.2]{DengSunWei2025} for its use with small
energy perturbations. The following consequence records the exact
limiting constant for both complements in the present setting.

\begin{cor}\label{bn:cor-coercivity}
As $\epsilon\to0$,
\begin{equation}\label{bn:eq-complement-coercivity}
 c_\epsilon(F_\epsilon)
 =1-\frac1{\mu_{(K+1)\epsilon}}\longrightarrow\frac4{N+4},
 \qquad
 c_\epsilon(\mathcal Z_\epsilon)\longrightarrow\frac4{N+4}.
\end{equation}
Consequently, for every $0<c<4/(N+4)$ and sufficiently small
$\epsilon$,
$B_\epsilon(w,w)\ge c\|\nabla w\|_2^2$ on either complement.
\end{cor}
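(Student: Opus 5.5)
The first identity follows from the spectral theorem, and the second from a sandwich between a min--max upper bound and a profile-compactness lower bound.

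\emph{First identity.} By \eqref{bn:eq-eigenbasis-normalization}, any $w\perp F_\epsilon$ in the energy inner product expands as $w=\sum_{l>K}c_l\widehat v_{l\epsilon}$, with
\[
 B_\epsilon(w,w)=\sum_{l>K}c_l^2\Bigl(1-\frac{1}{\mu_{l\epsilon}}\Bigr).
\]
Since $\mu_{l\epsilon}$ is nondecreasing, the infimum of $B_\epsilon(w,w)/\|\nabla w\|_2^2$ is $1-1/\mu_{(K+1)\epsilon}$, and it is attained at $\widehat v_{(K+1)\epsilon}$. Theorem~\ref{THm1-2} gives $\mu_{(K+1)\epsilon}\to(N+4)/N$, hence $c_\epsilon(F_\epsilon)\to 1-N/(N+4)=4/(N+4)$.

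\emph{Upper bound for $\mathcal Z_\epsilon$.} Writing
\[
 \frac{B_\epsilon(w,w)}{\|\nabla w\|_2^2}=1-\frac{\int f_\epsilon'(u_\epsilon)w^2}{\|\nabla w\|_2^2},
\]
it suffices to find trial functions $w\perp\mathcal Z_\epsilon$ whose weighted Rayleigh ratio tends to $N/(N+4)$. Take $\phi$ in the $s=2$ model eigenspace, so that $\phi$ is $pW^{p-1}$-orthogonal and energy-orthogonal to $E_{\le1}$. Approximate $\phi$ by a compactly supported function and set $w_\epsilon=T_{1\epsilon}\phi$. Subtract its energy projection onto $\mathcal Z_\epsilon$. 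This projection tends to zero: by \eqref{bn:eq-cutoff-energy-approx} and the disjointness of the supports, the Gram matrix of $\mathcal Z_\epsilon$ converges to the block-diagonal model Gram matrix, and the inner products of $w_\epsilon$ with $\mathcal Z_\epsilon$ tend to the model inner products, which vanish up to the approximation error. Lemma~\ref{bn:lem-potential-decomposition}, via \eqref{bn:eq-weighted-capture}, then shows that the weighted form tends to $\mu_2^{\star-1}\|\nabla\phi\|^2$, where $\mu_2^\star=(N+4)/N$. Hence
\[
 \limsup_{\epsilon\to0} c_\epsilon(\mathcal Z_\epsilon)\le \frac{4}{N+4}.
\]

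\emph{Lower bound for $\mathcal Z_\epsilon$.} Suppose, for contradiction, that along a sequence there are $w_\epsilon\perp\mathcal Z_\epsilon$ with $\|\nabla w_\epsilon\|_2=1$ and
\[
 \int f_\epsilon'(u_\epsilon)w_\epsilon^2\to L>\frac{N}{N+4}.
\]
Extract weak limits $T_{i\epsilon}^{-1}w_\epsilon\rightharpoonup W_i$ in $\mathcal D^{1,2}(\mathbb R^N)$. By \eqref{bn:eq-weighted-capture}, $L=p\sum_i\int W^{p-1}W_i^2$. Two further facts are needed.
\begin{enumerate}[label=\textup{(\alph*)}]
\item The orthogonality passes to the limit. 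For $\phi\in E_{\le1}$, weak convergence and \eqref{bn:eq-cutoff-energy-approx} give $(w_\epsilon,\chi_iT_{i\epsilon}\phi)_{H_0^1}\to\int\nabla W_i\cdot\nabla\phi$, so $W_i\perp E_{\le1}$ in energy. For the model operator, energy orthogonality to eigenfunctions is equivalent to weighted orthogonality.
\item The energies split: $\sum_i\|\nabla W_i\|_2^2\le1$. This is the step I expect to be the main obstacle. I would prove it with the localization argument of Proposition~\ref{bn:prop-strong-profile}: the functions $\chi_iT_{i\epsilon}W_i$ have disjoint supports, and the cross inner products $(w_\epsilon,\chi_iT_{i\epsilon}W_i)\to\|\nabla W_i\|^2$. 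Expanding $\|w_\epsilon-\sum_i\chi_iT_{i\epsilon}W_i\|^2\ge0$ then gives the inequality.
\end{enumerate}
On the model orthogonal complement of $E_{\le1}$, the weighted quotient is at most $1/\mu_2^\star=N/(N+4)$. Therefore
\[
 L\le\frac{N}{N+4}\sum_i\|\nabla W_i\|_2^2\le\frac{N}{N+4},
\]
a contradiction. Hence $\liminf_{\epsilon\to0} c_\epsilon(\mathcal Z_\epsilon)\ge 4/(N+4)$, and together with the upper bound, $c_\epsilon(\mathcal Z_\epsilon)\to4/(N+4)$.

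The final coercivity assertion follows directly from the two limits in \eqref{bn:eq-complement-coercivity}.
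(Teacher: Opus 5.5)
Your proof is correct, and for the second limit it takes a genuinely different route from the paper; the first identity is handled exactly as in the paper.

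\textbf{The paper's route.} For $\mathcal Z_\epsilon$, the paper does not argue on the complement directly. It applies Proposition~\ref{bn:prop-strong-profile} with $J=K$ and observes that all first-$K$ profiles lie in $E_{\le1}$. Hence the approximants $E_{l\epsilon}=\sum_i\chi_iT_{i\epsilon}\widehat V_{li}$ belong to $\mathcal Z_\epsilon$ and are $o(1)$-close to $\widehat v_{l\epsilon}$. After orthonormalization this gives $\|P_\epsilon-\Pi_\epsilon\|_{H_0^1\to H_0^1}\to0$ for the energy projections onto $F_\epsilon$ and $\mathcal Z_\epsilon$. Uniform boundedness of $B_\epsilon$ then transfers the constant:
\[
|c_\epsilon(\mathcal Z_\epsilon)-c_\epsilon(F_\epsilon)|\le C\|P_\epsilon-\Pi_\epsilon\|_{H_0^1\to H_0^1}.
\]

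\textbf{Your route.} Your sandwich works on the complement of $\mathcal Z_\epsilon$ itself. A transplanted $s=2$ model eigenfunction gives the upper bound. For the lower bound you take a profile decomposition of an almost-minimizing sequence and pass the orthogonality to the limit. You then use the Bessel-type inequality $\sum_i\|\nabla W_i\|_2^2\le1$, which you prove correctly by the same squared-norm expansion the paper uses for \eqref{bn:eq-strong-energy-decomposition}, so this step is not an obstacle. The model gap $1/\mu_2^\star=N/(N+4)$ above $E_{\le1}$ closes the argument.

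\textbf{Comparison.} Your argument is independent of the eigenfunctions, of Theorem~\ref{THm1-2} and of Proposition~\ref{bn:prop-strong-profile}. It needs only Lemma~\ref{bn:lem-potential-decomposition}, \eqref{bn:eq-cutoff-energy-approx} and the model spectrum. In effect it is the lower-bound half of the proof of Theorem~\ref{THm1-2}, run on a constrained space. The paper's route yields a stronger structural statement: the spectral subspace $F_\epsilon$ converges to the explicit bubble-mode space $\mathcal Z_\epsilon$, with a quantitative comparison of the two coercivity constants. The price is that it relies on the full spectral identification already proved.
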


\begin{proof}
For $w=\sum_{l>K}c_l\widehat v_{l\epsilon}$, the spectral
expansion gives
\[
 B_\varepsilon(w,w)=\sum_{l>K}(1-\mu_{l\epsilon}^{-1})c_l^2.
\]
Its normalized minimum is attained at
$\widehat v_{K+1,\epsilon}$. Since
$\mu_{K+1,\epsilon}\to(N+4)/N$, the first assertion follows.

Let $P_\epsilon$ and $\Pi_\epsilon$ be the energy-orthogonal
projections onto $F_\epsilon$ and $\mathcal Z_\epsilon$.
We claim that
\begin{equation}\label{bn:eq-projection-gap}
 \eta_\epsilon
 :=\|P_\epsilon-\Pi_\epsilon\|_{H_0^1\to H_0^1}\longrightarrow0.
\end{equation}
From any sequence $\epsilon\to0$, apply
Proposition~\ref{bn:prop-strong-profile} with $J=K$.
All its profiles lie in $E_{\le1}$, because their eigenvalues are
$1/p$ or $1$. Hence their approximants $E_{l\varepsilon}$ belong
to $\mathcal Z_\epsilon$ and satisfy
\[
 \max_{l\le K}\|E_{l\epsilon}-\widehat v_{l\epsilon}\|_{H_0^1}\to0.
\]
Their Gram matrix tends to $I_K$. Multiplication by its inverse
square root produces an orthonormal basis of
$\mathcal Z_\epsilon$ still $o(1)$-close to
$\{\widehat v_{l\epsilon}\}_{l\le K}$. The finite-rank
projection formulas yield \eqref{bn:eq-projection-gap} on this
subsequence; the subsequence criterion gives it for the full family.

Using \eqref{bn:eq-critical-form-bound} and Lemma~\ref{bn:lem-potential-decomposition}, we have
$|B_\epsilon(v,z)|\le C\|v\|_{H_0^1}\|z\|_{H_0^1}$ uniformly.
If $\|w\|_{H_0^1}=1$ and $w\perp\mathcal Z_\epsilon$, then
$(I-P_\epsilon)w\in F_\epsilon^\perp$ differs from $w$ by
at most $\eta_\epsilon$. After normalization, their quadratic
forms differ by at most $C\eta_\epsilon$. Interchanging the
two spaces gives
\[
 |c_\epsilon(\mathcal Z_\epsilon)
       -c_\epsilon(F_\epsilon)|\le C\eta_\epsilon\to0,
\]
which proves the second assertion.
\end{proof}

\section{A sharp decay estimate of the eigenfunctions}\label{section-3}
We study the behavior of solutions $u_{\epsilon}$ of \eqref{ele-1.1} outside the blow-up points $\{\xi_1,\cdots,\xi_m\}$. Introducing the set in what follows
$$\Sigma_r:=\Omega\setminus\bigcup_{i=1}^{m}B(\xi_i,r)\quad\mbox{for any}\quad r>0.$$

The following lemma will play a key role in our analysis.
\begin{lem}\label{outside-1}
Assume that $\{u_{\epsilon}\}_\epsilon$ is a family of solutions of \eqref{ele-1.1} having the form \eqref{pusaik-1}. Then for any $r>0$ we have
$u_{\epsilon}\rightarrow0$ as $\epsilon\rightarrow0$ uniformly for $x\in\Sigma_r$.
\end{lem}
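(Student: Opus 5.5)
The quickest route is the pointwise bound \eqref{tidu} of Lemma~\ref{Lem6.1} with $j=0$, which holds on all of $\Omega$:
\[
 u_\epsilon(x)\le C\sum_{i=1}^m\frac{\epsilon^{\frac{(N-2)\alpha_0}{2}}}{(\epsilon^{\alpha_0}+|x-\xi_{i\epsilon}|)^{N-2}}.
\]
Fix $r>0$. Since $\xi_{i\epsilon}\to\xi_i$, we have $|\xi_{i\epsilon}-\xi_i|<r/2$ for all small $\epsilon$. Then every $x\in\Sigma_r$ satisfies $|x-\xi_{i\epsilon}|\ge r/2$ for each $i$. Each summand is therefore at most $(2/r)^{N-2}\epsilon^{(N-2)\alpha_0/2}$. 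Because $\alpha_0>0$ in the admissible range ($N-6+q(N-2)>0$), this gives
\[
 \sup_{\Sigma_r}u_\epsilon\le Cm(2/r)^{N-2}\epsilon^{\frac{(N-2)\alpha_0}{2}}\longrightarrow0.
\]
This is in fact a rate, $u_\epsilon=O(\epsilon^{(N-2)\alpha_0/2})$ uniformly on $\Sigma_r$, and the constant $C$ is independent of $\epsilon$.

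The bound \eqref{tidu} was itself imported from \cite{GM} and \cite{K-L-2024}. If one prefers to avoid relying on that global estimate, I would use a self-contained backup argument.

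\textbf{Backup argument.}
\begin{enumerate}[label=\textup{(\arabic*)},leftmargin=2.2em]
\item \emph{$L^{p+1}$ smallness away from the centers.} From the decomposition \eqref{pusaik-1}, $\|\omega_\epsilon\|_{H_0^1}\to0$, and $\|PW_{i\epsilon}\|_{L^{p+1}(\Sigma_{r/2})}\le\|W_{i\epsilon}\|_{L^{p+1}(\Sigma_{r/2})}\to0$ because the bubbles concentrate. Hence $\|u_\epsilon\|_{L^{p+1}(\Sigma_{r/2})}\to0$.
\item \emph{Interior bound.} The function $u_\epsilon$ solves $-\Delta u=c(x)u$ with $c=u^{p-1}+\epsilon u^{q-1}$, and $\|c\|_{L^{N/2}(\Sigma_{r/2})}$ is small. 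A Moser iteration or Brezis--Kato argument, as in the proof of Proposition~\ref{prop:main}, upgrades this to an $L^\infty$ bound on $\Sigma_{3r/4}$. Local boundedness then gives $\sup_{\Sigma_{3r/4}\cap\{d(x,\partial\Omega)>\delta\}}u_\epsilon\le C\|u_\epsilon\|_{L^{p+1}(\Sigma_{r/2})}\to0$.
\item \emph{Near the boundary.} Boundary regularity with the zero Dirichlet data covers the region close to $\partial\Omega$ in the same way.
\end{enumerate}

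The only delicate point in the backup is absorbing the $\epsilon u^{q-1}$ term when $q=1$; it is harmless there since it becomes the bounded constant $\epsilon$. With \eqref{tidu} available, however, the proof is immediate and this is the argument I would write.
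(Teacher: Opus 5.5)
Your main argument is correct, but it is not the paper's route; your backup argument is essentially the paper's proof. The paper uses an $\epsilon$-regularity argument in the spirit of Lemmas 6--7 of \cite{HANZCHAO}:
\begin{itemize}
\item from the $H_0^1$ decomposition \eqref{pusaik-1} alone, it shows that the potential $u_\epsilon^{p-1}+\epsilon u_\epsilon^{q-1}$ is small in $L^{N/2}(\Sigma_{r/4})$;
\item it upgrades this to smallness in a slightly better Lebesgue space on $\Sigma_{r/2}$;
\item it then applies an $L^\infty$ estimate on $\Sigma_r$.
\end{itemize}

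Your primary route instead reads the conclusion off the global envelope \eqref{tidu} of Lemma~\ref{Lem6.1}. This is legitimate within the paper. Lemma~\ref{Lem6.1} comes earlier, and its $j=0$ bound is imported from \cite{GM} and \cite{K-L-2024} without using Lemma~\ref{outside-1}. Your checks that $|x-\xi_{i\epsilon}|\ge r/2$ on $\Sigma_r$ for small $\epsilon$, and that $\alpha_0>0$ in the admissible range, are all that is needed.

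\textbf{What your route buys.} It is short and it gives a rate: $u_\epsilon=O(\epsilon^{(N-2)\alpha_0/2})$ uniformly on $\Sigma_r$. Since $(N-2)\alpha_0/2=(N-2)/(N-6+q(N-2))$, this is exactly the content of Lemma~\ref{outside-2}. That lemma, which the paper proves via the moving-sphere bound \eqref{eq:bound}, the Green representation and the smallness from Lemma~\ref{outside-1}, therefore also becomes a one-line corollary.

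\textbf{What the paper's route buys.} It is independent of the heavy external pointwise estimate. It uses only the energy decomposition with a vanishing remainder, so it applies to any such family. It is also the kind of input from which envelopes like \eqref{tidu} are normally derived, so it avoids any hint of circularity in a self-contained blow-up analysis.
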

\begin{proof}
In virtue of Lemma 6-7 from \cite{HANZCHAO} and \eqref{pusaik-1}, we have
\begin{equation*}
\begin{split}
\int_{\Sigma_{r/4}}|u_{\epsilon}^{p-1}+\epsilon u_{\epsilon}^{q-1}|^{\frac{N}{2}}dx&\leq C\sum_{i=1}^m\Big(\int_{\Sigma_{r/4}}PW^{\frac{2N}{N-2}}_{\lambda_{i\epsilon}\epsilon^{\alpha_0},\xi_{i\epsilon}}dx
+\epsilon\int_{\Sigma_{r/4}}PW^{\frac{(q-1)N}{2}}_{\lambda_{i\epsilon}\epsilon^{\alpha_0},\xi_{i\epsilon}}dx\Big)\\&
+\int_{\Sigma_{r/4}}|R_{\epsilon}|^{\frac{2N}{N-2}}dx+\epsilon\int_{\Sigma_{r/4}}|R_{\epsilon}|^{\frac{(q-1)N}{2}}dx\\&
=O(\epsilon^{\frac{(N-2)(q-1)N}{N-6+q(N-2)}})+O(\epsilon^{\frac{2N}{N-6+q(N-2)}+1})+o(1)=o(1).
\end{split}
\end{equation*}
Choosing $\delta^{\prime}=\frac{4}{N}$, we deduce that
$
\int_{\Sigma_{r/2}}\big|u_{\epsilon}^{p-1}+\epsilon u_{\epsilon}^{q-1}\big|^{\frac{N}{2-\delta^{\prime}}}dx=o(1).
$
Applying Lemma 7 of \cite{HANZCHAO}  gives $\|u_{\epsilon}\|_{L^{\infty}(\Sigma_{r})}=o(1)$ and concluding the proof.
\end{proof}

\begin{lem}\label{outside-2}
We have the following estimates, uniformly for any $x\in\Sigma_{r}$ with any fixed $r>0$ small
\begin{equation*}
u_{\epsilon}(x)=O(\epsilon^{\frac{N-2}{N-6+q(N-2)}}).
\end{equation*}
\end{lem}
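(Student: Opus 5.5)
We prove Lemma~\ref{outside-2} through the Green representation of $u_\epsilon$, using the pointwise bound \eqref{tidu} from Lemma~\ref{Lem6.1} together with Lemma~\ref{outside-1}. Set $\delta_\epsilon:=\epsilon^{\alpha_0}$. The target is $u_\epsilon=O(\delta_\epsilon^{(N-2)/2})$ on $\Sigma_r$, since $\frac{N-2}{N-6+q(N-2)}=\frac{(N-2)\alpha_0}{2}$.

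\emph{Step 1: the direct route.} For $x\in\Sigma_r$ and every $i$ we have $|x-\xi_{i\epsilon}|\ge r/2$ once $\epsilon$ is small, because $\xi_{i\epsilon}\to\xi_i$. Estimate \eqref{tidu} with $j=0$ then gives at once
\[
u_\epsilon(x)\le C\sum_{i=1}^m\frac{\epsilon^{\frac{(N-2)\alpha_0}{2}}}{(\epsilon^{\alpha_0}+r/2)^{N-2}}\le C_r\,\epsilon^{\frac{(N-2)\alpha_0}{2}},
\]
which is the claim. If \eqref{tidu} is taken as established, this step is the whole proof.

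\emph{Step 2: a self-contained check.} To avoid relying on the cited global bound, we can instead write
\[
u_\epsilon(x)=\int_\Omega G(x,y)\big(u_\epsilon^p+\epsilon u_\epsilon^q\big)(y)\,dy
\]
and split $\Omega$ into the balls $B(\xi_{i\epsilon},r/4)$ and the remainder $\Sigma_{r/4}$.

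\emph{Step 2a: the balls.} There $G(x,y)\le C_r$. Proposition~\ref{prop:main} gives $u_\epsilon\le CW_{\lambda_{i\epsilon}\delta_\epsilon,\xi_{i\epsilon}}$, after shrinking $r$ to $\varrho_0$ if needed. Hence
\[
\int W^p_{\lambda_{i\epsilon}\delta_\epsilon,\xi_{i\epsilon}}=O\big(\delta_\epsilon^{(N-2)/2}\big).
\]
For the perturbation, $\epsilon\int W^q_{\lambda_{i\epsilon}\delta_\epsilon,\xi_{i\epsilon}}$ is of size $\epsilon\,\delta_\epsilon^{N-q(N-2)/2}$ when $q(N-2)>N$. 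In general it is at most $\epsilon\,\delta_\epsilon^{q(N-2)/2}|\log\delta_\epsilon|$ or $\epsilon\,\delta_\epsilon^{q(N-2)/2}$ times a constant. Either way, \eqref{beta1}, namely $\epsilon\,\delta_\epsilon^{\tau_0(p-q)}=\delta_\epsilon^{2\tau_0}$, shows that this term is $O(\delta_\epsilon^{\tau_0})$ or smaller.

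\emph{Step 2b: the remainder $\Sigma_{r/4}$.} Let $M_\epsilon=\sup_{\Sigma_{r/4}}u_\epsilon$, which is $o(1)$ by Lemma~\ref{outside-1}. The contribution of $\Sigma_{r/4}$ is bounded by
\[
C\big(M_\epsilon^{p-1}+\epsilon M_\epsilon^{q-1}\big)\sup_{\Sigma_{r/4}}u_\epsilon\int_\Omega G(x,y)\,dy=o(1)\,M_\epsilon .
\]
For $q=1$ the factor is $\epsilon$, which is still $o(1)$. Taking the supremum over $x\in\Sigma_{r/4}$ and repeating the ball estimates for such $x$, with $r$ replaced by $r/8$ in the bound on $G$, we get $M_\epsilon\le C\delta_\epsilon^{\tau_0}+o(1)M_\epsilon$. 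Absorbing the last term gives $M_\epsilon=O(\delta_\epsilon^{\tau_0})$, which proves the lemma on $\Sigma_r\subset\Sigma_{r/4}$.

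\emph{Main obstacle.} The delicate point in Step 2 is the subcritical term $\epsilon\int W^q$ near each center, in particular when $q(N-2)\le N$, where the integral is governed by the outer region rather than the core. The relation \eqref{beta1} between $\epsilon$ and $\delta_\epsilon$ is exactly what keeps this term at order $\delta_\epsilon^{\tau_0}$. The sharp computation is to split $\epsilon\int_{B_{r/4}}W^q$ at radius $\sim 1$ in the original variables and compare the resulting exponents with $\tau_0$, using $q\ge1$ and $N\ge4$.
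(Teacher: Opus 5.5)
Your proposal is correct. Its main argument, Step 1, takes a different and much shorter route than the paper. You read the lemma off the global pointwise bound \eqref{tidu} of Lemma~\ref{Lem6.1}: on $\Sigma_r$ one has $|x-\xi_{i\epsilon}|\ge r/2$ for small $\epsilon$, so $u_\epsilon\le C_r\,\epsilon^{(N-2)\alpha_0/2}$ at once. This is not circular, because \eqref{tidu} is imported from the works of Gao--Ma and K\"onig--Laurain rather than derived from Lemma~\ref{outside-2}.

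The paper argues as in your Step 2. It uses only the local moving-sphere bound \eqref{eq:bound} of Proposition~\ref{prop:main}, applied on the annuli $\Sigma_{r/2}\setminus\Sigma_r$. It combines this with the Green representation \eqref{green1}, the smallness $\mathcal A_\epsilon\to0$ from Lemma~\ref{outside-1}, and the absorption step
\[
\mathcal A_\epsilon\le C\bigl(\mathcal A_\epsilon^p+\epsilon\mathcal A_\epsilon^q+\epsilon^{(N-2)\alpha_0/2}\bigr).
\]
That route is the standard Han/Choi--Kim--Lee argument. Its advantage is that it derives the exterior decay from information near the centres alone. Yours turns the lemma into an immediate corollary of the global estimate. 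Inside the paper's own logic, the uniform $L^\infty$ hypothesis \eqref{eq:inputs} of Proposition~\ref{prop:main} is itself obtained from \eqref{tidu}. So your Step 1 is no less general than what the paper actually does.

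If you want the self-contained Step 2, there is one bookkeeping slip in Step 2b. You set $M_\epsilon=\sup_{\Sigma_{r/4}}u_\epsilon$ and take $x\in\Sigma_{r/4}$. Then $G(x,y)$ is not bounded for $y\in B(\xi_{i\epsilon},r/4)$, since $x$ may lie on the sphere $|x-\xi_i|=r/4$. If instead you shrink the balls to radius $r/8$, the annuli $r/8\le|y-\xi_{i\epsilon}|\le r/4$ move into the remainder, where $u_\epsilon\le M_\epsilon$ is not known.

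The fix is the paper's device. On those annuli, \eqref{eq:bound} with $r/4<\varrho_0$ gives
\[
u_\epsilon\le CW_{\lambda_{i\epsilon}\epsilon^{\alpha_0},\xi_{i\epsilon}}\le C_r\,\epsilon^{(N-2)\alpha_0/2}.
\]
Since $\sup_x\int_\Omega G(x,y)\,dy<\infty$, their contribution is
\[
O\bigl(\epsilon^{p(N-2)\alpha_0/2}+\epsilon\cdot\epsilon^{q(N-2)\alpha_0/2}\bigr)=O\bigl(\epsilon^{(N-2)\alpha_0/2}\bigr),
\]
by \eqref{beta1} and $q(N-2)\ge 2$. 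Your perturbation estimate in Step 2a is otherwise correct.
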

\begin{proof}
We proceed similarly to \cite{CKL-1}. Choosing $r\in(0,\varrho_0)$ and for any fixed $i\in\{1,\cdots,m\}$, by \eqref{eq:bound} we have
$$u_{\epsilon}(x)=O(\epsilon^{\frac{N-2}{N-6+q(N-2)}})\quad \mbox{for all}\quad\Sigma_{\frac{r}{2}}\setminus\Sigma_{r}.$$
Moreover, from the integral representation of $u_{\epsilon}$, we get that
\begin{equation}\label{green1}
u_{\epsilon}(x)=
\int_{\Sigma_{r/2}}\big(u_{\epsilon}^{p}(y)+\epsilon u_{\epsilon}^{q}(y)\big)G(x,y)dy+\sum_{i=1}^m\int_{ B(\xi_{i},r/2)}\big(u_{\epsilon}^{p}(y)+\epsilon u_{\epsilon}^{q}(y)\big)G(x,y)dy
\end{equation}
We now may write $\mathcal{A}_{\epsilon}=\max\{u_{\epsilon}(x):~x\in\Sigma_{r}\}$ and $\mathcal{B}(r)=\max\{G(x,y):~x,y\in\Sigma_{r},~|x-y|\geq r/2\}$, then by exploiting \eqref{eq:bound}, we infer
\begin{equation}\label{outside-3}
\begin{split}
\int_{\Sigma_{r/2}}\big(u_{\epsilon}^{p}(y)+\epsilon u_{\epsilon}^{q}(y)\big)G(x,y)dy&
\leq C\Big(\mathcal{A}^p_{\epsilon}+\epsilon\mathcal{A}^q_{\epsilon}+C \epsilon^{\frac{(N-2)p}{N-6+q(N-2)}}+C\epsilon^{\frac{(N-2)q}{N-6+q(N-2)}+1}\Big)
\end{split}
\end{equation}
for all $x\in\Sigma_{r/2}$, and
\begin{equation}\label{outside-4}
\begin{split}
\int_{ B(\xi_{i},r/2)}\big(u_{\epsilon}^{p}(y)+\epsilon u_{\epsilon}^{q}(y)\big)G(x,y)dy
\leq C\mathcal{B}(r)\epsilon^{\frac{N-2}{N-6+q(N-2)}}.
\end{split}
\end{equation}
Indeed, the inequality holds since
\begin{equation*}
\begin{split}
\int_{B(0,(\lambda_{i\epsilon}^{-1}\epsilon^{-\alpha_0})r/2)}W^{q}dy\leq
\left\lbrace
\begin{aligned}
&
C(\epsilon^{\alpha_0})^{\frac{(N-2)q}{2}}, \quad\quad\quad\quad\quad\quad\quad\hspace{2mm}\mbox{if}\quad q<N/(N-2),\\
&C(\epsilon^{\alpha_0})^{\frac{2N-(N-2)q}{2}}|\log (\epsilon)|, \quad\quad\quad\mbox{if}\quad q=N/(N-2),\\
&C(\epsilon^{\alpha_0})^{\frac{2N-(N-2)q}{2}},\quad\quad \quad\quad\quad\quad\hspace{1mm}\mbox{if}\quad q>N/(N-2),
\end{aligned}
\right.
\end{split}
\end{equation*}
As a consequence, we obtain
$$
\mathcal{A}_{\epsilon}\leq C\Big(\mathcal{A}^p_{\epsilon}+\epsilon\mathcal{A}^q_{\epsilon}+C \epsilon^{\frac{N-2}{N-6+q(N-2)}}\Big).
$$
Notice that $\mathcal{A}_{\epsilon}\rightarrow0$ as $\epsilon\rightarrow0$ by Lemma \ref{outside-1}, Then we eventually deduce that $\mathcal{A}_{\epsilon}\leq C\epsilon^{\frac{N-2}{N-6+q(N-2)}}$, which is the desired result.
\end{proof}

\begin{lem}\label{2-AS-eigenvalues}
Assume that $p=(N+2)/(N-2)$, $1<q<3$ if $N=4$, and $1\leq q<p$ if $N\geq5$. Then
\begin{equation}\label{AS-eigenvalues-1}
\frac{u_{\epsilon}(x)}{\epsilon^{(N-2)\alpha_0/2}}=\mathcal{A}_{0}\sum_{i=1}^mG(x,\xi_i)\lambda_{i\epsilon}^{\frac{N-2}{2}}+o(1),
\end{equation}
where the convergence is in $C^{2}(\Omega\setminus\{\xi_1,\cdots,\xi_m\})$ with $\{\xi_1,\cdots,\xi_m\}$ are the blow-up points of $u_\epsilon$ and $\mathcal{A}_0$ is defined in \eqref{pusaik-1-1}.
\end{lem}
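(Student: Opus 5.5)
We start from the Green representation
\[
u_\epsilon(x)=\int_\Omega G(x,y)\bigl(u_\epsilon^p(y)+\epsilon u_\epsilon^q(y)\bigr)\,dy ,
\]
and fix $x$ in a compact set $K\subset\Omega\setminus\{\xi_1,\dots,\xi_m\}$. Choose $r>0$ so small that $K\subset\Sigma_{2r}$. We then split the integral into the balls $B(\xi_{i\epsilon},r)$ and the exterior region $\Sigma_r$. Throughout, we use the notation $\delta_i=\lambda_{i\epsilon}\epsilon^{\alpha_0}$.

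\textbf{Exterior part.} Lemma \ref{outside-2} gives $u_\epsilon=O(\epsilon^{\tau_0\alpha_0})$ on $\Sigma_{r/2}$. The contribution of $\Sigma_r$ is therefore
\[
O\bigl(\epsilon^{p\tau_0\alpha_0}+\epsilon^{1+q\tau_0\alpha_0}\bigr).
\]
The first term is $o(\epsilon^{\tau_0\alpha_0})$ since $p>1$. For the second, \eqref{beta1} gives $1+q\tau_0\alpha_0=\tau_0\alpha_0(2-(p-q))+q\tau_0\alpha_0$, and this is $>\tau_0\alpha_0$ because $2+2q-p>1$. Hence the exterior part is $o(\epsilon^{\tau_0\alpha_0})$.

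\textbf{Ball parts.} On $B(\xi_{i\epsilon},r)$ the kernel $G(x,\cdot)$ is smooth, because $x$ stays away from that ball. We expand
\[
G(x,y)=G(x,\xi_{i\epsilon})+O(|y-\xi_{i\epsilon}|).
\]
Proposition \ref{prop:main} gives the bound $u_\epsilon\le CW_{\delta_i,\xi_{i\epsilon}}$. By Lemma \ref{Lem6.1}, $U_{i\epsilon}\to W$ locally, and this bound supplies dominated convergence. Rescaling, we get
\[
\int_{B(\xi_{i\epsilon},r)}u_\epsilon^p=\delta_i^{\tau_0}\int U_{i\epsilon}^p\,\mathbf 1_{B_{r/\delta_i}}=\delta_i^{\tau_0}\bigl(\mathcal A_0+o(1)\bigr),
\]
using $W^p\in L^1$. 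The first-moment error is $\delta_i^{\tau_0}\cdot\delta_i\int_{B_{r/\delta_i}}|z|W^p=o(\delta_i^{\tau_0})$, since $|z|W^p\sim|z|^{-N-1}$ is integrable.

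The $\epsilon u^q$ term on the ball is
\[
\epsilon\,\delta_i^{N-q\tau_0}\int_{B_{r/\delta_i}}W^q
=\delta_i^{\tau_0}\,\epsilon\,\delta_i^{\tau_0(2-(p-q))}\,\delta_i^{-2\tau_0}\int_{B_{r/\delta_i}}W^q .
\]
This computation is the one step needing care. By \eqref{beta1}, $\epsilon\,\delta_i^{\tau_0(p-q)}\approx\delta_i^{2\tau_0}$, i.e.\ $\epsilon\approx\delta_i^{\tau_0(2-(p-q))}$. The integral $\int_{B_{r/\delta_i}}W^q$ is bounded by the case analysis displayed in the proof of Lemma \ref{outside-2}. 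Combining these, the term is $\delta_i^{\tau_0}\cdot O(\delta_i^{(N-2)q-2})$ when $q>N/(N-2)$, possibly with a logarithm, and $\delta_i^{\tau_0}\cdot O(\delta_i^{N-2-\tau_0(p-q)}\cdot\dots)$ otherwise. It is $o(\delta_i^{\tau_0})$ exactly because $2\tau_0(q+1)>N$ and $2\tau_0>\tau_0(p-q)$, which is where the restriction $q>1$ for $N=4$ enters. Equivalently, $\epsilon\int_{B_r}u^q$ is $o(\epsilon^{\tau_0\alpha_0})$.

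\textbf{Assembling and upgrading to $C^2$.} Summing the pieces and using $\lambda_{i\epsilon}^{\tau_0}G(x,\xi_{i\epsilon})=\lambda_{i\epsilon}^{\tau_0}G(x,\xi_i)+o(1)$ gives the pointwise statement uniformly on $K$. For $C^2$ convergence, we differentiate the representation in $x$ up to second order. On the balls $\nabla_x^kG(x,y)$ is smooth, so the same argument applies verbatim. In the exterior region, we instead use elliptic estimates: the function $w_\epsilon=u_\epsilon/\epsilon^{\tau_0\alpha_0}$ satisfies
\[
-\Delta w_\epsilon=\epsilon^{-\tau_0\alpha_0}\bigl(u_\epsilon^p+\epsilon u_\epsilon^q\bigr)\quad\text{on }\Sigma_{r/2}.
\]
Its right-hand side tends to zero in $L^\infty$ by the exponent computation above, and the same limit holds near $\partial\Omega$. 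Schauder estimates then give $C^{2,\alpha}$ bounds, and $C^2$ convergence to the limit follows, including up to $\partial\Omega$ since $w_\epsilon=0$ there.
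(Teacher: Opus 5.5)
Your argument is correct and follows the same route as the paper: Green representation, the exterior region controlled by Lemma~\ref{outside-2}, the ball contributions obtained by rescaling with the bubble bound as dominating function, the $\epsilon u_\epsilon^q$ mass shown negligible, and elliptic regularity for the $C^2$ upgrade. Expanding $G(x,\cdot)$ about $\xi_{i\epsilon}$ and using $\int_{\mathbb{R}^N}|z|W^p\,dz<\infty$ is a slightly cleaner substitute for the paper's bound $I_3\le Cr$ with $r$ arbitrary.

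There is one slip in the step you flagged. Your displayed identity is false, since its right side equals $\epsilon\,\delta_i^{q\tau_0-2}$, and the two cases are interchanged. Correctly, $\epsilon\,\delta_i^{N-q\tau_0}=\delta_i^{\tau_0}\,\epsilon\,\delta_i^{N-(q+1)\tau_0}\approx\delta_i^{\tau_0}\,\delta_i^{N-2}$. So the term is $\delta_i^{\tau_0}O(\delta_i^{N-2})$ for $q>N/(N-2)$, with a logarithm at equality, and $\delta_i^{\tau_0}O(\delta_i^{(N-2)q-2})$ for $q<N/(N-2)$. This is $o(\delta_i^{\tau_0})$ precisely because $(N-2)q>2$, i.e.\ $2\tau_0(q+1)>N$.
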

\begin{proof}
Recalling \eqref{green1}, for $i\in\{1,\cdots,m\}$, we have that
\begin{equation*}
\begin{split}
\epsilon^{-\alpha_0\tau_0}u_{\epsilon}(x)=&
\epsilon^{-\alpha_0\tau_0}\int_{\Sigma_{r/2}}\big(u_{\epsilon}^{p}(y)+\epsilon u_{\epsilon}^{q}(y)\big)G(x,y)dy+\epsilon^{-\alpha_0\tau_0}\sum_{i=1}^mG(x,\xi_i)\int_{ B(\xi_{i},r/2)}\big(u_{\epsilon}^{p}(y)+\epsilon u_{\epsilon}^{q}(y)\big)dy\\&
+\epsilon^{-\alpha_0\tau_0}\sum_{i=1}^m\int_{ B(\xi_{i},r/2)}\big(u_{\epsilon}^{p}(y)+\epsilon u_{\epsilon}^{q}(y)\big)\big(G(x,y)-G(x,\xi_i)\big)dy:=I_1+I_2+I_3.
\end{split}
\end{equation*}
By Lemma \ref{outside-2}, we obtain
\begin{equation*}
|I_1|\leq C\big(\epsilon^{\frac{(N-2)(p-1)}{N-6+q(N-2)}}+\epsilon^{\frac{(N-2)(q-1)}{N-6+q(N-2)}+1}\big)\int_{\Omega}G(x,y)dy=o(1)
\end{equation*}
as $\epsilon\rightarrow0$. Using Lemma \ref{Lem6.1} by dominated convergence, we get
\begin{equation*}
\begin{split}
I_2&=\sum_{i=1}^mG(x,\xi_i)\int_{ B(0,\lambda_{i\epsilon}^{-1}\epsilon^{-\alpha_0}r/2)}\Big[\lambda_{i\epsilon}^{\frac{N-2}{2}}U_{i\epsilon}^{p}(z)+\epsilon^{\alpha_0(N-(q+1)\tau_0)+1}\lambda_{i\epsilon}^{N-q\tau_0} U_{i\epsilon}^{q}(z)\Big]dz
=\lambda_{i\epsilon}^{\frac{N-2}{2}}\sum_{i=1}^mG(\xi,\xi_i)\mathcal{A}_{0}+o(1)
\end{split}
\end{equation*}
as $\epsilon\rightarrow0$ and where $N>(q+1)\tau_0$.
Here have used estimate the total mass of the perturbation. Since $q\ge1$
and $m$ is fixed, the preceding bound, followed by the change of
variables $y=\xi_{i\epsilon}+\epsilon^{\alpha_0}z$, yields
\begin{align*}
\epsilon\epsilon^{-\alpha_0\tau_0}\int_\Omega u_\epsilon^q\,dy
&\le C\epsilon\epsilon^{\alpha_0(N-\tau_0(q+1))}
\int_{B(0,(1+diam(\Omega))/(\epsilon^{\alpha_0}))}(1+|z|)^{-(N-2)q}\,dz\\
&=C\epsilon^{(N-2)\alpha_0}
\int_{B(0,(1+diam(\Omega))/(\epsilon^{\alpha_0}))}(1+|z|)^{-(N-2)q}\,dz,
\end{align*}
where $D>0$ is fixed and sufficiently large. For $R\ge2$, polar
coordinates give
\[
\int_{B(0,R)}(1+|z|)^{-(N-2)q}\,dz
\le C
\begin{cases}
R^{N-(N-2)q},&q<\dfrac{N}{N-2},\\[3pt]
\log R,&q=\dfrac{N}{N-2},\\[3pt]
1,&q>\dfrac{N}{N-2}.
\end{cases}
\]
Consequently,
\[
\epsilon\epsilon^{-\alpha_0\tau_0}\int_\Omega u_\epsilon^q\,dy
\le C
\begin{cases}
\epsilon^{((N-2)q-2)\alpha_0},&q<\dfrac{N}{N-2},\\[3pt]
\epsilon^{(N-2)\alpha_0}|\log\epsilon^{\alpha_0}|,&q=\dfrac{N}{N-2},\\[3pt]
\epsilon^{(N-2)\alpha_0},&q>\dfrac{N}{N-2},
\end{cases}
\quad\longrightarrow0.
\]
For $I_3$, combining the above estimates, there holds that
\begin{equation*}
\begin{split}
I_3&=\epsilon^{-\alpha_0\tau_0}\sum_{i=1}^m\int_{ B(\xi_{i\epsilon},r/2)}\big[\langle\nabla G(x,\xi_i),y-\xi_i\rangle+O(|y-\xi_i|^2)\big]\big(u_{\epsilon}^{p}(y)+\epsilon u_{\epsilon}^{q}(y)\big)\leq Cr.
\end{split}
\end{equation*}
Combining these estimates, we conclude the validity of \eqref{AS-eigenvalues-1} by $r>0$ is arbitrary and the elliptic regularity.
\end{proof}

\begin{lem}\label{1-AS-eigenvalues-1}
Assume that $\{v_{l\epsilon}\}_\epsilon$ is a family of eigenfunctions of \eqref{ele-2} corresponding to $\lambda_{l,\epsilon}$ and $\|v_{l,\epsilon}\|_{\infty}=1$. Then for any $r>0$ we have
$v_{l\epsilon}\rightarrow0$ as $\epsilon\rightarrow0$ uniformly for $x\in\Sigma_r$.
\end{lem}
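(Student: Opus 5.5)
The plan is to copy the argument of Lemma~\ref{outside-1}, now for the linear equation satisfied by $v_{l\epsilon}$:
\[
-\Delta v_{l\epsilon}=c_\epsilon(x)v_{l\epsilon},\qquad c_\epsilon:=\mu_{l\epsilon}\bigl(pu_\epsilon^{p-1}+\epsilon qu_\epsilon^{q-1}\bigr).
\]
By Theorem~\ref{THm1-2}, $\mu_{l\epsilon}$ is bounded for fixed $l$. So everything rests on two facts: the potential $c_\epsilon$ is small away from the blow-up points, and $v_{l\epsilon}$ is small in an integral sense there.

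\textbf{Step 1: the potential is small on $\Sigma_{r/4}$.} By Lemma~\ref{outside-2}, $u_\epsilon=O(\epsilon^{(N-2)\alpha_0/2})$ uniformly on $\Sigma_{r/4}$. Since $p>1$ and $q\ge1$ (for $q=1$ the second term is just $\epsilon$), we get $\|c_\epsilon\|_{L^\infty(\Sigma_{r/4})}\to0$. This is stronger than the $L^{N/(2-\delta')}$ smallness used in Lemma~\ref{outside-1}.

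\textbf{Step 2: $v_{l\epsilon}\to0$ in $L^2(\Sigma_{r/4})$.} Since $|v_{l\epsilon}|\le1$ and $\Omega$ is bounded, $v_{l\epsilon}$ is bounded in $L^2(\Omega)$. Testing the equation with $v_{l\epsilon}$ and using $\|c_\epsilon\|_{L^{N/2}}\le C$ from Lemma~\ref{bn:lem-potential-decomposition}, I expect to get $\|\nabla v_{l\epsilon}\|_2^2=\int c_\epsilon v_{l\epsilon}^2\le C\|v_{l\epsilon}\|_{L^{2^*}}^{2}$. This inequality alone does not bound the energy, so I would instead compare with the energy-normalized eigenbasis. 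Apply Proposition~\ref{bn:prop-strong-profile} to the normalized function $v_{l\epsilon}/\|\nabla v_{l\epsilon}\|_2$, which lies in the span of eigenfunctions at level $\mu_{l\epsilon}$ and so decomposes near the bubbles. The decomposition \eqref{bn:eq-strong-energy-decomposition} says this normalized function is $o(1)$ in $H_0^1(\Sigma_{r/4})$, because each $\chi_iT_{i\epsilon}\widehat V_{li}$ has vanishing energy and $L^{2^*}$ norm outside $B(\xi_{i\epsilon},r/4)$. Separately, a local Moser/De Giorgi estimate near a maximum point of $|v_{l\epsilon}|$, where the value is $1$, bounds the energy normalization from above. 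The approach I would try is a contradiction argument:
\begin{itemize}
\item suppose $\sup_{\Sigma_r}|v_{l\epsilon}|\ge c_0>0$ along a sequence;
\item by Step 1 and elliptic estimates, $v_{l\epsilon}$ is then bounded in $C^{1,\alpha}(\Sigma_{r/2})$;
\item extract a limit $v$ with $-\Delta v=0$ in $\Sigma_{r/2}$, $v=0$ on $\partial\Omega$, and $\|v\|_{L^\infty(\Sigma_r)}\ge c_0$.
\end{itemize}

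\textbf{Step 3: the Harnack-type conclusion.} To conclude, it suffices to show $v\equiv0$ on $\Sigma_{r/2}$. Once the $L^2(\Sigma_{r/4})$ smallness is known, the local maximum principle (Lemma~7 of \cite{HANZCHAO}, or Moser iteration with the bounded, small potential) gives
\[
\|v_{l\epsilon}\|_{L^\infty(\Sigma_r)}\le C\|v_{l\epsilon}\|_{L^2(\Sigma_{r/2})}\to0,
\]
which finishes the proof.

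\textbf{Main obstacle.} The hardest step is Step 2, specifically ruling out that the $L^\infty$-normalized eigenfunction carries non-negligible mass away from the bubbles. The case I would treat first is when the maximum point of $|v_{l\epsilon}|$ lies in $\Sigma_{r}$. Then the rescaled profiles $V_{li\epsilon}$ would all be small, and the energy-normalized function would have essentially all its energy outside the bubbles. This contradicts \eqref{bn:eq-strong-energy-decomposition}, provided the energy normalization is comparable. That comparability is the technical point I would verify via the $L^{N/2}$ bound on $c_\epsilon$ and Sobolev's inequality.
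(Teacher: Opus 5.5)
Your proposal has a genuine gap. Everything is reduced to Step 2, the smallness of $v_{l\epsilon}$ in $L^2(\Sigma_{r/4})$, and Step 2 is never proved. The tools you suggest for it do not supply it.
\begin{itemize}
\item Pairing $c_\epsilon\in L^{N/2}$ with $v_{l\epsilon}\in L^{2^*}$ only gives the tautology $\|\nabla v_{l\epsilon}\|_2^2\le C\|\nabla v_{l\epsilon}\|_2^2$, as you note.
\item A Moser/De Giorgi bound at a point where $|v_{l\epsilon}|=1$ controls $1$ by an integral norm of $v_{l\epsilon}$. That bounds $\|\nabla v_{l\epsilon}\|_2$ from \emph{below}, which is the wrong direction for transferring \eqref{bn:eq-strong-energy-decomposition} from $\widehat v_{l\epsilon}$ to the $L^\infty$-normalized function.
\item Your contradiction argument stops at a harmonic limit on $\Sigma_{r/2}$ that vanishes on $\partial\Omega$. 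Such a function need not be zero, because its values on the inner spheres $\partial B(\xi_i,r/2)$ are uncontrolled.
\end{itemize}

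The missing idea is to pair the $L^\infty$ normalization with the $L^1$ norm of the potential, which tends to zero. One has $\int_\Omega\bigl(pu_\epsilon^{p-1}+\epsilon qu_\epsilon^{q-1}\bigr)\,dx\to0$, either by \eqref{eq:bound} or because $u_\epsilon\rightharpoonup0$ in $H_0^1(\Omega)$ and $p-1<2^*$.

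The paper uses exactly this inside the Green representation \eqref{Vl}, with $|v_{l\epsilon}|\le1$ throughout. On $\Sigma_{r/2}$ the weight is uniformly small by Lemma \ref{outside-2}. On each $B(\xi_i,r/2)$ one has $G(x,y)\le\mathcal B(r)$ for $x\in\Sigma_r$, and the weight has $o(1)$ mass there by \eqref{eq:bound}.

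The same fact closes your own route immediately: $\|\nabla v_{l\epsilon}\|_2^2\le\mu_{l\epsilon}\int_\Omega\bigl(pu_\epsilon^{p-1}+\epsilon qu_\epsilon^{q-1}\bigr)\to0$. Hence $v_{l\epsilon}\to0$ in $L^2(\Omega)$, and your Step 3 finishes; Proposition \ref{bn:prop-strong-profile} is not needed at all.

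Alternatively, repair the compactness argument. Extract $C^1$ limits on every $\Sigma_\rho$ and diagonalize, obtaining a bounded harmonic function on $\Omega\setminus\{\xi_1,\ldots,\xi_m\}$ that vanishes on $\partial\Omega$. Isolated singularities of bounded harmonic functions are removable, so this function is harmonic in $\Omega$ with zero boundary values, hence identically zero. That contradicts $\sup_{\Sigma_r}|v_{l\epsilon}|\ge c_0$.
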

\begin{proof}
For $x\in\Sigma_{r}$, using the Green's representation formula we can write
\begin{equation}\label{Vl}
\begin{split}
\frac{v_{l\epsilon}(x)}{\mu_{l\epsilon}}&=
\int_{\Sigma_{r/2}}\big(pu_{\epsilon}^{p-1}(y)+\epsilon q u_{\epsilon}^{q-1}(y)\big)v_{l,\epsilon}(y)G(x,y)dy\\&
\hspace{3mm}+\sum_{i=1}^k\int_{ B(\xi_{i},r/2)}\big(pu_{\epsilon}^{p-1}(y)+\epsilon q u_{\epsilon}^{q-1}(y)\big)v_{l\epsilon}(y)G(x,y)dy.
\end{split}
\end{equation}
Owing to Lemma \ref{outside-2},
\begin{equation}\label{V2}
\Big|\int_{\Sigma_{r/2}}\big(pu_{\epsilon}^{p-1}(y)+\epsilon q u_{\epsilon}^{q-1}(y)\big)v_{l,\epsilon}(y)G(x,y)dy\Big|=O(\epsilon^{\frac{4}{N-6+q(N-2)}}+\epsilon^{\frac{(N-2)(q-1)}{N-6+q(N-2)}+1}).
\end{equation}
Note that, we have used the hypothesis $\|v_{l\epsilon}\|_{\infty}=1$.
Moreover, using \eqref{eq:bound}, we derive
\begin{equation}\label{V3}
\begin{split}
&\Big|\int_{ B(\xi_{i},r/2)}\big(pu_{\epsilon}^{p-1}(y)+\epsilon q u_{\epsilon}^{q-1}(y)\big)v_{l,\epsilon}(y)G(x,y)dy\Big|
\\ \leq &C\mathcal{B}(r)\int_{B(0,r)}\Big[W^{\frac{4}{N-2}}_{0,\lambda_{i\epsilon}\epsilon^{\alpha_0}}
+\epsilon W^{q-1}_{0,\lambda_{i\epsilon}\epsilon^{\alpha_0}}\Big]dy=o(1)
\end{split}
\end{equation}
for any $i\in\{1,\cdots,m\}$ as $\epsilon\rightarrow0$, and
where $\mathcal{B}(r)$ is defined in the proof of Lemma \ref{outside-2}. Thus, combining \eqref{Vl} and \eqref{V2}-\eqref{V3}, we conclude that $V_{l,\epsilon}=o(1)$ as $\epsilon\rightarrow0$ uniformly for $x\in\Sigma_r$.
\end{proof}

Now, we recall $\Omega_{\epsilon}=\epsilon^{-\alpha_0}\Omega,\hspace{2mm}X_{i\epsilon}=\epsilon^{-\alpha_0}\xi_{i\epsilon}$ in \eqref{UU} and define the rescaled solutions $V_{l\epsilon}$ and $d_{\epsilon}$ as follows.
\begin{equation}\label{XX}
V_{l\epsilon}(x)=v_{l\epsilon}(\epsilon^{\alpha_0}x)\hspace{2mm}\mbox{for}\hspace{2mm} x\in\Omega_{\epsilon},\hspace{2mm}d_{\epsilon}(x)=\min\limits_{i}|x-X_{i\epsilon}|,
\end{equation}
Recalling Proposition \ref{existence-0}, it gives a fixed number $m$ of concentration points $\xi_{i\epsilon}\rightarrow\xi_i\in\Omega$ and $\lambda_{i\epsilon}\rightarrow\lambda_i$ as $\epsilon\rightarrow0$. Then there exists $c_0>0$ such that for $\epsilon$ small enough, there holds that
\begin{equation}\label{en}
dist(\xi_{i\epsilon},\partial\Omega)\geq c_0,\quad|\xi_{i\epsilon}-\xi_{j\epsilon}|\geq c_0\hspace{2mm}\mbox{for}\hspace{2mm}i\neq j,\quad c_0\leq\lambda_{i\epsilon}\leq c_0^{-1}.
\end{equation}
We have the following a sharp decay estimate of the eigenfunctions.
\begin{lem}\label{3-AS-eigenvalues-1}
Denote $\langle d_{\epsilon}(x)\rangle=(1+|d_{\epsilon}(x)|^2)^{1/2}$ with $d_{\epsilon}(x)=\min\limits_{i}|x-X_{i\epsilon}|$.
There exist a positive constant $C_l>0$, independent of $\epsilon$, such that
\begin{equation}\label{V4}
|V_{l\epsilon}(x)|\leq C_l \sum_{i=1}^{m}\langle x-X_{i\epsilon}\rangle^{2-N}\leq C_l\langle d_{\epsilon}(x)\rangle^{2-N}\quad\mbox{in}\quad \Omega_{\epsilon}.
\end{equation}
By rescaling, it can be shown that it is equivalent to
\begin{equation}\label{V5}
|v_{l\epsilon}(x)|\leq
C_l\sum_{i}^{m}\left(\frac{\epsilon^{2\alpha_0}}{\epsilon^{2\alpha_0}+|x-\xi_{i\epsilon}|^2}\right)^{(N-2)/2},
\end{equation}
for some $r>0$ and where the constant $C_l$ may depend on the fixed $q$, $l$ and geometry constants.
In particular, for every fixed $r>0$,
\begin{equation}\label{V8}
\sup_{\substack{x\in\Omega\\\min\limits_i|x-\xi_{i\epsilon}|\ge r}}
|v_{\ell\epsilon}(x)|\le C_{l,r}\epsilon^{\alpha_0(N-2)}.
\end{equation}
\end{lem}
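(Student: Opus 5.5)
We will prove \eqref{V4} by a comparison (barrier) argument for the rescaled eigenfunction equation in $\Omega_\epsilon$. Writing $\widehat U_\epsilon$ as in \eqref{UU}, the function $V_{l\epsilon}$ satisfies
\[
-\Delta V_{l\epsilon}=c_\epsilon(x)V_{l\epsilon}\quad\text{in }\Omega_\epsilon,\qquad V_{l\epsilon}=0\ \text{on }\partial\Omega_\epsilon,
\]
with
\[
c_\epsilon:=\mu_{l\epsilon}\bigl(p\widehat U_\epsilon^{p-1}+q\epsilon^{\alpha_0(N-2)}\widehat U_\epsilon^{q-1}\bigr).
\]
Here $\mu_{l\epsilon}$ is bounded by Theorem~\ref{THm1-2}. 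By \eqref{tidu}, and since $1+|x-X_{i\epsilon}|\le C\epsilon^{-\alpha_0}$ on $\Omega_\epsilon$, we get
\[
0\le c_\epsilon(x)\le C\sum_i\langle x-X_{i\epsilon}\rangle^{-4}+C\epsilon^{\alpha_0(N-2)}\sum_i\langle x-X_{i\epsilon}\rangle^{-(q-1)(N-2)}\le C\sum_i\langle x-X_{i\epsilon}\rangle^{-\sigma_q}.
\]
This is the bound \eqref{sigamaq}, with $\sigma_q=\min\{4,(N-2)q\}>2$ in the stated ranges. The second term is absorbed using $\epsilon^{\alpha_0(N-2)}\le C\langle\cdot\rangle^{-(N-2)}$. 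We start from $|V_{l\epsilon}|\le1$, which is the $L^\infty$ normalization.

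\textbf{Step 1 (iteration below $N-2$).} Fix $0<\theta<\sigma_q-2$. Suppose that $|V_{l\epsilon}|\le C_k\sum_i\langle x-X_{i\epsilon}\rangle^{-\gamma_k}$ with $\gamma_k<N-2-\theta$. Then
\[
|c_\epsilon V_{l\epsilon}|\le C\sum_i\langle x-X_{i\epsilon}\rangle^{-\gamma_k-\sigma_q}.
\]
This uses the separation \eqref{en}: cross terms are dominated because near $X_{i\epsilon}$ the other factors are $O(\epsilon^{\alpha_0\cdot})$, hence bounded by the same-center weights. For $0<\gamma<N-2$ the radial function $\psi_\gamma=\langle x\rangle^{-\gamma}$ satisfies
\[
-\Delta\psi_\gamma\ge c_\gamma\langle x\rangle^{-\gamma-2}
\]
outside a large ball. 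Take $\gamma_{k+1}=\gamma_k+\theta$. Then $\gamma_k+\sigma_q\ge\gamma_{k+1}+2$, so $C\sum_i\psi_{\gamma_{k+1}}(x-X_{i\epsilon})$ is a supersolution of $-\Delta w\ge|c_\epsilon V_{l\epsilon}|$ on $\Omega_\epsilon\setminus\bigcup_iB(X_{i\epsilon},R_0)$. It dominates $|V_{l\epsilon}|\le1$ on the spheres $\partial B(X_{i\epsilon},R_0)$, and $V_{l\epsilon}=0$ on $\partial\Omega_\epsilon$. The maximum principle, applied to $\pm V_{l\epsilon}$, then gives the next bound. Alternatively, one can write $V_{l\epsilon}$ via the Green function of $\Omega_\epsilon$, bound $G_{\Omega_\epsilon}(x,y)\le C|x-y|^{2-N}$, and use the convolution estimate $\int|x-y|^{2-N}\langle y\rangle^{-a}dy\le C\langle x\rangle^{2-a}$ for $2<a<N$. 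Either way, after finitely many steps we reach some exponent $\gamma_*\in(N-2-\theta,N-2)$, with constants depending only on $l,q$ and the geometry.

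\textbf{Step 2 (endpoint, the main obstacle).} The pure-power barrier fails at $\gamma=N-2$, because $|x|^{2-N}$ is harmonic. We plan a corrected barrier of the form
\[
\Phi(x)=\sum_i\Bigl(|x-X_{i\epsilon}|^{2-N}-|x-X_{i\epsilon}|^{2-N-\theta'}\Bigr)
\]
on $|x-X_{i\epsilon}|\ge R_0$, with $0<\theta'<\gamma_*+\sigma_q-N$. This choice is admissible because $\gamma_*+\sigma_q>N$, which holds once $\theta<\sigma_q-2$ and $\gamma_*>N-2-\theta$ are arranged suitably. The correction term gives
\[
-\Delta\Phi=c\sum_i|x-X_{i\epsilon}|^{-N-\theta'}>0,
\]
which dominates $|c_\epsilon V_{l\epsilon}|\lesssim\sum_i\langle x-X_{i\epsilon}\rangle^{-\gamma_*-\sigma_q}$, while $\Phi\approx\sum_i|x-X_{i\epsilon}|^{2-N}$. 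Comparison of $\pm V_{l\epsilon}$ with $C\Phi$ on the exterior region then gives \eqref{V4}. On the bounded cores $\langle\cdot\rangle$ is comparable to $1$, so $|V_{l\epsilon}|\le1$ suffices there. A cleaner alternative is the Green-representation route: for $a=\gamma_*+\sigma_q>N$, the convolution $\int|x-y|^{2-N}\langle y\rangle^{-a}dy\le C\langle x\rangle^{2-N}$ gives the endpoint directly. I would try this first and keep the barrier as a backup. The delicate point is uniformity of constants in $\epsilon$, and this requires care with the multi-center cross terms.

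\textbf{Step 3 (rescaling).} Rescaling by $x\mapsto\epsilon^{\alpha_0}x$ converts \eqref{V4} into \eqref{V5}, since $\langle(x-\xi_{i\epsilon})/\epsilon^{\alpha_0}\rangle^{2-N}$ is comparable to $\bigl(\epsilon^{2\alpha_0}/(\epsilon^{2\alpha_0}+|x-\xi_{i\epsilon}|^2)\bigr)^{(N-2)/2}$. If $\min_i|x-\xi_{i\epsilon}|\ge r$, each summand is at most $(\epsilon^{2\alpha_0}/r^2)^{(N-2)/2}$, and this yields \eqref{V8}.
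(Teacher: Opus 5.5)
Your proposal is correct and follows the paper's own proof essentially step by step. Both arguments bound $c_\epsilon\le C d_\epsilon^{-\sigma_q}$, run a finite comparison iteration from the $L^\infty$ normalization with decay increments below $\sigma_q-2$, reach the endpoint exponent $N-2$ with the corrected barrier $\sum_i\bigl(r_i^{2-N}-r_i^{2-N-\zeta}\bigr)$, and then rescale; the only cosmetic differences are that you use weights $\langle x-X_{i\epsilon}\rangle^{-\gamma}$ instead of $r_i^{-t_j}$, and you take the bound on $\mu_{l\epsilon}$ from Theorem~\ref{THm1-2} rather than from a direct min--max test.
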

\begin{proof}
Its proof is deferred to Section \ref{sharp}.
\end{proof}

As a consequence of the previous Lemma, we conclude this section with the following useful result.
\begin{cor}\label{AS-eigenvalues-2}
It holds that $v_{l\epsilon}=O(\epsilon^{(N-2)\alpha_0})$ in $\Sigma_r$ for some $r>0$ small.
\end{cor}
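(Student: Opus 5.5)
This is a direct consequence of Lemma \ref{3-AS-eigenvalues-1}, and the plan is to read it off from the pointwise bound \eqref{V5} or, equivalently, from \eqref{V8}. Fix $r>0$ small enough that the balls $B(\xi_i,r)$ lie inside $\Omega$ and are pairwise disjoint; this is possible by \eqref{en}. By the convergence $\xi_{i\epsilon}\to\xi_i$, for $\epsilon$ small we have $|\xi_{i\epsilon}-\xi_i|<r/2$ for every $i$. Hence any $x\in\Sigma_r$ satisfies $|x-\xi_{i\epsilon}|\ge |x-\xi_i|-|\xi_i-\xi_{i\epsilon}|\ge r/2$ for all $i$. Thus $\Sigma_r$ is contained in the set $\{x\in\Omega:\min_i|x-\xi_{i\epsilon}|\ge r/2\}$ appearing in \eqref{V8}.

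Next, on this set each summand in \eqref{V5} is bounded by
\[
\left(\frac{\epsilon^{2\alpha_0}}{\epsilon^{2\alpha_0}+r^2/4}\right)^{\frac{N-2}{2}}\le (2/r)^{N-2}\epsilon^{\alpha_0(N-2)}.
\]
Summing over the $m$ centres gives
\[
|v_{l\epsilon}(x)|\le m\,C_l\,(2/r)^{N-2}\epsilon^{\alpha_0(N-2)}\qquad\text{uniformly for }x\in\Sigma_r .
\]
This is exactly $v_{l\epsilon}=O(\epsilon^{(N-2)\alpha_0})$ on $\Sigma_r$, with a constant independent of $\epsilon$ because $C_l$ in Lemma \ref{3-AS-eigenvalues-1} is.

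There is no real obstacle once Lemma \ref{3-AS-eigenvalues-1} is granted; all of the difficulty is in that lemma's barrier and comparison argument, which is deferred to Section \ref{sharp}. The only point needing care here is the passage from the fixed limit points $\xi_i$, which define $\Sigma_r$, to the moving centres $\xi_{i\epsilon}$ that appear in \eqref{V5}. That is handled by halving the radius as above.
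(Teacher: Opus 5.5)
Your proposal is correct and matches the paper, which states this corollary as an immediate consequence of Lemma \ref{3-AS-eigenvalues-1} (the bounds \eqref{V5} and \eqref{V8}). Halving the radius to pass from the fixed points $\xi_i$ defining $\Sigma_r$ to the moving centres $\xi_{i\epsilon}$ supplies the one detail the paper leaves implicit.
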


\section{The proof of Theorem \ref{Figalli-1}}\label{asymptotic}
In this section, we are devoted to proving Theorem \ref{Figalli-1}.
The following result will be used to complete the proof of Theorem \ref{Figalli-1}.
\begin{lem}\label{cobvergence-1}
Assume that $\{v_{l\epsilon}\}_\epsilon$ is a family of eigenfunctions of \eqref{ele-2} corresponding to $\mu_{l\epsilon}$ and $\|v_{l\epsilon}\|_{\infty}=1$. Then there exists at least one $i_0\in\{1,\cdots,k\}$ and fixed $l$, such that
$V_{li_0\epsilon}\rightharpoonup V_0$ weakly in $\mathcal{D}^{1,2}(\mathbb{R}^N)$ as $\epsilon\rightarrow0$, where the
nonzero function
$V_0$ solves
\begin{equation}\label{WP}
-\Delta V_i=\mu_l pW^{p-1}V_i\hspace{2mm}\mbox{in}\hspace{2mm}\mathbb{R}^N.
\end{equation}
\end{lem}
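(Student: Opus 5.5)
The approach is a blow-up argument. Since $\|v_{l\epsilon}\|_{\infty}=1$, the rescaled functions $V_{li\epsilon}$ are bounded by one, and we need to show that at least one of them has a nontrivial limit. Choose $x_\epsilon\in\Omega$ with $|v_{l\epsilon}(x_\epsilon)|=1$. By Lemma~\ref{1-AS-eigenvalues-1}, $v_{l\epsilon}\to0$ uniformly on every $\Sigma_r$. Hence, after passing to a subsequence, $x_\epsilon\to\xi_{i_0}$ for some index $i_0$. We then write $x_\epsilon=\xi_{i_0\epsilon}+\lambda_{i_0\epsilon}\epsilon^{\alpha_0}y_\epsilon$. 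The aim is to show that $|y_\epsilon|$ stays bounded, so that the maximum point survives in the limit.

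Boundedness of $y_\epsilon$ follows from the sharp decay estimate of Lemma~\ref{3-AS-eigenvalues-1}. By \eqref{V5}, $1=|v_{l\epsilon}(x_\epsilon)|\le C_l\sum_i\bigl(\epsilon^{2\alpha_0}/(\epsilon^{2\alpha_0}+|x_\epsilon-\xi_{i\epsilon}|^2)\bigr)^{(N-2)/2}$. The terms with $i\neq i_0$ are $O(\epsilon^{\alpha_0(N-2)})$ because the centers are separated, as in \eqref{en}. It follows that $C_l\langle y_\epsilon\rangle^{2-N}\ge 1/2$, so $|y_\epsilon|\le R_0$ with $R_0$ independent of $\epsilon$. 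Passing to a further subsequence, $y_\epsilon\to y_0$.

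Next we extract the limit. The function $V_{li_0\epsilon}$ satisfies
\[
-\Delta V_{li_0\epsilon}=\mu_{l\epsilon}\bigl(pU_{i_0\epsilon}^{p-1}+q\kappa_\epsilon U_{i_0\epsilon}^{q-1}\bigr)V_{li_0\epsilon},
\]
with $\kappa_\epsilon\to0$. Here $U_{i_0\epsilon}\le CW$ locally by Proposition~\ref{prop:main} and Lemma~\ref{Lem6.1}, and $|V_{li_0\epsilon}|\le1$. Local elliptic estimates therefore give $C^{1,\alpha}_{loc}$ compactness. For the $\mathcal D^{1,2}$ bound, we test the equation with $v_{l\epsilon}$. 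This gives $\|\nabla v_{l\epsilon}\|_2^2=\mu_{l\epsilon}\int f_\epsilon'(u_\epsilon)v_{l\epsilon}^2$. By \eqref{V5} and Lemma~\ref{bn:lem-potential-decomposition}, the right-hand side is bounded, since $\mu_{l\epsilon}\to\mu_l^0$ by Theorem~\ref{THm1-2}. Rescaling preserves the Dirichlet energy, so $V_{li_0\epsilon}$ is bounded in $\mathcal D^{1,2}$ (after extension by zero), and a subsequence converges weakly to some $V_0$.

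Finally we identify the limit. Passing to the limit in the weak formulation against $C_c^\infty$ test functions, we use $U_{i_0\epsilon}\to W$ locally and $\kappa_\epsilon U^{q-1}\to0$ locally, the latter holding also when $q=1$. This shows that $V_0$ solves \eqref{WP} with $\mu_l=\lim\mu_{l\epsilon}$. Nontriviality comes from the $C^1_{loc}$ convergence: $|V_0(y_0)|=\lim|V_{li_0\epsilon}(y_\epsilon)|=1$.

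The main obstacle is ruling out escape of the maximum point to infinity in the rescaled variables. This rests entirely on the decay bound \eqref{V5}, whose proof is deferred to Section~\ref{sharp}. If only the weaker Lemma~\ref{1-AS-eigenvalues-1} were available, one would instead argue by contradiction using the weighted energy identity \eqref{bn:eq-weighted-capture}. If all profiles vanished, the weighted mass $\int f_\epsilon'(u_\epsilon)\widehat v_{l\epsilon}^2$ would tend to $0$, contradicting \eqref{bn:eq-eigenbasis-normalization} together with the boundedness of $\mu_{l\epsilon}$. Passing between the $L^\infty$ normalization and the energy normalization then requires only the bounded ratio established above.
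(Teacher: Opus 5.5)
Your strategy is the one the paper intends: its proof is a one-line appeal to Lemmas \ref{1-AS-eigenvalues-1} and \ref{3-AS-eigenvalues-1} plus elliptic regularity. Several of your steps are sound: the localization of the maximum point, the use of \eqref{V5} to keep $y_\epsilon$ bounded, the $C^{1,\alpha}_{loc}$ compactness, the identification of the limit equation, and the nontriviality $|V_0(y_0)|=1$.

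The step that fails as written is the $\mathcal D^{1,2}$ bound. The rescaling \eqref{vie} has no prefactor, so it does not preserve the Dirichlet energy. Only $T_{i\epsilon}^{-1}$ does, and it carries the factor $(\lambda_{i\epsilon}\epsilon^{\alpha_0})^{(N-2)/2}$. Instead,
\[
\int_{\mathbb R^N}|\nabla V_{li\epsilon}|^2\,dz=(\lambda_{i\epsilon}\epsilon^{\alpha_0})^{2-N}\int_\Omega|\nabla v_{l\epsilon}|^2\,dx .
\]
You derive an $O(1)$ bound on $\|\nabla v_{l\epsilon}\|_2^2$ from $|v_{l\epsilon}|\le1$, the $L^{N/2}$ bound on $f_\epsilon'(u_\epsilon)$ and $\mu_{l\epsilon}\to\mu_l^0$. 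That bound only gives $\|\nabla V_{li_0\epsilon}\|_2^2\le C\epsilon^{-\alpha_0(N-2)}$, which provides no weak limit.

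The repair is to use \eqref{V5} quantitatively rather than just for boundedness: you need $\|\nabla v_{l\epsilon}\|_2^2=O(\epsilon^{\alpha_0(N-2)})$.
\begin{itemize}
\item Near each center, in rescaled variables, \eqref{tidu} and \eqref{V5} give $U^{p-1}V^2\le C\langle z\rangle^{-2N}$. Hence $\int_\Omega pu_\epsilon^{p-1}v_{l\epsilon}^2\le C\epsilon^{\alpha_0(N-2)}$.
\item The perturbative part satisfies $\epsilon q\int_\Omega u_\epsilon^{q-1}v_{l\epsilon}^2=O(\kappa_\epsilon\,\epsilon^{\alpha_0(N-2)})$, because $(N-2)(q+1)>N$ in the stated range.
\item The region away from the centers is negligible by Lemma \ref{outside-2} and \eqref{V8}.
\end{itemize}
With this, $V_{li_0\epsilon}$ is bounded in $\mathcal D^{1,2}(\mathbb R^N)$, and the rest of your argument goes through.

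The same scaling issue affects your fallback paragraph. Turning a nonzero energy-normalized profile from Proposition \ref{bn:prop-strong-profile} into a nonzero limit of $V_{li\epsilon}$ requires a positive lower bound on $(\lambda_{i\epsilon}\epsilon^{\alpha_0})^{-(N-2)/2}\|\nabla v_{l\epsilon}\|_2$. An upper bound does not provide this; it is essentially what the decay estimate supplies. So that alternative is not complete as sketched.
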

\begin{proof}
The derivation of the conclusion can be achieved in Lemmas \ref{1-AS-eigenvalues-1}-\eqref{3-AS-eigenvalues-1} and elliptic regularity.
\end{proof}

The following aims to establish the estimates of the eigenvalues $\mu_{l\epsilon}$.	
For $i=1,\dots,k$ and let us fix a small number $r>0$ such that for any $1\leq i\neq j\leq k$ and any $\epsilon>0$ small, there holds
$$
B(x_i,4r)\subset\Omega\quad\mbox{and}\quad B(x_i,4r)\cap B(x_j,4r)=\emptyset.
$$
We first define
\begin{equation}\label{pusai}
\psi_i(x)=\tilde{\psi}(x-x_{i}),
\end{equation}
where $\tilde{\psi}$ is a function in $C_{0}^{\infty}(B(0,3r))$ such that $\tilde{\psi}\equiv1$ in $B(0,2r)$, $0\leq\tilde{\psi}\leq1$ in $B(0,3r)$, and
\begin{equation}\label{varepsilon}
\bar{u}_{i,\epsilon}=\psi_iu_{\epsilon}\hspace{2mm}\mbox{and}\hspace{2mm}
\eta_{m,i,\epsilon}(x)=\psi_i(x)\frac{\partial u_{\epsilon}}{\partial x_m}(x),\quad m=1,\cdots,N,
\end{equation}
\begin{equation}\label{varep}
\eta_{N+1,i,\epsilon}(x)=\psi_i(x)\left((x-\xi_{i\epsilon})\cdot\nabla u_{\epsilon}+\frac{2}{p-1}u_{\epsilon}\right):=\psi_i(x)\omega_{i\epsilon}(x).
\end{equation}
We need some preliminary computations.
\begin{lem}\label{nabla}
It holds that
\begin{equation}\label{womiga}
-\Delta \omega_{i\epsilon}=\big(pu_{\epsilon}^{p-1}+\epsilon qu_{\epsilon}^{q-1}\big)\omega_{i\epsilon}+\frac{2(p-q)}{p-1}\epsilon u_{\epsilon}^q.
\end{equation}
\end{lem}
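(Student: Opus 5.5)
The identity \eqref{womiga} is a direct computation, and I would organize it around the two scaling-type commutator identities for the Laplacian. Write $\omega_{i\epsilon}=(x-\xi_{i\epsilon})\cdot\nabla u_\epsilon+\frac{2}{p-1}u_\epsilon$ and set $y=x-\xi_{i\epsilon}$. The first step is the pointwise formula
\[
\Delta\big(y\cdot\nabla u\big)=2\Delta u+y\cdot\nabla(\Delta u).
\]
It follows from $\partial_{jj}(y_k\partial_k u)=2\delta_{jk}\partial_{jk}u+y_k\partial_{kjj}u$. Here $u_\epsilon\in C^3$ in the interior, by elliptic regularity applied to \eqref{ele-1.1}, since $u_\epsilon>0$ and $t\mapsto t^p+\epsilon t^q$ is $C^1$ on $(0,\infty)$. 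Hence the identity holds classically in $\Omega$. For $q=1$ one may alternatively read \eqref{womiga} in the weak sense.

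Next I substitute $-\Delta u_\epsilon=u_\epsilon^p+\epsilon u_\epsilon^q$. This gives
\[
-\Delta(y\cdot\nabla u_\epsilon)=2(u_\epsilon^p+\epsilon u_\epsilon^q)+\big(pu_\epsilon^{p-1}+\epsilon q u_\epsilon^{q-1}\big)\,y\cdot\nabla u_\epsilon,
\]
together with
\[
-\Delta\Big(\frac{2}{p-1}u_\epsilon\Big)=\frac{2}{p-1}\big(u_\epsilon^p+\epsilon u_\epsilon^q\big).
\]

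The final step is bookkeeping. Replace $y\cdot\nabla u_\epsilon$ by $\omega_{i\epsilon}-\frac{2}{p-1}u_\epsilon$ inside the potential term. This produces $\big(pu_\epsilon^{p-1}+\epsilon qu_\epsilon^{q-1}\big)\omega_{i\epsilon}$ plus the leftover
\[
-\frac{2}{p-1}\big(pu_\epsilon^p+\epsilon q u_\epsilon^q\big).
\]
Collecting terms, the coefficient of $u_\epsilon^p$ is
\[
2+\frac{2}{p-1}-\frac{2p}{p-1}=0.
\]
This is the usual scale invariance of the pure critical power. The coefficient of $\epsilon u_\epsilon^q$ is
\[
2+\frac{2}{p-1}-\frac{2q}{p-1}=\frac{2(p-q)}{p-1}.
\]
This yields exactly \eqref{womiga}.

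There is no real obstacle here. The only point needing care is the regularity justification for differentiating the equation, namely the existence of $\nabla\Delta u_\epsilon$ when $q$ is close to $1$ and the nonlinearity is merely $C^{1}$. I would handle this by working in $\Omega$, where $u_\epsilon>0$ is bounded below on compact subsets, so that $u_\epsilon^q$ is smooth there.
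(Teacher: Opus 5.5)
Your computation is correct: the commutator identity $\Delta(y\cdot\nabla u)=2\Delta u+y\cdot\nabla\Delta u$, followed by the coefficient bookkeeping, gives the cancellation $2+\frac{2}{p-1}-\frac{2p}{p-1}=0$ and the defect coefficient $\frac{2(p-q)}{p-1}$. The paper gives no argument of its own and simply cites Lemma~5.1 of \cite{YZ}, so your direct calculation supplies exactly the omitted proof; your interior-smoothness justification from $u_\epsilon>0$ is the right one, and the weak-sense caveat for $q=1$ is unnecessary because $t^p+\epsilon t$ is smooth on $(0,\infty)$.
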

\begin{proof}
See Lemma 5.1 in \cite{YZ}.
\end{proof}	
We have the following the bilinear identities for $u_{\epsilon}$ and $v_{l\epsilon}$.
\begin{lem}\label{conseq}
It holds that
\begin{equation}\label{m0}
\int_{\partial B(\xi_{i\epsilon},r)}\Big(\frac{\partial u_{\epsilon}}{\partial\nu}v_{l\epsilon}-\frac{\partial v_{l\epsilon}}{\partial\nu}u_{\epsilon}\Big)dS_x=\int_{B(\xi_{i\epsilon},r)}\Big(\big(p\mu_{l\epsilon}-1\big)u_{\epsilon}^{p}+\epsilon\big(q\mu_{l\epsilon}-1\big) u_{\epsilon}^{q}\Big)v_{l\epsilon}dx
\end{equation}
and
\begin{equation}\label{489}
\begin{split}
			\int_{\partial B(\xi_{i\epsilon},r)}\Big(\frac{\partial \omega_{i\epsilon}}{\partial\nu}v_{l\epsilon}-\frac{\partial v_{l\epsilon}}{\partial\nu}\omega_{i\epsilon}\Big)dS_x&=\big(\mu_{l\epsilon}-1\big)\int_{B(\xi_{i\epsilon},r)}\big(pu_{\epsilon}^{p-1}+\epsilon qu_{\epsilon}^{q-1}\big)\omega_{i\epsilon}v_{l\epsilon}dx\\&\hspace{3mm}-\frac{2(p-q)}{p-1}\epsilon \int_{B(\xi_{i\epsilon},r)} u_{\epsilon}^qv_{l\epsilon}dx
		\end{split}
\end{equation}
for any $i=1,\cdots,k$ and any $r>0$ small enough,
where $\nu=\nu(x)$ denotes the unit outward normal to the boundary $\partial B(\xi_{i\epsilon},r)$.
\end{lem}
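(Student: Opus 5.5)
Both identities come from Green's second identity on the ball $B(\xi_{i\epsilon},r)$, once we know which equation each function satisfies there. For $r$ small, $B(\xi_{i\epsilon},r)\subset\Omega$. By elliptic regularity, $u_\epsilon$ and $v_{l\epsilon}$ are $C^2$ in the interior: $u_\epsilon$ is a bounded positive solution, and $v_{l\epsilon}$ solves a linear equation with bounded potential $pu_\epsilon^{p-1}+\epsilon q u_\epsilon^{q-1}$. For $q=1$ this potential is simply $pu_\epsilon^{p-1}+\epsilon$. So Green's formula applies classically. In each case we use
\[
\int_{\partial B}\Big(\frac{\partial f}{\partial\nu}g-\frac{\partial g}{\partial\nu}f\Big)dS_x=\int_{B}\big(g\Delta f-f\Delta g\big)dx ,
\]
with $B=B(\xi_{i\epsilon},r)$, and substitute the equations for $\Delta f$ and $\Delta g$.

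\textbf{Proof of \eqref{m0}.} Take $f=u_\epsilon$ and $g=v_{l\epsilon}$. The equations are $\Delta u_\epsilon=-u_\epsilon^p-\epsilon u_\epsilon^q$ and $\Delta v_{l\epsilon}=-\mu_{l\epsilon}\big(pu_\epsilon^{p-1}+\epsilon q u_\epsilon^{q-1}\big)v_{l\epsilon}$. The integrand becomes
\[
-v_{l\epsilon}\big(u_\epsilon^p+\epsilon u_\epsilon^q\big)+\mu_{l\epsilon}u_\epsilon\big(pu_\epsilon^{p-1}+\epsilon qu_\epsilon^{q-1}\big)v_{l\epsilon}
=\Big((p\mu_{l\epsilon}-1)u_\epsilon^p+\epsilon(q\mu_{l\epsilon}-1)u_\epsilon^q\Big)v_{l\epsilon},
\]
which is exactly \eqref{m0}.

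\textbf{Proof of \eqref{489}.} Take $f=\omega_{i\epsilon}$ and $g=v_{l\epsilon}$, and use Lemma \ref{nabla}:
\[
\Delta\omega_{i\epsilon}=-\big(pu_\epsilon^{p-1}+\epsilon qu_\epsilon^{q-1}\big)\omega_{i\epsilon}-\frac{2(p-q)}{p-1}\epsilon u_\epsilon^q .
\]
For this we need $\omega_{i\epsilon}\in C^2$ near the ball. This follows since $u_\epsilon\in C^3_{loc}$ by bootstrapping; when $1\le q<2$ the factor $u_\epsilon^{q}$ is still smooth because $u_\epsilon>0$ in $\Omega$. The integrand $v_{l\epsilon}\Delta\omega_{i\epsilon}-\omega_{i\epsilon}\Delta v_{l\epsilon}$ then equals
\[
(\mu_{l\epsilon}-1)\big(pu_\epsilon^{p-1}+\epsilon qu_\epsilon^{q-1}\big)\omega_{i\epsilon}v_{l\epsilon}-\frac{2(p-q)}{p-1}\epsilon u_\epsilon^qv_{l\epsilon},
\]
and integrating gives \eqref{489}.

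There is no real obstacle; the only point requiring care is the regularity justification, which is standard. The remaining effort is bookkeeping of signs with the outward normal, which the substitutions above already fix.
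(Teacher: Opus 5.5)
Your proof is correct and follows the argument the paper intends. The paper omits the details, saying only that the identities follow from Lemma~\ref{nabla} and integration by parts; you apply Green's second identity on $B(\xi_{i\epsilon},r)$ and substitute the equations for $u_\epsilon$, $v_{l\epsilon}$ and $\omega_{i\epsilon}$. Your sign bookkeeping reproduces both \eqref{m0} and \eqref{489}, and your interior-regularity justification (smoothness from $u_\epsilon>0$ and $q\ge1$) adequately fills in what the paper leaves unstated.
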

\begin{proof}
The conclusion can be derived by \eqref{womiga} and simple computations by integrating by parts, and so we omit it.	
	\end{proof}
\begin{lem}\label{THERE}
There exists a nonvector
$$\tilde{\bm{e}}_{l}=\big(\lambda_{1}^{\frac{N-2}{2}}e_{l1},\cdots,\lambda_{m}^{\frac{N-2}{2}}e_{lm}\big)^T\in\mathbb{R}^m$$
such that for each $i=1,\cdots,m$ the function $V_{li\epsilon}\rightharpoonup e_{li}W$ weakly in $\mathcal{D}^{1,2}(\mathbb{R}^N)$ as $\epsilon\rightarrow0$.
\end{lem}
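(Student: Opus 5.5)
We prove Lemma~\ref{THERE} for $1\le l\le m$, working along a subsequence $\epsilon\to0$ where all the limits below exist. We use the $L^\infty$ normalization $\|v_{l\epsilon}\|_\infty=1$ and the rescalings $V_{li\epsilon}$ from \eqref{vie}.

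\emph{Step 1: local limits.} By the decay bound \eqref{V4}, each $V_{li\epsilon}$ is bounded in $L^\infty$ and dominated by $C_l\langle x\rangle^{2-N}$ near $0$. The rescaled equation is
\[
-\Delta V_{li\epsilon}=\mu_{l\epsilon}\bigl(pU_{i\epsilon}^{p-1}+\kappa'_\epsilon U_{i\epsilon}^{q-1}\bigr)V_{li\epsilon},
\]
with $\kappa'_\epsilon\to0$. Together with Proposition~\ref{prop:main} ($U_{i\epsilon}\le CW$), this bounds $\|\nabla V_{li\epsilon}\|_{L^2}$ uniformly. Elliptic regularity then gives $V_{li\epsilon}\to V_{li}$ in $C^1_{loc}$ and weakly in $\mathcal D^{1,2}(\mathbb R^N)$. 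Theorem~\ref{THm1-2} gives $\mu_{l\epsilon}\to1/p$, so the limit satisfies
\[
-\Delta V_{li}=W^{p-1}V_{li}.
\]
This is the eigenvalue problem at $\mu_0^\star=1/p$, whose eigenspace is $\operatorname{span}\{W\}$ by \eqref{bn:eq-first-model-spaces}. Hence $V_{li}=e_{li}W$ for some $e_{li}\in\mathbb R$. Alternatively, we may invoke Proposition~\ref{bn:prop-strong-profile} for the energy-normalized $\widehat v_{l\epsilon}$ and transfer the profile to the $L^\infty$ normalization. The two normalizations differ by a factor $\delta_{i\epsilon}^{\tau_0}$ times a bounded quantity, and the ratios between the bubbles are controlled by \eqref{en}.

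\emph{Step 2: the vector is nonzero.} Lemma~\ref{cobvergence-1} supplies an index $i_0$ with nonzero limit $V_0$, so $e_{li_0}\ne0$. To justify this, we argue that $\|v_{l\epsilon}\|_\infty=1$ is attained at some point $x_\epsilon$. Lemma~\ref{1-AS-eigenvalues-1} shows that $v_{l\epsilon}\to0$ uniformly on $\Sigma_r$, so $x_\epsilon$ lies in some $B(\xi_{i\epsilon},r)$. Combining the decay estimate \eqref{V4} with Step 1, the point $x_\epsilon$ cannot escape to infinity in the rescaled variables, since there $|V_{li\epsilon}|\le C_l\langle x\rangle^{2-N}<1$. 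Indeed, \eqref{V4} forces $|x_\epsilon-\xi_{i\epsilon}|\lesssim\epsilon^{\alpha_0}$, and the $C^1_{loc}$ convergence then yields $|V_{li_0}(y_0)|=1$ at a limit point $y_0$. Therefore $\tilde{\bm e}_l\neq0$, with components $\lambda_i^{(N-2)/2}e_{li}$ as in \eqref{eltita}. The weights $\lambda_i^{(N-2)/2}$ are merely a convenient rescaling here; they become relevant only in the later eigenvector identification.

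\emph{Main obstacle.} The delicate point is the uniformity of the decay constant $C_l$ in \eqref{V4} when the maximum sits in a transition region $\epsilon^{\alpha_0}\ll|x-\xi_{i\epsilon}|\ll r$. Lemma~\ref{3-AS-eigenvalues-1} (proved in the appendix) is used precisely to exclude this. The rest, namely passing to the limit in the weak formulation with test functions in $C_c^\infty$, is routine via Lemma~\ref{bn:lem-potential-decomposition}.
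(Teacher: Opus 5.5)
Your argument is correct and follows essentially the paper's route: local $C^1$ compactness of $V_{li\epsilon}$ from Lemma~\ref{Lem6.1} and the decay bound of Lemma~\ref{3-AS-eigenvalues-1}, identification of the limit through the model eigenvalue problem at $1/p$ (whose eigenspace is $\operatorname{span}\{W\}$), and nonvanishing at some center as in Lemma~\ref{cobvergence-1}. The only differences are these: you take $\mu_{l\epsilon}\to1/p$ directly from Theorem~\ref{THm1-2}, whereas the paper gets it in the proof of Theorem~\ref{Figalli-1} from a min--max upper bound plus the nondegeneracy of $W$. Also, your uniform $\mathcal{D}^{1,2}$ bound should rest on the global envelope \eqref{tidu} together with \eqref{V5}, not on Proposition~\ref{prop:main}, which is only local while the rescaled domain also contains the other bubbles.
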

\begin{proof}
The conclusion easily follows from Lemma \ref{Lem6.1} and Lemma \ref{cobvergence-1}.
\end{proof}
Lemma \ref{THERE} and \eqref{V2} then imply the following result.
\begin{lem}\label{gvalues}
Assume that $p=(N+2)/(N-2)$, $1<q<3$ if $N=4$, and $1\leq q<p$ if $N\geq5$. Then
\begin{equation}\label{nvalu-1}
\frac{v_{l\epsilon}(x)}{p\mu_{l\epsilon}\epsilon^{(N-2)\alpha_0}}=\mathcal{A}_{0}\sum_{i=1}^m\lambda_{i\epsilon}^{N-2}G(x,\xi_i)e_{li}+o(1)\hspace{2mm}\mbox{in}\hspace{2mm}C^{1}(\Omega\setminus\{\xi_1,\cdots,\xi_k\}),
\end{equation}
where $\mathcal{A}_0$ is defined in \eqref{pusaik-1-1}.
\end{lem}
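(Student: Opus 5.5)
We will prove Lemma~\ref{gvalues} through the Green representation \eqref{Vl}, split exactly as in the proof of Lemma~\ref{2-AS-eigenvalues}. Fix $r>0$ small and $x\in\Sigma_r$, and divide \eqref{Vl} by $\epsilon^{(N-2)\alpha_0}$. The eigenfunctions are normalized by $\|v_{l\epsilon}\|_\infty=1$. We write
\[
\frac{v_{l\epsilon}(x)}{\mu_{l\epsilon}\epsilon^{(N-2)\alpha_0}}=J_1+J_2+J_3 .
\]
Here $J_1$ is the integral over $\Sigma_{r/2}$. The term $J_2$ is $\sum_i G(x,\xi_{i\epsilon})\,\epsilon^{-(N-2)\alpha_0}\int_{B(\xi_{i\epsilon},r/2)}(pu_\epsilon^{p-1}+\epsilon qu_\epsilon^{q-1})v_{l\epsilon}$. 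The term $J_3$ carries the difference $G(x,y)-G(x,\xi_{i\epsilon})$.

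For $J_1$ we combine Lemma~\ref{outside-2} with Corollary~\ref{AS-eigenvalues-2}. These give $u_\epsilon=O(\epsilon^{\alpha_0\tau_0})$ and $v_{l\epsilon}=O(\epsilon^{(N-2)\alpha_0})$ on $\Sigma_{r/2}$. Then $J_1=O(\epsilon^{4\alpha_0/(1)\cdot\frac12\cdot 2}\,)+O(\epsilon^{1+(q-1)\alpha_0\tau_0})=o(1)$, uniformly in $x$. For $J_3$ we use the decay \eqref{V5} of $v_{l\epsilon}$ together with \eqref{eq:bound}, so the integrand is bounded by a bubble profile. Since $|G(x,y)-G(x,\xi_{i\epsilon})|\le C_r|y-\xi_{i\epsilon}|$ for $x\in\Sigma_r$, rescaling gives
\[
J_3=O\Big(\epsilon^{\alpha_0}\int_{\mathbb R^N}W^{p-1}(1+|z|)^{2-N}|z|\,dz\Big)+\text{($\epsilon u^{q-1}$ part)},
\]
and the first integral converges. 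The perturbation part is treated with the same case analysis in $q$ as for the mass integral in Lemma~\ref{2-AS-eigenvalues}, using $\sigma_q>2$ from \eqref{sigamaq}, and it is $o(1)$.

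The main term is $J_2$. We change variables $y=\lambda_{i\epsilon}\epsilon^{\alpha_0}z+\xi_{i\epsilon}$. This turns $\int pu_\epsilon^{p-1}v_{l\epsilon}\,dy$ into
\[
(\lambda_{i\epsilon}\epsilon^{\alpha_0})^{N-2}\int_{B(0,r/(2\lambda_{i\epsilon}\epsilon^{\alpha_0}))}pU_{i\epsilon}^{p-1}V_{li\epsilon}\,dz .
\]
By Lemma~\ref{THERE}, $V_{li\epsilon}\to e_{li}W$ locally, which is upgraded to $C^1_{loc}$ by elliptic regularity. The integrand is dominated by $CW^{p-1}(1+|z|)^{2-N}\in L^1$, using Proposition~\ref{prop:main} and \eqref{V4}. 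Dominated convergence then yields $\lambda_{i}^{N-2}e_{li}\,p\int W^{p}=p\lambda_i^{N-2}e_{li}\mathcal A_0$. Because $\lambda_{i\epsilon}\to\lambda_i$, this may be written with $\lambda_{i\epsilon}$ up to $o(1)$.

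The $\epsilon qu_\epsilon^{q-1}$ contribution to $J_2$ rescales to $\epsilon(\lambda_{i\epsilon}\epsilon^{\alpha_0})^{N-(q-1)\tau_0}\epsilon^{-(N-2)\alpha_0}$ times $\int U^{q-1}V$ over the large ball. Its integrand is bounded by $(1+|z|)^{-(N-2)q}$. We estimate it by the same three-case polar bound as in Lemma~\ref{2-AS-eigenvalues} and use the relation \eqref{beta1} among the exponents. Showing this contribution vanishes is the step we expect to be most delicate, especially in the borderline ranges $N=4$ and $q$ near $N/(N-2)$. This is where the restriction $q>1$ for $N=4$ enters.

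Putting the pieces together, dividing by $p$ and replacing $G(x,\xi_{i\epsilon})$ by $G(x,\xi_i)$ gives \eqref{nvalu-1} uniformly on $\Sigma_r$. Convergence in $C^1$ follows by differentiating the representation in $x$, since $\nabla_xG$ is smooth for $x\in\Sigma_r$ and $y$ near $\xi_i$, so the same estimates apply. Alternatively, it follows from interior elliptic estimates for $-\Delta v_{l\epsilon}=\mu_{l\epsilon}f_\epsilon'(u_\epsilon)v_{l\epsilon}$ on $\Sigma_r$, whose right-hand side is $o(\epsilon^{(N-2)\alpha_0})$ there. Since $r$ is arbitrary, the lemma follows.
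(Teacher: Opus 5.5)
Your proposal is correct and is essentially the paper's argument: you split the Green representation \eqref{Vl} into an exterior piece, controlled by Lemma~\ref{outside-2} and Corollary~\ref{AS-eigenvalues-2}, and ball pieces, where rescaling, Lemma~\ref{THERE} and dominated convergence produce $p\lambda_i^{N-2}e_{li}\mathcal{A}_0G(x,\xi_i)$ and the $\epsilon q u_\epsilon^{q-1}$ contribution vanishes. The step you flagged as delicate does close as you describe: by \eqref{beta1} the prefactor is exactly $\epsilon^{\alpha_0(N-2)}$, and with the decay \eqref{V5} the integral is at most $C\epsilon^{-\alpha_0\max\{N-(N-2)q,\,0\}}(1+|\log\epsilon|)$, so the term is $O(\epsilon^{\alpha_0\min\{(N-2)q-2,\,N-2\}}(1+|\log\epsilon|))\to0$ because $(N-2)q>2$; the only slip is the garbled exponent in $J_1$, which should read $O(\epsilon^{2\alpha_0})+O(\epsilon^{1+(q-1)\alpha_0\tau_0})$.
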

\begin{proof}
Recall that \eqref{Vl}, for $r>0$ fixed small and $x\in \Sigma_{r}$, we obtain
\begin{equation*}
\begin{split}
\frac{v_{l\epsilon}(x)}{\mu_{l\epsilon}\epsilon^{(N-2)\alpha_0}}&=\epsilon^{-(N-2)\alpha_0}
\Big(\sum_{i=1}^k\int_{ B(\xi_{i\epsilon},r/2)}+\int_{\Sigma_{r/2}}\Big)\big(pu_{\epsilon}^{p-1}(y)+\epsilon q u_{\epsilon}^{q-1}(y)\big)v_{l\epsilon}(y)G(x,y)dy.
\end{split}
\end{equation*}
In order to conclude the proof of \eqref{nvalu-1}, we establish now the
 following key estimates:
 \begin{equation*}
\begin{split}
\epsilon^{-(N-2)\alpha_0}&\int_{ B(\xi_{i\epsilon},2r)}pu_{\epsilon}^{p-1}(y)v_{l\epsilon}(y)G(x,y)dy
=p\lambda_{i\epsilon}^{N-2}e_{li}\int_{\mathbb{R}^N}W^{p}dyG(x,\xi_i)+o(1),
\end{split}
\end{equation*}
as $\epsilon\rightarrow0$. Moreover, we have
 \begin{equation*}
\begin{split}
&\Big|\epsilon^{-(N-2)\alpha_0}\epsilon\int_{ B(\xi_{i\epsilon},r/2)}qu_{\epsilon}^{q-1}(y)v_{l\epsilon}(y)G(x,y)dy\Big|
 =o(1),
\end{split}
\end{equation*}
by \eqref{eq:bound}, where $2-\frac{(N-2)(q-1)}{2}>0$ and $\mathcal{B}(r)$ is defined as before. On the other hand, $x\in \Sigma_{r}$,
by Lemma \ref{outside-2} and Corollary \ref{AS-eigenvalues-2}, we obtain
\begin{equation*}
\begin{split}
\Big|\epsilon^{-(N-2)\alpha_0}&\int_{\Sigma_{r/2}}\big(pu_{\epsilon}^{p-1}(y)+\epsilon q u_{\epsilon}^{q-1}(y)\big)v_{l\epsilon}(y)G(x,y)dy\Big|\\&
\leq C\big(\epsilon^{\frac{4}{N-6+q(N-2)}}+\epsilon^{\frac{(N-2)(q-1)}{N-6+q(N-2)}+1}\big)=o(1),
\end{split}
\end{equation*}
where $\mathcal{C}_{\epsilon}=\max\{v_{l\epsilon}(x):~x\in\Sigma_{r}\}$. Combining these estimates, we conclude that \eqref{nvalu-1}.
\end{proof}

Now we are ready to prove Theorem \ref{Figalli-1}.	
\begin{proof}[\textbf{Proof of Theorem \ref{Figalli-1}}]
Let us denote by $\mathcal{Z}$ the space spanned by the functions $\bar{u}_{i,\epsilon}$ for $i=1,\cdots,m$.
Then by the Courant-Fischer formula of the eigenvalue $\mu_{i\epsilon}$, we have
		\begin{equation}\label{minmax}
\begin{split}
			\mu_{m\epsilon}&=\min\limits_{\substack{\mathcal{Z}\subset H_{0}^1(\Omega),\\ dim\mathcal{Z}=k}}\max\limits_{f\in\mathcal{Z}\setminus\{0\}}\frac{\displaystyle\int_{\Omega}|\nabla f(x)|^2dx}{\displaystyle\int_{\Omega}\big(pu_{\epsilon}^{p-1}+\epsilon qu_{\epsilon}^{q-1}\big)f^2dx}			\leq\max\limits_{f\in\mathcal{Z}\setminus\{0\}}\frac{\displaystyle\int_{\Omega}|\nabla f(x)|^2dx}{\displaystyle\int_{\Omega}\big(pu_{\epsilon}^{p-1}+\epsilon qu_{\epsilon}^{q-1}\big)f^2dx}:=\max\limits_{f\in\mathcal{Z}\setminus\{0\}}g.
\end{split}
		\end{equation}
We next write $f=\sum_{i=1}^{m}a_i\bar{u}_{i,\epsilon}$ for some $(a_1,\cdots,a_m)\neq0$, then by Lemma \ref{Lem6.1} we get
\begin{equation*}
\begin{split}
			g&
\leq\frac{\displaystyle\int_{B(0,\lambda_{i\epsilon}^{-1}\epsilon^{-\alpha_0})}U_{i\epsilon}^{p+1}dx+ (\frac{\epsilon^{\alpha_0}}{\lambda_{i\epsilon}^{-1}})^{N-(q+1)\tau_0}\epsilon \displaystyle\int_{\Omega\setminus \cup_{i=1}^mB(0,\lambda_{i\epsilon}^{-1}\epsilon^{-\alpha_0})}U_{i\epsilon}^{q+1}dx}{p\displaystyle\int_{B(0,\lambda_{i\epsilon}^{-1}\epsilon^{-\alpha_0})}U_{i\epsilon}^{p+1}dx
+q(\frac{\epsilon^{\alpha_0}}{\lambda_{i\epsilon}^{-1}})^{N-(q+1)\tau_0}\epsilon \displaystyle\int_{\Omega\setminus \cup_{i=1}^mB(0,\lambda_{i\epsilon}^{-1}\epsilon^{-\alpha_0})}U_{i\epsilon}^{q+1}dx}
\rightarrow\frac{1}{p}
\end{split}
		\end{equation*}
as $\epsilon\rightarrow0$, which means that
$\limsup\limits_{\epsilon\rightarrow0}\mu_{m\epsilon}\leq1/p.$
Then, for any $l=1,\cdots,m$, up to a subsequence,  we have $\mu_{l\epsilon}\rightarrow\mu_l\in[0,1/p]$ for $\epsilon$ sufficiently small.
Moreover by Lemma \ref{cobvergence-1}, up to a subsequence, there exists a function $V_0$, such that we have $V_{li\epsilon}\rightarrow V_0$ in $C_{loc}^{1}(\mathbb{R}^N)$, where $V_0$ satisfies
\begin{equation*}
-\Delta V_{0}=\mu_lpW^{p-1}V_{0} \quad \mbox{in}\quad \mathbb{R}^N.
\end{equation*}
It follows from the non-degeneracy of solutions in \cite{BE91} that
\begin{equation}\label{e-00-1}
\mu_l=1/p\quad\mbox{and}\quad v_0=W(x)=(1+|x|^2)^{-\frac{N-2}{2}}.
\end{equation}

Now we will show that \eqref{vie-1}. Recall that $\mathcal{A}_{0}$ is defined as before in \eqref{pusaik-1-1} and Green's function, we denote in what follows
\begin{equation}\label{FK1}	
\mathcal{A}_{0}^{-1}\mathcal{F}_i(x)=-\lambda_{i}^{\frac{N-2}{2}}H(x,\xi_i)+\sum_{j\neq i}\lambda_{j}^{\frac{N-2}{2}}G(x,\xi_j),
\end{equation}
\begin{equation}\label{FK2}		
\mathcal{A}_{0}^{-1}\mathcal{K}_i(x)=-\lambda_{i}^{N-2}e_{li}H(x,\xi_i)+\sum_{j\neq i}\lambda_{j}^{N-2}e_{l,j}G(x,\xi_j),
\end{equation}
which are harmonic near $x_i$. Then, by \eqref{AS-eigenvalues-1} and \eqref{nvalu-1}, a direct computation shows that
\begin{equation}\label{F010}
\frac{u_{\epsilon}(x)}{\epsilon^{(N-2)\alpha_0/2}}=\mathcal{A}_{0}\lambda_{i}^{\frac{N-2}{2}}\frac{1}{(N-2)\omega_N|x-\xi_{i\epsilon}|^{N-2}}+\mathcal{F}_{i}(x)+o(1)
\end{equation}
and
\begin{equation}\label{F012}
\frac{v_{l\epsilon}(x)}{p\lambda_{l\epsilon}\epsilon^{(N-2)\alpha_0}}=\mathcal{A}_{0}\lambda_{i}^{N-2}e_{li}\frac{1}{(N-2)\omega_N|x-\xi_{i\epsilon}|^{N-2}}+\mathcal{K}_{i}(x)+o(1).
\end{equation}
Hence, substituting the two identities \eqref{F010} and \eqref{F012} in the left-hand side of \eqref{m0} and using $\mu_{l\epsilon}\rightarrow 1/p$, we get
\begin{equation}\label{F013}
\begin{split}
&\frac{1}{\epsilon^{[(N-2)/2+N-2]\alpha_0}}\int_{\partial B(\xi_{i\epsilon},r)}\Big(\frac{\partial u_{\epsilon}}{\partial\nu}v_{l\epsilon}-\frac{\partial v_{l\epsilon}}{\partial\nu}u_{\epsilon}\Big)dS_x\\&=\frac{\mathcal{A}_{0}}{\omega_N}\int_{\partial B(\xi_{i\epsilon},r)}\frac{1}{|x-\xi_{i\epsilon}|^{N-1}}\Big[\lambda_{i}^{\frac{N-2}{2}}\mathcal{F}_{i}(x)-\lambda_{i}^{N-2}e_{li}\mathcal{K}_{i}(x)\Big]dS_x+o(1).
\end{split}
\end{equation}
Noticing that $\mathcal{F}_i$ and $\mathcal{K}_i$ satisfy
\begin{equation}\label{F014}
\mathcal{F}_i(\xi_i)=\frac{1}{\omega_N}\int_{\partial B(\xi_{i\epsilon},r)}\frac{\mathcal{F}_i(x)}{|x-\xi_{i\epsilon}|^{N-1}}dS_{x}\hspace{2mm}\mbox{and}\hspace{2mm}\mathcal{K}_i(\xi_i)=\frac{1}{\omega_N}\int_{\partial B(\xi_{i\epsilon},r)}\frac{\mathcal{K}_i(x)}{|x-\xi_{i\epsilon}|^{N-1}}dS_{x}.
\end{equation}
Moreover, since $(\xi_1,\cdots,\xi_m, \lambda_1,\cdots,\lambda_m)$ is a critical point of the functional $\Psi_{\Omega,q}(\xi,\lambda)$,
then we have
\begin{equation}\label{critical}
\begin{split}
\mathcal{A}_0^2\Big[\phi(\xi_i)\lambda_{i}^{N-2}-\sum^k _{j\neq i} G(\xi_i,\xi_j)(\lambda_{i}\lambda_{j})^{\frac{N-2}{2}}\Big]=\mathcal{B}_0(p-q)\lambda_{i}^{\frac{(N-2)(p-q)}{2}}.
\end{split}
\end{equation}
Therefore,
\begin{equation}\label{F015}
\mathcal{A}_0^2\sum^m _{\substack{i,j=1,\\
i\neq j}} G(\xi_i,\xi_j)\lambda_{i}^{N-2}\lambda_{j}^{\frac{N-2}{2}}=\mathcal{A}_0^2\sum_{i=1}^m\phi(\xi_i)\lambda_{i}^{\frac{(3N-6)}{2}}-(p-q)\mathcal{B}_0\sum_{i=1}^m\lambda_{i}^{\frac{(N-2)(p-q+1)}{2}}.
\end{equation}
Combining this identity, \eqref{FK1}-\eqref{FK2} and \eqref{F014}, we can write \eqref{F013} as
\begin{equation}\label{f016}
\begin{split}
&\frac{1}{\epsilon^{[(N-2)/2+N-2]\alpha_0}}\int_{\partial B(\xi_{i\epsilon},r)}\Big(\frac{\partial u_{\epsilon}}{\partial\nu}v_{l\epsilon}-\frac{\partial v_{l\epsilon}}{\partial\nu}u_{\epsilon}\Big)dS_x\\&=
\sum_{i=1}^m\Big(\mathcal{A}_0^2\phi(x_i)\lambda_{i}^{N-2}-\frac{(p-q)}{2}B_0\lambda_{i}^{\frac{(N-2)(p-q)}{2}}\Big)\Big(\lambda_{i}^{\frac{N-2}{2}}e_{li}\Big)\\&\hspace{3mm}-\mathcal{A}_0^2\sum^m _{\substack{i,j=1,\\
i\neq j}} G(\xi_i,\xi_j)\Big(\lambda_{i}\cdot\lambda_{j}\Big)^{\frac{N-2}{2}}\Big(\lambda_{j}^{\frac{N-2}{2}}e_{lj}\Big)+o(1)
\end{split}
\end{equation}
as $\epsilon\rightarrow0$. On the other hand, the right-hand side of \eqref{m0} tells us that
\begin{equation}\label{f017}
\begin{split} &\frac{1}{\epsilon^{(N-2)\alpha_0/2}}\int_{B(\xi_{i\epsilon},r)}\Big(\big(p\mu_{l\epsilon}-1\big)u_{\epsilon}^{p}+\epsilon\big(q\mu_{l\epsilon}-1\big) u_{\epsilon}^{q}\Big)v_{l\epsilon}dx
\\&=\lambda_{i}^{\frac{N-2}{2}}\big(p\mu_{l\epsilon}-1\big)\int_{B(0,\lambda_{i\epsilon}^{-1}\epsilon^{-\alpha_0}r)}U_{i\epsilon}^{p}V_{li\epsilon}dx\\&
\hspace{3mm}+(\epsilon^{\alpha_0})^{N-\frac{(N-2)(q+1)}{2}}\mu_{i}^{N-\frac{(N-2)q}{2}}\big(q\mu_{l\epsilon}-1\big)\epsilon\int_{B(0,\lambda_{i\epsilon}^{-1}\epsilon^{-\alpha_0}r)}U_{i\epsilon}^{q}V_{li\epsilon}dx.
\end{split}
\end{equation}
By Lemma \ref{Lem6.1}, Lemma \ref{THERE} and the dominated convergence theorem,
$$
\lim\limits_{\epsilon\rightarrow0}\int_{B^N(0,\lambda_{i\epsilon}^{-1}\epsilon^{-\alpha_0}r)}U_{i\epsilon}^{p}V_{li\epsilon}dx=\int_{\mathbb{R}^N}W^{p+1}(x)dxe_{li},\quad\lim\limits_{\epsilon\rightarrow0}\int_{B(0,\lambda_{i\epsilon}^{-1}\epsilon^{-\alpha_0}r)}U_{i\epsilon}^{q}V_{li\epsilon}dx
=\int_{\mathbb{R}^N}W^{q+1}(y)dye_{li}.$$
Then we can deduce that the convergence of the second term of the right-hand side of \eqref{f017} to $0$ as $\epsilon\rightarrow0$ since $\frac{2}{N-2}<q<p$ for $N\geq4$.
Therefore, combining this information with \eqref{f016}-\eqref{f017}, we deduce that
\begin{equation}\label{f018}
\begin{split}
&\Big(\phi(x_i)\lambda_{i}^{N-2}-\frac{(p-q)}{2}\frac{\mathcal{B}_0}{\mathcal{A}_0^2}\lambda_{i}^{\frac{(N-2)(p-q)}{2}}\Big)\Big(\lambda_{i}^{\frac{N-2}{2}}e_{li}\Big)\\&-\sum^m _{\substack{i,j=1,\\
i\neq j}} G(\xi_i,\xi_j)\Big(\lambda_{i}\cdot\lambda_{j}\Big)^{\frac{N-2}{2}}\Big(\lambda_{j}^{\frac{N-2}{2}}e_{lj}\Big)=\frac{1}{\mathcal{A}_0^2}\int_{\mathbb{R}^N}W^{p+1}(x)dx\lim\limits_{\epsilon\rightarrow0}\Big(\frac{p\mu_{l\epsilon}-1}{\epsilon^{(N-2)\alpha_{0}}}\Big)\Big(\lambda_i^{\frac{N-2}{2}}e_{li}\Big).
\end{split}
\end{equation}
Observe that we can write:
$$
\nu_{l}^{1}:=\lim\limits_{\epsilon\rightarrow0}\frac{1}{\mathcal{A}_0^2}\int_{\mathbb{R}^N}W^{p+1}(x)dx\left(\frac{p\mu_{l\epsilon}-1}{\epsilon^{(N-2)\alpha_{0}}}\right).
$$
By the definitions of symmetric matrix $A$ in \eqref{pusaik-2} and a nonzero column vector $\tilde{e}_{l}$ in \eqref{eltita}, we infer that \eqref{f018} is equivalent to
$A\tilde{\bm{e}}_{l}=\nu_{l}^{1}\tilde{\bm{e}}_{l}$.
Furthermore, we can compute
$$\mu_{l\epsilon}=\frac{N-2}{N+2}+\frac{\mathcal{A}_0^2(N-2)}{(N+2)\int_{\mathbb{R}^N}W^{p+1}(x)dx}\nu_{l}^{1}\cdot\epsilon^{(N-2)\alpha_0}+o(\epsilon^{(N-2)\alpha_0}).$$
This proves \eqref{vie-1} with $\alpha_N=\mathcal{A}_0^2(N-2)\nu_{l}^{1}/((N+2)\int_{\mathbb{R}^N}W^{p+1}_{0,1}(x)dx)$.

Next, if the eigenfunctions $v_{l_1\epsilon}$, $v_{l_2\epsilon}$, $l_1\neq l_2$ are two eigenfunctions of \eqref{ele-2} corresponding to $\lambda_{l_1\epsilon}$ and $\lambda_{l_2\epsilon}$, orthogonal in the space $H_{0}^1(\Omega)$, then we have $\int_{\Omega}\nabla v_{l_1\epsilon}\cdot\nabla v_{l_2\epsilon}dx=0$. By Lemma \ref{Lem6.1} and Lemma \ref{THERE} imply
\begin{equation*}
\begin{split}
0&=\lim\limits_{\epsilon\rightarrow0}\frac{1}{\epsilon^{(N-2)\alpha_0}}\left(\sum_{i=1}^{m}\int_{B(\xi_{i\epsilon},r)}+\int_{\Omega\setminus\cup_{i=1}^{m}B(\xi_{i\epsilon},r)}\right)\Big(p\lambda_{l_1\epsilon}u_{\epsilon}^{p-1}+\epsilon q\lambda_{l_1,\epsilon}u_{\epsilon}^{q-1}\Big)v_{l_1\epsilon}v_{l_2\epsilon}dx\\&
=\sum_{i=1}^{m}\Big(\lambda_{i}^{\frac{N-2}{2}}e_{l_1\epsilon}\Big)\Big(\lambda_{i}^{\frac{N-2}{2}}e_{l_2,\epsilon}\Big)\int_{\mathbb{R}^N}W^{p+1}(x)dx.
\end{split}
\end{equation*}
Thus $e^T_{l_1\epsilon}\cdot e^T_{l_2\epsilon}=0$ is established. Finally, combined with the result of Lemma \ref{THERE} and so we conclude our proof.
\end{proof}

\section{Asyptotic behaviour of the eigenfunctions, $m+1\leq l\leq(N+1)m$}\label{section4}
In this section, we will prove Theorem \ref{existence-1}. To conclude this,  according to Lemma \ref{identity}, it suffices to prove the following main result.

\begin{thm}\label{dilation}
Assume that
\[
 N=4,\quad 1<q<3,
 \qquad\text{or}\qquad
 N\geq5,\quad 1\leq q<p,
\]
and that the positive multi-bubble hypotheses stated above hold.
For a fixed index \(\ell\), let \(v_{\ell\epsilon}\) satisfy
\[
 \|v_{l\epsilon}\|_{L^\infty(\Omega)}=1,
 \qquad
 m+1\leq l\leq(N+1)m.
\]
Then every local cluster limit in \eqref{identity} has vanishing
dilation coefficients:
\[
 a_{1,1,N+1}=\cdots=a_{l,m,N+1}=0.
\]
Equivalently, every such limit is a linear combination of the
translation Jacobi fields.
\end{thm}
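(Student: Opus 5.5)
We argue by contradiction, following the scheme of Proposition~4.4 in \cite{CKL-1} but keeping the defect term produced by Lemma~\ref{nabla}. Fix $m+1\le l\le(N+1)m$ and a subsequence along which, by Theorem~\ref{THm1-2}, $\mu_{l\epsilon}\to1$, and
\[
V_{li\epsilon}\to -\sum_{k=1}^N a_{l,i,k}\partial_kW+a_{l,i,N+1}Z_0\quad\text{in }C^1_{loc}(\mathbb R^N).
\]
This follows from Proposition~\ref{bn:prop-strong-profile}, the decay bound \eqref{V4} and elliptic regularity. Suppose the vector $\bm a=(a_{l,1,N+1},\dots,a_{l,m,N+1})$ is nonzero. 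The goal is to show that then $\mu_{l\epsilon}-1\gtrsim\epsilon^{\alpha_0(N-2)}$ with a positive coefficient, and to contradict this with the eigenvalue ordering.

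\textbf{Step 1: exterior expansion.} Using the Green representation \eqref{Vl}, the decay \eqref{V5}, and $\int_{\mathbb R^N}W^{p-1}Z_0=\int W^{p-1}\partial_kW=0$ but $p\int W^{p-1}Z_0\ne 0$, I expect
\[
\epsilon^{-\alpha_0(N-2)}v_{l\epsilon}\to c\sum_i\lambda_i^{N-2}a_{l,i,N+1}G(x,\xi_i)
\]
away from the concentration points. The zeroth-order mass comes only from the dilation coefficients, because the translation parts are odd. The $\epsilon q u_\epsilon^{q-1}$ contribution is $o(1)$ at this scale, as in Lemma~\ref{gvalues}. The coefficient $c$ should be computed with the identity $-\Delta Z_0=pW^{p-1}Z_0$ integrated against $1$.

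\textbf{Step 2: local identity.} Insert this expansion and \eqref{F010} into the bilinear identity \eqref{489}, with $\omega_{i\epsilon}$ the scaling field. On the boundary side, harmonicity of $\mathcal F_i$ and of the analogous regular part of $v_{l\epsilon}$ together with the mean value formula \eqref{F014} give $\epsilon^{\alpha_0(N-2)\cdot 3/2}$ times a quadratic-type expression $(D_tMD_t\,\tilde{\bm f})_i$, where $\tilde{\bm f}_i=\lambda_i^{(N-2)/2}a_{l,i,N+1}$. On the volume side there are two terms. The first, $(\mu_{l\epsilon}-1)\int(pu^{p-1}+\epsilon qu^{q-1})\omega_{i\epsilon}v_{l\epsilon}$, rescales to $(\mu_{l\epsilon}-1)\epsilon^{\alpha_0\tau_0}\lambda_i^{\tau_0}\big(p\int W^{p-1}Z_0^2\,a_{l,i,N+1}+o(1)\big)$. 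The second is the defect term $-\frac{2(p-q)}{p-1}\epsilon\int u_\epsilon^qv_{l\epsilon}$. I will evaluate it by rescaling, using the identity $\int W^qZ_0=\tau_0(p-q)\mathcal B_0$ (Lemma~\ref{UqZ0}) and the relation $\epsilon\,\epsilon^{\alpha_0\tau_0(p-q)}=\epsilon^{2\alpha_0\tau_0}$ from \eqref{beta1}. Its leading order is then the same $\epsilon^{\alpha_0(N-2)}$-scale, with coefficient proportional to $(p-q)(q+1-p)\mathcal B_0\lambda_i^{(N-2)(p-q)/2}a_{l,i,N+1}$ after using the critical point relation \eqref{critical}. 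Collecting terms should give
\[
H_q\tilde{\bm f}=\Big(\lim\frac{\mu_{l\epsilon}-1}{\epsilon^{\alpha_0(N-2)}}\Big)\mathcal C_{p,q}^{-1}\tilde{\bm f}.
\]
Since $H_q\succ0$ by Lemma~\ref{ckl}, this forces $\mu_{l\epsilon}>1+c\epsilon^{\alpha_0(N-2)}$ whenever $\tilde{\bm f}\ne0$.

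\textbf{Step 3: contradiction.} I expect to use Lemma~\ref{identity} and the orthogonality relations \eqref{bn:eq-profile-energy-orthogonality}. The $(N+1)m$ lowest eigenfunctions have profiles spanning $\{W\}^m$ plus an $Nm$-dimensional subspace of $E_{\le1}^m$. Consider the $Nm$ translation test functions $\eta_{k,i,\epsilon}$ from \eqref{varepsilon}. They satisfy $B_\epsilon(\eta,\eta)=O(\epsilon^{\alpha_0N})$, since the translation modes solve the homogeneous linearized equation up to boundary terms of size $\epsilon^{\alpha_0N}$. By min–max applied to these test functions together with the first $m$ eigenfunctions, $\mu_{(N+1)m,\epsilon}\le1+O(\epsilon^{\alpha_0N})=1+o(\epsilon^{\alpha_0(N-2)})$. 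This contradicts Step~2 for any $l\le(N+1)m$ with $\tilde{\bm f}\neq0$, so all dilation coefficients vanish.

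The main obstacle is Step~2: evaluating the defect volume term precisely and checking that it matches the boundary terms at exactly order $\epsilon^{\alpha_0(N-2)}$. This relies on Lemmas~\ref{m00}--\ref{limti} and on controlling the cross-bubble and $q$-dependent remainders uniformly, including the borderline case $q\le N/(N-2)$, where $\int W^q$ only converges thanks to the weight $v_{l\epsilon}\sim Z_0$.
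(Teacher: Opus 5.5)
Your proposal is correct and is essentially the paper's argument. Assuming $\bm c_l\neq0$, the scaling-field identity \eqref{489} with the defect term kept yields $H_q\bm c_l=pL_0\nu_l\bm c_l$ with $H_q\succ0$ (Lemmas~\ref{m00}--\ref{ckl}), hence $\nu_l>0$. This contradicts $\mu_{l\epsilon}\le1+O(\epsilon^{\alpha_0N})$ from Lemma~\ref{baowenbei}, whose omitted proof your Step~3 fills in. For that step you still need to check that the cross terms $B_\epsilon$ between the first $m$ eigenfunctions and the energy-normalized translation fields are $O(\epsilon^{\alpha_0(N-1)})$, so that their squares are $o(\epsilon^{\alpha_0N})$; this follows from the Green identity on the cutoff annulus.

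There are two slips to fix. First, $p\int_{\mathbb R^N}W^{p-1}Z_0=\tau_0\mathcal A_0\neq0$; only $\int W^{p-1}\partial_kW$ vanishes. Second, since $z\cdot\nabla W+\tau_0W=-Z_0$, the first volume term tends to $-pL_0\bm c_l^i$ rather than $+pL_0\bm c_l^i$. That sign is exactly what makes the collected system read $H_q\tilde{\bm f}=pL_0\nu_l\tilde{\bm f}$ rather than its negative, so it matters for the contradiction.
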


Before we prove Theorem \ref{dilation}, let us introduce some preliminary results.
\subsection{Estimates for the eigenvalues $\mu_{l\epsilon}$ for $m+1\leq l\leq(N+1)m$}\label{section3-1}
The following key estimate estimate of the eigenvalue $\mu_{l\epsilon}$  will be used a couple of times in the paper to conclude the proof of main
results.
\begin{lem}\label{baowenbei}
		For $m+1\leq l\leq(N+1)m$ and $\alpha_0=2/(N-6+q(N-2))$, we have
		\begin{equation}\label{baowen1}
\mu_{l\epsilon}\leq1+O(\epsilon^{N\alpha_0}).
		\end{equation}
Moreover, $\mu_{l\epsilon}\rightarrow1$ as $\epsilon\rightarrow0$ for $m+1\leq l\leq(N+1)m$.
\end{lem}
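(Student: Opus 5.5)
The plan is to use the Courant--Fischer characterization \eqref{minmax} with the $(N+1)m$-dimensional trial space
\[
\mathcal Z_1=\operatorname{span}\{\bar u_{i,\epsilon},\ \eta_{k,i,\epsilon}:\ 1\le i\le m,\ 1\le k\le N\},
\]
built from the cut-off functions \eqref{varepsilon}. By Lemma~\ref{Lem6.1} and the disjointness of the supports of the $\psi_i$, these functions are linearly independent for small $\epsilon$: their rescalings converge to $W$ and $\partial_kW$, which are linearly independent. We then have $\mu_{(N+1)m,\epsilon}\le\max_{f\in\mathcal Z_1\setminus\{0\}}g(f)$. The limit $\mu_{l\epsilon}\to1$ for $m+1\le l\le(N+1)m$ is already contained in Theorem~\ref{THm1-2}, taking $\delta_{i\epsilon}=\lambda_{i\epsilon}\epsilon^{\alpha_0}$. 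Since the $\mu_{l\epsilon}$ are ordered, it therefore suffices to prove the upper bound for $l=(N+1)m$.

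To estimate the Rayleigh quotient, write $f=\sum_i a_i\bar u_{i,\epsilon}+\sum_{i,k}b_{ik}\eta_{k,i,\epsilon}$ and compute $\int|\nabla f|^2-\int(pu_\epsilon^{p-1}+\epsilon qu_\epsilon^{q-1})f^2$. Different $i$ do not interact because the supports are disjoint.
\begin{itemize}
\item For the translation part, differentiating the equation gives $-\Delta\partial_ku_\epsilon=(pu_\epsilon^{p-1}+\epsilon qu_\epsilon^{q-1})\partial_ku_\epsilon$. Integrating by parts yields
\[
\int|\nabla\eta|^2-\int f_\epsilon'(u_\epsilon)\eta^2=\int|\nabla\psi_i|^2(\partial_ku_\epsilon)^2 .
\]
This is supported in $B(\xi_i,3r)\setminus B(\xi_i,2r)$, where \eqref{tidu} gives $|\nabla u_\epsilon|=O(\epsilon^{\alpha_0(N-2)/2})$. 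The error is therefore $O(\epsilon^{\alpha_0(N-2)})$.
\item For the amplitude part, $\int|\nabla\bar u|^2-\int f_\epsilon'(u_\epsilon)\bar u^2$ equals $-(p-1)\int u_\epsilon^{p+1}+O(\cdots)<0$. So this part only helps, and it suffices to control the cross terms $a_ib_{ik}$.
\item The cross term between $\bar u_{i,\epsilon}$ and $\eta_{k,i,\epsilon}$ is, up to boundary-layer errors, $(1-p)\int\psi_i^2u_\epsilon^p\partial_ku_\epsilon-\epsilon(q-1)\int\psi_i^2u_\epsilon^q\partial_ku_\epsilon$. This reduces to $\int\partial_k(\psi_i^2)u_\epsilon^{p+1}$-type annular terms, which are $O(\epsilon^{\alpha_0 N})$ by \eqref{tidu}.
\end{itemize}

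The remaining step is to normalize. The weighted norm of $\eta_{k,i,\epsilon}$ is of order $\epsilon^{-2\alpha_0}\int pW^{p-1}(\partial_kW)^2$ after scaling, while that of $\bar u$ is $O(1)$. Dividing by the denominator then turns the $O(\epsilon^{\alpha_0(N-2)})$ numerator error into a relative error $O(\epsilon^{\alpha_0N})$. A quadratic form argument (Cauchy--Schwarz, absorbing the cross terms into the negative amplitude term and the positive denominator) gives
\[
\max_{f\in\mathcal Z_1\setminus\{0\}}g(f)\le1+O(\epsilon^{\alpha_0N}).
\]

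I expect the main obstacle to be the cross terms and the bookkeeping of scales. The gradient functions carry an extra factor $\epsilon^{-\alpha_0}$, so the mixed terms must be shown to be $O(\epsilon^{\alpha_0N})$ relative to $\|\eta\|\,\|\bar u\|$ in the weighted norm. I would first try to handle them with the exact identity coming from the equation together with Lemma~\ref{outside-2} on the annuli. If the $\epsilon u_\epsilon^q$ contributions are not small enough on the annuli, I would use the oddness of $\partial_kW$ to cancel the leading interior contribution.
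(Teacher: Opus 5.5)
Your proposal is correct and follows essentially the route the paper intends. The paper omits this proof and defers to the Grossi--Pacella and Yang--Zhao min--max argument, using the trial functions $\bar u_{i,\epsilon},\eta_{k,i,\epsilon}$ it sets up in \eqref{varepsilon}; the same cutoff-identity plus Young-inequality scheme is written out for the larger space in Lemma~\ref{lem:dilationcluster}. The one difference is that you take $\mu_{l\epsilon}\to1$ directly from Theorem~\ref{THm1-2} rather than re-deriving it from the profile limits.

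One bookkeeping correction: because of the cutoff terms, the unnormalized amplitude--translation cross terms are only $O(\epsilon^{\alpha_0(N-2)})$, not $O(\epsilon^{\alpha_0N})$. This is still enough. After rescaling $\eta$ by $\epsilon^{\alpha_0}$ and absorbing the cross terms via Young's inequality into the negative amplitude block, they contribute only $O(\epsilon^{2\alpha_0(N-1)})\le O(\epsilon^{\alpha_0N})$ to the quotient, so no oddness cancellation is needed.
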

	\begin{proof}
The proof of this Lemma is very similarly to \cite{GP05,YZ} by using \eqref{baowen1}, Lemma \ref{cobvergence-1}, Lemma \ref{THERE} and proof of Theorem \ref{Figalli-1}. Therefore, we will omit it.
\end{proof}

\subsection{Asymptotic behaviour of the eigenfunctions}
Before going into the details of the proof of  Theorem \ref{dilation} it is useful to introduce some notation.
Set
\begin{equation}\label{qitai}
\bm{c}_l^i=\lambda_{i}^{\frac{N-2}{2}}a_{l,i,N+1}=t_ia_{l,i,N+1},\quad\bm{c}_l=\Big(\lambda_{1}^{\frac{N-2}{2}}a_{l,1,N+1},\cdots,\lambda_{m}^{\frac{N-2}{2}}a_{l,m,N+1}\Big)^T\in\mathbb{R}^m,
\end{equation}
and recalling the definition of matrixs $D_t$ and $D_{q}$ in \eqref{DDT}.
\begin{lem}\label{identity}
For the rescaled eigenfunctions $V_{l,\epsilon}$ defined in \eqref{vie} and $k+1\leq l\leq(N+1)k$, we have
\begin{equation}\label{viep}
V_{li\epsilon}(x)\rightarrow-\sum_{k=1}^{N}a_{l,i,k}\partial_kW+a_{l,i,N+1}Z_0\quad\mbox{in}\quad C_{loc}^{1}(\mathbb{R}^N).
\end{equation}
In addition, there is at least one $i_1\in{1,\cdots,m}$ such that $(a_{l,i_1,1},\cdots,a_{l,i_1,N+1})\neq0$.
\end{lem}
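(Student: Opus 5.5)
The plan is a local compactness argument for the rescaled eigenfunctions around each concentration point, followed by identification of the limit through the nondegeneracy of $W$. Fix $l$ with $m+1\le l\le(N+1)m$ and $i\in\{1,\dots,m\}$.

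\textbf{Local limits.} Since $\|v_{l\epsilon}\|_{L^\infty(\Omega)}=1$, we have $|V_{li\epsilon}|\le1$ on $\Omega_{i\epsilon}$. The function $V_{li\epsilon}$ solves
\[
-\Delta V_{li\epsilon}=\mu_{l\epsilon}\bigl(pU_{i\epsilon}^{p-1}+\kappa_\epsilon qU_{i\epsilon}^{q-1}\bigr)V_{li\epsilon},
\qquad \kappa_\epsilon=\epsilon(\lambda_{i\epsilon}\epsilon^{\alpha_0})^{\tau_0(p-q)}\to0 .
\]
By Lemma \ref{Lem6.1} and Proposition \ref{prop:main}, the potential is uniformly bounded on compact sets. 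For $q<2$ the term $U_{i\epsilon}^{q-1}$ is bounded because $U_{i\epsilon}\le C$ on balls. For $q=1$ that term is simply $\kappa_\epsilon$. Lemma \ref{baowenbei} gives $\mu_{l\epsilon}\to1$. Interior $W^{2,s}$ and Schauder estimates then provide a subsequence with $V_{li\epsilon}\to V_i$ in $C^1_{loc}(\mathbb R^N)$. Passing to the limit, using $U_{i\epsilon}\to W$ in $C^2_{loc}$ and $\kappa_\epsilon\to0$, yields
\[
-\Delta V_i=pW^{p-1}V_i \quad\text{in }\mathbb R^N,\qquad |V_i|\le1 .
\]

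\textbf{Identifying the limit.} To invoke the classification of bounded solutions of the linearized equation (\cite{BE91}), I need $V_i\in\mathcal D^{1,2}(\mathbb R^N)$, or at least decay. Lemma \ref{3-AS-eigenvalues-1} gives $|V_{li\epsilon}(x)|\le C_l\langle x\rangle^{2-N}$ uniformly on compact sets of the rescaled variables, after changing coordinates from $X_{i\epsilon}$ scaling by the bounded factor $\lambda_{i\epsilon}$. Hence $|V_i|\le C\langle x\rangle^{2-N}$. The same decay, combined with the energy bound inherited from the normalization in Proposition \ref{bn:prop-strong-profile} (the $L^\infty$ and energy normalizations differ by a bounded factor), shows $V_i\in\mathcal D^{1,2}$. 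The nondegeneracy result then gives
\[
V_i=-\sum_{k=1}^N a_{l,i,k}\partial_kW+a_{l,i,N+1}Z_0 .
\]
Here $Z_0=-y\cdot\nabla W-\tfrac{N-2}{2}W$, consistent with \eqref{bn:eq-first-model-spaces}; the constant $a$ in Theorem \ref{thmprtb} equals $\tau_0$.

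\textbf{Nonvanishing.} I would argue by contradiction that some $i_1$ has $V_{i_1}\ne0$. Suppose instead that $V_i\equiv0$ for every $i$. Let $x_\epsilon$ be a point with $|v_{l\epsilon}(x_\epsilon)|=1$. By Lemma \ref{1-AS-eigenvalues-1}, $x_\epsilon$ must approach some $\xi_{i}$. By \eqref{V5}, $|v_{l\epsilon}(x)|\le C_l\sum_i(\epsilon^{2\alpha_0}/(\epsilon^{2\alpha_0}+|x-\xi_{i\epsilon}|^2))^{\tau_0}$, so $|v_{l\epsilon}(x_\epsilon)|=1$ forces $|x_\epsilon-\xi_{i\epsilon}|\le R\epsilon^{\alpha_0}$ for a fixed $R$. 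In rescaled variables, the points $y_\epsilon=(x_\epsilon-\xi_{i\epsilon})/(\lambda_{i\epsilon}\epsilon^{\alpha_0})$ then stay in a compact set. Local uniform convergence gives $|V_i(y_0)|=1$, a contradiction. Therefore $(a_{l,i_1,1},\dots,a_{l,i_1,N+1})\neq0$ for some $i_1$.

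The main obstacle is the uniform decay input: bounding the potential $\kappa_\epsilon U_{i\epsilon}^{q-1}$ when $q<2$ away from the core, and transferring the sharp estimate of Lemma \ref{3-AS-eigenvalues-1} to the limit. Once that lemma is granted, the remaining steps are routine.
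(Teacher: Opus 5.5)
Your proposal is correct and follows essentially the same route as the paper. The paper only sketches this step: it cites local $C^1$ compactness (Lemma \ref{cobvergence-1}, which itself rests on the decay estimate of Lemma \ref{3-AS-eigenvalues-1}), the convergence $\mu_{l\epsilon}\to1$ from Lemma \ref{baowenbei}, and the Bianchi--Egnell nondegeneracy \cite{BE91}, all of which your argument makes explicit. One small simplification: $V_i\in\mathcal D^{1,2}(\mathbb R^N)$ follows directly from $|V_i|\le C\langle x\rangle^{2-N}$ by the cutoff argument in the proof of Lemma \ref{l21:lemma}, so you do not need to route this through the energy normalization of Proposition \ref{bn:prop-strong-profile}.
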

\begin{proof}
We here proceed similarly to the proof of Lemma 4.1 in \cite{YZ} and the result can be achieved by the non-degeneracy of solutions in \cite{BE91}, Lemma \ref{cobvergence-1} and Lemma \ref{baowenbei}. Hence, we omit the details.
	\end{proof}

We have the following lemma.
\begin{lem}\label{lem:exterior-limit}
For any $k+1\leq l\leq(N+1)k$ then there exists some $i_1\in\{1,\cdots,k\}$ such that
\begin{equation}\label{number}
\frac{v_{l\epsilon}(x)}{\epsilon^{(N-2)\alpha_0}}\rightarrow \mathcal{C}_0p\sum_{i=1}^{m}a_{l,i,N+1}\lambda_{i}^{N-2}G(x,\xi_i)\quad\mbox{in}\quad C^{1,\alpha}(\Omega\setminus\{\xi_1,\cdots,\xi_m\}),
\end{equation}
where $\mathcal{C}_0=\int_{\mathbb{R}^N}W^{p-1}(y)Z_0dy=\frac{N-2}{2}\mathcal{A}_0$.
\end{lem}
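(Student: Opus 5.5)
We follow the same Green-representation scheme as in Lemma~\ref{gvalues}, now fed by the local limit \eqref{viep} of Lemma~\ref{identity} instead of Lemma~\ref{THERE}. Fix a small $r>0$ and $x\in\Sigma_r$. By \eqref{Vl},
\[
\frac{v_{l\epsilon}(x)}{\mu_{l\epsilon}\epsilon^{(N-2)\alpha_0}}=\epsilon^{-(N-2)\alpha_0}\Big(\sum_{i=1}^m\int_{B(\xi_{i\epsilon},r/2)}+\int_{\Sigma_{r/2}}\Big)\big(pu_\epsilon^{p-1}+\epsilon qu_\epsilon^{q-1}\big)v_{l\epsilon}G(x,y)\,dy .
\]
The exterior integral over $\Sigma_{r/2}$ is $o(1)$ exactly as in the proof of Lemma~\ref{gvalues}. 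It uses Lemma~\ref{outside-2} for $u_\epsilon$, together with Corollary~\ref{AS-eigenvalues-2}, which gives $v_{l\epsilon}=O(\epsilon^{(N-2)\alpha_0})$ there. The $\epsilon qu_\epsilon^{q-1}$ part inside the balls is also $o(1)$. To see this, rescale with \eqref{eq:bound} and \eqref{V5}: the factor $\epsilon\,\epsilon^{-(N-2)\alpha_0}\epsilon^{\alpha_0(2-\tau_0(q-1))}$ tends to zero, with a logarithmic correction only at $(N-2)(q+1)=N+2$ if it occurs, and that correction is harmless. Since $\mu_{l\epsilon}\to1$ by Lemma~\ref{baowenbei}, the factor $\mu_{l\epsilon}$ may be dropped.

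For the main inner term we change variables $y=\xi_{i\epsilon}+\lambda_{i\epsilon}\epsilon^{\alpha_0}z$ and obtain
\[
\epsilon^{-(N-2)\alpha_0}\int_{B(\xi_{i\epsilon},r/2)}pu_\epsilon^{p-1}v_{l\epsilon}G(x,y)\,dy=\lambda_{i\epsilon}^{2}\epsilon^{2\alpha_0-(N-2)\alpha_0}\,\lambda_{i\epsilon}^{N-2}\epsilon^{(N-2)\alpha_0}\!\!\int pU_{i\epsilon}^{p-1}V_{li\epsilon}\,G(x,\xi_{i\epsilon}+\lambda_{i\epsilon}\epsilon^{\alpha_0}z)\,dz .
\]
The scaling gives $\lambda_{i\epsilon}^{N}\epsilon^{N\alpha_0}$ from $dy$ and $\lambda_{i\epsilon}^{-2}\epsilon^{-2\alpha_0}$ from $u^{p-1}$, so the net prefactor is $\lambda_{i\epsilon}^{N-2}$. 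We then expand $G(x,\cdot)$ to zeroth order around $\xi_{i\epsilon}$, writing the remainder as $O(\lambda_{i\epsilon}\epsilon^{\alpha_0}|z|)$. The leading term is
\[
\lambda_{i}^{N-2}G(x,\xi_i)\int_{\mathbb R^N}pW^{p-1}\Big(-\sum_k a_{l,i,k}\partial_kW+a_{l,i,N+1}Z_0\Big)dz .
\]
The translation integrals vanish by oddness, which leaves $p\,a_{l,i,N+1}\mathcal C_0$. Integrating $-\Delta W=W^p$ against $Z_0$, or using the scaling identity $\int W^{p-1}(-z\cdot\nabla W)=\frac{N}{p}\int W^p$, gives $\mathcal C_0=\frac{N-2}{2}\mathcal A_0$ up to the stated normalization. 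The first-order Taylor term contributes $\epsilon^{\alpha_0}\int W^{p-1}|z|\langle z\rangle^{2-N}$, which is finite since $p-1=4/(N-2)$, so that term is $o(1)$.

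The main obstacle is justifying the passage to the limit under the integral, since the convergence $V_{li\epsilon}\to$ (limit) is only in $C^1_{loc}$. We handle this with dominated convergence, using the uniform bounds $U_{i\epsilon}\le CW$ from Proposition~\ref{prop:main} and $|V_{li\epsilon}|\le C_l\langle z\rangle^{2-N}$ from Lemma~\ref{3-AS-eigenvalues-1}. Together these dominate the integrand by $C\langle z\rangle^{-N-2}\in L^1$; this is exactly where the sharp decay estimate \eqref{V4} is indispensable. Pointwise convergence then gives the limit, and uniformity in $x\in\Sigma_r$ follows because $G(x,\cdot)$ is smooth uniformly there. Upgrading to $C^{1,\alpha}$ convergence on compact subsets of $\Omega\setminus\{\xi_i\}$ is done in the standard way: the functions $\epsilon^{-(N-2)\alpha_0}v_{l\epsilon}$ satisfy $-\Delta w=c_\epsilon w$ there, with $c_\epsilon\to0$ uniformly by Lemma~\ref{outside-2}, and are uniformly bounded. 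Interior Schauder estimates then give $C^{1,\alpha}$ compactness, and the pointwise limit identifies the limit as the stated Green combination. The statement that some $a_{l,i,N+1}$ is relevant follows from Lemma~\ref{identity}.
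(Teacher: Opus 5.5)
Your route is the paper's own: the Green representation from Lemma~\ref{gvalues}, dominated convergence using \eqref{eq:bound} and \eqref{V5}, oddness to remove the translation modes, then elliptic regularity. Your treatment of the critical term is right. The displayed product of prefactors actually equals $\lambda_{i\epsilon}^{N}\epsilon^{2\alpha_0}$, but your verbal conclusion $\lambda_{i\epsilon}^{N-2}$ is the correct one. Two other computations are wrong as written and need fixing.

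\textbf{The perturbative inner term.} Since $\tau_0(p-1)=2$, you have $2-\tau_0(q-1)=\tau_0(p-q)$. So the factor you display is $\epsilon^{-(N-2)\alpha_0}\cdot\epsilon\,\epsilon^{\alpha_0\tau_0(p-q)}$, which equals $1$ by \eqref{iden}; it does not tend to zero. You kept $\epsilon^{-(N-2)\alpha_0}$ but dropped the matching $(\lambda_{i\epsilon}\epsilon^{\alpha_0})^{N-2}$ coming from $dy$.

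After the change of variables, the correct prefactor is $\lambda_{i\epsilon}^{N-2}\,\epsilon(\lambda_{i\epsilon}\epsilon^{\alpha_0})^{\tau_0(p-q)}\asymp\epsilon^{\alpha_0(N-2)}$. It multiplies $\int_{|z|\le C\epsilon^{-\alpha_0}}\langle z\rangle^{-(N-2)q}\,dz$, where the integrand bound comes from $U_{i\epsilon}^{q-1}|V_{li\epsilon}|$. This integral behaves as follows:
\begin{itemize}
\item it is bounded if $(N-2)q>N$;
\item it is logarithmic if $(N-2)q=N$, not at $(N-2)(q+1)=N+2$ as you state;
\item it grows like the power $\epsilon^{-\alpha_0(N-(N-2)q)}$ if $(N-2)q<N$. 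This happens for small $q$ in every dimension, and your ``logarithmic correction only'' remark misses it.
\end{itemize}
In the last case the product is $\epsilon^{\alpha_0((N-2)q-2)}=\epsilon^{1+\alpha_0\tau_0(q-1)}$. Altogether the term is $O\bigl(\epsilon^{\alpha_0\min\{N-2,(N-2)q-2\}}(1+|\log\epsilon|)\bigr)$, the same structure as in \eqref{l21:green3}. It tends to zero because $(N-2)q>2$ in the admissible range of $(N,q)$.

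\textbf{The constant $\mathcal C_0$.} The identity you quote, $\int W^{p-1}(-z\cdot\nabla W)=\frac{N}{p}\mathcal A_0$, gives $\mathcal C_0=\bigl(\frac{N}{p}-\frac{N-2}{2}\bigr)\mathcal A_0=\frac{N-2}{2p}\mathcal A_0$, that is, $p\,\mathcal C_0=\frac{N-2}{2}\mathcal A_0$. So the value $\mathcal C_0=\frac{N-2}{2}\mathcal A_0$ in the statement is off by a factor $p$. The paper itself later uses $\mathcal C_0=\frac{N-2}{2p}\mathcal A_0$ in the proof of Lemma~\ref{ckl}, consistent with \eqref{l21:dilation-moment}. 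State this correction explicitly rather than writing ``up to the stated normalization.''

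Your other suggested route does not compute the right quantity: testing $-\Delta W=W^p$ against $Z_0$ produces $\int W^pZ_0$, not $\int W^{p-1}Z_0$. An alternative that works is the flux of $-\Delta Z_0=pW^{p-1}Z_0$ through large spheres, using the explicit decay of $Z_0$.

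With these two corrections your argument proves the lemma, with limit $\frac{N-2}{2}\mathcal A_0\sum_i a_{l,i,N+1}\lambda_i^{N-2}G(x,\xi_i)$.
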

\begin{proof}
A similar argument as in the proof of Lemma \ref{gvalues}, we can easily deduce estimate \eqref{number} by using Lemma \ref{outside-2}, Lemma \ref{baowenbei} with Lemma \ref{identity} and combined with the oddness of
the integrand. So we omit it here.
 \end{proof}

\begin{lem}\label{lem:green-flux}
For $1\leq i\leq k$, and introducing
\begin{equation}\label{Q1}
\begin{split}
Q_{i,j}(x)&=(x-\xi_i)\cdot\nabla G(x,\xi_j)+\frac{2}{p-1}G(x,\xi_j),\hspace{3mm} H_{i,k}=G(x,\xi_{k})
\end{split}
\end{equation}
for any $1\leq j,m\leq k$.
Then we have
\begin{equation}\label{Q2}
F^{(r)}_{j,k,i}:=\int_{\partial B(\xi_i,r)}\big(\partial_{\nu}Q_{i,j}H_{i,k}-Q_{i,j}\partial_{\nu}H_{i,k}dS\big)=\frac{2}{p-1}
\left\lbrace
\begin{aligned}
&0\quad \quad\quad\quad\quad\quad\hspace{3.5mm}\mbox{if}\quad j\neq i\quad\mbox{and}\quad k\neq i \\
&G(\xi_{i},\xi_{j})\quad \quad\quad\hspace{2.5mm}\mbox{if}\quad j\neq i\quad\mbox{and}\quad k=i,
 \\
&G(\xi_{i},\xi_{k})\quad \quad\hspace{1.5mm}\quad\mbox{if}\quad j=i\quad\mbox{and}\quad k\neq i,
 \\
&-2\phi(\xi_i)\quad \quad \quad\hspace{1.7mm}\mbox{if}\quad j=k=i.
 \end{aligned}
\right.
\end{equation}
\end{lem}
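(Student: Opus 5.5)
The plan is to use Green's second identity for harmonic functions on the punctured ball $B(\xi_i,r)\setminus\{\xi_i\}$, splitting each function into a multiple of the fundamental solution centered at $\xi_i$ plus a part harmonic in the whole ball. Throughout I write $c:=\frac{2}{p-1}=\frac{N-2}{2}$ and
\[
\Gamma_i(x):=\frac{1}{(N-2)\omega_N|x-\xi_i|^{N-2}},
\]
so that $G(x,\xi_i)=\Gamma_i(x)-H(x,\xi_i)$ by \eqref{Robin}. I also abbreviate the flux pairing as
\[
\mathcal F(f,g):=\int_{\partial B(\xi_i,r)}\big(\partial_\nu f\,g-f\,\partial_\nu g\big)\,dS .
\]
This pairing is bilinear and antisymmetric, and it is independent of $r$ whenever $f$ and $g$ are harmonic in $B(\xi_i,r)\setminus\{\xi_i\}$. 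The radius $r$ is taken small enough that $\overline{B(\xi_i,r)}\subset\Omega$ and the ball contains no $\xi_j$ with $j\ne i$.

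\textbf{Step 1 (harmonicity).} Both $Q_{i,j}$ and $H_{i,k}$ are harmonic in $B(\xi_i,r)\setminus\{\xi_i\}$, and they are harmonic in the full ball when $j\ne i$, respectively $k\ne i$. For $Q_{i,j}$ this follows from
\[
\Delta\big((x-\xi_i)\cdot\nabla g\big)=2\Delta g+(x-\xi_i)\cdot\nabla\Delta g=0
\]
for any harmonic $g$.

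\textbf{Step 2 (two elementary fluxes).} If $g$ is harmonic in the whole ball, then $\mathcal F(\Gamma_i,g)=-g(\xi_i)$. To see this, note $\partial_\nu\Gamma_i=-\omega_N^{-1}r^{1-N}$ and use the mean value property. The term $\Gamma_i\int_{\partial B}\partial_\nu g$ vanishes by the divergence theorem. Moreover $\mathcal F(f,g)=0$ when $f$ and $g$ are both harmonic in the whole ball, and $\mathcal F(\Gamma_i,\Gamma_i)=0$ trivially.

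\textbf{Step 3 (decomposition of the self-interacting terms).} A direct computation gives $(x-\xi_i)\cdot\nabla\Gamma_i=-(N-2)\Gamma_i$. Hence
\[
Q_{i,i}=-c\,\Gamma_i+R_i,\qquad R_i:=-(x-\xi_i)\cdot\nabla_x H(x,\xi_i)-c\,H(x,\xi_i),
\]
where $R_i$ is harmonic in the whole ball and $R_i(\xi_i)=-c\,\phi(\xi_i)$.

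\textbf{Step 4 (the four cases).} The cases now follow by bilinearity.

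If $j\ne i$ and $k\ne i$, both functions are regular in the ball, so the flux is $0$.

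If $j\ne i$ and $k=i$, then $Q_{i,j}$ is regular with $Q_{i,j}(\xi_i)=c\,G(\xi_i,\xi_j)$, and $H_{i,i}=\Gamma_i-H(\cdot,\xi_i)$. Therefore
\[
\mathcal F(Q_{i,j},H_{i,i})=-\mathcal F(\Gamma_i,Q_{i,j})=c\,G(\xi_i,\xi_j).
\]

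If $j=i$ and $k\ne i$, then
\[
\mathcal F(Q_{i,i},H_{i,k})=-c\,\mathcal F(\Gamma_i,G(\cdot,\xi_k))=c\,G(\xi_i,\xi_k).
\]

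If $j=k=i$, expand
\[
\mathcal F\big(-c\Gamma_i+R_i,\ \Gamma_i-H(\cdot,\xi_i)\big).
\]
The $\Gamma_i\Gamma_i$ term and the regular--regular term vanish. The remaining contributions are
\[
c\,\mathcal F(\Gamma_i,H(\cdot,\xi_i))=-c\,\phi(\xi_i)
\qquad\text{and}\qquad
\mathcal F(R_i,\Gamma_i)=R_i(\xi_i)=-c\,\phi(\xi_i).
\]
Their sum is $-2c\,\phi(\xi_i)$, which is the claimed value.

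The only real obstacle is bookkeeping: keeping track of the sign conventions (outer normal, $\partial_\nu\Gamma_i<0$, and the antisymmetry of $\mathcal F$) and the normalization of $\Gamma_i$, so that each flux against $\Gamma_i$ produces exactly minus the point value, with no stray factor of $\omega_N$.
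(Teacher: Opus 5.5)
Your proof is correct and follows essentially the same route as the paper. The paper cites Lemma~4.5 of \cite{CKL-1} and sketches the computation inside the proof of Lemma~\ref{m00}: split off the fundamental solution at $\xi_i$ using $(x-\xi_i)\cdot\nabla\Gamma_i=-(N-2)\Gamma_i$, so that $Q_{i,i}=-\frac{N-2}{2}\Gamma_i+(\text{harmonic})$, and pair singular parts against regular parts; you do the same. The only difference is that you evaluate each flux exactly at fixed $r$ via the mean value property, whereas the paper uses $r$-independence and passes to the limit $r\downarrow 0$ with $O(r)$ expansions; both give the same four values.
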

\begin{proof}
See Lemma 4.5 in \cite{CKL-1}.
\end{proof}

Then we have
\begin{lem}\label{m00}
We have
\begin{equation*}
\begin{split}
\frac{1}{\epsilon^{[(N-2)/2+N-2]\alpha_0}}\int_{\partial B(\xi_{i\epsilon},r)}\Big(\frac{\partial \omega_{i\epsilon}}{\partial\nu}v_{l\epsilon}-\frac{\partial v_{l\epsilon}}{\partial\nu}\omega_{i\epsilon}\Big)dS_x\rightarrow
-\mathcal{A}_0\mathcal{C}_0\frac{2p}{p-1}\Big[D_tMD_t
    +\frac{\mathcal{B}_0(p-q)}{\mathcal{A}_{0}^2}
    D_{q}\Big]\bm{c}_{l}
\end{split}
\end{equation*}
as $\epsilon\rightarrow0$.
\end{lem}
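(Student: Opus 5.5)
We plan to compute the boundary integral by inserting the exterior expansions of $u_\epsilon$ and $v_{l\epsilon}$ on the sphere $\partial B(\xi_{i\epsilon},r)$, which lies in the region where Lemma~\ref{2-AS-eigenvalues} and Lemma~\ref{lem:exterior-limit} hold in $C^1$. By Lemma~\ref{2-AS-eigenvalues}, in $C^2$ away from the blow-up points,
\[
\epsilon^{-\alpha_0\tau_0}\omega_{i\epsilon}(x)
=\mathcal A_0\sum_{j=1}^m\lambda_j^{\frac{N-2}{2}}Q_{i,j}(x)+o(1),
\qquad Q_{i,j}=(x-\xi_i)\cdot\nabla G(x,\xi_j)+\tfrac{2}{p-1}G(x,\xi_j).
\]
The replacement of $\xi_{i\epsilon}$ by $\xi_i$ costs only $o(1)$ in $C^1$ on the sphere. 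Similarly, Lemma~\ref{lem:exterior-limit} gives
\[
\epsilon^{-\alpha_0(N-2)}v_{l\epsilon}=\mathcal C_0p\sum_k a_{l,k,N+1}\lambda_k^{N-2}H_{i,k}+o(1)
\]
in $C^1$ near $\partial B(\xi_i,r)$. After multiplying by $\epsilon^{-[(N-2)/2+N-2]\alpha_0}$, the boundary integral therefore converges to
\[
\mathcal A_0\mathcal C_0p\sum_{j,k}\lambda_j^{\frac{N-2}{2}}\lambda_k^{N-2}a_{l,k,N+1}\,F^{(r)}_{j,k,i}.
\]

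Next we insert Lemma~\ref{lem:green-flux}, which is independent of $r$. Collecting terms gives
\[
\frac{2}{p-1}\Bigl[-2\phi(\xi_i)\lambda_i^{\frac{3(N-2)}{2}}a_{l,i,N+1}
+\sum_{k\ne i}G(\xi_i,\xi_k)\lambda_i^{\frac{N-2}{2}}\lambda_k^{N-2}a_{l,k,N+1}
+\sum_{j\ne i}G(\xi_i,\xi_j)\lambda_j^{\frac{N-2}{2}}\lambda_i^{N-2}a_{l,i,N+1}\Bigr].
\]
In terms of $\bm c_l^k=\lambda_k^{\frac{N-2}{2}}a_{l,k,N+1}$, the first two pieces equal $-\lambda_i^{\frac{N-2}{2}}$ times $\bigl(2\phi(\xi_i)\lambda_i^{N-2}\bm c_l^i-\sum_{k\ne i}G\,\lambda_i^{\frac{N-2}{2}}\lambda_k^{\frac{N-2}{2}}\bm c_l^k\bigr)$, up to normalization. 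This is exactly $-2(D_tMD_t\bm c_l)_i+(\text{diagonal})$.

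The third, diagonal-in-$i$ piece $\lambda_i^{N-2}a_{l,i,N+1}\sum_{j\ne i}G(\xi_i,\xi_j)\lambda_j^{\frac{N-2}{2}}$ is handled with the criticality relation \eqref{critical}:
\[
\sum_{j\ne i}G(\xi_i,\xi_j)(\lambda_i\lambda_j)^{\frac{N-2}{2}}
=\phi(\xi_i)\lambda_i^{N-2}-\tfrac{\mathcal B_0(p-q)}{\mathcal A_0^2}\lambda_i^{\frac{(N-2)(p-q)}{2}}.
\]
Substituting this converts one copy of $\phi(\xi_i)\lambda_i^{N-2}\bm c_l^i$ into off-diagonal form. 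It also produces the term $\frac{\mathcal B_0(p-q)}{\mathcal A_0^2}(D_q\bm c_l)_i$, with its sign fixed by the substitution. The result is a multiple of $-\frac{2p}{p-1}\mathcal A_0\mathcal C_0\bigl[D_tMD_t+\frac{\mathcal B_0(p-q)}{\mathcal A_0^2}D_q\bigr]\bm c_l$, taken in the $i$-th component, where the vector $\bm c_l$ collects these components over $i$. The factor $\lambda_i^{\frac{N-2}{2}}$ normalization has to be absorbed consistently with how the statement's left-hand side is indexed by $i$.

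We expect the main obstacle to be the bookkeeping of signs and of the powers of $\lambda_i$ in this last step. The check is that the $\phi$-terms combine as $-2+1=-1$ times $\phi$ and the $G$-terms as $+1$, so that $D_tMD_t$ appears with the overall minus sign. A second, analytic point is justifying that the $o(1)$ errors survive multiplication by $\nabla$ on the sphere. This requires $C^1$ (not merely $C^0$) convergence of both expansions, which Lemma~\ref{2-AS-eigenvalues} ($C^2$) and Lemma~\ref{lem:exterior-limit} ($C^{1,\alpha}$) provide, together with the fact that $\xi_{i\epsilon}\to\xi_i$ uniformly in the coefficients of $\omega_{i\epsilon}$.
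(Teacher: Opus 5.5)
Your proposal is correct and follows the paper's proof essentially step by step. Like the paper, you insert the $C^1$ exterior expansions of $\omega_{i\epsilon}$ and $v_{l\epsilon}$ on the sphere, use the $r$-independent fluxes of Lemma~\ref{lem:green-flux}, and apply the criticality relation \eqref{critical} to trade one $\phi$-term for off-diagonal Green terms plus the $D_q$ term.

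One bookkeeping correction, with $t_i=\lambda_i^{\frac{N-2}{2}}$. The first two pieces are exactly $t_i\sum_{k\ne i}G(\xi_i,\xi_k)t_k\bm c_l^k-2\phi(\xi_i)t_i^2\bm c_l^i$. There is no extra factor $\lambda_i^{\frac{N-2}{2}}$, and they are not $-2(D_tMD_t\bm c_l)_i$ plus a diagonal term. Your final $-2+1$ count then gives precisely $-(D_tMD_t\bm c_l)_i-\frac{\mathcal B_0(p-q)}{\mathcal A_0^2}(D_q\bm c_l)_i$, with no normalization left to absorb.
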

\begin{proof}
By the definition of $\omega_{i,\epsilon}$ in \eqref{varep} and \eqref{AS-eigenvalues-1}, we have
\begin{equation}\label{thm1-5}
\frac{\omega_{i\epsilon}(x)}{\epsilon^{(N-2)\alpha_0/2}}\rightarrow\mathcal{A}_{0}\sum_{j=1}^m\lambda_{j,\epsilon}^{\frac{N-2}{2}}\Big((x-\xi_i)\nabla G(x,\xi_j)+\frac{2}{p-1}G(x,\xi_j)\Big)
\end{equation}
in $C^{1}(\Omega\setminus\{x_1,\cdots,x_k\})$ as $\epsilon\rightarrow0$.
Hence combined with \eqref{number}, the left hand side of \eqref{489} becomes
\begin{equation*}
\begin{split}
\frac{1}{\epsilon^{[(N-2)/2+N-2]\alpha_0}}\int_{\partial B(\xi_{i\epsilon},r)}\Big(\frac{\partial \omega_{i\epsilon}}{\partial\nu}v_{l,\epsilon}-\frac{\partial v_{l\epsilon}}{\partial\nu}\omega_{i\epsilon}\Big)dS_x=p\mathcal{A}_0\mathcal{C}_0
\sum_{j,k=1}^m\lambda_{j\epsilon}^{\frac{N-2}{2}}\lambda_{m,\epsilon}^{N-2}a_{l,k,N+1}F^{(r)}_{j,k,i},
\end{split}
\end{equation*}
where $F_{j,m,i}^{(r)}$ can be found in \eqref{Q1}. We further compute the right hand side of the above integrand, by \eqref{Q2}.
Let
\[
    \Phi_i(x):=\frac{\gamma_N }{|x-x_i|^{N-2}},
    \qquad
    \gamma_N:=\frac{1}{(N-2)\omega_N}.
\]
Thus, \(\Phi_i\) is the fundamental solution of \(-\Delta\) with pole at
\(x_i\). Throughout this computation, \(\nu\) denotes the unit normal
pointing outward from the ball \(B(\xi_i,r)\). Consequently,
\[
\begin{aligned}
    \int_{\partial B(\xi_i,r)}\partial_\nu\Phi_i\,dS
    &=
    \gamma_N(2-N)r^{1-N}
    \omega_{N}r^{N-1}
    =-1.
\end{aligned}
\]

For notational convenience, write
\[
    G_j(x):=G(x,\xi_j),
    \qquad
    G_{ij}:=G(\xi_i,\xi_j),
    \qquad
   \phi_i:=\phi(\xi_i).
\]
We first observe that $F_{j,m,i}^{(r)}$ is independent of \(r\),
provided that the annular region under consideration contains no
concentration point other than \(x_i\). Indeed, both \(Q_{i,j}\) and
\(G_m\) are harmonic away from their poles. More precisely,
\[
    \Delta Q_{i,j}
    =
    (x-\xi_i)\cdot\nabla_x(\Delta G_j)
    +\frac{N+2}{2}\Delta G_j
    =0
\]
in every such annulus. Green's second identity therefore gives
\[
    F_{j,k,i}^{(r)}(r_2)
    -
    F_{j,k,i}^{(r)}(r_1)
    =0
\]
whenever
\[
    0<r_1<r_2<
    \frac12\min_{m\ne i}|x_m-\xi_i|.
\]
It is consequently sufficient to compute the limit as \(r\downarrow0\).

Near \(\xi_i\), the Green function and the corresponding scaling field
satisfy
\[
    G_i(x)
    =
    \Phi_i(x)-\phi_i+O(r),
\]
and
\[
\begin{aligned}
    Q_{i,i}(x)
    &=
    (x-\xi_i)\cdot\nabla\Phi_i(x)
    +\frac{N-2}{2}\Phi_i(x)-\frac{N-2}{2}\phi_i+O(r) \\
    &=
    -\frac{N-2}{2}\Phi_i(x)-\frac{N-2}{2}\phi_i+O(r),
\end{aligned}
\]
because
\[
    (x-\xi_i)\cdot\nabla\Phi_i(x)
    =
    -(N-2)\Phi_i(x)
    =
    -2\frac{2}{p-1}\Phi_i(x).
\]
If \(j\ne i\), then \(G_j\) is smooth near \(x_i\), and hence
\[
    G_j(x)=G_{ij}+O(r),
    \qquad
    Q_{ij}(x)=\frac{N-2}{2}G_{ij}+O(r).
\]
Combining the above information, we deduce that
\begin{equation*}
\begin{split}
\sum_{j,k=1}^m\lambda_{j,\epsilon}^{\frac{N-2}{2}}\lambda_{k,\epsilon}^{N-2}&a_{l,k,N+1}F^{(r)}_{j,k,i}=
\frac{2}{p-1}t_i
    \left[
        \sum_{j=1,j\ne i}^{m}
        G_{ij}t_j\bm{c}_l^j
        +
        \bm{c}_l^i
        \sum_{j=1,j\ne i}^{m}
        G_{ij}t_j
        -
        2\phi_it_i\bm{c}_l^i
    \right].
\end{split}
\end{equation*}
This implies
\begin{equation}\label{5-11}
\begin{split}
    \lim_{\epsilon\to0}
    \frac{1}{\epsilon^{[(N-2)/2+N-2]\alpha_0}}&\int_{\partial B^N(\xi_{i\epsilon},r)}\Big(\frac{\partial \omega_{i\epsilon}}{\partial\nu}v_{l\epsilon}-\frac{\partial v_{l\epsilon}}{\partial\nu}\omega_{i\epsilon}\Big)dS_x\\&=\mathcal{A}_0\mathcal{C}_0\frac{2p}{p-1} t_i
    \left[
        \sum_{j=1,j\ne i}^{m}
        G_{ij}t_j\bm{c}_l^j
        +
        \bm{c}_l^i
        \sum_{j=1,j\ne i}^{m}
        G_{ij}t_j
        -
        2\phi_it_i\bm{c}_l^i
    \right].
    \end{split}
\end{equation}

We next continue to estimate the right hand side of \eqref{5-11}.
By the definition of the Green-Robin matrix \(M\), one has
\[
    (Mt)_i
    =
    \phi_i t_i
    -
    \sum_{j=1,j\ne i}^{m}G_{ij}t_j.
\]
Using \eqref{critical}, one discovers
\begin{equation}\label{deference}
    \mathcal{A}_0^2\frac{2}{p-1}t_i(Mt)_i
    =
    \mathcal{B}_0\frac{(N-2)(p-q)}{2}\lambda_i^{\tau_0(p-q)},
\end{equation}
therefore yields
\begin{equation}\label{5-12}
    \sum_{j=1,j\ne i}^{m}G_{ij}t_j
    =
    {\phi}_i t_i
    -\frac{\mathcal{B}_0(p-q)\lambda_i^{\tau_0(p-q)}}{\mathcal{A}_{0}^2\,t_i}.
\end{equation}
Notice also that the symmetry of \(M\) gives
\[
\begin{aligned}
    t^{T}Mt
    &=
    \sum_{i=1}^{m}\phi_i t_i^2
    -
    \sum_{\substack{i,j=1\\i\ne j}}^{m}
    G_{ij}t_it_j
    =
    \sum_{i=1}^{m}\phi_i t_i^2
    -
    2\sum_{1\le i<j\le k}G_{ij}t_it_j.
\end{aligned}
\]
Consequently,
\[
    \nabla_t
    \left(
        \frac{\mathcal{A}_{0}^2}{2}t^{T}Mt
    \right)
    =
    \mathcal{A}_0^2Mt.
\]
Thus, no additional factor of \(2\) is present in the derivative of
the quadratic part of the reduced functional.

By \eqref{5-12}, we compute
\[
\begin{aligned}
    t_i
    \left[
        \sum_{j=1,j\ne i}^{m}
        G_{ij}t_j\bm{c}_l^j
        +
        \bm{c}_l^i
        \sum_{j=1,j\ne i}^{m}
        G_{ij}t_j
        -
        2\phi_it_i\bm{c}_l^i
    \right]&=
    t_i
    \sum_{j=1,j\ne i}^{m}
    G_{ij}t_j\bm{c}_l^j
    -
    \phi_i t_i^2\bm{c}_l^i
    -
    \frac{\mathcal{B}_0(p-q)}{\mathcal{A}_{0}^2}
    \lambda_i^{\tau_{0}(p-q)} \bm{c}_l^i
    \\&=
    -\bigl(D_tMD_t\bm{c}_l\bigr)_i
    -
    \frac{\mathcal{B}_0(p-q)}{\mathcal{A}_{0}^2}
    \bigl(D_{q}\bm{c}_l\bigr)_i.
\end{aligned}
\]
Combing this identity and \eqref{5-11}, we obtain the conclusion.
\end{proof}

We next aim to analyze the right-hand side of
\eqref{489}.
\begin{lem}\label{aepu}
We have
\begin{equation*}
\frac{1}{\epsilon^{(N-2)/2\alpha_0}}\int_{B(\xi_{i\epsilon},r)}\big(pu_{\epsilon}^{p-1}+\epsilon qu_{\epsilon}^{q-1}\big)\omega_{i\epsilon}v_{l\epsilon}dx\rightarrow-pL_0\bm{c}_l^i
\end{equation*}
as $\epsilon\rightarrow0$, where $\bm{c}_l^i$ can be found in \eqref{qitai} and $ L_0
    =\int_{\mathbb R^N}
    W^{p-1}Z_0^2\,dz
    >0$.
\end{lem}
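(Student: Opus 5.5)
The plan is to rescale the integral to the bubble scale around $\xi_{i\epsilon}$ and pass to the limit by dominated convergence. The needed ingredients are the local profile of $\omega_{i\epsilon}$, the $C^1_{loc}$ convergence of $V_{li\epsilon}$ from Lemma~\ref{identity}, and the pointwise bounds \eqref{eq:bound}, \eqref{tidu} and \eqref{V5}.

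\textbf{Change of variables.} Set $\delta=\lambda_{i\epsilon}\epsilon^{\alpha_0}$ and $y=\delta z+\xi_{i\epsilon}$. Since $\frac{2}{p-1}=\tau_0$, the definition \eqref{miu-1} gives
\[
\omega_{i\epsilon}(\delta z+\xi_{i\epsilon})=\delta^{-\tau_0}\big(z\cdot\nabla U_{i\epsilon}(z)+\tau_0U_{i\epsilon}(z)\big)=-\delta^{-\tau_0}\widetilde Z_{i\epsilon}(z),
\]
where $\widetilde Z_{i\epsilon}:=-z\cdot\nabla U_{i\epsilon}-\tau_0U_{i\epsilon}$. The Lemma \ref{Lem6.1} convergence, upgraded to $C^1_{loc}$ by elliptic regularity applied to \eqref{eq:rescaling}, gives $\widetilde Z_{i\epsilon}\to Z_0$ locally uniformly. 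Also $u_\epsilon^{p-1}(y)=\delta^{-2}U_{i\epsilon}^{p-1}(z)$ and $dy=\delta^Ndz$. Combining these, the main term becomes
\[
\frac{1}{\epsilon^{\alpha_0\tau_0}}\int_{B(\xi_{i\epsilon},r)}pu_\epsilon^{p-1}\omega_{i\epsilon}v_{l\epsilon}\,dx=-\lambda_{i\epsilon}^{\tau_0}\,p\int_{B(0,r/\delta)}U_{i\epsilon}^{p-1}\widetilde Z_{i\epsilon}V_{li\epsilon}\,dz .
\]
The power count is $\delta^{N-2-\tau_0}\epsilon^{-\alpha_0\tau_0}=\lambda_{i\epsilon}^{\tau_0}$, because $N-2-\tau_0=\tau_0$.

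\textbf{Passage to the limit in the main term.} By Lemma~\ref{identity}, $V_{li\epsilon}\to-\sum_k a_{l,i,k}\partial_kW+a_{l,i,N+1}Z_0$ in $C^1_{loc}$. For a dominating function, Proposition~\ref{prop:main} gives $U_{i\epsilon}\le CW$. The estimate \eqref{tidu} gives $|\widetilde Z_{i\epsilon}|\le C\langle z\rangle^{2-N}$ on $B(0,r/\delta)$, and \eqref{V4} gives $|V_{li\epsilon}|\le C$. The integrand is therefore bounded by $C\langle z\rangle^{-4-(N-2)}=C\langle z\rangle^{-N-2}$, which is integrable. In the limit we obtain
\[
-\lambda_i^{\tau_0}p\int_{\mathbb R^N}W^{p-1}Z_0\Big(-\sum_ka_{l,i,k}\partial_kW+a_{l,i,N+1}Z_0\Big).
\]
The translation terms vanish by parity, since $W^{p-1}Z_0$ is radial and $\partial_kW$ is odd in $z_k$. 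What remains is $-pL_0\lambda_i^{\tau_0}a_{l,i,N+1}=-pL_0\bm c_l^i$.

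\textbf{The perturbation term.} Under the same rescaling the $\epsilon q u_\epsilon^{q-1}$ term acquires the prefactor $\epsilon\delta^{2-\tau_0(p-q)+\tau_0(p-1)}$, up to $\lambda$ factors. Using \eqref{beta1}, $\epsilon\delta^{\tau_0(p-q)}\sim\delta^{2\tau_0}$ up to $\lambda$ factors; since $\tau_0(p-1)=2$ this is $\epsilon\delta^{2}\sim\delta^{N-\tau_0(p-q)}$. The integral itself is bounded by
\[
\int_{B(0,r/\delta)}\langle z\rangle^{-(N-2)(q-1)-(N-2)}\,dz .
\]
If this integral converges, the term is $O(\delta^{N-\tau_0(p-q)})\to0$, because $N-\tau_0(p-q)>0$. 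If it diverges, it grows at most like $\delta^{-(N-(N-2)q)}$, or like $|\log\delta|$ at the borderline. The product is then $O(\delta^{(N-2)q-\tau_0(p-q)})$, and this exponent equals $\tau_0(3q-p)>0$ exactly when $q>p/3=\frac{N+2}{3(N-2)}$. That holds for $N\ge5$ with $q\ge1$, and for $N=4$ with $q>1$.

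\textbf{Main obstacle.} The delicate point is this uniform domination and the smallness of the $q$-term in the borderline regime $N=4$, $q$ close to $1$. There I would check the exponent bookkeeping carefully against \eqref{beta1} and, if needed, use the sharp decay \eqref{V5} of $v_{l\epsilon}$ rather than only the $L^\infty$ bound.
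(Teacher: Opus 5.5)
Your route is the paper's: rescale at the bubble scale $\delta=\lambda_{i\epsilon}\epsilon^{\alpha_0}$, pass to the limit by dominated convergence, and remove the translation components by parity. Your treatment of the main term is correct. Your majorant $C\langle z\rangle^{-N-2}$ uses only $|V_{li\epsilon}|\le 1$, which is a slightly lighter input than the paper's $C\langle z\rangle^{-2N}$; the paper's bound uses the decay \eqref{V4}.

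The bookkeeping for the perturbation term is wrong, although the conclusion survives. Since $u_\epsilon^{q-1}(\xi_{i\epsilon}+\delta z)=\delta^{-\tau_0(q-1)}U_{i\epsilon}^{q-1}(z)$, \eqref{iden} gives the prefactor
\[
\epsilon^{-\alpha_0\tau_0}\,\epsilon q\,\delta^{N-\tau_0(q-1)-\tau_0}
= q\lambda_{i\epsilon}^{\tau_0}\,\epsilon\,\delta^{\tau_0(p-q)}
= q\lambda_{i\epsilon}^{\tau_0(p-q+1)}\epsilon^{\alpha_0(N-2)} .
\]
Your exponent $2-\tau_0(p-q)+\tau_0(p-1)$ has the last two signs reversed. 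The correct exponent is $2+\tau_0(p-q)-\tau_0(p-1)=\tau_0(p-q)$. Your identification of the prefactor with $\epsilon\delta^2\sim\delta^{N-\tau_0(p-q)}$ is right only for $q=1$. For $q>1$ your claimed rate is off by a factor $\delta^{\tau_0(q-1)}$, and the threshold $q>p/3$ is an artifact of this slip.

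With the correct prefactor $\asymp\delta^{N-2}$ and your crude bound $|V_{li\epsilon}|\le 1$, the term is:
\begin{itemize}
\item $O(\delta^{N-2})$ if $(N-2)q>N$;
\item $O(\delta^{N-2}|\log\delta|)$ if $(N-2)q=N$;
\item $O(\delta^{(N-2)q-2})$ if $(N-2)q<N$.
\end{itemize}
The last exponent is positive because $(N-2)q>2$ in the stated range. Alternatively, as the paper does, \eqref{V4} gives the majorant $C\langle z\rangle^{-(N-2)(q+1)}$. This is integrable since $(N-2)(q+1)>N$ (cf.\ \eqref{beta1}), so the term is $O(\epsilon^{\alpha_0(N-2)})$ directly. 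Once this exponent is corrected, your proof is complete, and the regime $N=4$, $q\downarrow 1$ that you flagged causes no extra difficulty.
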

\begin{proof}
Recording $U_{i\epsilon}(x)$ in \eqref{miu-1} and
define
\[
W_{i\epsilon}(x):=
(\lambda_{i,\epsilon}\epsilon^{\alpha_0})^{\tau_0}\omega_{i\epsilon}\left(\lambda_{i\epsilon}\epsilon^{\alpha_0}x+\xi_{i\epsilon}\right).
\]
The definition of \(w_{i\epsilon}\) gives the identity
\[
    W_{i\epsilon}
    =
    x\cdot\nabla U_{i\epsilon}
    +
    \frac{2}{p-1} U_{i\epsilon}.
\]
Then up to the subsequence, we have
\begin{equation}\label{w0-1}
    W_{i\epsilon}
    \longrightarrow
    x\cdot\nabla W+\frac{2}{p-1}W=-\partial_{\lambda}W_{0,\lambda}|_{\lambda=1}\quad\mbox{in}\quad C_{loc}^{1}(\mathbb{R}^N).
\end{equation}

Combining the interior gradient
estimates on the corresponding annular regions, we find
\[
    \left|U_{i\epsilon}(z)\right|
    +
    \left|W_{i\epsilon}(z)\right|
    +
    \left|V_{li\epsilon}(z)\right|
    \leq
    C\langle z\rangle^{-(N-2)},
    \qquad
    z\in\mathcal D_{i\epsilon},
\]
where
\[
    \langle z\rangle
    :=
    \bigl(1+|z|^2\bigr)^{1/2}, \hspace{3mm}
    \mathcal D_{i\epsilon}
    :=
    \frac{B_i-\xi_{i\epsilon}}{\lambda_{i\epsilon}\epsilon^{\alpha_0}}
    =
    B\left(0,\frac{r}{\lambda_{i\epsilon}\epsilon^{\alpha_0}}\right)\quad\mbox{with}\quad B_i=B(\xi_{i\epsilon},r).
\]
This estimate also accounts for the influence of all bubbles other than the
\(i\)-th one. Indeed, in the rescaled coordinates centered at the \(i\)-th
bubble, their combined contribution is \(O((\epsilon^{\alpha_0(N-2)}))\). On the
other hand, the comparability of the concentration scales ensures that
\[
    \langle z\rangle^{-(N-2)}\geq c(\epsilon^{\alpha_0(N-2)}),
    \qquad z\in\mathcal D_{i\epsilon}.
\]
Then as a consequence,
\[
    U_{i\epsilon}^{\,p-1}
    \left|W_{i\epsilon}V_{li\epsilon}\right|
    \leq
    C\langle z\rangle^{-(N-2)(p+1)}
    =
    C\langle z\rangle^{-2N},\quad
   U_{i\epsilon}^{\,q}
    \left|V_{li\epsilon}\right|
    +
U_{i\epsilon}^{\,q-1}
    \left|W_{i\epsilon}V_{li\epsilon}\right|
    \leq
    C\langle z\rangle^{-(N-2)(q+1)}.
\]
By applying the
dominated convergence theorem on the expanding domains
\(\mathcal D_{i\epsilon}\), \eqref{viep} and \eqref{w0-1},
a direct calculation tells us that
\[
\begin{aligned}
    \lim_{\epsilon\to0}
    \frac{1}{\epsilon^{(N-2)/2\alpha_0}} p\int_{B_i}u_{\epsilon}^{p-1}\omega_{i\epsilon}v_{l\epsilon}dx
    &=
    \lim_{\epsilon\to0}p(\epsilon^{\alpha_0})^{-\frac{N-2}{2}}
    (\lambda_{i\epsilon}\epsilon^{\alpha_0})^{\frac{N-2}{2}}
    \int_{\mathcal D_{i\epsilon}}
    U_{i,\epsilon}^{\,p-1}
    W_{i\epsilon}V_{li\epsilon}\,dz \\
    &=
    -pt_i
    \sum_{m=1}^{N}a_{l,i,k}
    \int_{\mathbb R^N}
    W^{p-1}[0,1]\frac{\partial W}{\partial\lambda}\partial_kW\,dz \\
    &\quad
    -pt_ia_{l,i,N+1}
    \int_{\mathbb R^N}
    W^{p-1}(\frac{\partial W}{\partial\lambda})^2\,dz.
\end{aligned}
\]
By the
oddness of the integrand, one has
\[
    \int_{\mathbb R^N}
    W^{p-1}\frac{\partial W}{\partial\lambda}\partial_kW\,dz
    =0,
    \qquad k=1,\ldots,N.
\]
Set $
L_0:=\int_{\mathbb R^N}
W^{p-1}Z_0^2\,dz>0.
$
It follows that
\[
    \lim_{\epsilon\to0}
    \frac{1}{\epsilon^{(N-2)/2\alpha_0}} p\int_{B_i^N}u_{\epsilon}^{p-1}\omega_{i\epsilon}v_{l\epsilon}dx
    =
    -pL_0t_id_{l,i,N+1}
    =
    -pL_0\bm{c}_l^i.
\]
The same change of variables,
together with the identity $N-\frac{N-2}{2}q=\frac{N-2}{2}(p-q+1)$
gives
\[
\begin{aligned}
    \frac{1}{\epsilon^{(N-2)/2\alpha_0}}\int_{B_i}\epsilon qu_{\epsilon}^{q-1}\omega_{i\epsilon}v_{l\epsilon}dx
    &=
    \epsilon q(\epsilon^{\alpha_0})^{-\frac{N-2}{2}}
    (\lambda_{i\epsilon}\epsilon^{\alpha_0})^{\,\frac{N-2}{2}(p-q+1)}
    \int_{\mathcal D_{i\epsilon}}
    U_{i\epsilon}^{\,q-1}
    W_{i\epsilon}V_{li\epsilon}\,dz \\
    &=
    O\bigl(\epsilon^{\alpha_0(N-2)}\bigr)
    =o(1).
\end{aligned}
\]
Here we used the identity
\begin{equation}\label{iden}
   \epsilon\cdot(\epsilon^{\alpha_0})^\frac{(N-2)(p-q)}{2}
    =
    (\epsilon^{\alpha_0})^{N-2}.
\end{equation}
Thus,
we conclude that the proof.
\end{proof}

\begin{lem}\label{limti}
We have
\begin{equation}\label{518-0}
\frac{1}{\epsilon^{[(N-2)/2+N-2]\alpha_0}}\frac{2(p-q)}{p-1}\epsilon \int_{B_i}u_{\epsilon}^{q}v_{l\epsilon}dx\rightarrow(\frac{2(p-q)}{p-1})^2\mathcal{B}_0\lambda_{i}^{\frac{(N-2)(p-q)}{2}}\bm{c}_l^i
\end{equation}
as $\epsilon\rightarrow0$, where $\mathcal{B}_0$ can be found in \eqref{pusaik-1-1}.
\end{lem}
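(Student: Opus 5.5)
The plan is to rescale the integral to the $i$-th bubble's coordinates, pass to the limit by dominated convergence using the local profile \eqref{viep}, and evaluate the limiting integral with a Pohozaev-type integration by parts. Set $z=(x-\xi_{i\epsilon})/(\lambda_{i\epsilon}\epsilon^{\alpha_0})$ in $B_i=B(\xi_{i\epsilon},r)$. With $U_{i\epsilon}$ as in \eqref{miu-1} and $V_{li\epsilon}$ as in \eqref{vie}, we have $u_\epsilon=(\lambda_{i\epsilon}\epsilon^{\alpha_0})^{-\tau_0}U_{i\epsilon}(z)$ and $dx=(\lambda_{i\epsilon}\epsilon^{\alpha_0})^N dz$. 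Hence
\[
\epsilon\int_{B_i}u_\epsilon^q v_{l\epsilon}\,dx=\epsilon\,(\lambda_{i\epsilon}\epsilon^{\alpha_0})^{N-q\tau_0}\int_{\mathcal D_{i\epsilon}}U_{i\epsilon}^qV_{li\epsilon}\,dz .
\]
Since $N=\tau_0(p+1)$, we get $N-q\tau_0=\tau_0(p-q+1)$. By the identity \eqref{iden}, $\epsilon\,\epsilon^{\alpha_0\tau_0(p-q)}=\epsilon^{\alpha_0(N-2)}$. Therefore the power of $\epsilon$ in front equals $\epsilon^{\alpha_0(N-2+\tau_0)}$, which is exactly cancelled by the normalization in \eqref{518-0}. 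The remaining prefactor is $\lambda_{i\epsilon}^{\tau_0(p-q)}\lambda_{i\epsilon}^{\tau_0}\to\lambda_i^{\tau_0(p-q)}t_i$.

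Next, pass to the limit in $\int_{\mathcal D_{i\epsilon}}U_{i\epsilon}^qV_{li\epsilon}\,dz$. In the proof of Lemma~\ref{aepu} we already recorded the bound $|U_{i\epsilon}|+|V_{li\epsilon}|\le C\langle z\rangle^{2-N}$ on $\mathcal D_{i\epsilon}$. This bound comes from Proposition~\ref{prop:main} and Lemma~\ref{3-AS-eigenvalues-1}, together with scale comparability. It gives the domination $|U_{i\epsilon}^qV_{li\epsilon}|\le C\langle z\rangle^{-(N-2)(q+1)}$. This dominating function is integrable because $(N-2)(q+1)>N$, by \eqref{beta1}. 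Using $U_{i\epsilon}\to W$ and the $C^1_{loc}$ convergence \eqref{viep}, the integral converges to
\[
-\sum_{k=1}^N a_{l,i,k}\int_{\mathbb R^N}W^q\partial_kW\,dz+a_{l,i,N+1}\int_{\mathbb R^N}W^qZ_0\,dz .
\]
The translation terms vanish by oddness, since $W^q\partial_kW=\partial_k(W^{q+1})/(q+1)$ integrates to zero.

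For the dilation term, use $Z_0=-z\cdot\nabla W-\tau_0W$ and integrate by parts: $\int W^q\,z\cdot\nabla W=-\frac{N}{q+1}\int W^{q+1}$. This is justified by the same decay of $W^{q+1}$ at infinity. It yields
\[
\int_{\mathbb R^N}W^qZ_0=\Big(N-\tau_0(q+1)\Big)\mathcal B_0=\tau_0(p-q)\mathcal B_0=\frac{2(p-q)}{p-1}\mathcal B_0,
\]
because $\tau_0=2/(p-1)$. Combining this with the prefactor $\frac{2(p-q)}{p-1}$ and with $t_ia_{l,i,N+1}=\bm c_l^i$ gives the claimed limit $\big(\frac{2(p-q)}{p-1}\big)^2\mathcal B_0\lambda_i^{\tau_0(p-q)}\bm c_l^i$.

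The main obstacle is the uniform domination on the whole expanding domain $\mathcal D_{i\epsilon}$. Near the boundary of $\mathcal D_{i\epsilon}$ the contributions of the other bubbles and of the exterior part of $v_{l\epsilon}$ are of order $\epsilon^{\alpha_0(N-2)}$. These must still be bounded by $C\langle z\rangle^{2-N}$, which holds because $\langle z\rangle^{2-N}\ge c\,\epsilon^{\alpha_0(N-2)}$ on $\mathcal D_{i\epsilon}$. Beyond this, the whole argument rests on the sharp decay of Lemma~\ref{3-AS-eigenvalues-1}, and in particular on its endpoint exponent $N-2$. That endpoint is what makes the dominating function integrable in the borderline regime $N=4$, $q$ close to $1$.
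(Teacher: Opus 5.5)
Your proposal is correct and follows the paper's proof step by step. It uses the same change of variables with \eqref{iden} to cancel the normalization, dominated convergence on $\mathcal D_{i\epsilon}$ with the majorant $C\langle z\rangle^{-(N-2)(q+1)}$, oddness to remove the translation terms, and the integration by parts behind \eqref{UqZ0}, which gives $\int_{\mathbb R^N}W^qZ_0\,dz=\frac{2(p-q)}{p-1}\mathcal B_0$.

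One minor correction to your closing remark: the endpoint exponent $N-2$ is not needed for integrability. The pre-endpoint bound $\langle z\rangle^{-(N-2-\zeta)}$ from \eqref{eq:almost}, with $\zeta=(\sigma_q-2)/3$, already gives an integrable majorant, because $(N-2)q-2>\zeta$ throughout the admissible range.
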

\begin{proof}
Using the change of variables $x=\xi_{i\epsilon}+\lambda_{i\epsilon}\epsilon^{\alpha_0}z$, \eqref{iden},
we obtain
\[
\begin{aligned}
    \frac{1}{\epsilon^{[(N-2)/2+N-2]\alpha_0}}\frac{2(p-q)}{p-1}\epsilon \int_{B_i}u_{\epsilon}^{q}v_{l\epsilon}dx
    &=
     \frac{1}{\epsilon^{[(N-2)/2+N-2]\alpha_0}}\frac{2(p-q)}{p-1}\epsilon
    (\lambda_{i\epsilon}\epsilon^{\alpha_0})^{\,N-\frac{(N-2)q}{2}}
    \int_{\mathcal D_{i\epsilon}}
    U_{i\epsilon}^{\,q}
    V_{li\epsilon}\,dz \\
    &=
    \frac{2(p-q)}{p-1}\lambda_{i\epsilon}^{\frac{(N-2)(p-q+1)}{2}}
    \int_{\mathcal D_{i\epsilon}}
    U_{i\epsilon}^{\,q}
    V_{li\epsilon}\,dz.
\end{aligned}
\]
Furthermore, we find
\begin{equation}\label{UqZ0}
    \int_{\mathbb R^N}W^q\partial_{\lambda}W_{0,\lambda}|_{\lambda=1}\,dz
    =
    \frac{2(p-q)}{p-1}\mathcal{B}_0,
\end{equation}
where $\mathcal{B}_0$ can be found in \eqref{pusaik-1-1}.
Therefore, together with \eqref{UqZ0}, the fact that
\[
\begin{aligned}
    \int_{\mathbb R^N}W^q\partial_kW\,dz
    &=
    \frac{1}{q+1}
    \int_{\mathbb R^N}
    \partial_k\bigl(W^{q+1}\bigr)\,dz=0,
    \qquad k=1,\ldots,N,
\end{aligned}
\]
and the dominated convergence theorem, we obtain
\[
\begin{aligned}
    \int_{\mathcal D_{i\epsilon}}
    U_{i\epsilon}^{\,q}
    V_{li\epsilon}\,dz
    &\longrightarrow
    \sum_{k=1}^{N}a_{l,i,k}
    \int_{\mathbb R^N}
    W^{q}\partial_kW\,dz
    +
    a_{l,i,N+1}
    \int_{\mathbb R^N}
    W^{q}\frac{\partial W}{\partial\lambda}\,dz\\
    &=a_{l,i,N+1}\frac{2(p-q)}{p-1}\mathcal{B}_0.
\end{aligned}
\]
The boundary term at infinity vanishes because $(N-2)(q+1)>N$.
Finally, by $
    \bm{c}_l^i=t_ia_{l,i,N+1}=\lambda_{i,\epsilon}^{\frac{N-2}{2}}a_{l,i,N+1},$
we have
\begin{equation}\label{57}
    \frac{1}{\epsilon^{[(N-2)/2+N-2]\alpha_0}}\frac{2(p-q)}{p-1}\epsilon \int_{B_i}u_{\epsilon}^{q}v_{l\epsilon}dx
    \longrightarrow
    (\frac{2(p-q)}{p-1})^2\mathcal{B}_0\lambda_{i}^{\frac{(N-2)(p-q)}{2}}\bm{c}_l^i
\end{equation}
as $\epsilon\rightarrow0$. Concluding the proof.
\end{proof}

These expansions show that the additional volume term
$\frac{2(p-q)}{p-1}\epsilon \int_{B_i}u_{\epsilon}^{q}v_{l\epsilon}dx$ contributes to the leading-order matrix
and cannot be included in the remainder.
Combining the boundary and volume contributions yields $H_q$.
\begin{lem}\label{ckl}
\label{prop:effective-scale}
In the above notation
\[
 \bm{c}_{l}=(\lambda_1^{\frac{N-2}{2}}a_{l,1,N+1},\ldots,\lambda_k^{\frac{N-2}{2}}a_{l,k,N+1})^T.
\]
If \(\bm{c}_{l}\neq0\), then
\[
 \frac{\mu_{l\epsilon}-1}{\epsilon^{(N-2)\alpha_0}}
 \longrightarrow \nu_l\in\mathbb R,
\]
and
\begin{equation}
 H_q \bm{c}_{l}=pL_0\nu_l  \bm{c}_{l},
 \qquad
 H_q=\mathcal{A}_0^2(\frac{N-2}{2})^2D_tM D_t
      +\mathcal{B}_0(\frac{N-2}{2})^2(p-q)(q+1-p)D_{q},
 \label{eq:H-system}
\end{equation}
where $L_0$ is from Lemma \ref{aepu}.
At the limiting reduced critical point, \(H_q\) is the scale block
\(\nabla_y^2\Psi_k(t(y),x)\) of the leading reduced functional, where
\(y_i=\log\lambda_i\). Moreover, \(H_q\) is positive definite.
Consequently, \(\nu_l>0\).
\end{lem}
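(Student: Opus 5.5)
We will combine the bilinear identity \eqref{489} with the three limits in Lemmas~\ref{m00}, \ref{aepu} and \ref{limti}. First we normalize. Divide both sides of \eqref{489} by $\epsilon^{[(N-2)/2+N-2]\alpha_0}$. Lemma~\ref{m00} gives the limit of the left-hand side:
\[
-\mathcal A_0\mathcal C_0\tfrac{2p}{p-1}\big[D_tMD_t+\tfrac{\mathcal B_0(p-q)}{\mathcal A_0^2}D_q\big]\bm c_l ,
\]
with $i$-th entry taken. The defect term, by Lemma~\ref{limti}, tends to $-(\tfrac{2(p-q)}{p-1})^2\mathcal B_0\lambda_i^{\tau_0(p-q)}\bm c_l^i$ (note the minus sign in \eqref{489}). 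The remaining volume term is
\[
\frac{\mu_{l\epsilon}-1}{\epsilon^{(N-2)\alpha_0}}\cdot \epsilon^{-(N-2)\alpha_0/2}\int_{B_i}\big(pu_\epsilon^{p-1}+\epsilon qu_\epsilon^{q-1}\big)\omega_{i\epsilon}v_{l\epsilon}.
\]
Its second factor tends to $-pL_0\bm c_l^i$ by Lemma~\ref{aepu}.

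Next we obtain the existence of the limit. Since $\bm c_l\neq0$, we pick $i$ with $\bm c_l^i\neq0$. The second factor above is then bounded away from zero for small $\epsilon$. All other terms in the normalized identity converge, so the quotient $(\mu_{l\epsilon}-1)/\epsilon^{(N-2)\alpha_0}$ converges to some $\nu_l$, at least along the subsequence where the profiles converge. Passing to the limit in every component $i$ then gives a linear system. We use $\mathcal C_0=\tau_0\mathcal A_0$ and $\tfrac{2}{p-1}=\tau_0$, and multiply through by a constant. The $D_tMD_t$ part then carries the coefficient $\mathcal A_0^2\tau_0^2$ after the factor $2p/(p-1)\cdot\tau_0$ is absorbed. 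The $D_q$ part combines $-\tau_0^2(p-q)\mathcal B_0$ from the boundary term with $+\tau_0^2(p-q)^2\mathcal B_0$ from the defect term, giving $\tau_0^2\mathcal B_0(p-q)(q+1-p)$. This yields \eqref{eq:H-system}: $H_q\bm c_l=pL_0\nu_l\bm c_l$.

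The constants have to be tracked carefully, so that the overall factor $p\mathcal A_0\mathcal C_0\cdot 2/(p-1)$ against $pL_0$ reduces exactly to $pL_0$ on the right. If a stray positive constant survives, it only rescales $\nu_l$ and does not affect positivity. Uniqueness of the limit for the full family is not needed, because \eqref{eq:H-system} determines $\nu_l$ as an eigenvalue of $H_q$ with eigenvector $\bm c_l\neq 0$.

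The main obstacle is positive definiteness; we would handle it through the Hessian interpretation. Write $\lambda_i=e^{y_i}$ and $t_i=\lambda_i^{\tau_0}$, and set $g(y)=\Psi_{\Omega,q}(e^{y},\xi)$. Differentiating the quadratic part $\tfrac{\mathcal A_0^2}{2}t^TMt$ twice in $y$ gives
\[
\mathcal A_0^2\tau_0^2\big(D_tMD_t+\operatorname{diag}(t_i(Mt)_i)\big).
\]
The term $-\mathcal B_0\sum_i\lambda_i^{\tau_0(p-q)}$ gives $-\mathcal B_0\tau_0^2(p-q)^2D_q$. The criticality relation \eqref{deference} replaces $\mathcal A_0^2 t_i(Mt)_i$ by $\mathcal B_0(p-q)\lambda_i^{\tau_0(p-q)}$, and summing recovers exactly $H_q=\nabla_y^2 g$.

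Positive definiteness then follows from the stability of the critical point, which we would argue directly by a matrix argument. Write $H_q=\tau_0^2 D_t\big(\mathcal A_0^2M-\mathcal B_0(p-q)(p-q-1)\operatorname{diag}(\lambda_i^{\tau_0(p-q-2)})\big)D_t$. Then use \eqref{5-12}: the off-diagonal entries $-G_{ij}$ are nonpositive, and $Mt$ has positive entries $\mathcal B_0(p-q)\lambda_i^{\tau_0(p-q)}/(\mathcal A_0^2t_i)$. Define $S=\mathcal A_0^2 M+\mathcal B_0(p-q)(q+1-p)\operatorname{diag}(\lambda_i^{\tau_0(p-q-2)})$. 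Then
\[
(St)_i=\mathcal B_0(p-q)(2-(p-q))\lambda_i^{\tau_0(p-q)}/t_i>0,
\]
using $0<p-q<2$. So $S$ is a symmetric Z-matrix possessing a positive vector $t$ with $St>0$. It is therefore a nonsingular M-matrix, hence positive definite. Consequently $H_q=\tau_0^2D_tSD_t\succ0$, and since $pL_0>0$, we conclude $\nu_l>0$.
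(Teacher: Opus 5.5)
Your derivation of the limit system is the paper's argument: normalize \eqref{489}, divide by $\mathcal F_{i_0\epsilon}$ for an index with $\bm c_l^{i_0}\neq0$, and pass to the limit componentwise with Lemmas~\ref{m00}, \ref{aepu}, \ref{limti}. Your identification of $H_q$ as $\nabla_y^2\Psi$ via \eqref{deference} also matches the paper. Your Hessian computation is correct and more explicit, since it shows the term $\mathcal A_0^2\tau_0^2\operatorname{diag}(t_i(Mt)_i)$ that criticality converts.

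The one real difference is positive definiteness. The paper expands $\bm c^TD_tMD_t\bm c=\sum_{i<j}G_{ij}t_it_j(c_i-c_j)^2+\sum_i t_i(Mt)_ic_i^2$. From this it gets the explicit bound $\bm c^TH_q\bm c\ge\tau_0^2\mathcal B_0(p-q)(q+2-p)\min_i\lambda_i^{\tau_0(p-q)}|\bm c|^2$ (see \eqref{eq:H-positive}). You instead observe that $S$ is a symmetric Z-matrix with $St>0$ for the positive vector $t$, and invoke the M-matrix criterion. These are the same mechanism: that sum-of-squares identity is the standard proof of the criterion in the symmetric case, and equivalently $D_tSD_t$ is strictly row diagonally dominant. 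The paper's route is self-contained and gives a coercivity constant. Yours is shorter and makes clear that the only inputs are $G_{ij}\ge0$ and $p-q<2$.

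One correction to your constant tracking: $\mathcal C_0=\int_{\mathbb R^N}W^{p-1}Z_0\,dz=\tau_0\mathcal A_0/p$ (cf.\ \eqref{l21:dilation-moment}), not $\tau_0\mathcal A_0$. With the correct value, the prefactor in Lemma~\ref{m00} is exactly $\mathcal A_0\mathcal C_0\frac{2p}{p-1}=\mathcal A_0^2\tau_0^2$. The boundary contribution to the $D_q$ block is then exactly $-\tau_0^2\mathcal B_0(p-q)$, as your arithmetic implicitly assumes, and the right side is exactly $pL_0\nu_l$; nothing has to be absorbed.

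Your fallback remark that a stray constant would only rescale $\nu_l$ is wrong. Such a factor multiplies the boundary term $\mathcal E_{i\epsilon}$ but not the defect term $\mathcal G_{i\epsilon}$, so it changes the relative weight of the two $D_q$ contributions. With $\mathcal C_0=\tau_0\mathcal A_0$ you would obtain $\bigl(\mathcal A_0^2\tau_0^2D_tMD_t+\tau_0^2\mathcal B_0(p-q)\tfrac{q}{p}D_q\bigr)\bm c_l=L_0\nu_l\bm c_l$, which is a different matrix. The factor $(q+1-p)$ in $H_q$ depends on the exact value of $\mathcal C_0$.
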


\begin{proof}
Let
\[
 \eta_{l\epsilon}
 :=\frac{\mu_{l\epsilon}-1}{\epsilon^{(N-2)\alpha_0}},
 \qquad
\mathcal E_{i\epsilon}
 :=\frac{1}{\epsilon^{[(N-2)/2+N-2]\alpha_0}}\int_{\partial B_i}\Big(\frac{\partial \omega_{i\epsilon}}{\partial\nu}v_{l\epsilon}-\frac{\partial v_{l\epsilon}}{\partial\nu}\omega_{i\epsilon}\Big)dS_x,
\]
\[
\mathcal F_{i\epsilon}
 :=\frac{1}{\epsilon^{(N-2)/2\alpha_0}}\int_{B_i}\big(pu_{\epsilon}^{p-1}+\epsilon qu_{\epsilon}^{q-1}\big)\omega_{i\epsilon}v_{l\epsilon}dx,
 \qquad
\mathcal G_{i\epsilon}
 :=\frac{1}{\epsilon^{[(N-2)/2+N-2]\alpha_0}}\frac{2(p-q)}{p-1}\epsilon \int_{B_i}u_{\epsilon}^{q}v_{l\epsilon}dx.
\]
Recalling \eqref{489}, we obtain
\begin{equation}\label{519}
 \mathcal E_{i\epsilon}
 =\eta_{l\epsilon}\mathcal F_{i\epsilon}
  -\mathcal G_{i,\epsilon}.
\end{equation}

Choose \(i_0\) such that \(\bm{c}_l^{i_0}\neq0\), then by Lemma \ref{aepu}
, for all sufficiently small \(\epsilon\),
\[
 \eta_{l\epsilon}
 =
 \frac{
    \mathcal E_{i_0\epsilon}
   +\mathcal G_{i_0\epsilon}}
 {\mathcal F_{i_0\epsilon}}.
\]
Lemmas \ref{m00} and \ref{limti}
show that the numerator has a finite limit. Therefore,
\[
 \eta_{l\epsilon}\longrightarrow\nu_l
 \quad\text{for some }\nu_l\in\mathbb R.
\]

We may now pass to the limit in \eqref{519}
componentwise. Using Lemma \ref{m00}, Lemma
\ref{aepu} and Lemma \ref{limti}, we obtain
\[
 -\mathcal{A}_0\mathcal{C}_0\frac{2p}{p-1}
 \left(
   D_tM D_t
   +\frac{\mathcal{B}_0(p-q)}{\mathcal{A}_0^2}D_{q}
 \right)\bm{c}_l
 =
 -pL_0\nu_\ell \bm{c}_l-\mathcal{B}_0(\frac{2(p-q)}{p-1})^2B_{pq} \bm{c}_l.
\]
Since \(\mathcal{C}_0=\frac{N-2}{2p}\mathcal{A}_0\), this identity is equivalent to
\eqref{eq:H-system}.

We next identify \(H_q\). Set
$
 y_i:=\log\lambda_i,
 \qquad
 t_i=e^{\frac{N-2}{2}y_i},
$
and by \eqref{pusaik-0}, we write the functional $ \Psi_{\Omega,q}:=\Psi_{\Omega,q}(t(y),x)$, i.e
\[
 \Psi_{\Omega,q}
 :=
 \frac{\mathcal{A}_0^2}{2}\,t(y)^TM(x)t(y)
 -\mathcal{B}_0\sum_{i=1}^m\lambda_{i}^{\tau_0(p-q)}.
\]
Direct differentiation gives
\[
 \partial_{y_i}\Psi_{\Omega,q}
 =
 \mathcal{A}_0^2\frac{N-2}{2}\,t_i(Mt)_i
 -\mathcal{B}_0\frac{(N-2)(p-q)}{2}\lambda_{i}^{\tau_0(p-q)}.
\]
At the limiting reduced critical point, a second differentiation yields
\[
 \nabla_y^2\Psi_{\Omega,q}
 =
 A_0^2(\frac{N-2}{2})^2D_tM D_t
 +\mathcal{B}_0(\frac{N-2}{2})^2(p-q)(q+1-p)D_{q}
 =H_q.
\]
It remains to prove that \(H_q\) is positive definite.
Since \(m_{ij}=-G_{ij}\) for \(i\neq j\), a direct expansion gives
\begin{equation}
 \bm{c}_{l}^TD_tM D_t\bm{c}_l
 =
 \sum_{i<j}G_{ij}t_it_j(\bm{c}_i-\bm{c}_j)^2
 +\sum_{i=1}^k t_i(Mt)_i(\bm{c}_l^i)^2.
 \label{eq:graph-identity}
\end{equation}
Combining \eqref{eq:graph-identity} and the definition of $H_q$, we find
\begin{equation}
\bm{c}_l^TH_q\bm{c}_l
 =
\mathcal{ A}^2(\frac{N-2}{2})^2\sum_{i<j}G_{ij}t_it_j(\bm{c}_l^i-\bm{c}_l^j)^2
 +\mathcal{B}_0\frac{(N-2)(p-q)}{2\alpha_0}
  \sum_{i=1}^k\lambda_i^{\tau_0(p-q)} (\bm{c}_l^i)^2.
 \label{eq:H-positive}
\end{equation}
Since \(G_{ij}\geq0\), it follows that, for every \(\bm{c}_l\neq0\), we have
\begin{equation}
 \bm{c}_l^TH_q\bm{c}_l
 \geq
 \mathcal{B}_0(\frac{N-2}{2})^2(p-q)(q+2-p)
 \min_{1\leq i\leq k}\lambda_i^{\tau_0(p-q)}\,|\bm{c}_l|^2
 >0.
\label{eq:H-coercive}
\end{equation}
Thus \(H_q\) is positive definite. Taking the Euclidean inner product
of \eqref{eq:H-system} with \(\bm{c}_l\) gives
\[
 \nu_\ell
 =\frac{\bm{c}_l^TH_q\bm{c}_l}{pL_0|\bm{c}_l|^2}>0.
\]
This completes the proof of Lemma \ref{ckl}.
\end{proof}

We can now prove Theorem \ref{dilation}.
\begin{proof}[\textbf{Proof of Theorem \ref{dilation}}]
By Lemma \ref{identity} we have to prove that for any $m+1\leq l\leq(N+1)m$ and $i\in\{1,\cdots,m\}$ the number $a_{l,i,N+1}$ in \eqref{viep} is zero. In order to prove this fact, we will use Lemma~\ref{prop:effective-scale}.

Let \(\epsilon_n\downarrow0\) be arbitrary. Passing to a subsequence,
still denoted by $\epsilon$.
Since \(t_i>0\), it is enough to prove that
\[
 \bm{c}_l=(t_1a_{l,1,N+1},\ldots,t_ka_{l,m, N+1})^T
\]
vanishes.
Suppose, to the contrary, that \(\bm{c}_l\neq0\). In virtue of
Lemma~\ref{prop:effective-scale}, we have
\[
 \frac{\mu_{l\epsilon}-1}{\epsilon^{(N-2)\alpha_0}}
 \longrightarrow
 \nu_l
 =\frac{\bm{c}_l^TH_q\bm{c}_l}{pL_0|\bm{c}_l|^2}>0.
\]
On the other hand, estimate of the eigenvalues $\mu_{l,\epsilon}$ in \eqref{baowen1} gives, for $m+1\leq l\leq(N+1)m$,
\[
 \frac{\mu_{l\epsilon}-1}{\epsilon^{(N-2)\alpha_0}}
 \leq
 C\epsilon^{2\alpha_0}
 \longrightarrow0.
\]
Thus we clearly get a contradiction and concluding the proof.
\end{proof}

\section{Asymptotic behaviour of the eigenvalues, $m+1\leq l\leq(N+1)m$}\label{section-6}
In this section, we will prove Theorem \ref{vepsilon}. Motivated by the argument in \cite{CKL-1}, we prove its improvement \eqref{116}, which is stated once more in the following proposition.
Before this, we define the row vectors and matrix as follow:
$$
\mathcal{G}(x)=(G(x,\xi_1),\cdots,G(x,\xi_m)),
$$
\begin{equation}\label{gX}
\mathcal{\tilde{G}}_{(i,k)}(x)=\lambda_i^\frac{N}{2}\partial_{y_k}G(x,\xi_i),\quad (\tilde{\bm{d}}_{l})_{(i,k)}=\lambda_i^{\frac{N-2}{2}}a_{l,i,k},
\end{equation}

\begin{Prop}\label{propos-6-1}
We write
\begin{equation}\label{l2matrix}
N_{q}:=M+
\frac{\mathcal{B}_0(p-q)(q+1-p)}{\mathcal{A}_0^2}\lambda_i^{\frac{(N-2)(p-q-2)}{2}}.
\end{equation}
If $m+1\leq l\leq(N+1)m$, then
\begin{equation}\label{exterior}
\epsilon^{-\alpha_0(N-1)}v_{l\epsilon}\rightarrow \mathcal{A}_0\Big(\mathcal{G}(x)N_{q}^{-1}P+\tilde{\mathcal{G}}(x)\Big)\tilde{\bm{d}}_{l}
\end{equation}
in $C_{loc}^{1}(\Omega\setminus\{\xi_1,\cdots,\xi_m\})$ as $\epsilon\rightarrow0$, where $A$, $P$ and $\tilde{\bm{d}}_{l}$ are defined in \eqref{pusaik-1-1}, \eqref{pusaik-4} and \eqref{pusaik-3}, respectively.
\end{Prop}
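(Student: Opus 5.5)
The plan is to start from the Green representation \eqref{Vl} of $v_{l\epsilon}$ on $\Sigma_r$. I will expand the kernel around each concentration point to first order, and then determine the unknown zeroth-order masses by a bilinear Pohozaev-type identity. Concretely, for $x\in\Sigma_r$ write
\[
\frac{v_{l\epsilon}(x)}{\mu_{l\epsilon}}=\sum_{i=1}^m\int_{B(\xi_{i\epsilon},r/2)}f_\epsilon'(u_\epsilon)v_{l\epsilon}G(x,y)\,dy+\int_{\Sigma_{r/2}}f_\epsilon'(u_\epsilon)v_{l\epsilon}G(x,y)\,dy .
\]
The exterior integral is handled by Lemma~\ref{outside-2} and \eqref{V8}. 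The $pu_\epsilon^{p-1}$ part is $O(\epsilon^{4\alpha_0}\epsilon^{\alpha_0(N-2)})$, and the $\epsilon q u_\epsilon^{q-1}$ part is $O(\epsilon\,\epsilon^{\alpha_0(N-2)(q-1)/2}\epsilon^{\alpha_0(N-2)})$. By \eqref{beta1}, both are $o(\epsilon^{\alpha_0(N-1)})$ in the stated range of $(N,q)$.

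Inside $B(\xi_{i\epsilon},r/2)$, Taylor-expand in $y$:
\[
G(x,y)=G(x,\xi_{i\epsilon})+\nabla_yG(x,\xi_{i\epsilon})\cdot(y-\xi_{i\epsilon})+O(|y-\xi_{i\epsilon}|^2).
\]
This produces three contributions.
\begin{itemize}
\item The masses $k_{i\epsilon}:=\int_{B_i}f_\epsilon'(u_\epsilon)v_{l\epsilon}$.
\item First moments. After rescaling, using Theorem~\ref{existence-1}, the decay \eqref{V5} for dominated convergence, and $\int pW^{p-1}\partial_kW\,z_t=-\delta_{kt}\mathcal A_0$, these converge to $\epsilon^{\alpha_0(N-1)}\mathcal A_0\lambda_i^{N-1}a_{l,i,k}$ times $\partial_{y_k}G(x,\xi_i)$. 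This is exactly the $\tilde{\mathcal G}(x)\tilde{\bm d}_l$ term (recall $\lambda_i^{N/2}\lambda_i^{(N-2)/2}=\lambda_i^{N-1}$).
\item A quadratic remainder. By \eqref{V5} and Proposition~\ref{prop:main} it is bounded by $\epsilon^{\alpha_0 N}$, up to a logarithm. Its $\epsilon u_\epsilon^{q-1}$ part is the delicate one in the borderline case $N=4$, $q$ close to $1$. There I would not estimate it crudely, but keep its dependence on the masses through $\mathfrak h_\epsilon=\epsilon^{\alpha_0}+\max_i|k_{i\epsilon}|\epsilon^{-\alpha_0(N-2)}$.
\end{itemize}
The outcome is a preliminary expansion
\[
v_{l\epsilon}=\sum_i k_{i\epsilon}G(x,\xi_i)+\epsilon^{\alpha_0(N-1)}\mathcal A_0\tilde{\mathcal G}(x)\tilde{\bm d}_l+o(\epsilon^{\alpha_0(N-2)}\mathfrak h_\epsilon),
\]
valid in $C^1_{loc}$ after differentiating the representation. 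Note that by Lemma~\ref{lem:exterior-limit} and Theorem~\ref{dilation}, $k_{i\epsilon}=o(\epsilon^{\alpha_0(N-2)})$.

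Next, to find $\kappa_1:=(k_{i\epsilon})_i$, insert this preliminary expansion and Lemma~\ref{2-AS-eigenvalues} (written as \eqref{F010}) into the left side of \eqref{m0} on $\partial B(\xi_{i\epsilon},r)$. The mass terms give, via the mean value identity \eqref{F014} and the flux computations of Lemma~\ref{lem:green-flux}, $\mathcal A_0\epsilon^{\alpha_0(N-2)/2}$ times $\bigl(\lambda_i^{(N-2)/2}(\phi_i k_i-\sum_{j\ne i}G_{ij}k_j)\bigr)$ plus a diagonal correction. The derivative terms give $\epsilon^{\alpha_0(N-2)/2}\epsilon^{\alpha_0(N-1)}$ times the row $P\tilde{\bm d}_l$, i.e.\ $-\tfrac12\partial_k\phi$ and $\partial_{y_k}G(\xi_i,\xi_j)$. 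On the right side of \eqref{m0}, $(p\mu_{l\epsilon}-1)\int u_\epsilon^pv_{l\epsilon}$ is expressed through the masses, since $\int u_\epsilon^{p}v=\frac1{p}\int pu_\epsilon^{p-1}\cdot u_\epsilon v$ and the translation profile is odd. Also $\mu_{l\epsilon}=1+O(\epsilon^{N\alpha_0})$ by Lemma~\ref{baowenbei}. The signed term $\epsilon(q\mu_{l\epsilon}-1)\int u_\epsilon^q v_{l\epsilon}$ does not cancel. I would rewrite it using \eqref{womiga} paired with $v_{l\epsilon}$ (the signed moment relation), together with the critical point identity \eqref{deference}/\eqref{critical} to eliminate $\sum_{j\neq i}G_{ij}t_j$. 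This should yield, after multiplying by $D_t^{-1}$,
\[
N_q\kappa_1=-\epsilon^{\alpha_0(N-1)}\mathcal A_0P\tilde{\bm d}_l+o(\epsilon^{\alpha_0(N-2)}\mathfrak h_\epsilon),
\]
with the sign to be checked against \eqref{exterior}. The diagonal part $\frac{\mathcal B_0(p-q)(q+1-p)}{\mathcal A_0^2}\lambda_i^{(N-2)(p-q-2)/2}$ of $N_q$ arises precisely from the $q$-volume term combined with \eqref{5-12}.

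Since $N_q\succ0$ by Lemma~\ref{positivede}, this gives $\kappa_1=O(\epsilon^{\alpha_0(N-1)})+o(\epsilon^{\alpha_0(N-2)}\mathfrak h_\epsilon)$, hence $\mathfrak h_\epsilon=O(\epsilon^{\alpha_0})$. Feeding this back improves all remainders to $o(\epsilon^{\alpha_0(N-1)})$, so $\epsilon^{-\alpha_0(N-1)}\kappa_1\to\mathcal A_0N_q^{-1}P\tilde{\bm d}_l$. Substituting into the preliminary expansion gives \eqref{exterior}, first along subsequences, and then for the whole family since the limit is identified.

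The main obstacle is the third step. One must extract the mass equation with the correct $q$-dependent diagonal from \eqref{m0}, where the perturbation volume term is of the same order as the boundary terms. One must also keep every error relative to the self-referential quantity $\mathfrak h_\epsilon$ rather than an absolute bound, so that the invertibility of $N_q$ closes the bootstrap. I would first verify that identity in the single-bubble case $m=1$, where $N_q$ is a scalar.
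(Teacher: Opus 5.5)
Your overall architecture matches the paper: first moments from the translation profile, a mass system closed by $N_q\succ0$, and a bootstrap in $\mathfrak h_\epsilon$. However, the step that produces the mass system fails.

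\textbf{The mass system cannot come from \eqref{m0}.} Identity \eqref{m0} pairs $v_{l\epsilon}$ with $u_\epsilon$. Its volume side contains $(p\mu_{l\epsilon}-1)\int_{B_i}u_\epsilon^pv_{l\epsilon}$, which is forced to be of order $\epsilon^{3\alpha_0\tau_0}\mathfrak h_\epsilon$, the same order as the boundary mass terms, and you have no way to evaluate it. Rescaled, it equals $(p-1)\delta_{i\epsilon}^{\tau_0}\int U_{i\epsilon}^pV_{li\epsilon}$ up to negligible errors. The translation modes contribute nothing, by integration by parts. The rest of $v_{l\epsilon}$ has size $\mathfrak h_\epsilon$ and is asymptotically a multiple of $Z_0$, and it also contributes nothing at leading order because $\int W^pZ_0=0$. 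So the term is determined by second-order corrections of relative size $\delta_{i\epsilon}^{N-2}$, which you do not control. Oddness only removes the $O(\epsilon^{\alpha_0\tau_0})$ level. Writing $u^pv=\frac1p\,pu^{p-1}\cdot uv$ does not make it a multiple of the mass, because $u_\epsilon$ is not constant on the bubble.

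To see the damage, use the following notation. Let $\kappa_{i0},\kappa_{i1}$ be the integrals over $B_i$ of $f_\epsilon(u_\epsilon)$ and $u_\epsilon^{p-1}v_{l\epsilon}$. Let $J_{i\epsilon}=\epsilon\int_{B_i}u_\epsilon^qv_{l\epsilon}/(p\mu_{l\epsilon})$. Let $g_{i\epsilon},h_{i\epsilon}$ be the regular parts at $\xi_{i\epsilon}$ of $u_\epsilon$ and $v_{l\epsilon}/(p\mu_{l\epsilon})$. Set $\theta_i=\frac{\mathcal B_0(p-q)}{\mathcal A_0^2}\lambda_i^{\tau_0(p-q-2)}=(Mt)_i/t_i$.

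If you drop the term, \eqref{m0} together with the correct moment relation $J_{i\epsilon}\approx\theta_i\kappa_{i0}\kappa_{i1}$ gives $(M\kappa_1)_i-q\theta_i\kappa_{i1}=\mathcal R_{i\epsilon}(\kappa_2)$. The correct system is $(N_q\kappa_1)_i=(M\kappa_1)_i+(q+1-p)\theta_i\kappa_{i1}$. So the matrix you get is wrong. Its quadratic form at $t$ is $(1-q)\sum_i\theta_it_i^2\le0$, so it is not positive definite and your bootstrap would not close either.

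The missing idea is to pair with the dilation field. Use the bilinear Pohozaev identity for $(u_\epsilon,v_{l\epsilon}/(p\mu_{l\epsilon}))$, equivalently \eqref{489} with $w_{i\epsilon}$. There the nonlinear terms combine to $\tau_0uf_\epsilon'(u)+(\tau_0-N)f_\epsilon(u)=-\tau_0(p-q)\epsilon u^q$. The critical part cancels identically because $\tau_0(p+1)=N$. The term $(\mu_{l\epsilon}-1)D_{i\epsilon}$ is negligible by \eqref{cor1} and oddness. The harmonic evaluation then gives $\kappa_{i1}g_{i\epsilon}(\xi_{i\epsilon})+\kappa_{i0}h_{i\epsilon}(\xi_{i\epsilon})=-(p-q)J_{i\epsilon}+o(\epsilon^{3\alpha_0\tau_0}\mathfrak h_\epsilon)$. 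Note the plus sign, whereas the Green form of \eqref{m0} gives $\kappa_{i1}g-\kappa_{i0}h$. This sign is what produces the coefficient $q+1-p$.

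\textbf{Nothing supports the $\mathfrak h_\epsilon$ bookkeeping.} Two things rest on a weighted bound you do not have:
\begin{itemize}
\item the $o(\epsilon^{\alpha_0(N-2)}\mathfrak h_\epsilon)$ remainders in your preliminary expansion;
\item the moment relation $J_{i\epsilon}\approx\theta_i\kappa_{i0}\kappa_{i1}$.
\end{itemize}
That bound is $\|\zeta_\epsilon\|_*\le C\mathfrak h_\epsilon$ for $\zeta_\epsilon=v_{l\epsilon}-\sum c_{ik\epsilon}\psi_{ik\epsilon}$ (Lemma~\ref{l21:lemma}). It is proved by a blow-up argument using nondegeneracy, the orthogonality conditions, and the mass normalization to exclude a $Z_0$ component. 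Its consequence $\zeta_\epsilon/\mathfrak h_\epsilon\to b_iZ_0$ makes $k_{i\epsilon}$ and $\epsilon\int_{B_i}u_\epsilon^qv_{l\epsilon}$ multiples of the same $b_i$, via $p\int W^{p-1}Z_0=\tau_0\mathcal A_0$ and $\int W^qZ_0=\tau_0(p-q)\mathcal B_0$. Pairing \eqref{womiga} with $v_{l\epsilon}$ is not this relation, and naming $\mathfrak h_\epsilon$ does not supply it.

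Relatedly, your claim that the exterior integral is $o(\epsilon^{\alpha_0(N-1)})$ by \eqref{beta1} is false when $(N-2)q\le3$, that is, for $N=4$ with $q\le3/2$, or $N=5$ with $q=1$. There \eqref{V8} only gives $\epsilon u_\epsilon^{q-1}v_{l\epsilon}=O(\epsilon^{\alpha_0(N-2)}\epsilon^{\alpha_0((N-2)q-2)})$ on $\Sigma_{r/2}$. As in Corollary~\ref{lem:q-potential}, this term becomes negligible only relative to $\epsilon^{\alpha_0(N-2)}\mathfrak h_\epsilon$, and only after the $\ast$-estimate.
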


We will present the proof by the following some preliminary results.
\begin{lem} It holds that
\begin{equation}\label{tao0}
\epsilon^{\alpha_0(1-\tau_0)}\int_{B_{i\epsilon}}
\big(pu_{\epsilon}^{p-1}+\epsilon qu_{\epsilon}^{q-1}\big)\partial_k u_\epsilon
\,v_{l\epsilon}\,dx
\longrightarrow
-p\mathcal{C}_1\lambda_i^{\tau_0-1}a_{l,i,k},
\end{equation}
where
$
\mathcal{C}_1:=\int_{\mathbb R^N}
W^{p-1}(\partial_1W)^2\,dz>0.
$
\end{lem}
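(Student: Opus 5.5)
The plan is to rescale around the $i$-th concentration point, exactly as in the proof of Lemma~\ref{aepu}. Write $x=\xi_{i\epsilon}+\lambda_{i\epsilon}\epsilon^{\alpha_0}z$, so that $u_\epsilon(x)=(\lambda_{i\epsilon}\epsilon^{\alpha_0})^{-\tau_0}U_{i\epsilon}(z)$ and $v_{l\epsilon}(x)=V_{li\epsilon}(z)$. Then
\[
\partial_k u_\epsilon(x)=(\lambda_{i\epsilon}\epsilon^{\alpha_0})^{-\tau_0-1}\partial_kU_{i\epsilon}(z),\qquad dx=(\lambda_{i\epsilon}\epsilon^{\alpha_0})^N dz .
\]
Since $\tau_0(p-1)=2$, the main term becomes
\[
p\,(\lambda_{i\epsilon}\epsilon^{\alpha_0})^{N-2-\tau_0-1}\int_{\mathcal D_{i\epsilon}}U_{i\epsilon}^{p-1}\partial_kU_{i\epsilon}V_{li\epsilon}\,dz .
\]
Because $N-3-\tau_0=\tau_0-1$, this power equals $(\lambda_{i\epsilon}\epsilon^{\alpha_0})^{\tau_0-1}$. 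Multiplying by $\epsilon^{\alpha_0(1-\tau_0)}$ therefore leaves exactly $p\lambda_{i\epsilon}^{\tau_0-1}\int_{\mathcal D_{i\epsilon}}U_{i\epsilon}^{p-1}\partial_kU_{i\epsilon}V_{li\epsilon}$.

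Next I pass to the limit by dominated convergence. The pointwise convergences are $U_{i\epsilon}\to W$ and $\nabla U_{i\epsilon}\to\nabla W$ in $C^1_{loc}$, from Lemma~\ref{Lem6.1} and Proposition~\ref{prop:main}. By Theorem~\ref{dilation} and Lemma~\ref{identity}, $V_{li\epsilon}\to-\sum_t a_{l,i,t}\partial_tW$, since the dilation coefficient vanishes. For domination I use three bounds:
\begin{itemize}
\item $U_{i\epsilon}\le C\langle z\rangle^{2-N}$, from \eqref{eq:bound};
\item $|\nabla U_{i\epsilon}|\le C\langle z\rangle^{1-N}$, from the gradient bound \eqref{tidu} rescaled;
\item $|V_{li\epsilon}|\le C\langle z\rangle^{2-N}$, from Lemma~\ref{3-AS-eigenvalues-1}, where the contributions of the other centers are uniformly bounded by $C\epsilon^{\alpha_0(N-2)}\lesssim\langle z\rangle^{2-N}$ on $\mathcal D_{i\epsilon}$.
\end{itemize}
The integrand is then bounded by $C\langle z\rangle^{-4-(N-1)-(N-2)}=C\langle z\rangle^{-2N-1}$, which is integrable. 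In the limit,
\[
-p\lambda_i^{\tau_0-1}\sum_t a_{l,i,t}\int W^{p-1}\partial_kW\,\partial_tW .
\]
By the reflection symmetry $z_t\mapsto -z_t$, the off-diagonal terms $t\neq k$ vanish, and radial symmetry gives $\int W^{p-1}(\partial_kW)^2=\mathcal C_1$. This produces $-p\mathcal C_1\lambda_i^{\tau_0-1}a_{l,i,k}$.

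It remains to show that the $\epsilon q u_\epsilon^{q-1}$ part is negligible. The same change of variables produces the factor
\[
\epsilon\,(\lambda_{i\epsilon}\epsilon^{\alpha_0})^{N-\tau_0(q-1)-\tau_0-1}\epsilon^{\alpha_0(1-\tau_0)} .
\]
Using \eqref{iden}, i.e. $\epsilon\,\epsilon^{\alpha_0\tau_0(p-q)}=\epsilon^{2\alpha_0\tau_0}$, this is $O(\epsilon^{\alpha_0(N-2)})$ times $\int_{\mathcal D_{i\epsilon}}U_{i\epsilon}^{q-1}|\nabla U_{i\epsilon}||V_{li\epsilon}|$. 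That integrand is bounded by $\langle z\rangle^{-(N-2)q-(N-1)}$. The integral over the ball of radius $r/(\lambda_{i\epsilon}\epsilon^{\alpha_0})$ grows at most like $\epsilon^{-\alpha_0(2+(N-2)(1-q))_+}$, up to a logarithm. Since $(N-2)q>2$ in the admissible range, the total is $o(1)$. The borderline case $N=4$ with $q$ close to $1$ needs the sharpest bookkeeping, but $q>1$ still gives a positive power.

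The main obstacle I expect is the domination step for $V_{li\epsilon}$: the eigenfunction decay of Lemma~\ref{3-AS-eigenvalues-1} must hold uniformly up to radius $r/\epsilon^{\alpha_0}$. I would invoke that lemma directly, together with the $C^1_{loc}$ identification from Theorem~\ref{dilation}.
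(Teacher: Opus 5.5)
Your proposal is correct and follows essentially the same route as the paper: rescale at the $i$-th center, dominate the main integrand by $C\langle z\rangle^{-2N-1}$ using \eqref{tidu} and \eqref{V5}, pass to the translation-mode limit, and use parity and radial symmetry to obtain $-p\mathcal{C}_1\lambda_i^{\tau_0-1}a_{l,i,k}$. One small correction: the perturbative integrand bound $\langle z\rangle^{-(N-2)q-(N-1)}$ is already integrable on all of $\mathbb{R}^N$ because $(N-2)q>1$, so that term is simply $O(\epsilon^{\alpha_0(N-2)})$, with no growth factor in the radius, no borderline case, and no need for $(N-2)q>2$ (your growth rate is a harmless overestimate).
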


\begin{proof}
We write
\begin{equation}\label{ILK}
I_{lik\epsilon}:=
\int_{B_{i\epsilon}}
\big(pu_{\epsilon}^{p-1}+\epsilon qu_{\epsilon}^{q-1}\big)\partial_k u_\epsilon
\,v_{l\epsilon}\,dx.
\end{equation}
A change of variables gives
\[
\begin{aligned}
(\lambda_{i\epsilon}\epsilon^{\alpha_0})^{1-\tau_0}I_{lik\epsilon}
={}&p\int_{B(0,r/(\lambda_{i\epsilon}\epsilon^{\alpha_0})}
U_{i\varepsilon}^{p-1}
\partial_k U_{i\epsilon}
V_{li\epsilon}\,dz
+\epsilon q\delta_{i\epsilon}^{\tau_0(p-q)}
\int_{B(0,r/(\lambda_{i\epsilon}\epsilon^{\alpha_0})}
U_{i\epsilon}^{q-1}
\partial_k U_{i\epsilon}
V_{li\epsilon}\,dz.
\end{aligned}
\]
Combining some pointwise estimates \eqref{tidu} and \eqref{V5}, we obtain
\[
U_{i\epsilon}(z)\le C(1+|z|)^{-(N-2)},
\quad
|\nabla U_{i\epsilon}(z)|
\le C(1+|z|)^{-(N-1)},
\quad
|V_{li\epsilon}(z)|
\le C(1+|z|)^{-(N-2)}.
\]
Thus dominated convergence and radial symmetry of \(W\) yield
\[
p\int_{B(0,r/(\lambda_{i\epsilon}\epsilon^{\alpha_0}))}
U_{i\epsilon}^{p-1}
\partial_k U_{i\epsilon}
V_{li\epsilon}\,dz
\longrightarrow
-p\mathcal{C}_1d_{l,i,k},\quad
\mathcal{C}_1:=\int_{\mathbb R^N}
W^{p-1}(\partial_1W)^2\,dz>0.
\]
Moreover, the perturbation term satisfies
\[
\begin{aligned}
&\epsilon q(\lambda_{i\epsilon}\epsilon^{\alpha_0})^{\tau_{0}(p-q)}
\left|
\int_{B(0,r/(\lambda_{i\epsilon}\epsilon^{\alpha_0}))}
U_{i\epsilon}^{q-1}
\partial_k U_{i\epsilon}
V_{l i\epsilon}\,dz
\right|\\&
\qquad\le
C\epsilon(\lambda_{i\epsilon}\epsilon^{\alpha_0})^{\tau(p-q)}
\int_{\mathbb R^N}
(1+|z|)^{-(N-2)(q+1)-1}\,dz
=O(\epsilon^{\alpha_0(N-2)})=o(1).
\end{aligned}
\]
Hence the conclusion follows.
\end{proof}

As a consequence of Theorem \ref{dilation} and Lemma \ref{viep}, we deduce the following important eigenvalue estimate.
\begin{cor}
For $m+1\leq l\leq(N+1)m$, one has
\begin{equation}\label{cor1}
|\mu_{l\epsilon}-1|=O(\epsilon^{\alpha_{0}(N-1)}).
\end{equation}
\end{cor}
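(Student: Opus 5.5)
The estimate should come from the translation bilinear identity: pair $v_{l\epsilon}$ with the translation fields $\partial_k u_\epsilon$ near each bubble, and use Theorem \ref{dilation} to kill the dilation part of the local limit. Since $\partial_k u_\epsilon$ solves the linearized equation exactly, $-\Delta\partial_k u_\epsilon=(pu_\epsilon^{p-1}+\epsilon qu_\epsilon^{q-1})\partial_k u_\epsilon$, Green's second identity on $B_{i\epsilon}=B(\xi_{i\epsilon},r)$ gives
\[
\int_{\partial B_{i\epsilon}}\Big(\frac{\partial(\partial_k u_\epsilon)}{\partial\nu}v_{l\epsilon}-\frac{\partial v_{l\epsilon}}{\partial\nu}\partial_k u_\epsilon\Big)dS_x
=(\mu_{l\epsilon}-1)\,I_{lik\epsilon},
\]
with $I_{lik\epsilon}$ as in \eqref{ILK}. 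Unlike \eqref{489}, there is no defect volume term here, because the equation for $\partial_k u_\epsilon$ is homogeneous.

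\textbf{Step 1 (the right-hand side).} By \eqref{tao0}, $\epsilon^{\alpha_0(1-\tau_0)}I_{lik\epsilon}\to -p\mathcal C_1\lambda_i^{\tau_0-1}a_{l,i,k}$. Lemma \ref{identity} guarantees that some coefficient of the local limit is nonzero, and Theorem \ref{dilation} shows the dilation coefficients vanish. Hence there is a pair $(i_1,k_1)$ with $a_{l,i_1,k_1}\neq0$, and for that pair $|I_{li_1k_1\epsilon}|\ge c\,\epsilon^{\alpha_0(\tau_0-1)}$ for small $\epsilon$ (along subsequences, which suffices for an $O$-bound).

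\textbf{Step 2 (the boundary term).} Outside the concentration points, Lemma \ref{2-AS-eigenvalues} gives $\partial_k u_\epsilon=O(\epsilon^{\alpha_0\tau_0})$ in $C^1$ on $\partial B_{i\epsilon}$. For $v_{l\epsilon}$, Corollary \ref{AS-eigenvalues-2} (equivalently \eqref{V8}) together with elliptic regularity on an annulus gives $v_{l\epsilon}=O(\epsilon^{\alpha_0(N-2)})$ in $C^1$. This crude bound yields only
\[
|\mu_{l\epsilon}-1|=O\big(\epsilon^{\alpha_0(\tau_0+N-2)-\alpha_0(\tau_0-1)}\big)=O(\epsilon^{\alpha_0(N-1)}),
\]
which is exactly the claimed rate.

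\textbf{Step 3 (the main obstacle).} The point to verify is that the constants in Step 2 are uniform. This requires the exterior $C^1$ bound $\|v_{l\epsilon}\|_{C^1(\partial B_{i\epsilon})}\le C\epsilon^{\alpha_0(N-2)}$, which follows from the sharp decay of Lemma \ref{3-AS-eigenvalues-1}. The Green representation \eqref{Vl} then upgrades this to $C^1$: the potential is $O(\epsilon^{4\alpha_0})$ away from the points, and the near-bubble contributions give $C^1$ control of the kernel. A subsequence/contradiction argument over $(i_1,k_1)$ then turns the bound into the statement for the full family.

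A sharper bound would require showing that the $G(x,\xi_i)$-component of the exterior limit vanishes, in the spirit of Lemma \ref{lem:exterior-limit} with $a_{l,i,N+1}=0$. That refinement is not needed here; it is what Proposition \ref{propos-6-1} pursues.
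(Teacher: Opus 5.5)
Your proposal is correct and follows the paper's proof: Green's identity for the exact Jacobi field $\partial_k u_\epsilon$ against $v_{l\epsilon}$ gives $F_{lik\epsilon}=(\mu_{l\epsilon}-1)I_{lik\epsilon}$. Then \eqref{tao0} with a nonzero translation coefficient (forced by Theorem \ref{dilation}) gives $|I_{lik\epsilon}|\ge c\,\epsilon^{\alpha_0(\tau_0-1)}$, the exterior bounds give $F_{lik\epsilon}=O(\epsilon^{3\alpha_0\tau_0})$, and a subsequence argument extends the estimate to the full family. The only difference is cosmetic: you obtain the exterior $C^1$ bound on $v_{l\epsilon}$ through the Green representation, whereas the paper simply cites interior elliptic regularity.
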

\begin{proof}
Differentiating the equation for \(u_\epsilon\) gives
\begin{equation}\label{l21:derivative}
 -\Delta(\partial_k u_\epsilon)
 =\big(pu_{\epsilon}^{p-1}+\epsilon qu_{\epsilon}^{q-1}\big)\partial_k u_\epsilon.
\end{equation}
Together with the eigenvalue equation, Green's identity therefore yields
\begin{equation}\label{eq:translationidentity}
F_{lik\epsilon}
=(\mu_{l\epsilon}-1)I_{lik\epsilon},
\end{equation}
where $I_{lik\epsilon}$ is defined in \eqref{ILK},
\[
\begin{aligned}
F_{\ell ik\epsilon}
&:=
\int_{\partial B_{i\epsilon}}
\left(
\partial_\nu(\partial_k u_\epsilon)\,v_{l\epsilon}
-\partial_k u_\epsilon\,\partial_\nu v_{l\epsilon}
\right)\,dS.
\end{aligned}
\]
Here \(\nu\) denotes the outward unit normal to \(B_{i\epsilon}\).
On fixed annuli containing \(\partial B_{i\epsilon}\), by the Lemma \ref{Lem6.1}, Lemma \ref{outside-2}, Lemma \ref{AS-eigenvalues-1}, Corollary \ref{AS-eigenvalues-2} and regularity theory, we deduce
\[
|\nabla u_\epsilon|+|D^2u_\epsilon|
\le C\epsilon^{\alpha_0\tau_0},
\qquad
|v_{l\epsilon}|+|\nabla v_{l\epsilon}|
\le C\epsilon^{\alpha_0(N-2)},
\]
which implies
\[
F_{l ik\epsilon}
=O(\epsilon^{3\alpha_0\tau_0}).
\]
Combining \eqref{tao0}, up to the subsequence, we have
\[
|\mu_{\ell\varepsilon}-1|
=\frac{|F_{\ell ik\varepsilon}|}
{|I_{\ell ik\varepsilon}|}
\le C\epsilon^{\alpha_{0}(3\tau_0-(\tau_0-1))}
=C\epsilon^{\alpha_{0}(N-1)}.
\]
Finally, if the asserted estimate failed for the full family, there would exist a sequence along which
\[
\frac{|\mu_{l\varepsilon}-1|}
{\epsilon^{\alpha_0(N-1)}}\longrightarrow\infty.
\]
Applying the preceding argument to a subsequence gives a contradiction. This completes the proof.

\end{proof}
\begin{lem}\label{positivede}
In the above notation the limiting parameters satisfy
the identity \eqref{deference}.
Then $N_q$ is a real symmetric positive definite matrix.
In particular, $N_q$ is invertible.
\end{lem}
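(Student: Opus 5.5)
The plan is to use the same ``graph Laplacian'' decomposition as in the proof of Lemma~\ref{ckl}, now applied to $M$ itself rather than to $D_tMD_t$. The positivity will come entirely from the critical point relation \eqref{deference}, which fixes the row sums $(Mt)_i$, together with the strict inequality $0<p-q<2$. That inequality holds in the admissible range of $(N,q)$ and was already noted in the proof of Theorem~\ref{existence-0}. Symmetry of $N_q$ is immediate: $M$ is symmetric because $G(\xi_i,\xi_j)=G(\xi_j,\xi_i)$, and the added term is diagonal.

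\emph{Step 1: rewrite \eqref{deference}.} With $t_i=\lambda_i^{\tau_0}$ and $2/(p-1)=\tau_0$, identity \eqref{deference} reads
\[
 t_i(Mt)_i=\frac{\mathcal B_0(p-q)}{\mathcal A_0^2}\,\lambda_i^{\tau_0(p-q)},\qquad i=1,\ldots,m.
\]
This is simply \eqref{critical} divided by $\mathcal A_0^2$.

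\emph{Step 2: expand the quadratic form.} For $x\in\mathbb R^m\setminus\{0\}$, substitute $x_i=t_iy_i$. Since $t_i>0$, this substitution is bijective. Using $m_{ij}=-G_{ij}$ for $i\ne j$, the identity $-2y_iy_j=(y_i-y_j)^2-y_i^2-y_j^2$ gives
\[
 x^TMx=\sum_{i<j}G_{ij}t_it_j(y_i-y_j)^2+\sum_{i=1}^m t_i(Mt)_i\,y_i^2 .
\]
This is exactly \eqref{eq:graph-identity}. The diagonal correction in $N_q$ contributes
\[
 \frac{\mathcal B_0(p-q)(q+1-p)}{\mathcal A_0^2}\sum_i\lambda_i^{\tau_0(p-q-2)}t_i^2y_i^2
 =\frac{\mathcal B_0(p-q)(q+1-p)}{\mathcal A_0^2}\sum_i\lambda_i^{\tau_0(p-q)}y_i^2,
\]
because $t_i^2=\lambda_i^{2\tau_0}$.

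\emph{Step 3: conclude.} Inserting Step 1 into Step 2 yields
\[
 x^TN_qx=\sum_{i<j}G_{ij}t_it_j(y_i-y_j)^2+\frac{\mathcal B_0(p-q)(q+2-p)}{\mathcal A_0^2}\sum_i\lambda_i^{\tau_0(p-q)}y_i^2 .
\]
The first sum is nonnegative, since $G>0$ in $\Omega\times\Omega$ off the diagonal and the $\xi_i$ are distinct interior points. The second sum has a strictly positive coefficient, because $\mathcal B_0<\infty$ and $0<p-q<2$, so $q+2-p>0$. Hence
\[
 x^TN_qx\ge c\,|x|^2\quad\text{with}\quad c=\frac{\mathcal B_0(p-q)(q+2-p)}{\mathcal A_0^2}\,\min_i\lambda_i^{\tau_0(p-q-2)}>0,
\]
so $N_q\succ0$ and in particular $N_q$ is invertible.

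The only delicate point is bookkeeping of exponents and constants. I expect the main care to be needed in checking that the normalization in \eqref{deference} produces exactly the factor $\mathcal B_0(p-q)/\mathcal A_0^2$, so that it combines with $q+1-p$ into $q+2-p$ rather than a coefficient that could vanish or change sign. I would verify this directly from \eqref{critical}.
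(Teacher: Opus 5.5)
Your proof is correct and follows essentially the same argument as the paper. The paper also rescales by $t_i=\lambda_i^{\tau_0}$, writes $z^TMz$ as the weighted sum of squared differences plus the diagonal term $\sum_i\frac{(Mt)_i}{t_i}z_i^2$ fixed by \eqref{deference}, and combines the coefficients into the factor $\mathcal B_0(p-q)(q+2-p)/\mathcal A_0^2>0$.
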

\begin{proof}
Symmetry follows from the symmetry of the Green function.
Since $t_i=\lambda_i^\frac{N-2}{2}>0$, we obtain
\[
\frac{(M t)_i}{t_i}
=
\frac{\mathcal{B}_0(p-q)}{\mathcal{A}_0^2}\lambda_i^{\frac{(N-2)(p-q-2)}{2}}.
\]
For any $z=(z_1,\ldots,z_m)^T\in\mathbb R^m$, expanding the squares
and using $\mathcal m_{ij}=-G(\xi_i,\xi_j)$ for $i\ne j$, we obtain
\[
z^TM z
=
\sum_{i<j}G(\xi_i,\xi_j)t_it_j
\left(\frac{z_i}{t_i}-\frac{z_j}{t_j}\right)^2
+
\sum_{i=1}^m\frac{( M t)_i}{t_i}z_i^2.
\]
Consequently, the definition of $N_q$ yields
\[
z^TN_q z
=
\sum_{i<j}G(\xi_i,\xi_j)t_it_j
\left(\frac{z_i}{t_i}-\frac{z_j}{t_j}\right)^2
+
\frac{\mathcal{B}_0(p-q)(q+2-p)}{\mathcal{A}_0^2}
\sum_{i=1}^m\lambda_i^{\frac{(N-2)(p-q-2)}{2}}z_i^2.
\]
The first term is nonnegative because $G(\xi_i,\xi_j)>0$.
The second is strictly positive whenever $z\ne0$, since
$\mathcal{B}_0>0$, $1<p<q+2$, and $\lambda_i>0$.
Thus $N_q$ is positive definite and hence invertible.
\end{proof}

\begin{Def} We define the $\ast$-norm as
\begin{equation}\label{l21:weight}
 \mathfrak w_\epsilon(x)
 =\sum_{i=1}^m\left(
 \frac{\epsilon^{\alpha_0}}{\epsilon^{\alpha_0}+|x-\xi_{i\epsilon}|}
 \right)^{N-2},
 \qquad
 \|f\|_*:=\sup_{x\in\Omega}\frac{|f(x)|}{\mathfrak w_\epsilon(x)},
 \qquad \mathcal T:=(-\Delta_D)^{-1}.
\end{equation}
Here $-\Delta_D$ is the Dirichlet Laplacian,
\end{Def}

By \eqref{cor1} and \eqref{V5}, we deduce
\begin{equation}\label{l21:spectral}
 \|v_{l\epsilon}\|_*\le C,
 \qquad |\mu_{l\epsilon}-1|\le C(\epsilon^{\alpha_0}).
\end{equation}

Choose $R_0>0$ so that the balls
$B_{i\epsilon}:=B(\xi_{i\epsilon},R_0)$ are pairwise disjoint and
compactly contained in $\Omega$. Let
$\chi_{i\epsilon}\in C_c^\infty(B_{i\epsilon})$ equal one on
$B(\xi_{i\epsilon},R_0/2)$, with its first two derivatives bounded
uniformly in $\epsilon$. We keep the dependence on $\epsilon$ in
the cutoff notation. Set
\begin{equation}\label{l21:psi}
 \psi_{ik\epsilon}
 :=\delta_{i\epsilon}^{a+1}\chi_{i\epsilon}\partial_k u_\epsilon,
 \qquad 1\le i\le m,\quad 1\le k\le N.
\end{equation}
Then we have
$\psi_{ik\epsilon}(\xi_{i\epsilon}+\delta_{i\epsilon}z)
\to\partial_kW(z)$. Moreover,
\eqref{tidu} and scale comparability imply
$\|\psi_{ik\epsilon}\|_*\le C$.

Fix a nonnegative radial function
$\vartheta\in C_c^\infty(\mathbb R^N)$, $\vartheta\not\equiv0$.
Choose $c_{ik\epsilon}$ so that
\begin{equation}\label{l21:zeta}
 \zeta_\epsilon
 :=v_{l\epsilon}-\sum_{i=1}^m\sum_{k=1}^N
             c_{ik\epsilon}\psi_{ik\epsilon}
\end{equation}
satisfies
\begin{equation}\label{l21:orthog}
 \int_{\mathbb R^N}\vartheta(z)W^{p-1}(z)\partial_jW(z)
       \zeta_\epsilon(\xi_{i\epsilon}+\lambda_{i\epsilon}\epsilon^{\alpha_0}z)\,dz=0,
 \qquad j=1,\ldots,N.
\end{equation}
All integrands in \eqref{l21:orthog} are supported inside the rescaled
domain for small $\epsilon$. Since the cutoff supports are disjoint,
the equations for different $i$ decouple. Their coefficient matrices are
\[
 M_{i\epsilon,jk}
 =\int_{\mathbb R^N}\vartheta W^{p-1}\partial_jW\,
             \partial_kU_{i\epsilon}\,dz
 \longrightarrow
 \int_{\mathbb R^N}\vartheta W^{p-1}\partial_jW\partial_kW\,dz.
\]
By radial symmetry and parity, the limiting matrix is
$c_\vartheta I_N$, where
$c_\vartheta=\int\vartheta W^{p-1}(\partial_1W)^2>0$.
The matrices are therefore uniformly invertible. Since the right-hand
sides are bounded by \eqref{l21:spectral}, the coefficients exist uniquely
and satisfy $|c_{ik\epsilon}|\le C$. Consequently,
$\|\zeta_\epsilon\|_*\le C$.

Introduce the critical masses
\begin{equation}\label{l21:masses}
 k_{i\epsilon}:=\mu_\epsilon p
       \int_{B_{i\epsilon}}u_\epsilon^{p-1}v_{l\epsilon}\,dx,
 \qquad k_\epsilon=(k_{1\epsilon},\ldots,k_{m\epsilon})^T,
 \qquad
 \mathfrak h_\epsilon:=\epsilon^{\alpha_0}
             +\epsilon^{-2\alpha_0\tau_0}\|k_\epsilon\|_\infty.
\end{equation}
Thus $\mathfrak h_\epsilon\ge\epsilon^{\alpha_0}>0$. No sign condition
on $k_{i\epsilon}$ and no $O(\epsilon^{\alpha_0(2\tau_0+1)})$ mass estimate
are imposed.

\begin{lem}\label{l21:lemma}
There exists $C>0$, independent of
$\epsilon$, such that
\begin{equation}\label{l21:estimate}
 \|\zeta_\epsilon\|_*\le C\mathfrak h_\epsilon.
\end{equation}
For every fixed $K\subset\Omega\setminus\{\xi_1,\ldots,\xi_m\}$, there exists a constant $C_K$, depends on the fixed exponent $q$, such that
\begin{equation}\label{l21:exterior}
 \|v_{l\epsilon}\|_{C^1(K)}
 \le C_K\epsilon^{2\alpha_0\tau_0}\mathfrak h_\epsilon.
\end{equation}
\end{lem}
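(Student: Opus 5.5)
Throughout, write $\delta_{i\epsilon}=\lambda_{i\epsilon}\epsilon^{\alpha_0}$ and $\mathcal L_\epsilon=-\Delta-f_\epsilon'(u_\epsilon)$. The plan is to prove \eqref{l21:estimate} by a blow-up (contradiction) argument in the weighted $\ast$-norm, based on an equation for $\zeta_\epsilon$, and then deduce \eqref{l21:exterior} from the Green representation.

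\textbf{The equation for $\zeta_\epsilon$.} Applying $\mathcal L_\epsilon$ and using \eqref{l21:derivative}, we get
\[
\mathcal L_\epsilon\zeta_\epsilon=(\mu_{l\epsilon}-1)f_\epsilon'(u_\epsilon)v_{l\epsilon}
 -\sum_{i,k}c_{ik\epsilon}\delta_{i\epsilon}^{a+1}\bigl(-2\nabla\chi_{i\epsilon}\cdot\nabla\partial_ku_\epsilon-\Delta\chi_{i\epsilon}\,\partial_ku_\epsilon\bigr)=:g_\epsilon .
\]
The first term carries the factor $|\mu_{l\epsilon}-1|=O(\epsilon^{\alpha_0(N-1)})$ from \eqref{cor1}. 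The cutoff terms live on fixed annuli, where $|\nabla^j\partial_k u_\epsilon|\le C\epsilon^{\alpha_0\tau_0}$; with the prefactor $\delta_{i\epsilon}^{a+1}$ they are $O(\epsilon^{\alpha_0(N-1)})$. Both contributions are therefore $O(\epsilon^{\alpha_0}\cdot\epsilon^{2\alpha_0\tau_0})$.

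\textbf{Representation and splitting.} Write $\zeta_\epsilon=\mathcal T\bigl(f_\epsilon'(u_\epsilon)\zeta_\epsilon+g_\epsilon\bigr)$. Split $\mathcal T(f'_\epsilon(u_\epsilon)\zeta_\epsilon)$ near each $\xi_{i\epsilon}$ into a monopole term $G(x,\xi_{i\epsilon})\int_{B_{i\epsilon}}f'_\epsilon\zeta_\epsilon$ and a remainder. Since $\psi_{ik\epsilon}$ is odd to leading order, $\int f'_\epsilon\psi_{ik\epsilon}$ is small, so the monopole mass of $\zeta_\epsilon$ is $k_{i\epsilon}/\mu_{l\epsilon}$ plus a small error. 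The monopole contributes at most $C\epsilon^{-2\alpha_0\tau_0}|k_{i\epsilon}|\,\mathfrak w_\epsilon$ in $\ast$-norm, because $G(x,\xi_{i\epsilon})\le C\epsilon^{-2\alpha_0\tau_0}\mathfrak w_\epsilon$ away from the core. This is exactly where $\mathfrak h_\epsilon$ enters.

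\textbf{Contradiction argument.} Suppose $\|\zeta_\epsilon\|_*/\mathfrak h_\epsilon\to\infty$ and normalize $\tilde\zeta_\epsilon=\zeta_\epsilon/\|\zeta_\epsilon\|_*$. Then $\|\mathcal L_\epsilon\tilde\zeta_\epsilon\|$ is small in the dual weighted norm $\mathfrak w_\epsilon\langle d_\epsilon\rangle^{-2}$, and the masses of $\tilde\zeta_\epsilon$ are $o(\epsilon^{2\alpha_0\tau_0})$. Argue as follows.
\begin{enumerate}[label=\textup{(\alph*)}]
\item Rescale at each $\xi_{i\epsilon}$. The limits solve $-\Delta\phi=pW^{p-1}\phi$ with $|\phi|\le C\langle z\rangle^{2-N}$. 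By \cite{BE91}, $\phi$ is a combination of $\partial_jW$ and $Z_0$.
\item The orthogonality \eqref{l21:orthog} kills the $\partial_jW$ components.
\item The vanishing mass gives $\int W^{p-1}\phi=0$. Since $\int W^{p-1}Z_0=\mathcal C_0\ne0$ by Lemma \ref{lem:exterior-limit}, the $Z_0$ component also vanishes. Hence $\tilde\zeta_\epsilon\to0$ locally uniformly in every core.
\item In the intermediate and exterior regions, use the decay estimate of Appendix \ref{sharp} (the weight $c_\epsilon\le Cd_\epsilon^{-\sigma_q}$ with $\sigma_q>2$) together with the Green representation. Weight times $\mathfrak w_\epsilon$ integrates to $o(1)\,\mathfrak w_\epsilon$ outside large balls $B(X_{i\epsilon},R)$, once the masses are subtracted.
\end{enumerate}
Together these give $\|\tilde\zeta_\epsilon\|_*\to0$, contradicting $\|\tilde\zeta_\epsilon\|_*=1$.

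\textbf{Exterior bound.} For \eqref{l21:exterior}, take $x\in K$. In the Green representation of $v_{l\epsilon}=\zeta_\epsilon+\sum_{i,k} c_{ik\epsilon}\psi_{ik\epsilon}$, the $\psi_{ik\epsilon}$ vanish on $K$ (they are supported in the balls $B_{i\epsilon}$, disjoint from $K$ for small $\epsilon$). By \eqref{l21:estimate} we have $|\zeta_\epsilon|\le C\mathfrak h_\epsilon\epsilon^{2\alpha_0\tau_0}$ on $K$, and interior elliptic estimates upgrade this to $C^1(K)$.

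\textbf{Main obstacle.} The hardest step is the intermediate-region bound in (d) when $N=4$ and $q<3/2$, where $\sigma_q<3$. There I would reuse the corrected barrier of Appendix \ref{sharp}, with increments smaller than $\sigma_q-2$, keeping track of the uncontrolled monopole through $\mathfrak h_\epsilon$ rather than through any estimate on the masses.
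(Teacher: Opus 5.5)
Your proposal follows the paper's proof: a blow-up in the $\ast$-norm in which nondegeneracy, the orthogonality \eqref{l21:orthog} and the vanishing normalized masses $k_{i\epsilon}/(\epsilon^{\alpha_0(N-2)}\|\zeta_\epsilon\|_*)\to0$ kill the core limits. After that, positivity of $G$ and $|\tilde\zeta_\epsilon|\le\mathfrak w_\epsilon$ close the exterior with a tail of size $CR^{2-\sigma_q}$, so step (d) needs no mass subtraction and no barrier iteration (the paper splits this into an $R^{-2}$ tail for $pu_\epsilon^{p-1}$ and a global $o(1)$ bound for $\epsilon qu_\epsilon^{q-1}$). One slip in the exterior bound: $\psi_{ik\epsilon}$ need not vanish on $K$, since $K$ may meet $B(\xi_{i\epsilon},R_0)$; there, however, $|\psi_{ik\epsilon}|\le C\epsilon^{\alpha_0(N-1)}\le C\epsilon^{2\alpha_0\tau_0}\mathfrak h_\epsilon$ because $\mathfrak h_\epsilon\ge\epsilon^{\alpha_0}$, which is exactly why that floor is built into $\mathfrak h_\epsilon$.
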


\begin{proof}
We first establish the weighted Green estimates used in the proof. For
$R\ge2$, let
\[
 \mathcal O_{R\epsilon}
 :=\Omega\setminus\bigcup_{i=1}^m B(\xi_{i\epsilon},R\epsilon^{\alpha_0}).
\]
We claim that
\begin{align}
 \|\mathcal T(pu_\epsilon^{p-1}\mathfrak w_\epsilon)\|_*
 &\le C,\label{l21:green1}\\
 \|\mathcal T(\mathbf1_{\mathcal O_{R\epsilon}}
           pu_\epsilon^{p-1}\mathfrak w_\epsilon)\|_*
 &\le CR^{-2},\label{l21:green2}\\
 \|\mathcal T(\epsilon q u_\epsilon^{q-1}\mathfrak w_\epsilon)\|_*
 &\le C(\epsilon^{\alpha_0})^{\min\{2\tau_0,2\tau_0q-2\}}
                (1+|\log(\epsilon^{\alpha_0})|)\longrightarrow0.
 \label{l21:green3}
\end{align}
Here and below, $\mathbf1_{O_{R\epsilon}}$ denotes the indicator function of a set $O_{R\epsilon}$.

Write $r_i(y)=|y-\xi_{i\epsilon}|$ and $d(y)=\min_i r_i(y)$.
By \eqref{tidu},
$W_\epsilon(y)\le C(\epsilon^{\alpha_0})^{\tau_0}((\epsilon^{\alpha_0})+d(y))^{-2\tau_0}$
and $\mathfrak w_\epsilon(y)\le
C(\epsilon^{\alpha_0})^{2\tau_0}((\epsilon^{\alpha_0})+d(y))^{-{2\tau_0}}$.
Using $\tau_0(p-1)=2$ and $2\tau_0-\tau_0(p-q)+\tau_0(q-1)=2\tau_0q-2$, we obtain
\begin{align}
 pu_\epsilon^{p-1}\mathfrak w_\epsilon
 &\le C\sum_{i=1}^m
       \frac{\epsilon^{\alpha_0(2\tau_0+2)}}{((\epsilon^{\alpha_0})+r_i)^{2\tau_0+4}},
       \label{l21:sources1}\\
 \epsilon q u_\epsilon^{q-1}\mathfrak w_\epsilon
 &\le C\sum_{i=1}^m
       \frac{(\epsilon^{\alpha_0(2\tau_0+2\tau_0q-2)})}{(\epsilon^{\alpha_0}+r_i)^{2\tau_0q}}.
       \label{l21:sources2}
\end{align}

Set
$X=(x-\xi_{i\epsilon})/(\epsilon^{\alpha_0})$,
$z=(y-\xi_{i\epsilon})/(\epsilon^{\alpha_0})$. By the definition of $\mathcal{T}$, we have
$\mathcal T f(x)=\int_\Omega G(x,y)f(y)\,dy$. Then together with $G(x,y)\le C|x-y|^{-2\tau_0}$ and \eqref{l21:sources1}, we obtain
\begin{equation}\label{Gxy1}
\begin{split}
\mathcal T(pu_\epsilon^{p-1}\mathfrak w_\epsilon)(x)&\leq C\sum_{i=1}^{m}\int_{\Omega}\frac{1}{|x-y|^{2\tau_0}}\frac{\epsilon^{\alpha_0(2\tau_0+2)}}{((\epsilon^{\alpha_0})+|y-\xi_{i\epsilon}|)^{2\tau_0+4}}dy\\&
\leq C\int_{\mathbb R^N}|X-z|^{-2\tau_0}(1+|z|)^{-N-2}\,dz.
\end{split}
\end{equation}
We just need to obtain the estimate for $|X|>2$. If $t=|X|>2$, split into
$|z|<t/2$, $|X-z|<t/2$, and $\mathbb{R}^N\setminus(\{|z|<t/2\}\cup\{|X-z|<t/2\})$. The first part is
$O(t^{-2\tau_0})$; the second is $O(t^{-N})$ because
$\int_{|w|<t/2}|w|^{-s}\,dw=Ct^2$. In the complement, splitting again at $|z|=2t$ gives $O(t^{-N})$. Hence, we conclude that
\[
 \int_{\mathbb R^N}|X-z|^{-2\tau_0}(1+|z|)^{-N-2}\,dz
 \le C(1+|X|)^{-2\tau_0}.
\]
This proves the estimate \eqref{l21:green1}.

Similar to the above argument, we will show that
\begin{equation}\label{l21:newton-tail}
 \int_{|z|>R}|X-z|^{-2\tau_0}(1+|z|)^{-N-2}\,dz
 \le CR^{-2}(1+|X|)^{-2\tau_0}.
\end{equation}
Indeed, if $t=|X|\ge R/2$, the part with $|z|<t/2$ is at most
\[
 Ct^{-2\tau_0}\int_R^\infty r^{N-1}(1+r)^{-N-2}\,dr
 \le CR^{-2}t^{-2\tau_0}.
\]
The part with $|X-z|<t/2$ is $O(t^{-N})$. In the complement with
$|z|\le2t$, both $|z|$ and $|X-z|$ are at least $t/2$, so its contribution
is also $O(t^{-N})$. For $|z|>2t$, use
$|X-z|\ge|z|/2$ and integrate $r^{-2\tau_0-3}$ to obtain $O(t^{-N})$.
Since $t\ge R/2$ and $N=2\tau_0+2$, each such term is bounded by
$CR^{-2}t^{-2\tau_0}$. If $|X|<R/2$, then $|X-z|\ge|z|/2$ for $|z|>R$,
and the integral is at most
$CR^{-2\tau_0-2}\le CR^{-2}(1+|X|)^{-2\tau_0}$.
For $y\in\mathcal O_{R\epsilon}$, every $r_i(y)>R(\epsilon^{\alpha_0})$;
thus \eqref{l21:green2} follows.

To prove \eqref{l21:green3}, we set $\sigma=2\tau_0q>2$.
Choose a fixed $D>4\operatorname{diam}\Omega+4$ and set
$L=D/(\epsilon^{\alpha_0})$. All relevant rescaled points lie in $B_{L/2}$
for small $\epsilon$. In virtue of \eqref{l21:sources2}, and
 $\mathfrak w_{i\epsilon}=(1+|X|)^{-s}$, we deduce
\begin{equation}\label{l21:pert-conv}
\frac{\mathcal T(\epsilon q u_\epsilon^{q-1}\mathfrak w_\epsilon)}{\mathfrak w_{i\epsilon}}\leq C(\epsilon^{2\alpha_0\tau_0})(1+|X|)^{2\tau_0}
 \int_{|z|<L}|X-z|^{-2\tau_0}(1+|z|)^{-\sigma}\,dz.
\end{equation}
If $|X|\le2$, the right-hand side of \eqref{l21:pert-conv} is uniformly bounded,
because $\sigma>2$. For $t=|X|>2$, handling the cases $\{|z|<t/2\}$, $\{|X-z|<t/2\}$ and $\mathbb{R}^N\setminus(\{|z|<t/2\}\cup\{|X-z|<t/2\})$
separately, one can derive
\begin{equation}\label{L1}
\begin{split}
 \int_{\{|z|<t/2\}}|X-z|^{-2\tau_0}(1+|z|)^{-\sigma}\,dz&\leq Ct^{-2\tau_0}\int_0^{t/2}r^{N-1}(1+r)^{-\sigma}\,dr\\&
 \le Ct^{-2\tau_0}\begin{cases}
 1,&\sigma>N,\\
 1+\log t,&\sigma=N,\\
 t^{N-\sigma},&2<\sigma<N.
 \end{cases}
 \end{split}
\end{equation}
For the region $|X-z|<t/2$,  we have the bound $Ct^{2-\sigma}$.
In the complement with $|z|\le2t$ has the same bound; the part with
$|z|>2t$ is bounded by
$C\int_{2t}^{L}r^{1-\sigma}\,dr\le Ct^{2-\sigma}$
when that part is nonempty. Thanks to $t<L/2$, we are able to conclude that
\[
 (1+|X|)^{2\tau_0}\int_{|z|<L}|X-z|^{-2\tau_0}(1+|z|)^{-\sigma}\,dz
 \le C\begin{cases}
 1,&\sigma>N,\\
 1+\log L,&\sigma=N,\\
 L^{N-\sigma},&2<\sigma<N.
 \end{cases}
\]
Finally, multiplying by $\epsilon^{2\alpha_0\tau_0}$ and using
$2\tau_0-N+\sigma=2\tau_0q-2$ gives \eqref{l21:green3}.

We next aim to prove \eqref{l21:estimate}. By \eqref{l21:derivative}, it is easy to check that
\begin{align}
 \big(-\Delta-(pu_{\epsilon}^{p-1}+\epsilon qu_{\epsilon}^{q-1})\big)\zeta_\epsilon&=F_\epsilon,\quad\mbox{with}\hspace{2mm}
 F_\epsilon
 :=(\mu_{l\epsilon}-1)\big(pu_{\epsilon}^{p-1}+\epsilon qu_{\epsilon}^{q-1}\big)v_{l\epsilon}
       -\sum_{i,k}c_{ik\epsilon}E_{ik\epsilon}, \label{l21:remainder-equation}
 \end{align}
 where
\begin{align}
 E_{ik\epsilon}
 &:=\big(-\Delta-(pu_{\epsilon}^{p-1}+\epsilon qu_{\epsilon}^{q-1})\big)\psi_{ik\epsilon}\nonumber\\
 &=-\delta_{i\epsilon}^{a+1}
  \left(2\nabla\chi_{i\epsilon}\cdot\nabla\partial_k u_\epsilon
       +(\Delta\chi_{i\epsilon})\partial_k u_\epsilon\right).
       \label{l21:commutator}
\end{align}
The functions $E_{ik\epsilon}$ are supported on fixed-width annuli
away from the centers. By \eqref{tidu},
$\|E_{ik\epsilon}\|_\infty\le C(\epsilon^{\alpha_0(2\tau_0+1)})$.
Moreover, $\sup_x\int_\Omega G(x,y)\,dy\le C_\Omega$ and
$\mathfrak w_\epsilon(x)\ge c(\epsilon^{2\alpha_0\tau_0})$ in $\Omega$.
It follows that
$\|\mathcal T(|E_{ik\epsilon}|)\|_*\le C(\epsilon^{\alpha_0})$. Combining this bound and
the positivity of the Green kernel, \eqref{cor1}, and
\eqref{l21:green1}--\eqref{l21:green3}, we get
\begin{equation}\label{l21:forcing}
 \|\mathcal T(|F_\epsilon|)\|_*
 \le C\bigl((\epsilon^{\alpha_0})+|\mu_{l\epsilon}-1|\bigr)
 \le C(\epsilon^{\alpha_0}).
\end{equation}

By contradiction, if \eqref{l21:estimate} does not hold true, up to a subsequence, we choose a subsequence $\epsilon\rightarrow0$ such that
\begin{equation}\label{l21:normalization}
 n_\epsilon:=\|\zeta_\epsilon\|_*,
 \qquad \frac{n_\epsilon}{\mathfrak h_\epsilon}\longrightarrow\infty,
 \qquad Z_\epsilon:=\frac{\zeta_\epsilon}{n_\epsilon}.
\end{equation}
Then $\|Z_\epsilon\|_*=1$. Since
$\epsilon^{\alpha_0}\le\mathfrak h_\epsilon$,
\begin{equation}\label{l21:small-ratios}
 \frac{\epsilon^{\alpha_0}}{n_\epsilon}\longrightarrow0,
 \qquad \frac{|\mu_{l\epsilon}-1|}{n_\epsilon}\longrightarrow0,
 \qquad
 \left\|\mathcal T\left(\frac{|F_\epsilon|}{n_\epsilon}\right)
 \right\|_*\longrightarrow0.
\end{equation}
We define $i$-th center by writing
$Z_{i\epsilon}(z)=Z_\epsilon(\xi_{i\epsilon}
+\lambda_{i\epsilon}\epsilon^{\alpha_0}z)$. Then we have
$$
-\Delta Z_{i\epsilon}=(\lambda_{i\epsilon}\epsilon^{\alpha_0})^2(pu_{\epsilon}^{p-1}+\epsilon qu_{\epsilon}^{q-1})(\xi_{i\epsilon}
+\lambda_{i\epsilon}\epsilon^{\alpha_0}z)Z_{i\epsilon}+\frac{(\lambda_{i\epsilon}\epsilon^{\alpha_0})^2}{n_{\epsilon}}F_{\epsilon}(\xi_{i\epsilon}+\lambda_{i\epsilon}\epsilon^{\alpha_0}z).
$$
Notice that
\begin{equation}\label{l21:rescaled-potential}
\begin{split}
 (\lambda_{i\epsilon}\epsilon^{\alpha_0})^2&(pu_{\epsilon}^{p-1}+\epsilon qu_{\epsilon}^{q-1})
       (\xi_{i\epsilon}+\lambda_{i\epsilon}\epsilon^{\alpha_0}z)
 =pU_{i\epsilon}^{p-1}
   +q\epsilon(\lambda_{i\epsilon}\epsilon^{\alpha_0})^{\tau_0(p-q)}
           U_{i\epsilon}^{q-1},\\&
 \epsilon(\lambda_{i\epsilon}\epsilon^{\alpha_0})^{\tau_{0}(p-q)}
 =(\epsilon^{2\alpha_0\tau_0})\lambda_{i\epsilon}^{\tau_0(p-q)}\longrightarrow0.
\end{split}
\end{equation}
Therefore,
by the standard elliptic regularity theorem, \eqref{l21:rescaled-potential} together with $U_{i\epsilon}\rightarrow W$, and \eqref{l21:small-ratios}, we have a subsequence of
$Z_{i\epsilon}$ converges to $Z_i$ in $C^1_{\mathrm{loc}}(\mathbb R^N)$ which satisfies that
\begin{equation}\label{l21:limit-equation}
 -\Delta Z_i=pW^{p-1}Z_i\quad\hbox{in }\mathbb R^N,
 \qquad |Z_i(z)|\le C(1+|z|)^{-(N-2)}.
\end{equation}

Let $\eta_R$ be a smooth cut-off equal on $\mathbb{R}^N$ such that $\eta_R|_{B_R}=1$, $\eta_R|_{B^{c}_{2R}}=0$ and
$|\nabla\eta_R|\le C/R$. Multiply \eqref{l21:limit-equation} by
$\eta_R^2Z_i$ and using Young's inequality gives
\[
 \int\eta_R^2|\nabla Z_i|^2
 \le 2p\int W^{p-1}Z_i^2
       +C\int|\nabla\eta_R|^2Z_i^2.
\]
The first integral is finite by \eqref{l21:limit-equation}; the second
is at most $CR^{N-2-4\tau_0}=CR^{2-N}$.
Thus $\nabla Z_i\in L^2(\mathbb R^N)$. The same decay bound gives
$Z_i\in L^{2^*}(\mathbb R^N)$, where $2^*=2N/(N-2)$.
Finally,
\[
 \|\nabla(Z_i-\eta_R Z_i)\|_2^2
 \le 2\int_{|z|>R}|\nabla Z_i|^2
       +2\int|\nabla\eta_R|^2Z_i^2\longrightarrow0.
\]
After smooth approximation of $\eta_RZ_i$, this proves
$Z_i\in D^{1,2}(\mathbb R^N)$.
Owing to the nondegeneracy of solution \cite[Lemma 2.10]{CKL},
\begin{equation}\label{l21:kernel}
 Z_i=b_iZ_0+\sum_{k=1}^N b_{ik}\partial_kW,
 \qquad Z_0:=-(z\cdot\nabla W+aW).
\end{equation}
Divide \eqref{l21:orthog} by $n_\epsilon$ and pass to the limit as $\epsilon\rightarrow0$, and \eqref{l21:kernel}, we obtain
\begin{equation*}
 0=\int_{\mathbb R^N}\vartheta(z)W^{p-1}(z)\partial_jW(z)
       [b_iZ_0+\sum_{k=1}^N b_{ik}\partial_kW]\,dz=b_{ik}\int_{\mathbb R^N}\vartheta W^{p-1}(\partial_jW)^2dz,
\end{equation*}
where we have used the oddness. Hence $b_{ik}=0$ for every $k$.
It remains to prove $b_i=0$ for every $i$. Moreover, by integration by parts,
\begin{equation}\label{l21:translation-mass}
 \begin{aligned}
 p\int_{B_{i\epsilon}}u_\epsilon^{p-1}\psi_{ik\epsilon}\,dx
 &=(\lambda_{i\epsilon}\epsilon^{\alpha_0})^{\tau_0+1}
      \int_{B_{i\epsilon}}\chi_{i\epsilon}\partial_k(u_\epsilon^p)\,dx\\
 &=-(\lambda_{i\epsilon}\epsilon^{\alpha_0})^{\tau_0+1}
      \int_{B_{i\epsilon}}(\partial_k\chi_{i\epsilon})u_\epsilon^p\,dx
 =O(\epsilon^{\alpha_0(N+1)}).
 \end{aligned}
\end{equation}
Therefore, by \eqref{l21:zeta}, we have
\[
 \mu_{l\epsilon} p\int_{B_{i\epsilon}}
       u_\epsilon^{p-1}\zeta_\epsilon\,dx
 =k_{i\epsilon}+O(\epsilon^{\alpha_0(N+1)}).
\]
Combining the following eatimates
\[
 \frac{|k_{i\epsilon}|}
      {(\lambda_{i\epsilon}\epsilon^{\alpha_0})^{N-2} n_\epsilon}
 \le C\frac{\mathfrak h_\epsilon}{n_\epsilon}\longrightarrow0,
 \qquad
 \frac{\epsilon^{\alpha_0(N+1)}}
      {(\lambda_{i\epsilon}\epsilon^{\alpha_0})^{N-2} n_\epsilon}
 \le C(\epsilon^{\alpha_0})^2
        \frac{\epsilon^{\alpha_0}}{n_\epsilon}\longrightarrow0
\]
we eventually get that
\begin{equation}\label{l21:normalized-mass}
 \mu_{l\epsilon} p
 \int_{B(0,R_0/(\lambda_{i\epsilon}\epsilon^{\alpha_0}))}
       U_{i\epsilon}^{p-1}Z_{i\epsilon}\,dz
 \longrightarrow0.
\end{equation}
We justify passage to the limit on this expanding ball. For
$|z|<R_0/(\lambda_{i\epsilon}\epsilon^{\alpha_0})$, we have
\[
 |U_{i\epsilon}(z)|+|Z_{i\epsilon}(z)|
       \le C\bigl((1+|z|)^{-(N-2)}+\epsilon^{\alpha_0(N-2)}\bigr).
\]
Hence
\[
 |U_{i\epsilon}^{p-1}Z_{i\epsilon}|
 \le C(1+|z|)^{-(N+2)}\in L^1(\mathbb R^N).
\]
By dominated convergence theorem, \eqref{l21:kernel} and
\eqref{l21:normalized-mass}, we obtain
\begin{equation}\label{l21:mass-limit}
 0=p\int_{\mathbb R^N}W^{p-1}Z_i\,dz
   =b_i p\int_{\mathbb R^N}W^{p-1}Z_0\,dz.
\end{equation}
Moreover, by the integration by parts,
\begin{equation}\label{l21:dilation-moment}
 \begin{aligned}
 p\int_{\mathbb R^N}W^{p-1}Z_0\,dz
 &=-\int_{\mathbb R^N}z\cdot\nabla(W^p)\,dz
        -\frac{N+2}{2}\int_{\mathbb R^N}W^p\,dz=\tau_0\mathcal{A}_0>0.
 \end{aligned}
\end{equation}
The boundary term in this calculation is
$R\int_{\partial B_R}W^p\,dS=O(R^{-2})$, so it vanishes.
Thus $b_i=0$ for every $i$.

Local convergence is only controlled on a fixed scaling ball, while the region far away from each bubble remains uncontrolled. Thus we define
\[
 \alpha_\epsilon(R)
 :=\sup_{x\in\bigcup_i B(\xi_{i\epsilon},R(\epsilon^{\alpha_0}))}
                  \frac{|Z_\epsilon(x)|}{\mathfrak w_\epsilon(x)}.
\]
On this region, $\mathfrak w_\epsilon\ge(1+R)^{-(N-2)}$.
Scale comparability and local convergence therefore give
$\alpha_\epsilon(R)\to0$ for fixed $R$.
By Green's representation formula, we have
\[
 Z_\epsilon=\mathcal T(pu_\epsilon^{p-1}Z_\epsilon)
      +\mathcal T(\epsilon q u_\epsilon^{q-1}Z_\epsilon)
      +\mathcal T(F_\epsilon/n_\epsilon).
\]
Using positivity of $G$, $|Z_\epsilon|\le\mathfrak w_\epsilon$,
\eqref{l21:green1}--\eqref{l21:green3}, and \eqref{l21:small-ratios}, we obtain
\[
 1=\|Z_\epsilon\|_*
 \le C\alpha_\epsilon(R)+CR^{-2}+C(\epsilon^{\alpha_0})^{\min\{2\tau_0,2\tau_0q-2\}}
                (1+|\log(\epsilon^{\alpha_0})|)+o(1).
\]
Letting $\epsilon\to0$ with $R$ fixed gives $1\le CR^{-2}$.
Taking $R$ sufficiently large, we clearly get a contradiction, which concludes the proof of
\eqref{l21:estimate}.

Finally, choose $K'$ with
$K\subset\operatorname{int}K'\subset\Omega\setminus\{\xi_1,\ldots,\xi_m\}$.
On $K'$, $\mathfrak w_\epsilon\le C_{K'}\epsilon^{\alpha_0(N-2)}$ and
$\psi_{ik\epsilon}=O(\epsilon^{\alpha_0(N-1)})$.
Thus \eqref{l21:zeta}, \eqref{l21:estimate}, and
$\mathfrak h_\epsilon\ge(\epsilon^{\alpha_0})$ imply
\[
 \|v_\epsilon\|_{L^\infty(K')}
 \le C_{K'}\epsilon^{\alpha_0(N-2)}\mathfrak h_\epsilon.
\]
Also $(pu_{\epsilon}^{p-1}+\epsilon qu_{\epsilon}^{q-1})=O(\epsilon^{2\alpha_0}+\epsilon^{\alpha_0((N-2)q-2)})$ on
$K'$, and $\mu_\epsilon$ is bounded. An interior $W^{2,t}$ estimate
with fixed $t>N$, followed by Sobolev embedding, gives
\[
 \|v_{l\epsilon}\|_{C^1(K)}
 \le C_K\left(\|v_{l\epsilon}\|_{L^\infty(K')}
               +\|\mu_{l\epsilon}(pu_{\epsilon}^{p-1}+\epsilon qu_{\epsilon}^{q-1})v_{l\epsilon}\|_{L^\infty(K')}
        \right)
 \le C_K\epsilon^{\alpha_0(N-2)}\mathfrak h_\epsilon,
\]
as desired.
\end{proof}

\begin{cor}\label{lem:moment}
We have
\begin{equation}\label{eq:moment-match}
\displaystyle
 \epsilon\int_{B_{i\epsilon}}u_\epsilon^qv_{l\epsilon}\,dx
 =\epsilon^{\alpha_0\tau_0}\frac{\mathcal{B}_0(p-q)}{\mathcal{A}_0}\lambda_i^{\frac{(N-2)(p-q-1)}{2}}k_{i\epsilon}
       +o(\epsilon^{3\alpha_0\tau_0}\mathfrak h_\epsilon).
\end{equation}
\end{cor}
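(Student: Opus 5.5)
The plan is to compare the two local integrals through the decomposition \eqref{l21:zeta}, $v_{l\epsilon}=\zeta_\epsilon+\sum_{i,k}c_{ik\epsilon}\psi_{ik\epsilon}$. Both $\epsilon\int_{B_{i\epsilon}}u_\epsilon^qv_{l\epsilon}$ and $k_{i\epsilon}$ will turn out to be, to leading order, linear functionals of one local limit profile of $\zeta_\epsilon/\mathfrak h_\epsilon$. By the orthogonality \eqref{l21:orthog}, that profile is a multiple of $Z_0$, and the ratio of the two functionals on $Z_0$ is an explicit constant.

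\textbf{Step 1: the translation part is negligible.} The functions $\psi_{ik\epsilon}$ contribute negligibly to both sides. For $k_{i\epsilon}$ this is \eqref{l21:translation-mass}, which gives $O(\epsilon^{\alpha_0(N+1)})=o(\epsilon^{2\alpha_0\tau_0}\mathfrak h_\epsilon)$, using $\mathfrak h_\epsilon\ge\epsilon^{\alpha_0}$. For the left side, write $u_\epsilon^q\partial_ku_\epsilon=\partial_k(u_\epsilon^{q+1})/(q+1)$ and integrate by parts. Only $\partial_k\chi_{i\epsilon}$ survives, on a fixed annulus. By Lemma \ref{outside-2} this leaves $O(\epsilon\cdot\epsilon^{\alpha_0(\tau_0+1)}\epsilon^{\alpha_0\tau_0(q+1)})$, which is $o(\epsilon^{3\alpha_0\tau_0}\mathfrak h_\epsilon)$ by \eqref{iden}. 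The claim therefore reduces to the same identity with $v_{l\epsilon}$ replaced by $\zeta_\epsilon$.

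\textbf{Step 2: rescaling and the local limit.} Argue along an arbitrary sequence $\epsilon\to0$ and set $Z_\epsilon=\zeta_\epsilon/\mathfrak h_\epsilon$. By \eqref{l21:estimate}, $\|Z_\epsilon\|_*\le C$. Rescale at the $i$-th centre, with $\delta=\lambda_{i\epsilon}\epsilon^{\alpha_0}$ and $Z_{i\epsilon}(z)=Z_\epsilon(\xi_{i\epsilon}+\delta z)$. Then
\[
\epsilon\int_{B_{i\epsilon}}u_\epsilon^q\zeta_\epsilon=\mathfrak h_\epsilon\,\epsilon\,\delta^{N-\tau_0q}\int U_{i\epsilon}^qZ_{i\epsilon}\,dz,
\qquad
\mu_{l\epsilon}p\int_{B_{i\epsilon}}u_\epsilon^{p-1}\zeta_\epsilon=\mathfrak h_\epsilon\,\mu_{l\epsilon}p\,\delta^{N-2}\int U_{i\epsilon}^{p-1}Z_{i\epsilon}\,dz .
\]
Using $\epsilon\delta^{\tau_0(p-q)}=\epsilon^{2\alpha_0\tau_0}\lambda_{i\epsilon}^{\tau_0(p-q)}$ and $2-\tau_0q=\tau_0(p-q)-\tau_0$, the ratio of the prefactors is $\epsilon^{\alpha_0\tau_0}\lambda_i^{\tau_0(p-q-1)}(1+o(1))$. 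This is exactly the power appearing in \eqref{eq:moment-match}.

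Next, $Z_{i\epsilon}$ satisfies the rescaled equation with forcing $\delta^2F_\epsilon/\mathfrak h_\epsilon$. On compact sets this forcing tends to zero: $E_{ik\epsilon}$ is supported away from the centres, and $|\mu_{l\epsilon}-1|=O(\epsilon^{\alpha_0(N-1)})$ by \eqref{cor1}. The rescaled potential converges to $pW^{p-1}$ by \eqref{l21:rescaled-potential}. As in the proof of Lemma \ref{l21:lemma}, a subsequence therefore converges in $C^1_{loc}$ to a decaying kernel element, and \eqref{l21:orthog} kills its translation components. Hence $Z_{i\epsilon}\to b_iZ_0$, where $b_i$ may be zero.

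\textbf{Step 3: passing to the limit and the constant.} Dominated convergence on the expanding balls uses $|Z_{i\epsilon}|+U_{i\epsilon}\le C\big((1+|z|)^{2-N}+\epsilon^{\alpha_0(N-2)}\big)$. Since $(N-2)(q+1)>N$ in our range, $U_{i\epsilon}^q|Z_{i\epsilon}|$ has an integrable majorant. The constant $\epsilon^{\alpha_0(N-2)}$ part must be controlled separately on $|z|\lesssim\epsilon^{-\alpha_0}$, and showing that it contributes $o(1)$ is the step I expect to need the most care. It should follow from the same case analysis ($q$ versus $N/(N-2)$) as in the proof of Lemma \ref{2-AS-eigenvalues}. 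The two limits are
\[
\int_{\mathbb R^N}W^qZ_0=\Big(N-\tau_0(q+1)\Big)\frac{1}{q+1}\int W^{q+1}=\tau_0(p-q)\mathcal B_0,
\qquad
p\int_{\mathbb R^N}W^{p-1}Z_0=\tau_0\mathcal A_0 .
\]
The second identity is \eqref{l21:dilation-moment}; the first is the analogous integration by parts. Their ratio is $(p-q)\mathcal B_0/\mathcal A_0$, and $\mu_{l\epsilon}\to1$.

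Combining the three steps gives \eqref{eq:moment-match} along the subsequence, with error $o(\epsilon^{\alpha_0\tau_0}\delta^{N-2}\mathfrak h_\epsilon)=o(\epsilon^{3\alpha_0\tau_0}\mathfrak h_\epsilon)$. The case $b_i=0$ is included, since then both sides are $o(\epsilon^{3\alpha_0\tau_0}\mathfrak h_\epsilon)$. The usual subsequence argument extends the identity to the full family.
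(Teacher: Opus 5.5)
Your proposal is correct and follows essentially the same route as the paper: you remove the translation terms by integrating by parts onto $\partial_k\chi_{i\epsilon}$, pass to the local limit $\zeta_\epsilon(\xi_{i\epsilon}+\delta_{i\epsilon}z)/\mathfrak h_\epsilon\to b_iZ_0$ using the orthogonality conditions, and take the ratio of $\int W^qZ_0=\tau_0(p-q)\mathcal B_0$ to $p\int W^{p-1}Z_0=\tau_0\mathcal A_0$. The step you flag as delicate is in fact immediate: on $B(0,R_0/\delta_{i\epsilon})$ one has $\epsilon^{\alpha_0(N-2)}\le C(1+|z|)^{2-N}$, so the constant part is absorbed and the single majorant $C(1+|z|)^{-(N-2)(q+1)}$ suffices, as the paper does implicitly.
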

\begin{proof}
By Lemma~\ref{l21:lemma}, $\|\zeta_\epsilon/\mathfrak h_\epsilon\|\leq C$. Thus combined with $\mathfrak h_\epsilon\ge\epsilon^{\alpha_0}$, nondegeneracy and orthogonality condition \eqref{l21:orthog}, we have
\[
 \frac{\zeta_\epsilon(\xi_{i\epsilon}+\delta_{i\epsilon}z)}{\mathfrak h_\epsilon}
 \longrightarrow b_i Z_0(z)
\]
By dominated convergence theorem,
\begin{equation}\label{eq:mass-normalized}
 \frac{k_{i\epsilon}}{\epsilon^{\alpha_0(N-2)}\mathfrak h_\epsilon}\longrightarrow
 \lambda_i^{N-2} \tau_0\mathcal{A}_0 b_i.
\end{equation}
Furthermore, we compute
\begin{align}
 \frac{\epsilon\int_{B_{i\epsilon}}u_\epsilon^q\zeta_\epsilon\,dx}
      {\epsilon^{3\alpha_0\tau_0}\mathfrak h_\epsilon}
 &=\lambda_{i\epsilon}^{\tau_0(p+1-q)}
   \int_{B(0,R_0/\delta_{i\epsilon})}U_{i\epsilon}^q
   \frac{\zeta_\epsilon(\xi_{i\epsilon}+\lambda_{i\epsilon}\epsilon^{\alpha_0}z)}{\mathfrak h_\epsilon}\,dz
 \longrightarrow\lambda_i^{\tau_0(p+1-q)}\tau_0(p-q)\mathcal{B}_0 b_i.
 \label{eq:q-normalized}
\end{align}
The removed translations have a smaller contribution, with no approximation to $W_\epsilon$ needed:
\begin{align}
 \epsilon\int_{B_{i\epsilon}}u_\epsilon^q\psi_{ik\epsilon}\,dx
 &=-\frac{\epsilon(\lambda_{i\epsilon}\epsilon^{\alpha_0})^{\tau_0+1}}{q+1}
     \int_{B_{i\epsilon}}(\partial_k\chi_i)u_\epsilon^{q+1}\,dx\nonumber\\
 &=O(\epsilon^{\alpha_0(3\tau_0+1+2\tau_0q-2)})
 =o(\epsilon^{3\alpha_0\tau_0}\mathfrak h_\epsilon).\label{eq:q-translation}
\end{align}
Here the exponent follows from $N-2-\tau_{0}(p-q)+\tau_0(q+2)+1=3\tau_0+1+2\tau_0q-2$, and the last estimate uses $\mathfrak h_\epsilon\ge\epsilon^{\alpha_0}$ and $2\tau_0q>2$.
Finally \eqref{eq:moment-match} follows by \eqref{eq:mass-normalized}, \eqref{eq:q-normalized} and \eqref{eq:q-translation}.
\end{proof}

\begin{cor}\label{lem:q-potential}
For every exterior compact set $K$, it holds that
\begin{equation}\label{eq:q-potential}
 \|\mathcal{T}(\mu_{l\epsilon}\epsilon q u_\epsilon^{q-1}v_{l\epsilon})\|_{C^1(K)}
 =o(\epsilon^{2\alpha_0\tau_0}\mathfrak h_\epsilon).
\end{equation}
\end{cor}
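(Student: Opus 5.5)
The plan is to estimate the Green representation
\[
\mathcal T(f_\epsilon)(x)=\int_\Omega G(x,y)f_\epsilon(y)\,dy,\qquad f_\epsilon:=\mu_{l\epsilon}\epsilon q u_\epsilon^{q-1}v_{l\epsilon},
\]
and its $x$-gradient, for $x\in K$. Both $v_{l\epsilon}$ and $f_\epsilon$ are split according to the decomposition \eqref{l21:zeta}, $v_{l\epsilon}=\zeta_\epsilon+\sum_{i,k}c_{ik\epsilon}\psi_{ik\epsilon}$, with $|c_{ik\epsilon}|\le C$. For the remainder we use the estimate $|\zeta_\epsilon|\le C\mathfrak h_\epsilon\mathfrak w_\epsilon$ from Lemma~\ref{l21:lemma}. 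Let $d_K=\operatorname{dist}(K,\{\xi_1,\dots,\xi_m\})>0$. We split $\Omega$ into the balls $B(\xi_{i\epsilon},d_K/2)$ and their complement $\Sigma$. For $x\in K$ and $y$ in one of these balls, $G(x,y)$ and $\nabla_xG(x,y)$ are smooth in $y$ and bounded uniformly with their $y$-derivatives. In $\Sigma$, on the other hand, the kernels $G(x,\cdot)$ and $\nabla_xG(x,\cdot)$ are merely integrable, uniformly in $x\in K$.

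\textbf{Exterior part.} On $\Sigma$ we have $u_\epsilon\le C\epsilon^{\alpha_0\tau_0}$ by Lemma~\ref{outside-2}. Moreover $\mathfrak w_\epsilon\le C\epsilon^{2\alpha_0\tau_0}$ and $|\psi_{ik\epsilon}|\le C\epsilon^{\alpha_0(N-1)}\le C\epsilon^{2\alpha_0\tau_0}\mathfrak h_\epsilon$, so $|v_{l\epsilon}|\le C\epsilon^{2\alpha_0\tau_0}\mathfrak h_\epsilon$ there. It follows that
\[
|f_\epsilon|\le C\epsilon^{1+\alpha_0\tau_0(q-1)}\epsilon^{2\alpha_0\tau_0}\mathfrak h_\epsilon,
\]
which is $o(\epsilon^{2\alpha_0\tau_0}\mathfrak h_\epsilon)$ also for $q=1$. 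Integrating against the two integrable kernels gives a contribution of the same order in $C^1(K)$.

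\textbf{Bubble part from $\zeta_\epsilon$.} On $B(\xi_{i\epsilon},d_K/2)$ the contribution is bounded by $C\mathfrak h_\epsilon\,\epsilon\int u_\epsilon^{q-1}\mathfrak w_\epsilon$. Rescale by $y=\xi_{i\epsilon}+\lambda_{i\epsilon}\epsilon^{\alpha_0}z$ and use \eqref{tidu}. The identity \eqref{beta1}, $\epsilon\,\epsilon^{\alpha_0\tau_0(p-q)}=\epsilon^{2\alpha_0\tau_0}$, together with $N+\tau_0-\tau_0p=N-2$, reduces this to
\[
C\mathfrak h_\epsilon\,\epsilon^{2\alpha_0\tau_0}\epsilon^{\alpha_0(N-2)}\int_{|z|<C\epsilon^{-\alpha_0}}(1+|z|)^{-(N-2)q}\,dz .
\]
As in the proof of Lemma~\ref{2-AS-eigenvalues}, this integral is $O(\epsilon^{-\alpha_0(N-(N-2)q)})$, or carries an extra logarithm in the borderline case. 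The contribution is therefore $O(\epsilon^{2\alpha_0\tau_0}\mathfrak h_\epsilon\,\epsilon^{\alpha_0\gamma}|\log\epsilon|)$ with $\gamma=\min\{N-2,(N-2)q-2\}>0$, which is $o(\epsilon^{2\alpha_0\tau_0}\mathfrak h_\epsilon)$. The same bound holds for the $x$-gradient, since $\nabla_xG$ is also bounded there.

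\textbf{Translation part; the main obstacle.} The terms $c_{ik\epsilon}\psi_{ik\epsilon}$ are not multiplied by $\mathfrak h_\epsilon$. The crude bound from the previous step gains only $\epsilon^{\alpha_0\gamma}$, and this can be weaker than the required $o(\epsilon^{\alpha_0})$, for instance when $N=4$ and $q$ is close to $1$. I would instead use the divergence structure. Since $\psi_{ik\epsilon}=\delta_{i\epsilon}^{\tau_0+1}\chi_{i\epsilon}\partial_ku_\epsilon$, we have $u_\epsilon^{q-1}\psi_{ik\epsilon}=q^{-1}\delta_{i\epsilon}^{\tau_0+1}\chi_{i\epsilon}\partial_k(u_\epsilon^q)$. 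Integrating by parts moves the derivative onto the smooth function $\chi_{i\epsilon}(y)G(x,y)$, or onto $\chi_{i\epsilon}(y)\nabla_xG(x,y)$ for the gradient. This gives
\[
O\Big(\epsilon\,\delta_{i\epsilon}^{\tau_0+1}\int_{B_{i\epsilon}}u_\epsilon^q\Big).
\]
By the mass computation in the proof of Lemma~\ref{2-AS-eigenvalues}, $\epsilon\int u_\epsilon^q=O(\epsilon^{\alpha_0\tau_0}\epsilon^{\alpha_0\gamma'}|\log\epsilon|)$ for some $\gamma'>0$. The total is then $O(\epsilon^{\alpha_0(2\tau_0+1+\gamma')}|\log\epsilon|)=o(\epsilon^{2\alpha_0\tau_0}\epsilon^{\alpha_0})\subset o(\epsilon^{2\alpha_0\tau_0}\mathfrak h_\epsilon)$, using $\mathfrak h_\epsilon\ge\epsilon^{\alpha_0}$. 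This parity and integration-by-parts step is the point to check carefully, since the exponents are tightest in the case $N=4$, $q$ near $1$. Summing the three contributions proves \eqref{eq:q-potential}.
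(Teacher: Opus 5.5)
Your proposal is correct and follows essentially the paper's route. You use the same splitting $v_{l\epsilon}=\zeta_\epsilon+\sum_{i,k}c_{ik\epsilon}\psi_{ik\epsilon}$ and the bound $|\zeta_\epsilon|\le C\mathfrak h_\epsilon\mathfrak w_\epsilon$ for the remainder; the paper packages that step through \eqref{l21:green3} and interior elliptic estimates rather than your direct near/far splitting of the kernel. For the translation terms you use the same integration by parts $u_\epsilon^{q-1}\partial_ku_\epsilon=q^{-1}\partial_k(u_\epsilon^q)$ combined with the mass bound $\epsilon\int_\Omega u_\epsilon^q=o(\epsilon^{\alpha_0\tau_0})$ of \eqref{cor23simple:mass}.

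One detail should be made explicit. Since $K$ may meet $\operatorname{supp}\chi_{i\epsilon}$, integrate by parts only over $B(\xi_{i\epsilon},d_K/2)$. There the boundary term is $O(\epsilon\,\epsilon^{\alpha_0(\tau_0+1)}\epsilon^{\alpha_0\tau_0 q})=o(\epsilon^{\alpha_0(N-1)})$, and the rest of the support is already covered by your exterior estimate.
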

\begin{proof}
Fix an open set $V$ such that
\[
v_{l\epsilon}=\zeta_\epsilon
+\sum_{i,k}c_{ik\epsilon}\psi_{ik\epsilon},
\]
we estimate the remainder and translation terms separately.
Using \eqref{l21:estimate} and \eqref{l21:green3}, we obtain
\[
\left|\mathcal T(b_\epsilon\zeta_\epsilon)\right|
\le C(\epsilon^{\alpha_0})^{\min\{2\tau_0,2\tau_0q-2\}}
                (1+|\log(\epsilon^{\alpha_0})|)\mathfrak h_\epsilon
\mathfrak w_\epsilon,\quad\mbox{where}\quad b_{\epsilon}:=\mu_{l\epsilon}\epsilon q u_\epsilon^{q-1}.
\]
On $V$, we have $\mathfrak w_\epsilon=O(\epsilon^{\alpha_0(N-2)})$,
$b_\epsilon=O(\epsilon^{\alpha_0(2\tau_0q-2)})$, and
$|\zeta_\epsilon|\le C\mathfrak h_\epsilon
\mathfrak w_\epsilon$. Interior elliptic estimates therefore yield
\begin{equation}\label{cor23simple:remainder}
\begin{split}
\left\|\mathcal T(b_\epsilon\zeta_\epsilon)\right\|_{C^1(K)}
&\le C_K\epsilon^{\alpha_0(N-2)}\mathfrak h_\epsilon
\left( C(\epsilon^{\alpha_0})^{\min\{2\tau_0,2\tau_0q-2\}}
                (1+|\log(\epsilon^{\alpha_0})|)+\epsilon^{\alpha_0(2\tau_0q-2)}\right)
\\&=o\left(\epsilon^{\alpha_0(N-2)}\mathfrak h_\epsilon\right).
\end{split}
\end{equation}
For the translation terms, the solution envelope and rescaling give
\begin{equation}\label{cor23simple:mass}
\begin{aligned}
\epsilon\int_\Omega u_\epsilon^q\,dy
&\le C\epsilon^{3\alpha_0\tau_0}
\int_0^{C/(\epsilon^{\alpha_0})}
t^{N-1}(1+t)^{-(N-2)q}\,dt\\
&\le C(\epsilon^{\alpha_0})^{\tau_0+\min\{N-2,2\tau_0q-2\}}
\left(1+|\log(\epsilon^{\alpha_0})|\right)
=o(\epsilon^{\alpha_0\tau_0}).
\end{aligned}
\end{equation}
Set $P_{ik\epsilon}:=\mathcal T(b_\epsilon\psi_{ik\epsilon})$.
Since $\psi_{ik\epsilon}=(\lambda_{i\epsilon}\epsilon^{\alpha_0})^{\tau_0+1}
\chi_{i\epsilon}\partial_k u_\epsilon$, integration by parts yields
\begin{equation}\label{cor23simple:ibp}
\begin{aligned}
P_{ik\epsilon}(x)
=&-\mu_{l\epsilon}\epsilon(\lambda_{i\epsilon}\epsilon^{\alpha_0})^{\tau_0+1}
\int_\Omega
\bigl[\chi_{i\epsilon}(y)\partial_{y_k}G(x,y)
+G(x,y)\partial_k\chi_{i\epsilon}(y)\bigr]
u_\epsilon^q(y)\,dy.
\end{aligned}
\end{equation}
This identity remains valid when $x$ belongs to the cutoff support:
$|\nabla_yG(x,y)|\le C|x-y|^{1-N}$ is locally integrable, and the
boundary term obtained by removing a ball of radius $\eta$ about $x$
is $O_\epsilon(\eta)$ and tends to zero as $\eta\to0$.
Choose $d>0$, independent of small $\epsilon$, such that
$B(\xi_{i\epsilon},d)$ is separated from $V$.
On this ball, the kernels in \eqref{cor23simple:ibp} are uniformly
bounded for $x\in V$. On the remaining cutoff support, the exterior
estimate implies
\[
\epsilon u_\epsilon^q
=O((\epsilon^{\alpha_0})^{2\tau_0-\tau_0(p-q)+\tau_0q})
=O(\epsilon^{\alpha_0(\tau_0+2\tau_0q-2)}).
\]
Moreover,
\[
\sup_{x\in V}
\int_{\operatorname{supp}\chi_{i\epsilon}}
\bigl(G(x,y)+|\nabla_yG(x,y)|\bigr)\,dy
\le C_V.
\]
Splitting the integral in \eqref{cor23simple:ibp} over these two
regions, and using \eqref{cor23simple:mass} and
$\lambda_{i\epsilon}\epsilon^{\alpha_0}\approx\epsilon^{\alpha_0}$, we obtain
\begin{equation}\label{cor23simple:potential}
\|P_{ik\epsilon}\|_{L^\infty(V)}
\le C_V\epsilon^{\alpha_0(\tau_0+1)}
\left(M_\epsilon+\epsilon^{\alpha_0(\tau_0+2\tau_0q-2)}\right)
=o(\epsilon^{\alpha_0(N-1)}).
\end{equation}
On the other hand, the exterior bounds for $u_\epsilon$ and its
gradient give
\[
\|b_\epsilon\psi_{ik\epsilon}\|_{L^\infty(V)}
\le C_V(\epsilon^{\alpha_0(N-1+2\tau_0q-2)}).
\]
Applying interior $W^{2,t}$ estimates with $t>N$ to
$-\Delta P_{ik\epsilon}=b_\epsilon\psi_{ik\epsilon}$,
and then Sobolev embedding, yields
\begin{equation}\label{cor23simple:translation}
\|P_{ik\epsilon}\|_{C^1(K)}
=o(\epsilon^{\alpha_0(N-1)}).
\end{equation}
Finally, the coefficients $c_{ik\epsilon}$ are bounded and
$\mathfrak h_\epsilon\ge\epsilon^{\alpha_0}$.
Summing \eqref{cor23simple:translation} over the finitely many
translation terms and using \eqref{cor23simple:remainder} proves
\eqref{eq:q-potential}.
\end{proof}

Fix the small radius $R_0$, and $i$, and let $B_i=B(\xi_{i\epsilon},R_0)$ be the ball in the sequel, we introduce
\begin{equation}\label{eq:kappas}
 \begin{aligned}
 \kappa_{i0}&=\int_{B_{i}}(u_\epsilon^p+\epsilon u_\epsilon^q)\,dy,\quad\kappa_{i1}=\int_{B_{i}}u_\epsilon^{p-1}v_{l\epsilon}\,dy,\\
 \kappa_{i2}&=\int_{B_{i}}(y-\xi_{i\epsilon})u_\epsilon^{p-1}v_{l\epsilon}\,dy
       =(\kappa_{i21},\ldots,\kappa_{i2N}).
 \end{aligned}
\end{equation}
Recalling \eqref{l21:masses}, we have
\begin{equation}\label{eq:mass-conversion}
 k_{i\epsilon}=\mu_{l\epsilon} p\kappa_{i1},\qquad
 \mathfrak h_\epsilon=\epsilon^{\alpha_0}+
 \frac{\mu_{l\epsilon} p\max_i|\kappa_{i1}|}{\epsilon^{\alpha_0(N-2)}}.
\end{equation}
Thus $\mathfrak h_\epsilon$ is a temporary scalar error size, it is different from the harmonic function $h_{i\epsilon}$ introduced in \eqref{eq:regular-parts}.

\begin{lem}\label{lem:green}
On every compact subset of $\Omega\setminus\{\xi_1,\ldots,\xi_m\}$, we have
\begin{align}
 u_\epsilon(x)&=\sum_{i=1}^m\kappa_{i0}G(x,\xi_{i\epsilon})
                      +o(\epsilon^{\alpha_0\tau_0}),\label{eq:W-kappa}\\
 \frac{v_{l\epsilon}(x)}{p\mu_{l\epsilon}}
 &=\sum_{i=1}^m\left[\kappa_{i1}G(x,\xi_{i\epsilon})
           +\kappa_{i2}\cdot\nabla_yG(x,\xi_{i\epsilon})\right]
                      +o(\epsilon^{\alpha_0(N-2)}\mathfrak h_\epsilon).
 \label{eq:v-kappa}
\end{align}
Moreover, the coefficients satisfy
\begin{equation}\label{eq:kappa-orders}
 \kappa_{i0}=\mathcal{A}_0(\epsilon^{\alpha_0\tau_0})\lambda_i^{\tau_0}+o(\epsilon^{\alpha_0\tau_0}),\qquad
 \kappa_{i1}=o(\epsilon^{\alpha_0(N-2)}),\qquad
 \kappa_{i2}=O(\epsilon^{\alpha_0(N-1)}).
\end{equation}
\end{lem}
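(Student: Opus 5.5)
The plan is to use the Green representation formula on a compact set $K\subset\Omega\setminus\{\xi_1,\ldots,\xi_m\}$. We split the integration domain into the balls $B_i=B(\xi_{i\epsilon},R_0)$ and the exterior $\Sigma:=\Omega\setminus\bigcup_iB_i$, and Taylor expand $G(x,\cdot)$ about $\xi_{i\epsilon}$ inside each ball. We may take $R_0$ so small that $K$ lies at positive distance from every $B_i$. Then $y\mapsto G(x,y)$ is smooth on $B_i$ uniformly in $x\in K$, and we can write
\[
G(x,y)=G(x,\xi_{i\epsilon})+(y-\xi_{i\epsilon})\cdot\nabla_yG(x,\xi_{i\epsilon})+O(|y-\xi_{i\epsilon}|^2).
\]

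\textbf{Estimate for $u_\epsilon$.} We have $u_\epsilon=\mathcal T(u_\epsilon^p+\epsilon u_\epsilon^q)$. Keeping only the zeroth-order Taylor term on $B_i$ gives $\kappa_{i0}G(x,\xi_{i\epsilon})$. The remainder on $B_i$ is bounded by $\int_{B_i}|y-\xi_{i\epsilon}|(u_\epsilon^p+\epsilon u_\epsilon^q)$. We control this with \eqref{tidu} after rescaling. The $u^p$ part contributes $O(\epsilon^{\alpha_0(\tau_0+1)})$ (with a logarithm when $N=3$, which is excluded here). The $\epsilon u^q$ part is $o(\epsilon^{\alpha_0\tau_0})$ by the total-mass bound in the proof of Lemma~\ref{2-AS-eigenvalues}. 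The exterior contribution is $O(\epsilon^{\alpha_0\tau_0 p}+\epsilon^{1+\alpha_0\tau_0q})=o(\epsilon^{\alpha_0\tau_0})$ by Lemma~\ref{outside-2}. The expansion $\kappa_{i0}=\mathcal A_0\lambda_i^{\tau_0}\epsilon^{\alpha_0\tau_0}+o(\cdot)$ is the computation of $I_2$ in Lemma~\ref{2-AS-eigenvalues}.

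\textbf{Estimate for $v_{l\epsilon}$.} We write $v_{l\epsilon}/(p\mu_{l\epsilon})=\mathcal T(u_\epsilon^{p-1}v_{l\epsilon})+\mathcal T(p^{-1}\epsilon q u_\epsilon^{q-1}v_{l\epsilon})$. Corollary~\ref{lem:q-potential} shows that the second term is $o(\epsilon^{\alpha_0(N-2)}\mathfrak h_\epsilon)$ in $C^1(K)$. For the first term we expand to first order on each $B_i$, which produces the $\kappa_{i1}$ and $\kappa_{i2}$ terms. Two remainders must then be shown to be $o(\epsilon^{\alpha_0(N-2)}\mathfrak h_\epsilon)$.

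The first is the exterior piece. On $\Sigma$ we have $u_\epsilon^{p-1}=O(\epsilon^{2\alpha_0})$, and \eqref{l21:exterior} gives $|v_{l\epsilon}|\le C\epsilon^{\alpha_0(N-2)}\mathfrak h_\epsilon$ near $\partial B_i$. On the rest of $\Sigma$ we use the $\ast$-norm bound on $v_{l\epsilon}$, or run the same estimate on slightly larger compact sets. Together these give a gain of $\epsilon^{2\alpha_0}$.

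The second is the quadratic Taylor remainder, bounded by $\int_{B_i}|y-\xi_{i\epsilon}|^2u_\epsilon^{p-1}|v_{l\epsilon}|$. This is the main obstacle. A naive use of \eqref{V5} gives only $\epsilon^{\alpha_0(N-2)}\cdot\epsilon^{2\alpha_0}|\log\epsilon|$ (the integrand decays like $|z|^{-N}$). We need this to be $o(\epsilon^{\alpha_0(N-2)}\mathfrak h_\epsilon)$, and since $\mathfrak h_\epsilon\ge\epsilon^{\alpha_0}$, the quantity $\epsilon^{2\alpha_0}|\log\epsilon|$ is $o(\epsilon^{\alpha_0})$, which is sufficient. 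The planned proof therefore keeps the log and absorbs it against $\mathfrak h_\epsilon$. More carefully, we decompose $v_{l\epsilon}=\zeta_\epsilon+\sum c_{ik\epsilon}\psi_{ik\epsilon}$ and use $\|\zeta_\epsilon\|_*\le C\mathfrak h_\epsilon$ from Lemma~\ref{l21:lemma}. The $\psi$ part then contributes $O(\epsilon^{\alpha_0(N-1)}\epsilon^{\alpha_0}|\log\epsilon|)$ and the $\zeta$ part $O(\mathfrak h_\epsilon\epsilon^{\alpha_0 N}|\log\epsilon|)$, both acceptable. The $C^1$ version follows by differentiating the kernel in $x$, which is smooth on $K$, or from interior elliptic estimates as in Lemma~\ref{l21:lemma}.

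\textbf{Coefficient orders.} We derive \eqref{eq:kappa-orders} by rescaling with Theorem~\ref{dilation}, Lemma~\ref{identity} and the decay bound \eqref{V5}.

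For $\kappa_{i1}$, we have $\epsilon^{-\alpha_0(N-2)}\kappa_{i1}\to\lambda_i^{N-2}\int W^{p-1}\big(-\sum_k a_{l,i,k}\partial_kW\big)=0$ by oddness and dominated convergence, so $\kappa_{i1}=o(\epsilon^{\alpha_0(N-2)})$. The dilation coefficient does not enter because it vanishes by Theorem~\ref{dilation}.

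For $\kappa_{i2}$, one extra factor $\lambda_{i\epsilon}\epsilon^{\alpha_0}z$ appears. The resulting integral $\int|z|W^{p-1}\langle z\rangle^{2-N}$ converges, since the integrand decays like $|z|^{3-2N}$. This gives $\kappa_{i2}=O(\epsilon^{\alpha_0(N-1)})$.
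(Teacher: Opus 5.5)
Your proposal is correct and follows the same route as the paper. You use the Green representation split into the balls $B(\xi_{i\epsilon},R_0)$ and the exterior, and a first-order Taylor expansion of $G(x,\cdot)$ at $\xi_{i\epsilon}$. You handle the $\epsilon q u_\epsilon^{q-1}$ potential by Corollary~\ref{lem:q-potential}, and you bound the quadratic Taylor remainder via \eqref{V5} by $C\epsilon^{\alpha_0N}|\log\epsilon|$, absorbing it with $\mathfrak h_\epsilon\ge\epsilon^{\alpha_0}$, so your $\zeta/\psi$ refinement is unnecessary. The coefficient orders then come from rescaling, as in the paper.

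There are two harmless slips. The integrand for $\kappa_{i2}$ decays like $|z|^{-N-1}$, not $|z|^{3-2N}$. The first-order remainder for $u_\epsilon^p$ has radial density $\sim r^{-2}$, so no logarithm appears in any dimension. Neither affects your conclusions.
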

\begin{proof}
Proof of \eqref{eq:W-kappa}, $\kappa_{i1}$ and $\kappa_{i2}$ is similar to Lemma 5.3 in \cite{CKL-1}. So we omit it.
By the change of variables $y=\xi_{i\epsilon}+\lambda_{i\epsilon}\epsilon^{\alpha_0}z$. Since $N-\tau_0p=\tau_0$,
\[
 \epsilon^{-\alpha_0\tau_0}\int_{B_{i\epsilon}^N}u_\epsilon^p\,dy
 =\lambda_{i\epsilon}^{\tau_0}\int_{B(0,R_0/(\lambda_{i\epsilon}\epsilon^{\alpha_0}))}U_{i\epsilon}^p\,dz
 \longrightarrow\lambda_i^{\tau_0}\mathcal{A}_0.
\]
In virtue of \eqref{cor23simple:mass} and $\epsilon\int_{B_{i\epsilon}^{N}}u_\epsilon^q=o(\epsilon^{\alpha_0\tau_0})$, we obtain the estimate of $\kappa_{i0}$.

We next prove \eqref{eq:v-kappa}. Let $K$ be an exterior compact set and first choose the integration balls disjoint from a fixed neighborhood of $K$. By the Green's representation, we have
\[
 \frac{v_{l\epsilon}(x)}{p\mu_{l\epsilon}}
 =\int_\Omega G(x,y)u_\epsilon^{p-1}v_\epsilon(y)\,dy
   +\frac\epsilon p\int_\Omega G(x,y)q u_\epsilon^{q-1}v_\epsilon(y)\,dy.
\]
According to \eqref{eq:q-potential}, we have that the second term is bounded by $o_{C^1(K)}(\epsilon^{\alpha_0(N-2)}\mathfrak h_\epsilon)$. Outside the fixed balls, the profile and remainder estimates give
$
 |u_\epsilon^{p-1}v_{l\epsilon}|\le C\epsilon^{\alpha_0N}\mathfrak h_\epsilon.
$
The contribution of this bounded source and its first $x$ derivatives is $O(\epsilon^{\alpha_0N}\mathfrak h_\epsilon)$ on $K$, by the Green bound and local elliptic estimates.
Inside a ball, Taylor's formula in the source variable gives
\[
 G(x,y)=G(x,\xi_{i\epsilon})
 +(y-\xi_{i\epsilon})\cdot\nabla_yG(x,\xi_{i\epsilon})+E_i(x,y),
\]
where, uniformly for $x\in K$,
$|E_i|+|\nabla_xE_i|\le C_K|y-\xi_{i\epsilon}|^2$. Indeed, the second $y$ derivatives of $G$ and their first $x$ derivatives are bounded on these separated sets. The resulting remainder is bounded by
\begin{align}
 \int_{B_{i\epsilon}}|y-\xi_{i\epsilon}|^2u_\epsilon^{p-1}|v_{l\epsilon}|\,dy
 &\le C(\lambda_{i\epsilon}\epsilon^{\alpha_0})^{N}
 \int_0^{R_0/(\lambda_{i\epsilon}\epsilon^{\alpha_0})}
 \frac{t^{N+1}}{(1+t)^{N+2}}\,dt\nonumber\\
 &\le C\epsilon^{\alpha_0N}(1+|\log(\epsilon^{\alpha_0})|)
 =o(\epsilon^{\alpha_0(N-1)}).\label{eq:taylor-error}
\end{align}
Since $\mathfrak h_\epsilon\ge\epsilon^{\alpha_0}$, this is also $o(\epsilon^{\alpha_0(N-2)}\mathfrak h_\epsilon)$. Hence we prove \eqref{eq:v-kappa}.

\end{proof}

Write $\Gamma_i(x)=\gamma_N|x-\xi_{i\epsilon}|^{-(N-2)}$. Introducing
\begin{equation}\label{eq:regular-parts}
 \begin{aligned}
 g_{i\epsilon}(x)&=-\kappa_{i0}H(x,\xi_{i\epsilon})
             +\sum_{j\ne i}\kappa_{j0}G(x,\xi_{j\epsilon}),\\
 h_{i\epsilon}(x)&=-\kappa_{i1}H(x,\xi_{i\epsilon})
              -\kappa_{i2}\cdot\nabla_yH(x,\xi_{i\epsilon})\\
 &\quad+\sum_{j\ne i}\left[\kappa_{j1}G(x,\xi_{j\epsilon})
                   +\kappa_{j2}\cdot\nabla_yG(x,\xi_{j\epsilon})\right].
 \end{aligned}
\end{equation}
Both functions are harmonic near $\xi_{i\epsilon}$. Their local expansions are
\begin{align}
 u_\epsilon(x)&=\kappa_{i0}\Gamma_i(x)+g_{i\epsilon}(x)+o_{C^1}(\epsilon^{\alpha_0\tau_0}),\label{eq:local-W}\\
 \frac{v_{l\epsilon(x)}}{p\mu_{l\epsilon}}
 &=\kappa_{i1}\Gamma_i(x)
   +(N-2)\gamma_N\frac{\kappa_{i2}\cdot(x-\xi_{i\epsilon})}{|x-\xi_{i\epsilon}|^N}
   +h_{i\epsilon}(x)+o(\epsilon^{\alpha_0(N-2)}\mathfrak h_\epsilon).
 \label{eq:local-v}
\end{align}
These estimates are on any fixed annulus about the center. We will apply them on a fixed sphere. Only the harmonic comparison functions, not their PDE remainders, will subsequently be evaluated at a radius tending to zero.
Introducing the nonzero column vectors  $\kappa_1=(\kappa_{11},\ldots,\kappa_{m1})^T$ and $\kappa_0=(\kappa_{10},\ldots,\kappa_{m0})^T$. Let
\begin{equation}\label{eq:R-kappa}
 \mathcal R_{i\epsilon}(\kappa_2)
 =-\frac12\kappa_{i2}\cdot\nabla\tau(\xi_{i\epsilon})
       +\sum_{j\ne i}\kappa_{j2}\cdot
                  \nabla_yG(\xi_{i\epsilon},\xi_{j\epsilon}),
\end{equation}
and $\mathcal R_\epsilon=(\mathcal R_{1\epsilon},\ldots,\mathcal R_{m\epsilon})^T$.

\begin{lem}\label{lem:coefficient}
Recall the definition of $N_q$ in \eqref{Nq}, we have
\begin{equation}\label{eq:kappa-sharp}
 \kappa_1=O(\epsilon^{\alpha_0(N-1)}),\qquad
 \mathfrak h_\epsilon=O(\epsilon^{\alpha_0}),\qquad
 \kappa_1=N_q^{-1}\mathcal R_\epsilon(\kappa_2)+o(\epsilon^{\alpha_0(N-1)}).
\end{equation}
In particular, if $(\eta^{ij})=N_q^{-1}$, then it holds for $m+1\leq l\leq(n+1)m$ that
\begin{align}
 \epsilon^{-\alpha_0(N-1)}\kappa_{i1}
 =\sum_{j=1}^m\eta^{ij}\bigg[
 -\frac12\epsilon^{-\alpha_0(N-1)}\kappa_{j2}\cdot\nabla\phi(\xi_j)
 +\sum_{l\ne j}\epsilon^{-\alpha_0(N-1)}\kappa_{l2}\cdot
                      \nabla_yG(\xi_j,\xi_l)\bigg]+o(1).
 \label{eq:CKL59}
\end{align}
\end{lem}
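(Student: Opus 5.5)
We plan to derive a linear system for $\kappa_1$ from the bilinear identity \eqref{m0} of Lemma~\ref{conseq}, taken on the fixed sphere $\partial B_i=\partial B(\xi_{i\epsilon},R_0)$ and tested against $u_\epsilon$ and $v_{l\epsilon}$. We insert the local expansions \eqref{eq:local-W} and \eqref{eq:local-v} into the boundary integral. Both $g_{i\epsilon}$ and $h_{i\epsilon}$ are harmonic near $\xi_{i\epsilon}$, so the flux of the singular-times-harmonic pairs is independent of the radius. This is the same mechanism as in Lemma~\ref{lem:green-flux}. We may therefore shrink the radius to zero in the harmonic parts only. By the mean value property, the $\Gamma_i$ pairings then produce $\kappa_{i0}h_{i\epsilon}(\xi_{i\epsilon})-\kappa_{i1}g_{i\epsilon}(\xi_{i\epsilon})$ up to sign. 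The dipole term $(N-2)\gamma_N\kappa_{i2}\cdot(x-\xi_{i\epsilon})|x-\xi_{i\epsilon}|^{-N}$ paired with $g_{i\epsilon}$ produces $\kappa_{i2}\cdot\nabla g_{i\epsilon}(\xi_{i\epsilon})$. The errors are $o(\epsilon^{\alpha_0\tau_0}\epsilon^{\alpha_0(N-2)}\mathfrak h_\epsilon)$, which is $o(\epsilon^{3\alpha_0\tau_0}\mathfrak h_\epsilon)$.

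Next we treat the volume side of \eqref{m0}, namely $(p\mu_{l\epsilon}-1)\int_{B_i}u_\epsilon^pv_{l\epsilon}+\epsilon(q\mu_{l\epsilon}-1)\int_{B_i}u_\epsilon^qv_{l\epsilon}$. Since $p\mu_{l\epsilon}-1=p-1+O(\epsilon^{\alpha_0(N-1)})$ by \eqref{cor1}, we must evaluate $\int u_\epsilon^pv_{l\epsilon}$. We decompose $v_{l\epsilon}=\zeta_\epsilon+\sum c_{ik\epsilon}\psi_{ik\epsilon}$ as in \eqref{l21:zeta}. The translation part contributes $O(\epsilon^{\alpha_0(N+1)})$ after integration by parts, exactly as in \eqref{l21:translation-mass}. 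The $\zeta_\epsilon$ part is, after rescaling and using $\zeta_\epsilon/\mathfrak h_\epsilon\to b_iZ_0$, a multiple of $b_i\int W^pZ_0$. We then convert this to $\kappa_{i1}$ through \eqref{eq:mass-normalized}. The $q$-term is already given by Corollary~\ref{lem:moment}, namely $\epsilon\int u_\epsilon^qv_{l\epsilon}=\epsilon^{\alpha_0\tau_0}\frac{\mathcal B_0(p-q)}{\mathcal A_0}\lambda_i^{\tau_0(p-q-1)}k_{i\epsilon}+o(\epsilon^{3\alpha_0\tau_0}\mathfrak h_\epsilon)$.

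We then substitute $\kappa_{i0}=\mathcal A_0\epsilon^{\alpha_0\tau_0}\lambda_i^{\tau_0}+o(\cdot)$ from \eqref{eq:kappa-orders} and $g_{i\epsilon}(\xi_{i\epsilon})\approx\mathcal A_0\epsilon^{\alpha_0\tau_0}(-\phi_i t_i+\sum_{j\ne i}G_{ij}t_j)$. We use the critical point relation \eqref{deference}/\eqref{5-12} to rewrite $\sum_{j\ne i}G_{ij}t_j$. After dividing by $\mathcal A_0\epsilon^{\alpha_0\tau_0}t_i$, the $\Gamma_i$–$h_{i\epsilon}$ term $\kappa_{i0}h_{i\epsilon}(\xi_{i\epsilon})$ yields $(M\kappa_1)_i-\mathcal R_{i\epsilon}(\kappa_2)$, where the $\kappa_{i2}\cdot\nabla_yH$ self-term combines with the dipole term to give $-\tfrac12\kappa_{i2}\cdot\nabla\phi$. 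The remaining $\kappa_{i1}g_{i\epsilon}$ term and the two volume terms produce the diagonal correction $\frac{\mathcal B_0(p-q)(q+1-p)}{\mathcal A_0^2}\lambda_i^{\tau_0(p-q-2)}\kappa_{i1}$. Altogether this gives the preliminary identity $N_q\kappa_1=\mathcal R_\epsilon(\kappa_2)+o(\epsilon^{\alpha_0(N-2)}\mathfrak h_\epsilon)$, which is (5.18).

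To conclude, Lemma~\ref{positivede} shows that $N_q$ is invertible. Together with $\kappa_{i2}=O(\epsilon^{\alpha_0(N-1)})$ from \eqref{eq:kappa-orders}, this gives $|\kappa_1|\le C\epsilon^{\alpha_0(N-1)}+o(\epsilon^{\alpha_0(N-2)}\mathfrak h_\epsilon)$. Inserting this into \eqref{eq:mass-conversion} gives $\mathfrak h_\epsilon\le\epsilon^{\alpha_0}+C\epsilon^{\alpha_0}+o(\mathfrak h_\epsilon)$, hence $\mathfrak h_\epsilon=O(\epsilon^{\alpha_0})$. Consequently $\kappa_1=O(\epsilon^{\alpha_0(N-1)})$, and the error in (5.18) becomes $o(\epsilon^{\alpha_0(N-1)})$. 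Multiplying by $N_q^{-1}$ gives \eqref{eq:kappa-sharp}. Writing the result componentwise with $\nabla\tau=\nabla\phi$ and $\xi_{i\epsilon}\to\xi_i$ gives \eqref{eq:CKL59}.

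The main obstacle is the bookkeeping of the diagonal coefficient. The contributions from $\kappa_{i1}g_{i\epsilon}(\xi_{i\epsilon})$, from $(p-1)\int u^p\zeta_\epsilon$, and from the signed $q$-moment of Corollary~\ref{lem:moment} must combine, through \eqref{deference} and the value $\int W^pZ_0$, into exactly the factor $(p-q)(q+1-p)$ in $N_q$. This requires care with $\int W^pZ_0$, which by a scaling computation equals $-\tau_0\mathcal A_0/p$ up to normalization.
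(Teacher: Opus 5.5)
There is a genuine gap in the choice of identity. In the translation regime $\mu_{l\epsilon}\to1$, so the volume side of \eqref{m0} contains $(p\mu_{l\epsilon}-1)\int_{B_i}u_\epsilon^pv_{l\epsilon}$ with $p\mu_{l\epsilon}-1\to p-1\neq0$. The boundary side, by contrast, is only $O(\epsilon^{3\alpha_0\tau_0}\mathfrak h_\epsilon)$. You would therefore need $\int_{B_i}u_\epsilon^pv_{l\epsilon}$ up to an error $o(\epsilon^{3\alpha_0\tau_0}\mathfrak h_\epsilon)$. What is available ($\|\zeta_\epsilon\|_*\le C\mathfrak h_\epsilon$ and $\zeta_\epsilon/\mathfrak h_\epsilon\to b_iZ_0$ locally) only gives
\[
\int_{B_i}u_\epsilon^p\zeta_\epsilon=\epsilon^{\alpha_0\tau_0}\mathfrak h_\epsilon\Bigl(\lambda_i^{\tau_0}b_i\int_{\mathbb R^N}W^pZ_0\,dz+o(1)\Bigr),
\]
which misses the required precision by a factor $\epsilon^{\alpha_0(N-2)}$.

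Moreover $\int_{\mathbb R^N}W^pZ_0\,dz=0$ exactly. It equals $\frac{1}{p+1}\frac{d}{d\lambda}\int W_{\lambda,0}^{p+1}$ at $\lambda=1$, and this integral is scale invariant. The value $-\tau_0\mathcal A_0/p$ you expect comes from confusing it with $p\int W^{p-1}Z_0=\tau_0\mathcal A_0$ in \eqref{l21:dilation-moment}. So this term cannot be converted into a multiple of $\kappa_{i1}$ via \eqref{eq:mass-normalized}. Had it been nonzero, it would be of size $\epsilon^{-\alpha_0\tau_0}|\kappa_{i1}|$, not comparable with $\kappa_{i1}g_{i\epsilon}(\xi_{i\epsilon})\sim\epsilon^{\alpha_0\tau_0}\kappa_{i1}$. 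Since it vanishes, what you actually need is a second-order expansion of both $u_\epsilon$ and $v_{l\epsilon}$ throughout the ball, which nothing in the paper provides.

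The algebra confirms this. The antisymmetric Green form yields $\kappa_{i1}g_{i\epsilon}(\xi_{i\epsilon})-\kappa_{i0}h_{i\epsilon}(\xi_{i\epsilon})$. After division by $\kappa_{i0}$, its diagonal part is $M-\operatorname{diag}((Mt)_i/t_i)$, a matrix that annihilates $t$. The whole missing diagonal of $N_q$ would therefore have to come from the uncontrolled term.

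The paper avoids this by testing with the scaling field instead of $u_\epsilon$. It uses the symmetric bilinear Pohozaev form $\mathcal P_i(u_\epsilon,\overline v_\epsilon)$ of Choi--Kim--Lee; identity \eqref{489} with $\omega_{i\epsilon}$ would serve equally well. At the critical exponent the nonlinear volume terms cancel exactly, $\tau_0uf_\epsilon'(u)+(\tau_0-N)f_\epsilon(u)=-\tau_0(p-q)\epsilon u^q$. Only two volume terms survive: $-\tau_0(p-q)J_{i\epsilon}$, computed by Corollary~\ref{lem:moment}, and $(\mu_{l\epsilon}-1)D_{i\epsilon}=o(\epsilon^{3\alpha_0\tau_0}\mathfrak h_\epsilon)$, by \eqref{cor1} and parity.

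The boundary side then gives the symmetric combination $\tau_0\bigl(\kappa_{i1}g_{i\epsilon}+\kappa_{i0}h_{i\epsilon}\bigr)(\xi_{i\epsilon})+(\tau_0+1)\kappa_{i2}\cdot\nabla g_{i\epsilon}(\xi_{i\epsilon})$. Its $+(Mt)_i/t_i$ diagonal, together with $-(p-q)J_{i\epsilon}/\kappa_{i0}$ and \eqref{deference}, produces exactly the coefficient $(p-q)(q+1-p)$ in $N_q$.

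The dipole term is of the critical size $O(\epsilon^{\alpha_0(3\tau_0+1)})$. It does not merge with the self-term of $h_{i\epsilon}$: the $-\frac12\kappa_{i2}\cdot\nabla\phi$ in $\mathcal R_{i\epsilon}$ comes from $h_{i\epsilon}(\xi_{i\epsilon})$ alone, via $\nabla_yH(\xi,\xi)=\frac12\nabla\phi(\xi)$. The dipole term is negligible only because $\epsilon^{-\alpha_0\tau_0}\nabla g_{i\epsilon}(\xi_{i\epsilon})\to0$, by criticality of the limiting configuration in $\xi$. Once \eqref{expanded:masssystem} is available, your absorption argument for $\kappa_1$ and $\mathfrak h_\epsilon$ matches the paper's.
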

\begin{proof}
Fix $R_0$, we define
\[
 \bar{X}=x-\xi_{i\epsilon},\qquad
 \mu=\mu_{\ell\epsilon},\qquad
 \overline v_\epsilon=\frac{v_{\ell\epsilon}}{\mu_{l\epsilon} p},\qquad
 f_\epsilon(t)=t^p+\epsilon t^q,
\]
so that $-\Delta u_\epsilon=f_\epsilon(u_\epsilon)$ and
$-\Delta\overline v_\epsilon
=\mu f_\epsilon'(u_\epsilon)\overline v_\epsilon$.
We use the symmetric boundary form
\begin{equation}\label{massrev:boundary-form}
 \begin{aligned}
 \mathcal P_i(\phi,\psi)
 :={}&R_0\int_{\partial B_i}
       \bigl(\nabla\phi\cdot\nabla\psi
             -2\partial_\nu\phi\,\partial_\nu\psi\bigr)\,dS-\tau_0\int_{\partial B_i}
       \bigl(\partial_\nu\phi\,\psi
                      +\phi\,\partial_\nu\psi\bigr)\,dS.
 \end{aligned}
\end{equation}
Applying Lemma 5.4 in \cite{CKL-1} and Green's identity, we obtain
\begin{equation}\label{massrev:volume}
 \begin{aligned}
 \mathcal P_i(u_\epsilon,\overline v_\epsilon)
 =&-\tau_0(p-q)J_{i\epsilon}
      +(\mu-1)D_{i\epsilon}
      +R_0\int_{\partial B_i}
                   f_\epsilon(u_\epsilon)\overline v_\epsilon\,dS,\\
 J_{i\epsilon}:=&\epsilon\int_{B_i}
                   u_\epsilon^q\overline v_\epsilon\,dx,\qquad
 D_{i\epsilon}:=\int_{B_i}f_\epsilon'(u_\epsilon)
          (\bar{X}\cdot\nabla u_\epsilon+\tau_0 u_\epsilon)
                            \overline v_\epsilon\,dx.
 \end{aligned}
\end{equation}
Indeed, the nonlinear terms combine into
$[\tau_0 u_\epsilon f_\epsilon'(u_\epsilon)
 +(\tau_0-N)f_\epsilon(u_\epsilon)]\overline v_\epsilon
=-\tau_0(p-q)\epsilon u_\epsilon^q\overline v_\epsilon$.

The exterior estimates on the fixed sphere imply
\[
 R_0\int_{\partial B_i}
           f_\epsilon(u_\epsilon)\overline v_\epsilon\,dS
 =O\!\left((\epsilon^{\alpha_0})^{3\tau_0+\min\{2,\kappa\}}
                          \mathfrak h_\epsilon\right)
 =o(\epsilon^{3\alpha_0\tau_0}\mathfrak h_\epsilon).
\]
For the critical part of $D_{i\epsilon}$, by Theorem \ref{existence-1} and oddness, we have
\[
 \epsilon^{-\alpha_0\tau_0}
 \int_{B_i}p u_\epsilon^{p-1}
       (\bar{X}\cdot\nabla u_\epsilon+\frac{N-2}{2}u_\epsilon)
                         \overline v_\epsilon\,dx
 \longrightarrow
 -\lambda_i^{\frac{N-2}{2}}\sum_{k=1}^N d_{l,i,k}
 \int_{\mathbb R^N}W^{p-1}(z\cdot\nabla W+\frac{N-2}{2}W)\partial_kW\,dz
 =0.
\]
The profile bounds supply
the integrable majorant $C(1+|z|)^{-2N}$. The perturbative part,
after multiplication by $(\epsilon^{-\alpha_0\tau_0})$, is
$O(\epsilon^{\alpha_0(N-2)})$, with majorant
$C(1+|z|)^{-(N-2)(q+1)}\in L^1(\mathbb R^N)$.
Thus $D_{i\epsilon}=o(\epsilon^{\alpha_0\tau_0})$.
By the estimate \eqref{cor1}
and $\mathfrak h_\epsilon\ge\epsilon^{\alpha_0}$, we have
\[
 (\mu-1)D_{i\epsilon}
 =o(\epsilon^{\alpha_0}(3\tau_0+1))
 =o(\epsilon^{3\alpha_0\tau_0}\mathfrak h_\epsilon).
\]
Thus, we can compute \eqref{massrev:volume} as
\begin{equation}\label{massrev:reduced-volume}
 \mathcal P_i(u_\epsilon,\overline v_\epsilon)
 =-\tau_0(p-q)J_{i\epsilon}
                   +o(\epsilon^{3\alpha_0\tau_0}\mathfrak h_\epsilon).
\end{equation}

We next evaluate this boundary form using
\eqref{eq:local-W}--\eqref{eq:local-v}. Taking
\[
 I_i^0=\kappa_{i0}\Gamma_i+g_{i\epsilon},\qquad
 v_i^0=\kappa_{i1}\Gamma_i
       +\kappa_{i2}\cdot\nabla_{\xi_{i\epsilon}}\Gamma_i
       +h_{i\epsilon}.
\]
On $\partial B_i^N$, these functions have respective $C^1$ bounds
$O(\epsilon^{\alpha_0\tau_0})$ and
$O(\epsilon^{\alpha_0(N-2)})\mathfrak h_\epsilon)$. Therefore,
\begin{equation}\label{massrev:harmonic-replacement}
 \mathcal P_i(u_\epsilon,\overline v_\epsilon)
 =\mathcal P_i(I_i^0,v_i^0)
                   +o(\epsilon^{3\alpha_0\tau_0}\mathfrak h_\epsilon).
\end{equation}
For harmonic regular parts $g,h$, the following identity holds:
\begin{equation}\label{massrev:harmonic-identity}
 \mathcal P_i(C\Gamma_i+g,
                  K\Gamma_i+b\cdot\nabla_{\xi_{i\epsilon}}\Gamma_i+h)
 =a\bigl(Kg(\xi_{i\epsilon})+Ch(\xi_{i\epsilon})\bigr)
   +(a+1)b\cdot\nabla g(\xi_{i\epsilon}).
\end{equation}
To verify this, we define $\mathcal P_r(\phi,\psi)$ to be the right-hand side of \eqref{massrev:volume}, where $R_0$ is replaced by $r$.
Then we have the bilinear Pohozaev identity (5.8) of Lemma 5.4 in \cite{CKL-1} on annuli
\begin{equation}\label{massrev:annular-identity}
\begin{aligned}
\mathcal P_R(\phi,\psi)-\mathcal P_r(\phi,\psi)
=-\int_{B(\xi,R)\setminus\overline{B(\xi,r)}}
\Big[
 ((x-\xi)\cdot\nabla\phi+a\phi)\Delta\psi
+((x-\xi)\cdot\nabla\psi+a\psi)\Delta\phi
\Big]\,dx.
\end{aligned}
\end{equation}
Using this identity \eqref{massrev:annular-identity}, we can show that
the left-hand side is independent of the sphere radius.
Evaluate it as that radius tends to zero, using the Taylor expansions
of $g,h$, the normalization $(N-2)\gamma_N|\mathbb S^{N-1}|=1$, and
$\int_{\mathbb S^{N-1}}\nu_j\nu_k\,dS
=|\mathbb S^{N-1}|\delta_{jk}/N$.
The monopole--regular pairings contribute $aKg(\xi_{i\epsilon})$
and $aCh(\xi_{i\epsilon})$, while the dipole--regular pairing
contributes $(a+1)b\cdot\nabla g(\xi_{i\epsilon})$.
The monopole--monopole terms cancel, the monopole--dipole terms
vanish by oddness, and the regular--regular contribution vanishes
by the bilinear identity.
This auxiliary radius limit concerns only the harmonic comparison
functions; the boundary replacement in
\eqref{massrev:harmonic-replacement} is made at the fixed radius $R_0$.

Thanks to \eqref{eq:regular-parts},
\begin{equation*}
 \nabla g_{i\epsilon}(\xi_{i\epsilon})=-\kappa_{i0}\nabla_{x}H(\xi_{i\epsilon},\xi_{i\epsilon})
             +\sum_{j\ne i}\kappa_{j0}\nabla_{x}G(\xi_{i\epsilon},\xi_{j\epsilon}),
\end{equation*}
 combined with $\nabla\phi=2\nabla_xH(x,x)$ and \eqref{eq:kappa-orders},
we obtain
\[
 \epsilon^{-\alpha_0\tau_0}\nabla g_{i\epsilon}(\xi_{i\epsilon})
 \longrightarrow \mathcal{A}_0\left[-\frac{t_i}{2}\nabla\phi(\xi_i)
       +\sum_{j\ne i}t_j\nabla_xG(\xi_i,\xi_j)\right]=0.
\]
Since $\kappa_{i2}=O(\epsilon^{\alpha_0(N-1)})$, the dipole term in
\eqref{massrev:harmonic-identity} is
$o(\epsilon^{\alpha_0(3\tau_0+1)})
=o(\epsilon^{3\alpha_0\tau_0}\mathfrak h_\epsilon)$.
Combining \eqref{massrev:reduced-volume}, \eqref{massrev:harmonic-replacement} and \eqref{massrev:harmonic-identity},
we get
\begin{equation}\label{massrev:regular-relation}
 \kappa_{i1}g_{i\epsilon}(\xi_{i\epsilon})
 +\kappa_{i0}h_{i\epsilon}(\xi_{i\epsilon})
 =-(p-q)J_{i\epsilon}
                   +o(\epsilon^{3\alpha_0\tau_0}\mathfrak h_\epsilon).
\end{equation}
Recalling Corollary~\ref{lem:moment},  \eqref{eq:moment-match} divided by $\mu p$, gives
\begin{equation}\label{massrev:signed-moment}
 J_{i\epsilon}
 =\epsilon^{\alpha_0\tau_0}\frac{(p-q)\mathcal{B}_0}{\mathcal{A}_0}
                  \lambda_i^{\tau_0(p-q-1)}\kappa_{i1}
                   +o(\epsilon^{3\alpha_0\tau_0}\mathfrak h_\epsilon).
\end{equation}
Let $M_\epsilon$ denote the Green-Robin matrix at the
current centers, i.e
$$
(M_{\epsilon})_{ii}=\phi(\xi_{i\epsilon}),\hspace{3mm} (M_{\epsilon})_{ij}=-G(\xi_{i\epsilon},\xi_{j\epsilon})\hspace{2mm}\mbox{if}\hspace{2mm}i\neq j.
$$
Then the regular parts satisfy
\[
 g_{i\epsilon}(\xi_{i\epsilon})
     =-(M_\epsilon\kappa_0)_i,
 \qquad
 h_{i\epsilon}(\xi_{i\epsilon})
     =-( M_\epsilon\kappa_1)_i
                         +\mathcal R_{i\epsilon}(\kappa_2).
\]
Substituting these identities and \eqref{massrev:signed-moment}
into \eqref{massrev:regular-relation}, we deduce that
\begin{equation*}
 (M_{\epsilon}\kappa_1)_i
 +\frac{(M_{\epsilon}\kappa_0)_i}{\kappa_{i0}}
       \kappa_{i1}
 =\mathcal R_{i\epsilon}(\kappa_2)+(p-q)\frac{J_{i\epsilon}}{\kappa_{i0}}
                       +o(\epsilon^{\alpha_0(N-2)}\mathfrak h_\epsilon).
\end{equation*}
Combining this bound and the following two facts
$$
\frac{(M_{\epsilon}\kappa_0)_i}{\kappa_{i0}}\rightarrow\frac{(Mt)_i}{t_{i}}
$$
and
$$
(p-q)\frac{J_{i\epsilon}}{\kappa_{i0}}
 =\frac{\mathcal{B}_0(p-q)^2}{\mathcal{A}_0^2}
                  \lambda_i^{\tau_{0}(p-q)-(N-2)}\kappa_{i1}
                   +o(\epsilon^{\alpha_0(N-2)}\mathfrak h_\epsilon),
$$
so that we eventually get that
\begin{equation}\label{massrev:prematrix}
 (M\kappa_1)_i
 +\left[\frac{(Mt)_i}{t_i}
       -\frac{\mathcal{B}_0(p-q)^2}{\mathcal{A}_0^2}\lambda_i^{\tau_0(p-q)-(N-2)}\right]\kappa_{i1}
 =\mathcal R_{i\epsilon}(\kappa_2)
                       +o(\epsilon^{\alpha_0(N-2)}\mathfrak h_\epsilon).
\end{equation}
Here converging coefficients were replaced by their limits using
$\|\kappa_1\|_\infty=O(\epsilon^{\alpha_0(N-2)}\mathfrak h_\epsilon)$,
which follows directly from \eqref{eq:mass-conversion}.
The identity \eqref{deference} gives
\[
 \frac{(Mt)_i}{t_i}
 =\frac{\mathcal{B}_0(p-q)}{\mathcal{A}_0^2}\lambda_i^{\tau_0(p-q)-(N-2)}.
\]
Thus \eqref{massrev:prematrix} together with \eqref{l2matrix} imply that
\begin{equation}\label{expanded:masssystem}
N_q\kappa_1
 =\mathcal R_\epsilon(\kappa_2)
                       +o(\epsilon^{\alpha_0(N-2)}\mathfrak h_\epsilon).
\end{equation}
Finally, $N_q$ is positive definite by
Lemma \ref{positivede}, and
$\mathcal R_\epsilon(\kappa_2)=O(\epsilon^{\alpha_0(N-1)})$
by \eqref{eq:kappa-orders}. Hence \eqref{eq:mass-conversion} and \eqref{expanded:masssystem}
imply that
\[
 \|\kappa_1\|_\infty
 \le C\epsilon^{\alpha_0}(N-1)
       +o(1)\left(\epsilon^{\alpha_0(N-1)}+\|\kappa_1\|_\infty\right).
\]
Absorbing the last term gives
$\kappa_1=O(\epsilon^{\alpha_0(N-1)})$, and then
$\mathfrak h_\epsilon=O(\epsilon^{\alpha_0})$.
\eqref{eq:kappa-sharp} follows by \eqref{expanded:masssystem}. Dividing by $\epsilon^{\alpha_0(N-1)}$ and
using convergence of the centers and boundedness of
$\epsilon^{-\alpha_0(N-1)}\kappa_2$ yields \eqref{eq:CKL59}.
\end{proof}

We can now prove Proposition \ref{propos-6-1}.

\begin{proof}[Proof of Proposition \ref{propos-6-1}]
Fix $l$ and a subsequence on which the local translation limit
\eqref{vlie} holds. Recording the previous some information
\eqref{eq:kappas} and the rescaled profiles from \eqref{tidu} and
\eqref{tidu}; in particular, $\kappa_{i2k}$ is the $k$-th component
of the first-moment vector $\kappa_{i2}$. The change of variables in
$y=\xi_{i\epsilon}+\lambda_{i\epsilon}\epsilon^{\alpha_0}z$ in $\kappa_{i2}$, we have
\begin{equation}\label{eq:closing-first-rescaling}
 \epsilon^{-\alpha_0(N-1)}\kappa_{i2k}
 =\lambda_{i\epsilon}^{N-1}
 \int_{B(0,R_0/(\lambda_{i\epsilon}\epsilon^{\alpha_0}))}
 z_kU_{i\epsilon}^{p-1}V_{li\epsilon}dz.
\end{equation}
Furthermore, we bound
\[
 \bigl|z_kU_{i,\epsilon}^{p-1}
 V_{\ell i\epsilon}\bigr|
 \le C|z|(1+|z|)^{-N-2}\in L^1(\mathbb{R}^N).
\]
Using \eqref{vlie}, Lemma \ref{Lem6.1} and
dominated convergence, we obtain
\[
 \epsilon^{-\alpha_0(N-1)}\kappa_{i2k}
 \longrightarrow
 -\lambda_i^{N-1}\sum_{j=1}^N d_{\ell,i,j}
 \int_{\mathbb{R}^N}z_kW^{p-1}\partial_jWdz.
\]
To evaluate this integral, integration by parts on $B_R=B(0,R)$ gives
\[
 p\int_{B_R}z_kW^{p-1}\partial_jWdz
 =\int_{\partial B_R}z_kW^p\nu_j\,dS
  -\delta_{kj}\int_{B_R}W^pdz.
\]
Since $|z_k|\le R$, $|\nu_j|\le1$, and $W^p=O(R^{-N-2})$
on $\partial B_R$, we deduce
\[
 \left|\int_{\partial B_R}z_kW^p\nu_j\,dS\right|
 \le CR\,R^{-N-2}|\partial B_R|
 \le CR^{-2}\longrightarrow0\quad\mbox{as}\quad R\rightarrow\infty.
\]
Letting $R\to\infty$ and using the definition of $\mathcal{A}_0$ in
\eqref{pusaik-1-1}, we obtain
\begin{align}
 \int_{\mathbb{R}^N}z_kU^{p-1}\partial_jWdz
 &=-\frac{\mathcal{A}_0}{p}\delta_{kj},\label{eq:closing-ibp}\\
 \epsilon^{-\alpha_0(N-1)}\kappa_{i2k}
 &\longrightarrow \frac{\mathcal{A}_0}{p}\lambda_i^{N-1}a_{l,i,k}.
 \label{eq:closing-first-limit}
\end{align}

We next evaluate the interaction term in the mass relation.
By \eqref{eq:R-kappa} and
$\nabla_yH(\xi,\xi)=\tfrac12\nabla\tau(\xi)$, we obtain
\[
\begin{aligned}
 \epsilon^{-\alpha_0(N-1)}\mathcal R_{i\epsilon}(\kappa_2)
 &=-\frac12\sum_{k=1}^N
 \bigl(\epsilon^{-\alpha_0(N-1)}\kappa_{i2k}\bigr)
 \partial_k\phi(\xi_{i\epsilon})
+\sum_{j\ne i}^{m}\sum_{k=1}^N
 \bigl(\epsilon^{-\alpha_0(N-1)}\kappa_{j2k}\bigr)
 \partial_{y_k}G(\xi_{i\epsilon},\xi_{j\epsilon}).
\end{aligned}
\]
By $\xi_{i\epsilon}\rightarrow \xi_{i}$ as $\epsilon\rightarrow0$ and
smoothness of $\tau$ and of $G$ off the diagonal therefore gives
\[
 \begin{aligned}
 \partial_k\tau(\xi_{i\epsilon})
 &\longrightarrow\partial_k\tau(\xi_i),\quad
 \partial_{y_k}G(\xi_{i\epsilon},\xi_{j\epsilon})
 \longrightarrow\partial_{y_k}G(\xi_i,\xi_j),\qquad j\ne i.
 \end{aligned}
\]
We may thus apply \eqref{eq:closing-first-limit}
to each term and obtain
\begin{equation}\label{eq:closing-R-limit}
 \begin{aligned}
 \epsilon^{-\alpha_0(N-1)}\mathcal R_{i\epsilon}(\kappa_2)
 \longrightarrow&\frac{\mathcal{A}_0}{p}\Biggl[
 -\frac12\lambda_i^{N-1}
 \sum_{k=1}^N a_{l,i,k}\partial_k\phi(\xi_i)
 +\sum_{j\ne i}\lambda_j^{N-1}
 \sum_{k=1}^N a_{\ell,j,k}\partial_{y_k}G(\xi_i,\xi_j)
 \Biggr]\\
 &=\frac{\mathcal{A}_0}{p}(P\tilde{\bm{d}}_l)_i.
 \end{aligned}
\end{equation}
For the above last equality, we use \eqref{gX} and \eqref{pusaik-4} to write
\begin{equation}\label{Pl}
\begin{split}
 (P\tilde{\bm{d}}_l)_i
 &=\sum_{j=1}^m\sum_{k=1}^N
 P_{i,(j,k)}\lambda_j^\frac{N-2}{2} a_{l,j,k}
 =-\frac12\lambda_i^{N-1}
 \sum_{k=1}^N a_{l,i,k}\partial_k\tau(\xi_i)
 +\sum_{j\ne i}^{m}\lambda_j^{N-1}
 \sum_{k=1}^N a_{l,j,k}\partial_{y_k}G(\xi_i,\xi_j).
 \end{split}
\end{equation}
The terms with $j=i$ and $j\ne i$ give the Robin and Green terms,
respectively.
Recall from \eqref{eq:R-kappa} that
$\mathcal R_\epsilon(\kappa_2)
=(\mathcal R_{1\epsilon}(\kappa_2),\ldots,
\mathcal R_{m\epsilon}(\kappa_2))^T\in\mathbb{R}^m$,
thus $\mathcal R_{i\epsilon}(\kappa_2)$ is its $i$-th scalar component.
Since \eqref{eq:closing-R-limit} holds for every $i=1,\ldots,m$,
\begin{equation}\label{roue}
 \epsilon^{-\alpha_0(N-1)}\mathcal R_\epsilon(\kappa_2)
 \longrightarrow\frac{\mathcal{A}_0}{p}P\tilde{\bm{d}}_l
 \qquad\text{in }\mathbb{R}^m.
\end{equation}
Using the mass relation from \eqref{eq:kappa-sharp} by $\epsilon^{\alpha_0(N-1)}$ and \eqref{roue}, we conclude that
\begin{equation}\label{eq:closing-zeroth-limit}
 \epsilon^{-\alpha_0(N-1)}\kappa_1
 \longrightarrow \frac{\mathcal{A}_0}{p}N_q^{-1}P\tilde{\bm{d}}_l.
\end{equation}
Finally, let $K\subset\Omega\setminus\{\xi_1,\ldots,\xi_m\}$.
The remainder in \eqref{eq:v-kappa} is
$o((\epsilon^{\alpha_0})^s\mathfrak h_\epsilon)$.
Since $\mathfrak h_\epsilon/(\epsilon^{\alpha_0})=O(1)$ by Lemma~\ref{lem:coefficient},
\[
 \epsilon^{-\alpha_0(N-1)}
 o(\epsilon^{\alpha_0(N-2)}\mathfrak h_\epsilon)
 =\frac{\mathfrak h_\epsilon}{\epsilon^{\alpha_0}}
 o(1)=o(1).
\]
Combining \eqref{eq:v-kappa}, we obtain
\[
\begin{aligned}
 \epsilon^{-\alpha_0(N-1)}v_\epsilon(x)
 =\mu_\epsilon p\sum_{i=1}^m
 \Bigl[&\epsilon^{-\alpha_0(N-1)}\kappa_{i1}G(x,\xi_{i\epsilon})
 +\epsilon^{-\alpha_0(N-1)}\kappa_{i2}\cdot
 \nabla_yG(x,\xi_{i\epsilon})\Bigr]+o(1).
\end{aligned}
\]
For small $\epsilon$, the centers remain a positive distance from
$K$. Smoothness of $G$ off the diagonal implies that both
$G(\cdot,\xi_{i\epsilon})$ and
$\nabla_yG(\cdot,\xi_{i\epsilon})$ converge to their limiting
kernels in $C^1(K)$. Applying \eqref{eq:closing-first-limit} and
\eqref{eq:closing-zeroth-limit}, and combined with $\mu_\epsilon\to1$, we obtain
\begin{equation}\label{eq:closing-exterior-limit}
\begin{aligned}
 \epsilon^{-\alpha_0(N-1)}v_{\ell\epsilon}(x)
 &\longrightarrow
 \mathcal{A}_0\sum_{i=1}^mG(x,\xi_i)(N_q^{-1}P\tilde{\bm{d}}_l)_i
+\mathcal{A}_0\sum_{i=1}^m\lambda_i^{N-1}
       \sum_{k=1}^N a_{l,i,k}\partial_{y_k}G(x,\xi_i)\\
 &=\mathcal{A}_0\bigl[\mathcal{G}(x)N_q^{-1}P
           +\widetilde{\mathcal{G}}(x)\bigr]\tilde{\bm{d}}_l
 \qquad\text{in }C^1(K).
\end{aligned}
\end{equation}
Here the last equality have used \eqref{gX}.
Since $K$ was arbitrary, this proves the asserted
$C^1_{\mathrm{loc}}$ convergence along the chosen subsequence.
\end{proof}

\section{Proof of Theorem \ref{remainder terms}}\label{section-7}
In this section, we are devote to prove Theorem \ref{remainder terms}.
Recalling the symmetric $Nm\times Nm$ matrix $Q$ by $Q_{(i,k),(j,h)}$ in \eqref{pusaik-5}for $i,j\in\{1,\cdots,m\}$ and $k,h\in\{1,\cdots,N\}$.
We can now prove Theorem \ref{remainder terms}.
\begin{proof}[Proof of Theorem \ref{remainder terms}]
Taking any sequence $\epsilon_n\to0$, up to subsequence, we still denoted by $\epsilon$.
We first compute the limit of the boundary integral in \eqref{eq:translationidentity}. Set
\[
\mathcal W(x)=\sum_j\lambda_j^{\frac{N-2}{2}}G(x,\xi_j),\qquad
\mathcal V_l(x)=\bigl[\mathcal{G}(x)N_q^{-1}P+\tilde{\mathcal{G}}(x)\bigr]\tilde{\bm{d}}_l,
\]
and define
\[
\mathscr B_i(f,g)=\int_{\partial B(\xi_i,r_0)}
(\partial_\nu f\,g-f\,\partial_\nu g)\,dS.
\]
The exterior convergences \eqref{AS-eigenvalues-1} and \eqref{exterior} hold uniformly near the spheres $\partial B_{i\epsilon}$. Since their centers converge to $\xi_i$, we have
\[
\epsilon^{-\frac{3N-4}{2}\alpha_0}F_{lik\epsilon}
\longrightarrow \mathcal{A}_0^2\mathscr B_i(\partial_k\mathcal W,\mathcal V_l).
\]
We shall evaluate this limit using the Green identities in \cite[Lemma~6.2]{CKL-1}.
Write $\mathcal{\mathcal{G}}_l(x)=G(x,\xi_l)$ and let $\widetilde{\mathcal{G}}_{l,h}(x)=\lambda_l^{N/2}\partial_{y_h}G(x,\xi_l)$, the $(l,h)$th entry of $\tilde{\mathcal{G}}$. For $l\ne i$, those identities and Green symmetry give
\begin{align*}
\lambda_i\mathscr B_i(\partial_k\mathcal W,\mathcal{G}_l)
&=\lambda_i^{N/2}\partial_{x_k}G(\xi_i,\xi_l)=P_{l,(i,k)},\\
\lambda_i\mathscr B_i(\partial_k\mathcal W,\widetilde{\mathcal{G}}_{l,h})
&=(\lambda_i\lambda_l)^{N/2}\partial_{x_k}\partial_{y_h}G(\xi_i,\xi_l)
=Q_{(i,k),(l,h)}.
\end{align*}
For $l=i$, the first identity contains a regular part from each center. Then we have
\begin{align*}
\lambda_i\mathscr B_i(\partial_k\mathcal W,\mathcal{G}_i)
&=\lambda_i\left(\sum_{j\ne i}\lambda_j^{\frac{N-2}{2}}\partial_{x_k}G(\xi_i,\xi_j)
-\lambda_i^{\frac{N-2}{2}}\partial_k\phi(\xi_i)\right)\\
&=-\tfrac12\lambda_i^{N/2}\partial_k\phi(\xi_i)
=P_{i,(i,k)}.
\end{align*}
The second identity gives
\[
\lambda_i\mathscr B_i(\partial_k\mathcal W,\widetilde{\mathcal{G}}_{i,h})
=\lambda_i^{(N+2)/2}\left(
\sum_{j\ne i}\lambda_j^{\frac{N-2}{2}}\partial_{x_kx_h}G(\xi_i,\xi_j)
-\frac{\lambda_i^{\frac{N-2}{2}}}{2}\partial_{kh}\phi(\xi_i)\right)
=Q_{(i,k),(i,h)}.
\]
Thus, with $\mathscr B_i$ acting on row vectors componentwise,
\begin{equation}\label{eq:rowidentities}
\begin{aligned}
\lambda_i\mathscr B_i(\partial_k\mathcal W,\mathcal{G})=(P^T)_{(i,k)},\quad
\lambda_i\mathscr B_i(\partial_k\mathcal W,\tilde{\mathcal{G}})=Q_{(i,k)}.
\end{aligned}
\end{equation}
By the definition of $\mathcal V_\ell$ and bilinearity,
\[
\lambda_i\mathscr B_i(\partial_k\mathcal W,\mathcal V_l)
=\bigl[(P^T)_{(i,k),\cdot}N_q^{-1}P+Q_{(i,k),\cdot}\bigr]\tilde{\bm{d}}_l
=(B_q\tilde{\bm{d}}_l)_{(i,k)}.
\]
Consequently,
\begin{equation}\label{eq:translationflux}
\lambda_i\epsilon^{-\frac{3N-4}{2}\alpha_0}F_{lik\epsilon}
\longrightarrow \mathcal{A}_0^2(B_q\tilde{\bm{d}}_l)_{(i,k)}.
\end{equation}

We next estimate the eigenvalue $\mu_{l\epsilon}$ for $m+1\leq l\leq (N+1)m$. Let
\[
\hbar_{l\epsilon}=\frac{\mu_{l\epsilon}-1}{\epsilon^{\alpha_0N}}.
\]
Multiplying \eqref{eq:translationidentity} by $\lambda_i\epsilon^{-\frac{3N-4}{2}\alpha_0}$, we obtain
\[
\lambda_i\epsilon^{-\frac{3N-4}{2}\alpha_0}F_{l ik\epsilon}
=\hbar_{l\epsilon}\,\lambda_i\epsilon^{\frac{\alpha_0(4-N)}{2}}I_{l ik\epsilon}.
\]
Combining \eqref{tao0} and \eqref{eq:translationflux}, we have
\[
\mathcal{A}_0^2(B_q\tilde{\bm{d}}_l)_{(i,k)}+o(1)
=\hbar_{l\epsilon}\bigl[-p\mathcal{C}_1(\tilde{\bm{d}}_l)_{(i,k)}+o(1)\bigr].
\]
Choose $(i_0,k_0)$ with $(\tilde{\bm{d}}_l)_{(i_0,k_0)}\ne0$, we deduce that
\[
\hbar_{l\epsilon}
=\frac{\mathcal{A}_0^2(B_q\tilde{\bm{d}}_l)_{(i_0,k_0)}+o(1)}
{-p\mathcal{C}_1(\tilde{\bm{d}}_l)_{(i_0,k_0)}+o(1)}
\longrightarrow\hbar_l\in\mathbb{R}.
\]
In particular, $\hbar_{l\epsilon}$ is bounded. We may therefore pass to the limit in every component, the terms $\eta_{\ell\epsilon}o(1)$ tend to zero. This leads to
\begin{equation}\label{eq:translationmatrix}
B_q\tilde{\bm{d}}_l=-\frac{p\mathcal{C}_1}{\mathcal{A}_0^2}\hbar_l\tilde{\bm{d}}_l.
\end{equation}
Up to now, we have established that each shift is associated with an eigenvalue of $B$. The multiplicity and order of this eigenvalue remain to be determined.
For this end, we exploit the $v_\epsilon$-orthogonality of the selected eigenfunctions and carry this orthogonality over to their local limits. In $B_{i\epsilon}$, using the change of variables $x=\xi_{i\epsilon}+\lambda_{i\epsilon}\epsilon^{\alpha_0}z$ and \eqref{vlie}, we deduce
\begin{align*}
\lim_{\epsilon\to0}\epsilon^{-\alpha_0(N-2)}\int_{B_{i\epsilon}}
pu_\epsilon^{p-1}v_{l\epsilon}v_{l^{\prime}\epsilon}
&=p\lambda_i^{N-2}\sum_{k,h=1}^N a_{l,i,k}a_{l^{\prime},i,h}
\int_{\mathbb{R}^N}W^{p-1}\partial_kW\,\partial_hW\\
&=p\mathcal{C}_1\lambda_i^{N-2}\sum_{k=1}^N a_{l,i,k}a_{l^{\prime},i,k}.
\end{align*}
Indeed, \eqref{tidu} and Lemma \ref{3-AS-eigenvalues-1} give the integrable bound $C(1+|z|)^{-2N}$ for the rescaled integrand and radial symmetry gives
$\int W^{p-1}\partial_kW\,\partial_hW=\mathcal{C}_1\delta_{k}^{h}$.
The same bounds control the remaining terms:
\begin{align*}
\epsilon^{-\alpha_0(N-2)}\int_{\Omega\setminus\bigcup_i B_{i\epsilon}}
pu_\epsilon^{p-1}|v_{l\epsilon}v_{l^{\prime}\epsilon}|
&=O(\epsilon^{\alpha_0N}),\\
\epsilon^{-\alpha_0(N-2)}\epsilon q\int_\Omega u_\epsilon^{q-1}|v_{l\epsilon}v_{l^{\prime}\epsilon}|
&\le C\epsilon\epsilon^{\alpha_0\tau_0(p-q)}=C\epsilon^{\alpha_0(N-2)}.
\end{align*}
The second estimate uses $2\tau_0(q+1)>N$ from \eqref{beta1}. Both terms tend to zero. Summing over the bubbles and using the definition \eqref{gX}, we obtain, for $\ell\ne\ell'$,
\begin{equation}\label{eq:translationorthogonality}
0=\lim_{\epsilon\to0}\epsilon^{-\alpha_0(N-2)}
\int_\Omega u_\epsilon v_{l\epsilon}v_{l^{\prime}\epsilon}
=p\mathcal{C}_1\,\tilde{\bm{d}}_l\cdot\tilde{\bm{d}}_{l^{\prime}}.
\end{equation}
The $Nm$ vectors $\tilde{\bm{d}}_l$ are nonzero and mutually orthogonal in $\mathbb{R}^{Nm}$,
whence they constitute a basis. By virtue of \eqref{eq:translationmatrix}, the quantities $-p\mathcal{C}_1\hbar_l/\mathcal{A}_0^2$ are precisely all eigenvalues of $B_q$, counted with multiplicity. Since the $\mu_{l\epsilon}$ are increasing, their normalized limits satisfy
$\hbar_{m+1}\le\cdots\le\hbar_{(N+1)m}$. Multiplying by $-p\mathcal{C}_1/\mathcal{A}_0^2$ reverses this ordering, so that
\[
-\frac{p\mathcal{C}_1}{\mathcal{A}_0^2}\hbar_{m+j}=\nu_{j}^2,
\qquad B_q\tilde{\bm{d}}_{m+j}=\nu_{j}^2\tilde{\bm{d}}_{m+j},
\qquad j=1,\ldots,Nm.
\]
These limits are identical for any initial sequence. To verify convergence for the full family, assume that some normalized shift remains at a positive distance from its claimed limit along a certain subsequence. Applying the preceding argument to this sequence yields a subsequence converging to the claimed limit, which is a contradiction.
This establishes identity \eqref{eq:translationresult} together with the assertions concerning the coefficient vectors.
\end{proof}

\section{Proof of Theorem \ref{thmprtb}}\label{section-8}

This section is devoted to proving Theorem \ref{thmprtb}, and its proof relies on the following two lemmas.
\begin{lem}\label{lem:dilationcluster}
For $(N+1)m+1\le l\le(N+2)m$, we have
\begin{equation}\label{eq:dilationupper}
\mu_{l\epsilon}\to1,\qquad
\mu_{l\epsilon}\le1+C\epsilon^{\alpha_0(N-2)}.
\end{equation}
Up to subsequences,
\begin{equation}\label{eq:dilationlocal}
V_{\ell i\epsilon}\to d_{\ell,i}(\partial_{\lambda}W)
\quad\text{in }C^1_{\mathrm{loc}}(\mathbb{R}^N),
\qquad \tilde{\bm{f}}_l=\Big(\lambda_{1}^{\frac{N-2}{2}}a_{l,1},\cdots,\lambda_{m}^{\frac{N-2}{2}}a_{l,m}\Big)^T\in\mathbb{R}^m\ne0.
\end{equation}
\end{lem}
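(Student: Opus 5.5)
The convergence $\mu_{l\epsilon}\to1$ for $(N+1)m+1\le l\le(N+2)m$ is already contained in Theorem~\ref{THm1-2}, since these indices lie in the cluster $m+1\le l\le K$. For the upper bound I would use the Courant--Fischer formula \eqref{bn:eq-weighted-minmax} with a trial space of dimension $(N+2)m$. Take
\[
\mathcal Z=\operatorname{span}\{\bar u_{i,\epsilon},\ \eta_{k,i,\epsilon},\ \eta_{N+1,i,\epsilon}:1\le i\le m,\ 1\le k\le N\},
\]
built from the functions in \eqref{varepsilon}--\eqref{varep}. These have disjoint supports in $i$, so $\dim\mathcal Z=(N+2)m$ for small $\epsilon$.

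For $f=\sum_i(a_i\bar u_{i\epsilon}+\sum_k b_{ik}\eta_{k,i,\epsilon}+c_i\eta_{N+1,i,\epsilon})$, I compute the numerator minus the weighted denominator by integrating against the equations:
\begin{itemize}
\item $-\Delta u_\epsilon=f_\epsilon(u_\epsilon)$;
\item \eqref{l21:derivative} for the $\partial_k u_\epsilon$;
\item Lemma~\ref{nabla} for $\omega_{i\epsilon}$.
\end{itemize}
Cutoff commutators live on fixed annuli. There Lemma~\ref{outside-2} and \eqref{tidu} give $u_\epsilon,\nabla u_\epsilon,\omega_{i\epsilon}=O(\epsilon^{\alpha_0\tau_0})$, so these terms contribute $O(\epsilon^{\alpha_0(N-2)})$.

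The only non-cutoff defect is the source $\frac{2(p-q)}{p-1}\epsilon u_\epsilon^q$ in \eqref{womiga}. Paired with $\psi_i^2\omega_{i\epsilon}$ it gives
\[
\epsilon\int u_\epsilon^q\,\omega_{i\epsilon}=O(\epsilon^{\alpha_0(N-2)}),
\]
by the rescaling \eqref{iden} and \eqref{UqZ0}. The $u$-block gives Rayleigh quotient near $1/p<1$, and its cross terms with the dilation and translation blocks are $O(1)$. Those cross terms are harmless: the quadratic form $\int|\nabla f|^2-\int f_\epsilon'(u_\epsilon)f^2$ restricted to the $u$-directions is strictly negative of order one, so it absorbs them by a Cauchy--Schwarz/Schur complement argument. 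Normalizing by $\int f_\epsilon'(u_\epsilon)f^2\approx$ const, I get $\max_{\mathcal Z}$ Rayleigh quotient $\le 1+C\epsilon^{\alpha_0(N-2)}$. Min--max then gives \eqref{eq:dilationupper}.

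For the local profiles, I normalize $\|v_{l\epsilon}\|_\infty=1$. By Lemma~\ref{cobvergence-1}, the decay bound \eqref{V4}, and $\mu_{l\epsilon}\to1$, each $V_{li\epsilon}$ converges in $C^1_{loc}$ along a subsequence to a bounded solution of $-\Delta V=pW^{p-1}V$ decaying like $|z|^{2-N}$. By nondegeneracy \cite{BE91} this is $-\sum_k a_{l,i,k}\partial_kW+a_{l,i}Z_0$, exactly as in Lemma~\ref{identity}. The main task is to kill the translation coefficients and show $\tilde{\bm f}_l\neq0$. Here I would use orthogonality, as in Proposition~\ref{bn:prop-strong-profile} and the proof of Theorem~\ref{remainder terms}. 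The first $(N+1)m$ eigenfunctions already have profiles spanning $\{W\}$ at each bubble (Theorem~\ref{Figalli-1}) and all translations (Theorem~\ref{existence-1} and Theorem~\ref{remainder terms}, where the $\tilde{\bm d}_{l'}$ form a basis of $\mathbb R^{Nm}$). The weighted orthogonality
\[
\int f_\epsilon'(u_\epsilon)v_{l\epsilon}v_{l'\epsilon}=0,
\]
rescaled and passed to the limit using the majorant $C(1+|z|)^{-2N}$, gives $\sum_i\lambda_i^{N-2}a_{l,i,k}a_{l',i,k}$-type relations against every $\tilde{\bm d}_{l'}$. These force all $a_{l,i,k}=0$. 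Nonvanishing of the whole local limit comes from the $L^\infty$ normalization together with \eqref{V4} and Lemma~\ref{1-AS-eigenvalues-1}: the sup is attained within bounded rescaled distance of some center, so $\tilde{\bm f}_l\ne0$.

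The main obstacle is normalization compatibility in the orthogonality step. The weighted inner products of $L^\infty$-normalized eigenfunctions are all of size $\epsilon^{\alpha_0(N-2)}$, and passing to the limit requires that no profile be degenerate. I would instead work with the energy-orthonormal basis \eqref{bn:eq-eigenbasis-normalization} and Proposition~\ref{bn:prop-strong-profile} with $J=K$. There the profiles of the top $m$ eigenfunctions lie in $E_{\le1}$ and are weighted-orthogonal to the $Nm+m$ lower ones. The lower profiles span $\operatorname{span}\{W,\partial_kW\}^m$, so the top profiles lie in $\operatorname{span}\{Z_0\}^m$ and are nonzero by \eqref{bn:eq-profile-energy-orthogonality}. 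The last point is to transfer this to the $L^\infty$ normalization, using the uniform comparability of the two norms supplied by \eqref{V4} and the strong decomposition \eqref{bn:eq-strong-energy-decomposition}.
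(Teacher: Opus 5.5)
Your outline follows the paper's proof in both parts. For the upper bound you use the same $(N+2)m$-dimensional trial space of cut-off $u_\epsilon$, $\partial_k u_\epsilon$, $\omega_{i\epsilon}$, evaluated through the equations these fields satisfy. For the profiles you use nondegeneracy plus weighted orthogonality against the translation eigenfunctions, whose vectors $\tilde{\bm d}_{l'}$ form a basis, and the $L^\infty$ normalization with \eqref{V4} for nonvanishing. The profile part is fine. Your energy-normalized variant through Proposition~\ref{bn:prop-strong-profile} also works: energy-orthonormality gives the spanning of the lower profiles for free, so you only need Theorems~\ref{Figalli-1} and \ref{existence-1}. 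The normalization "obstacle" you describe does not actually arise, since after dividing by $\epsilon^{\alpha_0(N-2)}$ one passes to the limit directly.

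The genuine problem is the justification of the rate $\mu_{l\epsilon}\le 1+C\epsilon^{\alpha_0(N-2)}$. You claim the cross terms between the amplitude block and the translation/dilation blocks are $O(1)$, and that they are absorbed because the amplitude block is negative of order one. That mechanism fails. Write $\mathcal B_\epsilon(f,f)=\alpha^TA\alpha+2\alpha^TB\beta+\beta^TC\beta$ with $A\le -c_0$. Maximizing in $\alpha$ gives $\beta^T\bigl(C+B^T(-A)^{-1}B\bigr)\beta$, so the negative block adds a \emph{positive} correction of size $\|B\|^2/c_0$ to the translation--dilation block. For instance, $\begin{pmatrix}-1&b\\ b&\delta\end{pmatrix}$ has top eigenvalue about $\delta+b^2$. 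With $\|B\|=O(1)$ you only get $\sup_{\mathcal Z}R_\epsilon\le 1+O(1)$.

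You need the cross terms to be $O(\epsilon^{\alpha_0(N-2)/2})$, and in fact they are $O(\epsilon^{\alpha_0(N-2)})$. Use the cutoff identity $\mathcal B_\epsilon(\psi f,\psi g)=\int\psi^2 f\,L_\epsilon g$ plus annulus terms, together with $L_\epsilon\partial_k u_\epsilon=0$ and \eqref{womiga}.
\begin{itemize}
\item $\mathcal B_\epsilon(\psi_iu_\epsilon,\psi_i\partial_ku_\epsilon)$ reduces to annulus terms.
\item For the dilation cross term,
\[
\mathcal B_\epsilon(\psi_iu_\epsilon,\psi_i\omega_{i\epsilon})
=\frac{2(p-q)}{p-1}\,\epsilon\int\psi_i^2u_\epsilon^{q+1}+O(\epsilon^{\alpha_0(N-2)})
=O(\epsilon^{\alpha_0(N-2)}).
\]
This uses \eqref{iden} and $(N-2)(q+1)>N$.
\end{itemize}
With this, Young's inequality gives $\mathcal B_\epsilon(f,f)\le-\frac{c_0}{2}\sum_i\alpha_i^2+C\epsilon^{\alpha_0(N-2)}\sum_i(|b_i|^2+c_i^2)$. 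That is exactly the paper's estimate, and it has to be combined with a uniform lower bound on the weighted Gram matrix. You also need that Gram bound for $\dim\mathcal Z=(N+2)m$: disjointness of supports in $i$ does not give independence within each block.
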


\begin{proof}
We first define Rayleigh
$$
R_{\epsilon}(f):=\frac{\int_{\Omega}|\nabla f|^2dx}{\int_\Omega \big(pu_{\epsilon}^{p-1}+\epsilon qu_{\epsilon}^{q-1}\big)f^2}.
$$
Write $L_\epsilon=-\Delta-(pu_{\epsilon}^{p-1}+\epsilon qu_{\epsilon}^{q-1})$ and
$
T_{ik\epsilon}=\lambda_{i\epsilon}\epsilon^{\alpha_0}\partial_k u_\epsilon$
, and recalling $w_{i\epsilon}=(x-\xi_{i\epsilon})\cdot\nabla u_\epsilon+\frac{N-2}{2}u_\epsilon$, we have
\begin{equation}\label{eq:fieldidentities}
\begin{gathered}
L_\epsilon T_{ik\epsilon}=0,\qquad L_\epsilon w_{i\epsilon}=\epsilon\tau_0(p-q) u_\epsilon^q,\quad
L_\epsilon u_\epsilon=-(p-1)u_\epsilon^p-\epsilon(q-1)u_\epsilon^q.
\end{gathered}
\end{equation}
Choose smooth cutoff functions $\chi_{i\epsilon}\subset B_{i\epsilon}$ and $\chi_{i\epsilon}\equiv1$ on $B(\xi_{i\epsilon},r_0/2)$, and all derivatives are uniformly bounded.
The space
\[
\mathcal V_\epsilon=span\{\chi_{i\epsilon}u_\epsilon,
\chi_{i\epsilon}T_{ik\epsilon},\chi_{i\epsilon}w_{i\epsilon}:i=1,\ldots,m,\ k=1,\ldots,N\}
\]
is of dimension $(N+2)m$. Indeed, each weighted Gram block converges to the positive definite Gram matrix associated with $W,\partial_1W,\ldots,\partial_NW,-(z\cdot\nabla W+\frac{N-2}{2}W)$ in the space $L^2(pW^{p-1}dz)$. For
\[
f=\sum_i\chi_{i\epsilon}(\alpha_i u_\epsilon+K_i),
\qquad K_i=\sum_k b_{ik}T_{ik\epsilon}+c_iw_{i\epsilon},
\]
we have
\begin{equation}\label{eq:gram}
D_\epsilon(f,f):=\int_\Omega \big(pu_{\epsilon}^{p-1}+\epsilon qu_{\epsilon}^{q-1}\big)f^2
\ge c\sum_i(\alpha_i^2+|b_i|^2+c_i^2).
\end{equation}
Define $\mathcal B_\epsilon(f,g)=\int\nabla f\cdot\nabla g-\int(pu_{\epsilon}^{p-1}+\epsilon qu_{\epsilon}^{q-1})fg$. Integration by parts gives
\begin{equation}\label{eq:cutoffidentity}
\begin{split}
\mathcal B_\epsilon(\chi f,\chi g)
={}&\int\chi^2fL_\epsilon g
+\int\chi\nabla\chi\cdot(g\nabla f-f\nabla g)
+\int|\nabla\chi|^2fg.
\end{split}
\end{equation}
On the cutoff annuli, \eqref{AS-eigenvalues-1} gives
$u_\epsilon,w_{i\epsilon},\nabla u_\epsilon,\nabla w_{i\epsilon}=O(\epsilon^{\alpha_0\tau_0})$ and
$T_{ik\epsilon},\nabla T_{ik\epsilon}=O(\epsilon^{\alpha_0(\tau_0+1)})$. Also,
\[
\epsilon\int_{B_{i\epsilon}}u_\epsilon^{q+1}=O(\epsilon^{\alpha_0(N-2)}),\qquad
\epsilon\int_{B_{i\epsilon}}u_\epsilon^q|K_i|
\le C\epsilon^{\alpha_0(N-2)}(|b_i|+|c_i|),
\]
by \eqref{tidu} and $(N-2)(q+1)>N$. Using \eqref{eq:fieldidentities}, \eqref{eq:gram} and \eqref{eq:cutoffidentity}, we obtain
\begin{align*}
\mathcal B_\epsilon(\chi_i u_\epsilon,\chi_i u_\epsilon)&\le-c_0,\\
|\mathcal B_\epsilon(\chi_i u_\epsilon,\chi_i K_i)|
&\le C\epsilon^{\alpha_0(N-2)}(|b_i|+|c_i|),\\
|\mathcal B_\epsilon(\chi_i K_i,\chi_i K_i)|
&\le C\epsilon^{\alpha_0(N-2)}(|b_i|^2+c_i^2),
\end{align*}
where $\chi_i=\chi_{i\epsilon}$. In particular, the amplitude--dilation cross term is controlled by the exact equation for $w_{i\epsilon}$:
\[
\mathcal B_\epsilon(\chi_i u_\epsilon,\chi_iw_{i\epsilon})
=\epsilon\tau_0(p-q)\int\chi_i^2u_\epsilon^{q+1}+O(\epsilon^{\alpha_0(N-2)})=O(\epsilon^{\alpha_0(N-2)}).
\]
Young's inequality and disjointness of the supports yield
\[
\mathcal B_\epsilon(f,f)
\le-\frac{c_0}{2}\sum_i\alpha_i^2
+C\epsilon^{\alpha_0(N-2)}\sum_i(|b_i|^2+c_i^2).
\]
Together with \eqref{eq:gram}, the min--max principle gives
$\mu_{(N+2)m,\epsilon}\leq\sup\limits_{0\neq f\in\mathcal{V}_\epsilon}R_{\epsilon}(f)\le1+C\epsilon^{\alpha_0(N-2)}$.
The lower limit follows from the ordering and
$\mu_{(N+1)m,\epsilon}\to1$.

We notice that the local limit equation $-\Delta V=pW^{p-1}V$ and the bubble nondegeneracy in \cite[Lemma~2.10]{CKL} gives
\[
V_{\ell i}=-\sum_k b_{l,i,k}\partial_kU+a_{l,i}(\partial_{\lambda}W).
\]
The $L^\infty$ normalization and \eqref{V5} imply that not all these coefficients vanish.
For every translation eigenfunction, weighted orthogonality, \eqref{eq:translationorthogonality}, and
$\int W^{p-1}Z_0\partial_kW=0$ give
\[
(\lambda_i^{\frac{N-2}{2}}b_{l,i,k})_{i,k}\cdot\tilde{\bm{d}}_j=0,
\qquad m+1\le j\le(N+1)m.
\]
The vectors $\tilde{\bm{d}}_j$ form a basis by Theorem~\ref{remainder terms}, hence all $a_{l,i,k}$ vanish. This proves \eqref{eq:dilationlocal}.
\end{proof}

\begin{lem}\label{thm:dilation}
Let $0<\nu_{(N+1)m+1}^{3}\le\cdots\le\nu_{(N+2)m}^3$ be the eigenvalues of $H_q$, counted with multiplicity. Then
\begin{equation}\label{eq:dilationresult}
\mu_{l\epsilon}
=1+\frac{\nu_l^3}{pL_0}\epsilon^{\alpha_0(N-2)}+o(\epsilon^{\alpha_0(N-2)}),
\qquad (N+1)m+1\leq l\leq(N+2)m.
\end{equation}
The nonzero vectors $\tilde{\bm{f}}_{(N+1)m+j}$ in \eqref{eq:dilationlocal} are corresponding eigenvectors of $H_q$ and form an orthogonal basis of $\mathbb{R}^m$ along every simultaneous convergent subsequence. In particular, all eigenvalues in this cluster exceed $1$ for sufficiently small $\epsilon$.
\end{lem}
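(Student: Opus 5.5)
The plan is to reuse the scaling identity \eqref{489} from Lemma~\ref{conseq} with the dilation eigenfunctions $v_{l\epsilon}$, $(N+1)m+1\le l\le(N+2)m$. This repeats the argument of Lemma~\ref{ckl}, but now the local limits are known to be pure dilation modes. By Lemma~\ref{lem:dilationcluster}, along a subsequence $V_{li\epsilon}\to a_{l,i}Z_0$ in $C^1_{loc}$ and $\tilde{\bm f}_l\neq0$, so the vector $\bm c_l$ of \eqref{qitai} coincides with $\tilde{\bm f}_l$. The exterior expansion of Lemma~\ref{lem:exterior-limit} was proved using only the local profile, the decay bound \eqref{V5} and $\mu_{l\epsilon}\to1$. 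It therefore applies verbatim and gives $\epsilon^{-(N-2)\alpha_0}v_{l\epsilon}\to \mathcal C_0p\sum_i a_{l,i}\lambda_i^{N-2}G(\cdot,\xi_i)$. Consequently Lemma~\ref{m00}, Lemma~\ref{aepu} and Lemma~\ref{limti} hold for these indices without change.

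\textbf{Step 1.} Set $\eta_{l\epsilon}=(\mu_{l\epsilon}-1)\epsilon^{-(N-2)\alpha_0}$ and write \eqref{519}, that is $\mathcal E_{i\epsilon}=\eta_{l\epsilon}\mathcal F_{i\epsilon}-\mathcal G_{i\epsilon}$. Choose $i_0$ with $\bm c_l^{i_0}\neq0$. Then $\mathcal F_{i_0\epsilon}\to -pL_0\bm c_l^{i_0}\neq0$, so $\eta_{l\epsilon}$ converges to some $\nu_l$. Passing to the limit componentwise gives $H_q\tilde{\bm f}_l=pL_0\nu_l\tilde{\bm f}_l$, exactly as in Lemma~\ref{ckl}, and positivity of $H_q$ gives $\nu_l>0$. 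This identifies each normalized shift $pL_0\nu_l$ as an eigenvalue of $H_q$.

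\textbf{Step 2.} This step establishes orthogonality and completeness. Take $l\neq l'$ in the cluster with $v_{l\epsilon}$ and $v_{l'\epsilon}$ orthogonal. Rescale $\epsilon^{-(N-2)\alpha_0}\int f_\epsilon'(u_\epsilon)v_{l\epsilon}v_{l'\epsilon}=0$ exactly as in \eqref{eq:translationorthogonality}. The same majorant $C(1+|z|)^{-2N}$ controls the main term, and the exterior and perturbative pieces vanish by \eqref{V8} and $2\tau_0(q+1)>N$. The limit is $pL_0\,\tilde{\bm f}_l\cdot\tilde{\bm f}_{l'}=0$. Hence the $m$ vectors $\tilde{\bm f}_l$ form an orthogonal basis of $\mathbb R^m$ of eigenvectors of $H_q$, and the numbers $pL_0\nu_l$ exhaust the spectrum of $H_q$ with multiplicity. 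Since $\mu_{l\epsilon}$ is nondecreasing in $l$, the limits are ordered in the same way, so $pL_0\nu_l=\nu^3_l$. This gives \eqref{eq:dilationresult} along the subsequence. Because the limit is independent of the subsequence, the usual subsequence-contradiction argument yields it for the whole family. Finally, $\nu_l^3>0$ forces $\mu_{l\epsilon}>1$ for small $\epsilon$.

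\textbf{Main obstacle.} The delicate point is justifying the $C^1$ exterior expansion and the limit in Lemma~\ref{m00} with $\mu_{l\epsilon}-1$ only known to be $O(\epsilon^{\alpha_0(N-2)})$ from \eqref{eq:dilationupper}, rather than smaller. The factor $(\mu_{l\epsilon}-1)$ multiplies $\mathcal F_{i\epsilon}=O(1)$, and this is precisely the term being solved for, so it causes no harm. One must still check that no other error in \eqref{489} is of the critical size $\epsilon^{[(N-2)/2+N-2]\alpha_0}$. I would redo the bounds of Lemma~\ref{aepu} using \eqref{V5}, which holds for every fixed $l$. The other-bubble contributions there are $O(\epsilon^{\alpha_0(N-2)})$ relative to the main term, hence negligible.
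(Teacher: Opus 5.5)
Your proposal is correct and follows the paper's own proof. You apply the scaling identity \eqref{519} with the dilation profiles from Lemma~\ref{lem:dilationcluster} to get $H_q\tilde{\bm f}_l=pL_0\eta_l\tilde{\bm f}_l$, use weighted orthogonality to obtain an orthogonal eigenbasis that exhausts the spectrum of $H_q$, and then use ordering and the subsequence argument to reach the full family. Your explicit check that Lemmas~\ref{lem:exterior-limit}, \ref{m00}, \ref{aepu} and \ref{limti} carry over to the dilation indices supplies a step the paper only invokes implicitly.
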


\begin{proof}
Recalling some identities in Lemma \ref{m00}, Lemma \ref{aepu} and Lemma \ref{limti} by using Lemma \ref{lem:dilationcluster}.
Choose $i$ with $\tilde{\bm{f}}_i\ne0$, by \eqref{519},
\[
\eta_{l\epsilon}:=\frac{\mu_{l\epsilon}-1}{\epsilon^{\alpha_0(N-2)}}
=\frac{
    \mathcal E_{i_0\epsilon}
   +\mathcal G_{i_0\epsilon}}
 {\mathcal F_{i_0\epsilon}}
\to\eta_l.
\]
Taking the limit, we conclude that
\begin{equation}\label{eq:scalematrix}
H_q\tilde{\bm{f}}_l=pL_0\eta_\ell\tilde{\bm{f}}_l.
\end{equation}
Moreover,
\[
0=\lim_{\epsilon\to0}\epsilon^{-\alpha_0(N-2)}
\int_\Omega \big(pu_{\epsilon}^{p-1}+\epsilon qu_{\epsilon}^{q-1}\big)v_{l\epsilon}v_{l'\epsilon}
=pC_4\tilde{\bm{f}}_l\cdot\tilde{\bm{f}}_{l'}\quad\mbox{for}\quad l\ne l'.
\]
The $m$ nonzero orthogonal vectors exhaust the spectrum of $H_q$. Its positivity follows from the previously proved identity \eqref{eq:H-positive}.
Ordering and the same subsequence argument as in Theorem \ref{remainder terms} yield \eqref{eq:dilationresult} for the full family.
\end{proof}

We final explain the relationship the reduced functional with two matrixs.
\begin{Rem}\label{r9-3}
The matrices governing the translation and dilation spectra in
Theorems~\ref{remainder terms} and~\ref{thmprtb} admit a common interpretation in terms of the
reduced functional. In the logarithmic scale variables
$h_i=\log\lambda_i$, write
$t=(e^{ah_1},\ldots,e^{ah_m})^T$ and set
\[
    \Phi_{\Omega,q}(h,\xi)
    =
    \frac{\mathcal{A}_0^2}{2}\,t^T M(\xi)t
    -
    \mathcal{B}_0\sum_{i=1}^{m}e^{\tau_0(p-q)h_i}.
\]
All derivatives below are evaluated at the limiting configuration
satisfying the identities~\eqref{deference}. Direct differentiation gives
$D_{hh}^2\Phi_{\Omega,q}=H_q,
$
which is positive definite by~\eqref{eq:H-positive}. Thus, the matrix governing the
dilation spectrum is precisely the Hessian of $\Phi_q$ with respect
to the logarithmic scale variables.
To identify the corresponding structure of the translation matrix, put
\[
    \mathcal C=D_{h\xi}^2\Phi_{\Omega,q},
    \qquad
    \mathcal R=D_{\xi\xi}^2\Phi_{\Omega,q},
    \qquad
    D_\lambda
    =
    \operatorname{diag}(\lambda_1I_N,\ldots,\lambda_mI_N).
\]
The definitions of $P$ and $Q$, together with~(1.14), yield
\[
    \mathcal C D_\lambda=-\mathcal{A}_0^2aD_tP,
    \qquad
    D_\lambda\mathcal R D_\lambda=-\mathcal{A}_0^2Q.
\]
Using $H_q=\mathcal{A}_0^2\tau_0^2D_tN_qD_t$, we therefore obtain
\begin{equation}\label{eq:bn-spectral-schur}
    D_\lambda
    \left(
        \mathcal R
        -
        \mathcal C^TH_q^{-1}\mathcal C
    \right)
    D_\lambda
    =
    -\mathcal{A}_0^2B_q.
\end{equation}
Hence, up to the diagonal change of variables and the factor $-\mathcal{A}_0^2$,
the translation matrix is the Schur complement of the scale block
$H_q$ in the full Hessian of $\Phi_{\Omega,q}$. In particular, the
term $P^TN_q^{-1}P$ arises from eliminating the scale
variables and accounts for their coupling with the concentration
points. This is an algebraic identification of the matrices already obtained
from the local identities; it does not rely on a $C^2$ expansion of
an $\epsilon$-dependent reduced functional.
\end{Rem}

\appendix

\section{A sharp decay estimate of the eigenfunctions} \label{sharp}
In this appendix, we will prove Lemma \ref{3-AS-eigenvalues-1}. Before the proof, we recall some notations in \eqref{XX}
and define
\[
U_\epsilon(x)=\epsilon^{\alpha_0\tau_0} u_\epsilon(\epsilon^{\alpha_0}x),\qquad
r_i=|x-X_{i\epsilon}|,\qquad d_\epsilon(x)=\min_i r_i,
\qquad \langle z\rangle=(1+|z|^2)^{1/2}.
\]


We can now prove Lemma \ref{3-AS-eigenvalues-1}.
\begin{proof}[\textbf{Proof of Lemma \ref{3-AS-eigenvalues-1}}]
For fixed $R_{\ast}, c_{\ast}$, we obtain
\begin{equation}\label{WW1}
\epsilon^{\frac{\alpha_0(N-2)}{2}}u_{\epsilon}(\xi_{1\epsilon}+\epsilon^{\alpha_0}z)\geq c_{\ast}, \quad \mbox{for}\quad |z|\leq R_{\ast}.
\end{equation}
Choose an $l$-dimensional subspace $E\subset C_c^\infty(B(0,R_*))$ and set $\phi_\epsilon(x)=\psi((x-\xi_{1\epsilon})/(\epsilon^{\alpha_0}))$ for $\psi\in E$.
Its support is contained in $\Omega$ for small $\epsilon$. Since $\tau_0(p-1)=2$, \eqref{WW1} gives
\[
\int_\Omega|\nabla\phi_\epsilon|^2=(\epsilon^{\alpha_0})^s\int|\nabla\psi|^2,
\qquad
\int_\Omega\bigl(pu_\epsilon^{p-1}+\epsilon q u_\epsilon^{q-1}\bigr)\phi_\epsilon^2
\ge p c_*^{p-1}(\epsilon^{\alpha_0})^s\int\psi^2.
\]
Norm equivalence on $E$ and the min--max characterization therefore imply
\begin{equation}\label{eq:mu}
0<\mu_{l\epsilon}\le C_l.
\end{equation}
By \eqref{XX}, we notice that $V_{l\epsilon}=0$ on $\partial \Omega_\epsilon$, $\|V_{l\epsilon}\|_\infty=1$, and
\begin{equation}\label{eq:scaled}
-\Delta V_{l\epsilon}=c_\epsilon V_{l\epsilon},
\qquad
c_\epsilon=\mu_{\ell\epsilon}
\bigl(pU_\epsilon^{p-1}+q\epsilon\cdot\epsilon^{\alpha_0\tau_0(p-q)}U_\epsilon^{q-1}\bigr)\ge0.
\end{equation}
By \eqref{tidu}, rescaled functions satidfy
\[
U_\epsilon(x)\le C\sum_i\langle x-X_{i\epsilon}\rangle^{-s}.
\]
Hence, on $d_\epsilon\ge1$, using $\epsilon\rho_{\epsilon}^{\beta}\leq C\rho_{\epsilon}^{s}$, \eqref{eq:mu}, and $(N-2)(p-1)=4$, we obtain
\[
c_\epsilon(x)\le C\bigl(d_\epsilon^{-4}
+\epsilon^{\alpha_0(N-2)}d_\epsilon^{-(N-2)(q-1)}\bigr).
\]
Since $d_\epsilon(x)\le\operatorname{diam}(\Omega)/(\epsilon^{\alpha_0})$, it follows from \eqref{tidu} that
\begin{equation}\label{wise}
c_\epsilon(x)\le C d_\epsilon(x)^{-\sigma},
\qquad \sigma:=\sigma_q=\min\{4,(N-2)q\}>2,
\qquad d_\epsilon(x)\ge1.
\end{equation}
For $R\ge1$, we denote
\[
\mathcal E_{R}=\Omega_\epsilon\setminus
\bigcup_{i=1}^m\overline{B(X_{i\epsilon},R)}.
\]
Then we have
\begin{equation}\label{eq:critical-tail}
\begin{aligned}
\int_{\mathcal E_{R}}c_\epsilon^{\frac{N}{2}}\,dX
&\le C\sum_i\int_{|x-X_{i\epsilon}|>R}
|x-X_{i\epsilon}|^{-\sigma N/2}\,dx\\
&\le C R^{-\frac{N(\sigma-2)}{2}}.
\end{aligned}
\end{equation}
Here $\sigma>2$ makes the radial integrals convergent, and the constant is independent of $R$ and $\epsilon$.
Choosing $R$ sufficiently, the right-hand side tends to zero.
The iteration below uses the pointwise bound \eqref{wise} directly.

Set
\begin{equation}\label{eq:parameters}
\zeta=\frac{\sigma-2}{3},\qquad T=(N-2)-\zeta>0,
\qquad J=\left\lceil\frac{T}{\zeta}\right\rceil,
\qquad t_j=\min\{j\zeta,T\}\quad\mbox{for}\hspace{2mm}0\le j\le J,
\end{equation}
where $\lceil x\rceil$ denotes the smallest integer greater than or equal to $x$.
Therefore, $t_0=0$, $t_J=T$, and
\[
0<t_j-t_{j-1}\le\zeta<\sigma-2,
\qquad 0<t_j<N-2\quad\mbox{for}\hspace{2mm}1\le j\le J.
\]
Fix $R\ge2$ so large that $R^{-\zeta}\le1/2$. For $\epsilon$ small enough, \eqref{en} ensures that the closed balls $\overline{B(X_{i\epsilon},R)}$ are disjoint and contained in $\Omega_\epsilon$.
We next aim to prove that
\begin{equation}\label{eq:induction}
|V_{l\epsilon}(x)|\le C_j\sum_i r_i^{-t_j}
\qquad\text{on }\mathcal E_{R}.
\end{equation}
The iteration starts from the normalization \(\|V_{l\epsilon}\|_\infty=1\), which yields \eqref{eq:induction} for \(j=0\) with \(C_0=1\). The nearest-center inequalities give
\[
d_\epsilon^{-\sigma}\sum_i r_i^{-t_{j-1}}
\le m d_\epsilon^{-\sigma-t_{j-1}}
\le m\sum_i r_i^{-\sigma-t_{j-1}}.
\]
Consequently, choosing $\eta_j=t_j-t_{j-1}$, then \eqref{wise} tell us that
\begin{equation}\label{eq:source-step}
c_\epsilon|V_{l\epsilon}|
\le K_0 C_{j-1}\sum_i r_i^{-t_{j-1}-\sigma}
\le K_0 C_{j-1}R^{-(\sigma-2-\eta_j)}
\sum_i r_i^{-t_j-2},
\end{equation}
where $K_0$ is a constant, independent of $j$ and $\epsilon$. Define the function
\[
\Psi_j(x)=C_j\sum_i r_i^{-t_j}.
\]
Since $0<t_j<N-2$, we have
\[
-\Delta\Psi_j=C_j t_j(N-2-t_j)\sum_i r_i^{-t_j-2}.
\]
Fix $R$, choose $C_j>0$, independently of $\epsilon$, sufficiently large such that
\begin{equation}\label{eq:constants}
C_j\ge R^{t_j},
\qquad
C_j t_j(N-2-t_j)
\ge K_0 C_{j-1}R^{-(\sigma-2-\eta_j)}.
\end{equation}
By the choice of \(C_j\), we have
$
-\Delta\Psi_j\ge c_\epsilon|V_{l\epsilon}|
\qquad\text{in }\mathcal E_{R}.
$
Moreover, \(\Psi_j\ge1\ge|V_{l\epsilon}|\) on each inner boundary component, while \(V_{l\epsilon}=0\) and \(\Psi_j>0\) on \(\partial \Omega_\varepsilon\). Hence,
$$
\begin{cases}
-\Delta(\Psi_j\pm V_{l\epsilon})
=-\Delta\Psi_j\pm c_\epsilon V_{l\epsilon}\ge0
&\text{in }\mathcal E_{R},\\
\Psi_j\pm V_{l\epsilon}\ge0
&\text{on }\partial\mathcal E_{R}.
\end{cases}
$$
The maximum principle therefore gives
$$
|V_{l\epsilon}|\le\Psi_j
=C_j\sum_{i=1}^{m}r_i^{-t_j},
$$
which completes the induction and proves \eqref{eq:induction}. Since \(J\) is finite and independent of \(\varepsilon\), and \(t_J=N-2-\zeta\), we conclude that
\begin{equation}\label{eq:almost}
|V_{l\epsilon}(x)|
\le C_J\sum_{i=1}^{m}r_i^{-(N-2-\zeta)}
\qquad\text{in }\mathcal E_{R},
\end{equation}
where \(C_J\) is independent of \(\epsilon\).
Combining \eqref{wise} and \eqref{eq:almost} as in \eqref{eq:source-step}, and using $\sigma-2=3\zeta$, yields
\begin{equation}\label{eq:source-final}
c_\epsilon|V_{l\epsilon}|\le C_*\sum_i r_i^{-N-2\zeta}
\qquad\text{on }\mathcal E_{R}.
\end{equation}

We next define the corrected barrier function
\begin{equation}\label{eq:barrier}
\Phi_\epsilon(x)=K_1\sum_i\bigl(r_i^{-(N-2)}-r_i^{-(N-2)-\zeta}\bigr).
\end{equation}
where $K_1$ is a constant and independent of $j$ and $\epsilon$.
Our fixed choice of $R$ gives
\begin{equation}\label{eq:positive-barrier}
\frac{K_1}{2}\sum_i r_i^{-(N-2)}\le\Phi_\epsilon(x)
\le K_1\sum_i r_i^{-(N-2)}
\qquad\text{on }\mathcal E_{R}.
\end{equation}
We note that each function \(r_i^{-s}\) is harmonic away from \(X_{i\varepsilon}\). Direct computation therefore yields
\begin{equation}\label{eq:laplace-barrier}
-\Delta\Phi_\varepsilon
=
K\zeta(N-2+\zeta)\sum_{i=1}^{m}r_i^{-N-\zeta}
\qquad\text{in }\mathcal E_{R}.
\end{equation}
Since $r_i\ge R$, the right-hand side of \eqref{eq:source-final} is controlled by
$C_*R^{-\zeta}\sum_i r_i^{-N-\zeta}$. Choose
\[
K_1\ge\max\left\{2R^{N-2},
\frac{C_*R^{-\zeta}}{\zeta(N-2+\zeta)}\right\}.
\]
Then $-\Delta\Phi_\epsilon\ge c_\epsilon|V_{l\epsilon}|$, and \eqref{eq:positive-barrier} ensures $\Phi_\epsilon\ge1\ge|V_{l\epsilon}|$ on each inner sphere. On $\partial \Omega_\epsilon$, $V_{l\epsilon}=0$ and $\Phi_\epsilon>0$.
Applying the comparison principle to $\Phi_\epsilon\pm V_{l\epsilon}$ gives
\[
|V_{l\epsilon}(x)|\le\Phi_\epsilon(x)\le K_1\sum_i r_i^{-(N-2)}
\qquad\text{on }\mathcal E_{R}.
\]
Since \(|V_{l\epsilon}|\le 1\) and
$$
\sum_{i=1}^{m}\langle x-X_{i\epsilon}\rangle^{-(N-2)}
\ge (1+R^2)^{-\frac{N-2}{2}}
$$
inside the core balls, the exterior estimate extends to
\begin{equation}\label{eq:global-scaled}
|V_{l\epsilon}(X)|
\le C_l\sum_{i=1}^{m}\langle x-X_{i\epsilon}\rangle^{-(N-2)},
\qquad x\in \Omega_\varepsilon,
\end{equation}
after enlarging \(C_l\) if necessary.
Returning to $x=\epsilon^{\alpha_0}x$ first gives \eqref{V5}
and using the uniform comparability in \eqref{en}, we obtain \eqref{V5}.

Fix $0<r_0<c_0/3$,
by (8), we obtain
\begin{equation}\label{eq:local}
|V_{li\epsilon}(z)|
\le C_\ell(1+|z|^2)^{-(N-2)/2},
\qquad |z|\le\frac{r_0}{\lambda_{i\epsilon}\epsilon^{\alpha_0}}.
\end{equation}
The restriction to this neighborhood excludes the other concentration points, near which a decay bound centered solely at the $i$-th bubble need not hold.
Thus, for every fixed $r>0$, we obtain \eqref{V8}.
\end{proof}

We next state the optimality of the common decay exponent.
\begin{Rem}\label{rem:sharpness}
Under assumptions Lemma \ref{3-AS-eigenvalues-1}, the exponent
$a=:(N-2)/2$ in \eqref{V5} is optimal among estimates
valid for every fixed $l\in\mathbb N$, with constants allowed
to depend on $l$ but independent of $\epsilon$.

Indeed, consider the positive first eigenfunction normalized by
$\|v_{1\epsilon}\|_{\infty}=1$.
Estimate \eqref{V5} places a maximum point within
$O(\epsilon^{\alpha_0})$ of some concentration center.
The local Harnack inequality, followed by harmonic comparison
on a surrounding annulus, yields points $x_\epsilon$ staying
a fixed positive distance from all concentration centers such that
\[
v_{1\epsilon}(x_\epsilon)
\ge c(\epsilon^{\alpha_0(N-2)}).
\]
If $a$ in \eqref{V5} could be replaced by $a+\eta$,
for some $\eta>0$, then scale comparability would give
\[
c(\epsilon^{\alpha_0(N-2)})
\le v_{1\epsilon}(x_\epsilon)
\le C(\epsilon^{\alpha_0(N-2+2\eta)}),
\]
a contradiction as $\epsilon\to0$.
This establishes optimality of the common exponent, which does not
assert optimality for each individual eigenfunction branch.
\end{Rem}

	\vspace{3mm}
\noindent \textbf{Acknowledgments:}
Minbo Yang was partially supported by the National Natural Science Foundation of China (No.~12471114) and the Natural Science Foundation of Zhejiang Province (No.~LZ26A010002). Shunneng Zhao was partially supported by the National Natural Science Foundation of China (No.~12401146) and the Natural Science Foundation of Zhejiang Province (No.~LMS25A010007). The authors are grateful to Professor Wenming Zou of Tsinghua University for his valuable suggestions.

	\vspace{3mm}
\noindent \textbf{AI assistance statement:} The authors partly used AI tools to assist with exploratory investigation; all mathematical validation, final proof decisions, and final wording remain the sole responsibility of the human authors.

\end{document}